\documentclass[11pt,a4paper]{amsart}
\usepackage[utf8]{inputenc}

\usepackage[mono=false]{libertine}
\usepackage{libertinust1math}
\usepackage[T1]{fontenc}
\usepackage[cal=euler,scr=boondoxo,bb=pazo]{mathalfa}
\usepackage[scaled=0.88]{beramono}

\usepackage{amsmath}
\usepackage{amsfonts}
\usepackage{amssymb}
\usepackage{amsthm}
\usepackage{amscd}
\usepackage{bbm}
\usepackage{xcolor-material}
\usepackage[unicode, psdextra, colorlinks=true, linkcolor=GoogleBlue, citecolor=GoogleGreen, urlcolor=GoogleBlue, linktocpage]{hyperref}
\usepackage{tikz-cd}
\usepackage{mathrsfs}
\usepackage{multirow}
\usepackage{stmaryrd}
\usepackage{mathtools}
\usepackage{etoolbox}
\usepackage{tensor}
\usepackage[shortlabels]{enumitem}
\usepackage[capitalize]{cleveref}


\newlist{thmlist}{enumerate}{1}
\setlist[thmlist]{label=(\alph{thmlisti}), ref=\thethm.(\alph{thmlisti}),noitemsep}
\newlist{proplist}{enumerate}{1}
\setlist[proplist]{label=(\alph{proplisti}), ref=\thethm.(\alph{proplisti}),noitemsep}
\newlist{lmlist}{enumerate}{1}
\setlist[lmlist]{label=(\alph{lmlisti}), ref=\thethm.(\alph{lmlisti}),noitemsep}
\newlist{corrlist}{enumerate}{1}
\setlist[corrlist]{label=(\alph{corrlisti}), ref=\thethm.(\alph{corrlisti}),noitemsep}
\newlist{rmqlist}{enumerate}{1}
\setlist[rmqlist]{label=(\alph{rmqlisti}), ref=\thethm.(\alph{rmqlisti}),noitemsep}

\Crefname{thm}{Theorem}{Theorems}
\Crefname{thmI}{Theorem}{Theorems}
\Crefname{lm}{Lemma}{Lemmas}
\Crefname{prop}{Proposition}{Propositions}
\Crefname{corr}{Corollary}{Corollaries}
\Crefname{rmq}{Remark}{Remarks}
\Crefname{defi}{Definition}{Definitions}
\Crefname{conj}{Conjecture}{Conjectures}
\Crefname{exe}{Example}{Examples}

\Crefname{thmlisti}{Theorem}{Theorems}
\Crefname{lmlisti}{Lemma}{Lemmas}
\Crefname{proplisti}{Proposition}{Propositions}
\Crefname{corrlisti}{Corollary}{Corollaries}
\Crefname{rmqlisti}{Remark}{Remarks}


\setcounter{section}{-1}
\setcounter{tocdepth}{1}

\renewcommand\labelenumi{(\roman{enumi})}
\renewcommand\theenumi\labelenumi

\oddsidemargin=30pt
\evensidemargin=30pt
\oddsidemargin=-10pt
\evensidemargin=-10pt
\topmargin=-20pt
\textwidth=170mm
\textheight=660pt


\newcommand{\codim}{\mathrm{codim}}
\newcommand{\bbk}{\mathbbm{k}}

\newcommand{\Ker}{\operatorname{Ker}}

\renewcommand{\Im}{\operatorname{Im}}
\newcommand{\Id}{\operatorname{Id}}
\newcommand{\id}{\operatorname{id}}

\newcommand{\Hom}{\operatorname{Hom}}

\newcommand{\Ext}{\operatorname{Ext}}
\newcommand{\End}{\operatorname{End}}

\newcommand{\bbN}{\mathbb{N}}
\newcommand{\bbZ}{\mathbb{Z}}

\newcommand{\bbQ}{\mathbb{Q}}

\newcommand{\bbP}{\mathbb{P}}
\newcommand{\bbC}{\mathbb{C}}

\newcommand{\ra}{\rightarrow}

\newcommand{\xra}[1]{\xrightarrow{#1}}

\renewcommand{\phi}{\varphi}

\newcommand{\Ocal}{\ensuremath{\mathcal{O}}}

\renewcommand{\bf}[1]{\mathbf{#1}}
\newcommand{\bb}[1]{\mathbb{#1}}
\newcommand{\cal}[1]{\mathcal{#1}}
\newcommand{\fk}[1]{\ensuremath{\mathfrak{#1}}}
\newcommand{\ona}[1]{\operatorname{#1}}


\newcommand{\ui}{\mathbf{i}}
\newcommand{\uj}{\mathbf{j}}
\newcommand{\uh}{\mathbf{h}}
\newcommand{\bfk}{\mathbbm{k}}
\newcommand{\cusp}{\operatorname{cusp}}
\newcommand{\Comp}{\operatorname{Comp}}
\newcommand{\Pol}{\operatorname{Pol}}
\newcommand{\Poll}{\operatorname{Poll}}

\renewcommand{\mod}{\operatorname{mod}}
\newcommand{\ev}{\operatorname{ev}}
\newcommand{\bfZ}{\mathbf Z}
\newcommand{\bfF}{\mathbf F}

\newcommand{\frakS}{\mathfrak{S}}
\newcommand{\rmS}{\mathrm{S}}
\newcommand{\rmM}{\mathrm{M}}
\newcommand{\rmR}{\mathrm{R}}

\newcommand{\T}{\mathcal{T}}
\newcommand{\FT}{\mathcal{F}}
\newcommand{\WR}{\mathfrak{W}} 
\newcommand{\Z}{\mathfrak{Z}} 
\newcommand{\Sc}{\mathcal{S}} 
\newcommand{\loc}{\mathrm{loc}}
\newcommand{\nc}{\text{nc}}
\newcommand{\doubleS}[2]{\tensor[^{#1}]{\fk{S}}{^{#2}}}

\renewcommand{\leqslant}{\leq}
\renewcommand{\geqslant}{\geq}

\newcommand{\dmerge}[4]{
	\draw (#1,#2) .. controls (#1,#4*0.5+#2*0.5) and (#3*0.5+#1*0.5,#4*0.5+#2*0.5) .. (#3*0.5+#1*0.5,#4);
	\draw (#3,#2) .. controls (#3,#4*0.5+#2*0.5) and (#3*0.5+#1*0.5,#4*0.5+#2*0.5) .. (#3*0.5+#1*0.5,#4);
}
\newcommand{\dsplit}[4]{
	\draw (#3*0.5+#1*0.5,#2) .. controls (#3*0.5+#1*0.5,#4*0.5+#2*0.5) and (#3,#4*0.5+#2*0.5) .. (#3,#4);
	\draw (#3*0.5+#1*0.5,#2) .. controls (#3*0.5+#1*0.5,#4*0.5+#2*0.5) and (#1,#4*0.5+#2*0.5) .. (#1,#4);
}
\newcommand{\opbox}[5]{
	\draw (#1,#2) rectangle (#3,#4);
	\node[] at (#3*0.5+#1*0.5,#4*0.5+#2*0.5) {#5};
}
\newcommand{\ntxt}[3]{
	\node[text height=1.2ex,text depth=.25ex] at (#1,#2) {#3};
}
\newcommand{\crosin}[4]{
	\draw (#1,#2) .. controls (#1,#4*0.6+#2*0.4) and (#3,#4*0.4+#2*0.6) .. (#3,#4);
	\draw (#3,#2) .. controls (#3,#4*0.6+#2*0.4) and (#1,#4*0.4+#2*0.6) .. (#1,#4);
}

\newcommand{\strex}[2]{
	\draw (#1-0.3,#2-0.3) -- (#1+0.3,#2+0.3);
	\draw (#1+0.3,#2-0.3) -- (#1-0.3,#2+0.3);
}
\newcommand{\strdot}[2]{
	\fill (#1,#2) circle (11pt);
}

\newlength{\dhatheight}

\theoremstyle{plain}
\newtheorem{thm}{Theorem} [section]
\newtheorem{thmI}{Theorem}

\newtheorem{lm}[thm]{Lemma}
\newtheorem{prop}[thm]{Proposition}
\newtheorem{corr}[thm]{Corollary}

\newtheorem{conj}[thm]{Conjecture}

\theoremstyle{definition}
\newtheorem{defi}[thm]{Definition}
\newtheorem*{defi*}{Definition}

\theoremstyle{remark}
\newtheorem{exe}[thm]{Example}
\newtheorem{rmq}[thm]{Remark}
\newtheorem*{nota}{Notation}

\AtEndEnvironment{ncorr}{\qed}

\title{KLR and Schur algebras for curves and semi-cuspidal representations}
\author{\sc Ruslan Maksimau}
\address{\parbox[b]{\linewidth}{Institut Montpelli\'erain Alexander Grothendieck, Universit\'e de Montpellier, France \\
Laboratoire Analyse Géométrie Modélisation, CY Cergy Paris Universit\'e, France}
}
\email{ruslan.maksimau@umontpellier.fr, ruslmax@gmail.com}

\author{\sc Alexandre Minets}
\address{Institute of Science and Technology Austria, Klosterneuburg, Austria}
\email{alexandre.minets@ist.ac.at}

\begin{document}
\begin{abstract}
Given a smooth curve $C$, we define and study analogues of KLR algebras and quiver Schur algebras, where quiver representations are replaced by torsion sheaves on $C$.
In particular, they provide a geometric realization for certain affinized symmetric algebras.
When $C=\bbP^1$, a version of curve Schur algebra turns out to be Morita equivalent to the imaginary semi-cuspidal category of the Kronecker quiver in any characteristic.
As a consequence, we argue that one should not expect to have a reasonable theory of parity sheaves for affine quivers.
\end{abstract}
\date{\today}
\maketitle
\tableofcontents

\section{Introduction}

\subsection{Motivation}
KLR algebras were introduced by Khovanov and Lauda~\cite{KL_DACQ2009} and Rouquier~\cite{Rou_2A2008} as a tool for categorification of quantum groups.
The geometric construction of this algebras was given by Varagnolo and Vasserot \cite{VV_CBK2011} and Rouquier~\cite{Rou_QHA22012}.
The positive characteristic version of this construction was done in~\cite{Mak_CBKA2015}.

Let us recall this geometric construction.
Let $\Gamma$ be a quiver without loops and let $\alpha$ be the dimension vector.
To this data we can associate a complex variety $\bfZ_\alpha$.
Its points are parameterized by triples, consisting of a representation of $\Gamma$ having dimension $\alpha$ together with two full flags of subrepresentations on it.
Then the algebra $R(\alpha)$ is isomorphic to the equivariant Borel-Moore homology $H_*^{G_\alpha}(\bfZ_\alpha)$, where $G_\alpha$ is a certain group of gauge transformations.  
The union of categories of (graded, projective, finitely generated) $R(\alpha)$-modules can be then equipped with induction and restriction functors.
These functors categorify product and coproduct in the quantum group $U^-_q(\mathfrak g_\Gamma)$, where $\mathfrak g_\Gamma$ is the Kac-Moody Lie algebra associated to $\Gamma$.

One of our motivations was to generalize this construction to other objects.
Namely, recall that by a theorem of Ringel-Green \cite{Rin_HAQG1990,Gre_HAHA1995} the quantum group $U^-_q(\mathfrak g_\Gamma)$ can be also realized as the spherical Hall algebra of the category of representations of $\Gamma$.
Another class of categories whose Hall algebras were actively considered is categories $\ona{Coh}C$ of coherent sheaves over smooth curves, see~\cite{Sch_LHA2012} for an overview.
In particular, starting with an elliptic curve, we get the elliptic Hall algebra, which was extensively studied under many different guises~\cite{MS_HSAT2017,BS_HAEC2012,Neg_SAR2014,SV_EHAE2013}.
Proceeding by analogy with quivers, we expect that KLR-like algebras associated to the category $\ona{Coh}C$ will provide an interesting categorification of the Hall algebra of $C$.

In the present paper, we are making first steps in this direction.
Namely, given a smooth curve $C$ we consider the moduli stack $\T_C = \ona{Tor}C$, which parameterizes torsion sheaves on $C$. 
Repeating the construction of KLR algebras, we consider the moduli of triples, consisting of a torsion sheaf of length $n$ together with two full flags of subsheaves.
Its Borel-Moore homology gets equipped with a convolution product, and we call the resulting algebra $\cal R^C_n$ the \textit{curve KLR algebra}.
\footnote{KLR algebras are often called ``quiver Hecke algebras'', so we could call $\cal R^C_n$ ``curve Hecke algebra''. We opted to not use this terminology, since Hecke algebras already appear in too many different contexts} 
Further, replacing full flags by partial flags, we define and study the \textit{curve Schur algebras} $\Sc^C_n$.
We obtain the following explicit description of $\Sc^C_n$, see \cref{sec:TorSh} for notations.

\begin{thmI}[\cref{SchurPolRepFaith}]\label{thm:A}
Let $P_n = \bigoplus_{\lambda\in \Comp(n)} \bf P_n^{\frakS_\lambda}$.
The algebra $\Sc^C_n$ can be identified with the subalgebra of $\End(P_n)$, generated by multiplication operators $P_n\subset\End(P_n)$, inclusions of invariants $\rmS_\lambda^{\lambda'}:\bf{P}_n^{\fk{S}_\lambda}\hookrightarrow \bf{P}_n^{\fk{S}_{\lambda'}}$ (split), and the merge operators
\[
\rmM_{\lambda'}^\lambda : \bf{P}_n^{\fk{S}_{\lambda'}}\to \bf{P}_n^{\fk{S}_{\lambda}},\qquad
\rmM_{\lambda'}^\lambda(P) = \sum_{a\in \fk{S}_\lambda/\fk{S}_{\lambda'}} \left(y\prod_{i=1}^{\lambda_k}\prod_{j=1}^{\lambda_{k+1}}\left(1+\frac{\Delta_{\tilde{\lambda}_{k-1}+i,\tilde{\lambda}_{k}+j}}{x_{\tilde{\lambda}_{k}+j}-x_{\tilde{\lambda}_{k-1}+i}}\right)\right)^a,
\]
where $\lambda' = (\lambda_1,\ldots,\lambda_r)$ is a composition of $n$, $\tilde{\lambda}_k=\sum_{1\leq i\leq k} \lambda_i$, and $\lambda =  (\lambda_1,\ldots,\lambda_{k-1},\lambda_k+\lambda_{k+1},\lambda_{k+2},\ldots,\lambda_r)$.
\end{thmI}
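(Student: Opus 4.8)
The plan is to realize $\Sc^C_n$ concretely as a convolution algebra acting on $P_n$, match the three families of operators with elementary geometric correspondences, and then establish two things: that these operators generate, and that the action is faithful. For $\lambda\in\Comp(n)$ write $\mathrm{Fl}_\lambda$ for the moduli stack of torsion sheaves of length $n$ on $C$ equipped with a flag of subsheaves of type $\lambda$, so that $\bf P_n^{\fk S_\lambda}=H^{\mathrm{BM}}_*(\mathrm{Fl}_\lambda)$ is a module over $\bf P_n^{\fk S_n}=H^{\mathrm{BM}}_*(\T_C)$, and $\Sc^C_n=\bigoplus_{\lambda,\mu}H^{\mathrm{BM}}_*(\mathrm{Fl}_\lambda\times_{\T_C}\mathrm{Fl}_\mu)$ with its convolution product; the tautological action of $\Sc^C_n$ on $P_n=\bigoplus_\lambda\bf P_n^{\fk S_\lambda}$ is exactly the one in the statement. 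Under these identifications $\bf P_n^{\fk S_\lambda}$ really is a ring of $\fk S_\lambda$-invariants (pullback along $\mathrm{Fl}_\lambda\to\mathrm{Fl}_{(1^n)}$), with the $x_i$ and the $\Delta_{ij}$ the characteristic classes described in \cref{sec:TorSh}.

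First I would identify the generators. Multiplication operators are the classes of the diagonal $\mathrm{Fl}_\lambda\hookrightarrow\mathrm{Fl}_\lambda\times_{\T_C}\mathrm{Fl}_\lambda$ decorated by a characteristic class of a tautological subquotient; they act by multiplication essentially by construction. The split $\rmS_\lambda^{\lambda'}$ is the flat (smooth) pullback along the one-step refinement morphism $\pi\colon\mathrm{Fl}_{\lambda'}\to\mathrm{Fl}_\lambda$, which under the identifications above is precisely the inclusion of invariants $\bf P_n^{\fk S_\lambda}\hookrightarrow\bf P_n^{\fk S_{\lambda'}}$. The merge $\rmM_{\lambda'}^\lambda$ is the proper pushforward $\pi_*$ along the same $\pi$, and computing this pushforward on Borel--Moore homology is the heart of the theorem: the fibre of $\pi$ is a relative moduli of length-$\lambda_k$ subsheaves inside a length-$(\lambda_k+\lambda_{k+1})$ sheaf, and one evaluates $\pi_*$ either by the projective-bundle/Gysin formula applied one flag step at a time, or by equivariant localization on the fibres, which after the flag-variety identification are ordinary partial flag varieties with Weyl group the coset space $\fk S_\lambda/\fk S_{\lambda'}$. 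Either route produces a sum over $\fk S_\lambda/\fk S_{\lambda'}$ of a rational expression whose numerator is the Euler class of the tangent--obstruction complex of the incidence locus; this Euler class is $\prod_{i,j}\bigl(1+\Delta_{ij}/(x_j-x_i)\bigr)$ up to the normalizing factor $y$, with $\Delta_{ij}$ accounting for the $\bbHom$/$\bbExt$ contribution between composition factors (the diagonal class of $C$), and the summation over the coset space symmetrizes the $\fk S_{\lambda'}$-invariant summand into the displayed $\fk S_\lambda$-invariant operator. Verifying that this a priori rational expression is polynomial and pinning down $\Delta_{ij}$ exactly is the most delicate computation in the argument.

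Next I would show that multiplications, splits and merges generate $\Sc^C_n$. For this I would stratify $\mathrm{Fl}_\lambda\times_{\T_C}\mathrm{Fl}_\mu$ by the relative position of the two flags; the strata are stacky affine fibrations over smaller flag stacks, so by the cellular-fibration lemma $H^{\mathrm{BM}}_*(\mathrm{Fl}_\lambda\times_{\T_C}\mathrm{Fl}_\mu)$ admits a basis of fundamental classes of stratum closures, possibly capped with characteristic classes. An induction on the relative-position parameter, modelled on the corresponding statement for quiver Schur algebras, then writes each such class as a convolution of multiplications, one-step splits and one-step merges; that one-step refinements suffice is because any partial flag is reached from the trivial flag by successive one-step refinements.

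Finally, faithfulness of the $P_n$-action, which is the step that must survive in arbitrary characteristic: the complete-flag idempotent $e_{(1^n)}$ is not faithful once $p\le n$, so one cannot reduce to the full-flag (curve KLR) situation. Instead I would use equivariant localization, valid integrally under the relevant torsion-freeness hypotheses on $H_*(C)$: both $\Sc^C_n$ and $\End(P_n)$ inject, after localizing at the maximal torus, into their fixed-point analogues, where the convolution action becomes explicit and injectivity of $\Sc^C_n\to\End(P_n)$ is visible; alternatively, one exhibits an explicit basis of $\Sc^C_n$ indexed by pairs (a double coset, a monomial in the $x_i$) and checks linear independence of its image by a leading-term argument. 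Combining faithfulness with the previous two steps identifies $\Sc^C_n$ with the subalgebra of $\End(P_n)$ generated by the three families, and the merge formula computed above completes the proof. I expect the principal obstacles to be exactly the merge computation and the stacky geometry underneath it: one must control $H^{\mathrm{BM}}_*$ of the moduli stacks $\mathrm{Fl}_\lambda\times_{\T_C}\mathrm{Fl}_\mu$, approximating them by quotient stacks of schemes, and then extract the precise polynomial identity from the Gysin pushforward, including the correct form of $\Delta_{ij}$.
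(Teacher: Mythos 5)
Your proposal is correct and follows essentially the same route as the paper: splits and merges are realized as pullback and pushforward along the flag-refinement morphism, the explicit merge formula is extracted via equivariant localization to the $T_n$-fixed locus $(\bbP^1)^n$ (respectively $C^n$), faithfulness is obtained by injecting both $\Sc_n^C$ and $\End(P_n)$ into their localized counterparts, and generation comes from a basis argument anchored on the stratification of $Z_{\mu,\lambda}$ by relative position of flags. The only presentational difference is that the paper bypasses an abstract cellular-fibration-plus-induction argument by directly constructing the explicit basis elements $\Psi_g^P = \rmM^{\mu}_{\mu'}\rmR^{\mu'}_{\lambda'}(w)P\,\rmS_\lambda^{\lambda'}$ (\cref{lm:basis-Schur-zigzag}) and verifying via a leading-term computation in the localized algebra that they form a $\bbk$-basis — this is exactly the ``explicit basis indexed by double cosets'' alternative you mention at the end, and it simultaneously proves generation and is the workhorse for faithfulness.
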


We also provide an explicit basis and a diagrammatic presentation for $\Sc^C_n$, see \cref{lm:basis-Schur-zigzag}.

It turns out that the integral version of $\Sc^C_n$ for $C=\bbP^1$ is intimately related to the representation theory of KLR algebras in type $\widehat{\mathfrak{sl}}_2$.

\subsection{Semi-cuspidal categories} 
Let $\Gamma$ be a quiver of affine type.
It is known~\cite{McN_RKAI2017,KM_SKAA2017} (under some conditions on the characteristic of the base field) that the KLR algebra $R(\alpha)$ is properly stratified, see~\cite{Kle_AHWC2015} for the definition of this property.
Informally speaking, this means that one can slice the category of $R(\alpha)$-modules into a collection of categories $C(n\xi)\mathrm{-mod}$, where $\xi$ is a positive root.
The category $C(n\xi)\mathrm{-mod}$ is the category of semi-cuspidal $R(n\xi)$-modules.
It is easy to describe if $\xi$ is a real root, but becomes much more complicated when $\xi=\delta$ is the imaginary root.
In the present paper, we shed some light on this problem by finding an explicit diagrammatic algebra, which is Morita equivalent to $C(n\delta)$ in any characteristic.

When working over $\bbk$ a field of characteristic zero, this was already done in~\cite{KM_AZAI2019}.
In this case $C(n\delta)$ can be shown to be Morita equivalent to $e_0 C(n\delta)e_0$ for some simple and explicit idempotent $e_0$.
For any $\bbZ_{\ge 0}$-graded symmetric algebra $F$, Kleshchev and Muth introduce affinized symmetric algebra $\WR_n(F)$ of rank $n$, and then prove an isomorphism $e_0 C(n\delta)e_0\simeq \WR_n(F)$ for a specific choice of $F$.
In particular, in type $\widehat{\mathfrak{sl}}_2$ one has $F = \bbk[c]/(c^2)$.

In positive characteristic, the algebras $C(n\delta)$ and $e_0 C(n\delta)e_0$ are not Morita equivalent any more.
It is possible to find a more complicated idempotent $e$ such that the algebras $C(n\delta)$ and $eC(n\delta)e$ are Morita equivalent.
However, no explicit description of $eC(n\delta)e$ is known in general.

The starting point of our contribution is the following observation:
\begin{thmI}[\cref{KLRisowreath}, \cref{subs:P1anyring}]
We have an isomorphism of algebras $\cal R_n^C \simeq \WR_n(H^*(C,\bbQ))$.
When $C = \bb P^1$, this isomorphism holds over any field $\bbk$.
\end{thmI}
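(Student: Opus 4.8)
The plan is to establish an explicit presentation of $\cal R_n^C$ by generators and relations, following the Varagnolo--Vasserot/Rouquier geometric construction of KLR algebras, and then recognise the outcome as Kleshchev--Muth's affinized symmetric algebra $\WR_n(F)$ with $F=H^*(C,\bbQ)$. First I would decompose the moduli of triples (a length-$n$ torsion sheaf together with two full flags of subsheaves) along the iterated Hecke correspondence recording the ordered support of the sheaf. This yields a faithful action of $\cal R_n^C$ on the Borel--Moore homology $\bf P_n$ of the moduli of a single full flag, together with a canonical identification $\bf P_n\simeq H^*(C,\bbQ)^{\otimes n}[x_1,\dots,x_n]$ on which $\frakS_n$ acts by simultaneously permuting the tensor factors and the variables. (Alternatively, $\cal R_n^C\cong e_{(1^n)}\Sc_n^C e_{(1^n)}$, and one may read this off from \cref{thm:A}.) Under this identification the natural generators of $\cal R_n^C$ become multiplication by $x_i$ and by classes pulled back from the $i$-th copy of $H^*(C,\bbQ)$, together with, for each $i$, a crossing operator $\psi_i$ coming from the correspondence between the flag types $(1^n)$ and $(1^{i-1},2,1^{n-i-1})$. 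The first task is then the equivariant localization computation on these rank-one Hecke correspondences, exactly as in \cite{VV_CBK2011,Rou_QHA22012,Mak_CBKA2015}, extracting the relations among the $x_i$, the curve classes and the $\psi_i$.

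The conceptual point is the shape of the defect term. The localization computation produces a commutation relation of the form
\[
\psi_i x_i-x_{i+1}\psi_i=\tau_i,
\]
where $\tau_i$ is the image on strands $i,i+1$ of the class of the diagonal $\Delta\subset C\times C$ in $H^*(C\times C,\bbQ)$. Expanding $\Delta$ into its K\"unneth components with respect to bases of $H^*(C,\bbQ)$ dual for the Poincar\'e pairing $\int_C$ identifies $\tau_i$ with the Casimir element of the graded symmetric Frobenius algebra $(H^*(C,\bbQ),\int_C)$. Together with the remaining relations of \cite{KM_AZAI2019} governing the $\psi_i$ alone --- which, much as for the one-loop quiver controlling torsion sheaves, involve no parameter beyond $F$ --- this is precisely the defining presentation of $\WR_n(H^*(C,\bbQ))$. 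For $C=\bbP^1$ one has $H^*(\bbP^1,\bbk)=\bbk[c]/(c^2)$ with $\deg c=2$ and $\int_{\bbP^1}c=1$, whose Casimir is $c\otimes 1+1\otimes c$, matching the diagonal term in the merge operator of \cref{thm:A}.

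This produces a well-defined algebra homomorphism $\WR_n(H^*(C,\bbQ))\to\cal R_n^C$, which is surjective since $\cal R_n^C$ is generated over $\bf P_n$ by the crossings. Injectivity follows from a comparison of graded ranks: the cellular decomposition of the moduli of triples exhibits $\cal R_n^C$ as a free left module over its commutative subalgebra $H^*(C,\bbQ)^{\otimes n}[x_1,\dots,x_n]$ with basis $\{\sigma_w\}_{w\in\frakS_n}$, while the basis theorem of \cite{KM_AZAI2019} does the same for $\WR_n(F)$ with basis $\{\psi_w\}_{w\in\frakS_n}$; the homomorphism carries one basis to the other modulo lower-order terms, hence is an isomorphism.

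Finally, for the assertion over an arbitrary field $\bbk$ when $C=\bbP^1$: the only place where characteristic zero (and field coefficients) is used for a general curve is in the homological computations, as torsion and odd cohomology occur for curves of positive genus. When $C=\bbP^1$ the relevant moduli of torsion sheaves with partial or full flags are paved by affine spaces, so their Borel--Moore homology is free over $\bbk$ and concentrated in even degrees, the localization arguments go through verbatim, and $H^*(\bbP^1,\bbk)=\bbk[c]/(c^2)$ is a symmetric Frobenius $\bbk$-algebra; running the previous three steps with $\bbk$-coefficients gives $\cal R_n^{\bbP^1}\simeq\WR_n(H^*(\bbP^1,\bbk))$ over any $\bbk$. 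The step I expect to be the main obstacle is the localization computation underpinning the presentation of $\cal R_n^C$ --- pinning down the defect term as the diagonal class and checking that no further corrections enter the quadratic and braid relations; this is the curve-theoretic counterpart of the technical core of \cite{VV_CBK2011,Rou_QHA22012,Mak_CBKA2015}, now carrying the cohomology of $C$ through every correspondence.
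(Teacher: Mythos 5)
Your proposal is correct and is essentially the paper's argument. The key computation — equivariant localization of the rank-one Hecke correspondence to show the crossing $\tau_i$ acts on the polynomial representation as $s_i-\Delta_{i,i+1}\partial_i$, with $\Delta_{i,i+1}$ the K\"unneth decomposition of the diagonal class of $C\times C$ — is precisely \cref{CrossElts}. The only organizational difference is in the concluding step: the paper does not construct a homomorphism $\WR_n(F)\to\cal R_n^C$ and then compare bases; instead, it observes that both algebras act faithfully on ${\bf P}_n(F)$ (\cref{lm:polrep-wreath} and \cref{SchurPolRepFaith}), are generated by the same operators (polynomials and the $\tau_i$, via the basis theorem \cref{lm:basis-Schur-zigzag}), and hence coincide as subalgebras of $\End({\bf P}_n(F))$. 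Your route first requires verifying that the defining relations of $\WR_n$ (e.g.\ $\tau_i^2=1$, braid relations) hold for the classes $[Z_n^{\sigma_i}]$ in order to have a well-defined map; the paper's phrasing sidesteps that verification entirely, which is its main economy. Your injectivity argument by rank comparison over the polynomial subalgebra is fine, and is the same content as \cref{lem:basis-wreath} versus \cref{lm:basis-Schur-zigzag}. For $C=\bbP^1$ your appeal to evenness of Borel--Moore homology is the right idea, though the paper establishes this in \cref{sec:P1int} via a stratification of $Q_n$ by supports and a torus retraction onto affine fixed loci rather than a literal affine paving; the consequence — homology free and even over any $\bbk$, so localization and the basis theorem still apply — is the same.
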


This suggests that the curve Schur algebras $\Sc_n^C$ can be related to the imaginary semi-cuspidal categories. 
In effect, let $\Gamma$ be the Kronecker quiver, and $C=\bbP^1$.
Using the well-known derived equivalence between coherent sheaves on $\bbP^1$ and representations of $\Gamma$, we produce a homomorphism $\Phi_n:eC(n\delta)e\to \Sc_n^{\bbP^1}$.
It is constructed in a geometric fashion, and is defined over any field, as well as $\bbk=\bbZ$.
It turns out that $\Phi_n$ is bijective if $\bfk$ is a field of characteristic zero and is injective for $\bfk=\bbZ$, with the image $\widetilde{\Sc}_n^{\bbP^1}:=\Im\Phi_n$ being a sublattice of full rank in $\Sc_n^{\bbP^1}$.
Note that $\Phi_n$ is not an isomorphism; this discrepancy is related to the fact that the integral cohomology groups of the stack $\T_C$ are not generated by tautological classes, see \cref{ex:d2nonsurj,prop:phi=TH}.
In conclusion, we get the following result:
\begin{thmI}[\cref{thm:cusp-computed}]
Let $\Gamma$ be the Kronecker quiver, and let $\delta$ be the imaginary simple root.
Denote $\widetilde{\Sc}_n^{\bb F_p} = \widetilde{\Sc}_n^{\bbP^1}\otimes_\bbZ \bb F_p$.
For any $n>0$ and $p$ prime, we have an isomorphism $eC_{\bb F_p}(n\delta)e\simeq \widetilde{\Sc}_n^{\bb F_p}$.
\end{thmI}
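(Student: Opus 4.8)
The plan is \emph{not} to analyse the mod‑$p$ reduction $\Phi_{n,\bb F_p}$ of the comparison map $\Phi_n$ directly: that map is not injective in general, since the cokernel $\Sc_n^{\bbP^1}/\widetilde{\Sc}_n^{\bbP^1}$ is a non‑zero finite group (cf.\ \cref{ex:d2nonsurj}), so tensoring the inclusion $\widetilde{\Sc}_n^{\bbP^1}\hookrightarrow\Sc_n^{\bbP^1}$ with $\bb F_p$ develops a kernel at the primes dividing its order. Instead, I would transport the \emph{integral} isomorphism already supplied by $\Phi_n$ to characteristic $p$ by reduction modulo $p$. Concretely, by the properties of $\Phi_n$ recalled above, over $\bbk=\bbZ$ the map $\Phi_n$ is an injective homomorphism of $\bbZ$‑algebras with image $\widetilde{\Sc}_n^{\bbP^1}$, hence it corestricts to an isomorphism of $\bbZ$‑algebras $\Phi_n\colon eC_\bbZ(n\delta)e\xra{\ \sim\ }\widetilde{\Sc}_n^{\bbP^1}$; in particular $eC_\bbZ(n\delta)e$ is a free $\bbZ$‑module of finite rank. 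Tensoring this isomorphism with $\bb F_p$ and using the definition $\widetilde{\Sc}_n^{\bb F_p}=\widetilde{\Sc}_n^{\bbP^1}\otimes_\bbZ\bb F_p$, one obtains an isomorphism of $\bb F_p$‑algebras $eC_\bbZ(n\delta)e\otimes_\bbZ\bb F_p\simeq\widetilde{\Sc}_n^{\bb F_p}$.

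It then remains to identify $eC_\bbZ(n\delta)e\otimes_\bbZ\bb F_p$ with $eC_{\bb F_p}(n\delta)e$, i.e.\ to check that the idempotent‑truncated semi‑cuspidal algebra commutes with the base change $\bbZ\to\bb F_p$. Here I would use that the KLR algebra has an integral form with a basis that specialises correctly, so that $R_\bbZ(n\delta)\otimes_\bbZ\bb F_p\simeq R_{\bb F_p}(n\delta)$ and, after compressing by the integrally defined idempotent $e$, $eR_\bbZ(n\delta)e\otimes_\bbZ\bb F_p\simeq eR_{\bb F_p}(n\delta)e$. The semi‑cuspidal quotient $C(n\delta)=R(n\delta)/\mathfrak{I}_{n\delta}$ is cut out by a two‑sided ideal generated by an explicit list of elements defined over $\bbZ$; applying the right‑exact functor $-\otimes_\bbZ\bb F_p$ to the presentation $\mathfrak{I}_{n\delta}\to R_\bbZ(n\delta)\to C_\bbZ(n\delta)\to 0$, and observing that the reductions of these generators still generate $\mathfrak{I}_{n\delta}$ over $\bb F_p$, one deduces $C_\bbZ(n\delta)\otimes_\bbZ\bb F_p\simeq C_{\bb F_p}(n\delta)$; applying the exact idempotent‑truncation functor $e(-)e$ then gives $eC_\bbZ(n\delta)e\otimes_\bbZ\bb F_p\simeq eC_{\bb F_p}(n\delta)e$. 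Composing with the previous paragraph yields the asserted isomorphism $eC_{\bb F_p}(n\delta)e\simeq\widetilde{\Sc}_n^{\bb F_p}$.

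The delicate point will be the base‑change step: one has to make sure that the semi‑cuspidal formalism — usually set up over a field — has genuinely been installed over $\bbZ$ in a base‑change‑compatible manner, i.e.\ that the generators of $\mathfrak{I}_{n\delta}$ and the truncating idempotent $e$ can be chosen uniformly in every characteristic. Granting this (which should follow from the explicit integral presentation of these algebras used to define $\Phi_n$), the theorem is essentially formal: all the substantive input — surjectivity of $\Phi_n$ over $\bbQ$, injectivity over $\bbZ$, and the identification of $\widetilde{\Sc}_n^{\bbP^1}$ with the lattice spanned by tautological classes, cf.\ \cref{prop:phi=TH} — has already been established in the course of constructing $\Phi_n$.
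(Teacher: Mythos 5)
Your proposal is correct and follows the same route as the paper: reduce the integral isomorphism $\Phi_n^\bbZ\colon eC_\bbZ(n\delta)e\xra{\sim}\widetilde{\Sc}_n^\bbZ$ (which is the content of \cref{prop:Phininj}) modulo $p$, and then identify $eC_\bbZ(n\delta)e\otimes_\bbZ\bb F_p$ with $eC_{\bb F_p}(n\delta)e$. The paper simply asserts that $\Phi_n^\bbk$ is the base change of $\Phi_n^\bbZ$ and states that the theorem ``follows immediately''; you correctly flag the base-change compatibility of the semi-cuspidal truncation as the only point needing an argument, and your sketch (integral basis of $R(n\delta)$, the ideal being generated by the integrally defined idempotent $1_{\nc}$, right-exactness of $-\otimes_\bbZ\bb F_p$, and exactness of $e(-)e$) is the right one. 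One small imprecision: the cokernel $\Sc_n^\bbZ/\widetilde{\Sc}_n^\bbZ$ is a graded torsion group, finite in each degree but not a finite group globally, and $eC_\bbZ(n\delta)e$ is free of finite rank only degree-by-degree; this does not affect the argument.
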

Note that $H^*(\bbP^1,\bbk)\simeq \bbk[c]/(c^2)$, so as a byproduct we obtain a new geometric proof of the isomorphism of Kleshchev-Muth in type $\widehat{\mathfrak{sl}}_2$.

The sublattice $\widetilde{\Sc}_n^{\bbP^1}$ can be described in terms of \cref{thm:A}.
Namely, we give a certain explicit sublattice $\widetilde{P}_n\subset P_n$ which is preserved under the action of $\widetilde{\Sc}_n^{\bbP^1}$.
The algebra $\widetilde{\Sc}_n^{\bbP^1}$ is then generated by multiplication operators in $\widetilde{P}_n$, together with split and merge operators $\rmS_\lambda^{\lambda'}$, $\rmM^\lambda_{\lambda'}$.
This allows us to obtain a diagrammatic description, an explicit basis and a polynomial representation $\widetilde{P}_n\otimes_\bbZ \bb F_p$ for $\widetilde{\Sc}_n^{\bb F_p}$.
We conjecture that this representation is faithful, see \cref{conj:pol-Fp-faith}.

In \cref{app:parity} we discuss a consequence of the fact that the map $\Phi_n$ is not surjective over $\bb F_p$.
We show that for the Kronecker quiver the fibers of the flag version of Springer resolution have even cohomology groups over $\bbZ$.
For a quiver of Dynkin type, this would be enough to exhibit a nice theory of parity sheaves on the quiver variety~\cite{Mak_CBKA2015}.
However, the existence of such theory for the Kronecker quiver would imply surjectivity of $\Phi_n$.

\subsection{Future work}
We expect that applying our approach to curves with orbifold points will shed light on the semi-cuspidal category $C(n\delta)\rm{-mod}$ in other types.
It would be also interesting to deduce some combinatorics of $C(n\delta)\rm{-mod}$ from our explicit description of $eC(n\delta)e$. 

Concerning the categorification questions, the next logical steps would be to consider Schur algebras for the whole category $\ona{Coh}C$, including sheaves of positive rank.
We plan to investigate this in the future.
For $C=\bbP^1$, partial results in this direction were obtained in~\cite{SVV_CCQL2019}.
For $C=E$ an elliptic curve, we hope to obtain a categorification of elliptic Hall algebra, compatible with the action of the braid group $B_3$ on $D^b(\ona{Coh}E)$.

\subsection{Organization of paper}
We start by recalling the theory of convolution algebras and their localization in \cref{ConvSetup}.
Next, we introduce the moduli stack of (flags of) torsion sheaves on a smooth curve and prove some its properties in \cref{sec:TorSh}.
In \cref{sec:schur} we introduce curve Schur algebras $\Sc_n^C$, and construct a basis and a faithful representation for them.
A certain simple subalgebra of $\Sc_n^C$ is described by generators and relations in \cref{sec:KLRcurve}.
In \cref{sec:P1int}, we discuss in detail the integral version of $\Sc_n^C$ for $C = \bbP^1$.
In \cref{sec:KLRquivers}, we recall some properties of KLR algebras and their divided power version.
In \cref{sec:semicusp}, we provide a description of the semi-cuspidal category of the Kronecker quiver in positive characteristic in terms of $\Sc_n^C$.
Finally, these results are used in \cref{app:parity} to show that there is no satisfactory theory of parity sheaves for the Kronecker quiver.

\subsubsection*{Acknowledgments}
A crucial role in the genesis of this paper was played by Alexander Kleshchev. We thank him for numerous fruitful discussions, for sharing his insight into semi-cuspidal representations of KLR algebras and particularly for pointing out to us that the map $\Phi_n$ should not be surjective over $\bbZ$.
The authors would also like to thank Anton Mellit, Olivier Schiffmann, \'Eric Vasserot for stimulating discussions.
This collaboration between the two authors started from a conversation at the workshop ``Geometric representation theory and low-dimensional topology'' at ICMS, Edinburgh, and was in parts conducted during the thematic trimester on representation theory at IHP, Paris.

\subsubsection*{Notations}
All varieties we consider are defined over $\bbC$, and $\dim (-)$ always means the complex dimension.
The coefficient ring of $H_*$ is denoted by $\bfk$.
We always assume either that $\bbk$ is a field, or $\bbk = \bbZ$.
In \cref{sec:schur,sec:KLRcurve} we additionally assume that $\bfk$ is a field of characteristic zero.
We will almost always drop the coefficient ring from the notation.

For any $G$-variety $X$ we define $H^*_G(X):=H_{2\dim X-*}^G(X)$ by abuse of notation.
When $X$ is smooth, we recover the usual cohomology groups, while for general $X$ this is usually not true.
We introduce this notation solely for the purpose of getting correct gradings later on, and will avoid it whenever possible.
We will \textit{never} consider usual cohomology groups for singular varieties.

\section{Localization of convolution algebras}\label{ConvSetup}
\subsection{Borel-Moore homology and refined pullbacks}
Recall that for an algebraic variety $X$, its Borel-Moore homology is defined as relative homology with respect to some compactification of $X$.
In what follows, we will drop the superscript and write $H_*(X)=H_*^{BM}(X)$.
For any proper map $f:X\to Y$, we have the direct image $f_*:H_*(X)\to H_*(Y)$.
For any lci\footnote{that is, a composition of a regular embedding and a smooth map} morphism $g:X\to Y$, we have the pullback map
\[
g^*:H_*(Y)\to H_{*+2d}(X),
\] 
where $d$ is the relative dimension of $g$.
Further, let $h:Y'\to Y$ be an arbitrary morphism.
Form a cartesian square
\begin{equation}\label{CartforGysin}
\begin{tikzcd}
Y'\times_Y X\ar[r,"g'"]\ar[d,"h'"] & Y'\ar[d,"h"]\\
X\ar[r,"g"] & Y
\end{tikzcd}
\end{equation}
Then one can define the \textit{refined pullback} $(g')^!_g:H_*(Y')\to H_{*+2d}(Y'\times_Y X)$.
In particular, if $X,Y'\subset Y$ are closed subvarieties, and both $X$ and $Y$ are smooth, we get a restriction map $H_*(Y')\to H_{*-2\codim_Y X}(Y'\cap X)$.
\begin{rmq}
As notation suggests, $(g')^!_g$ depends on the whole cartesian square~\eqref{CartforGysin}, and not just the map $g'$.
However, we will often drop the subscript, when the choice of cartesian square is clear.
\end{rmq}
For a closed embedding of smooth varieties $X\subset Y$, we denote its normal bundle by $N_X Y$.
We say that a diagram is a fiber diagram if all squares in it are cartesian.
\begin{prop}\label{GysinProp}
\begin{proplist}
	\item \label{GysinProp:a} For any fiber diagram
	\[
	\begin{tikzcd}
		Y'\times_Y X_2\ar[r,"g'_2"]\ar[d] & Y'\times_Y X_1\ar[r,"g'_1"]\ar[d] & Y'\ar[d]\\
		X_2\ar[r,"g_2"] & X_1\ar[r,"g_1"] & Y
	\end{tikzcd}
	\]
	we have $(g'_1\circ g'_2)^!_{g_1\circ g_2}=(g'_2)^!_{g_2}\circ(g'_1)^!_{g_1}$, provided that $g_1$ and $g_2$ are lci;
	\item \label{GysinProp:b} consider a fiber diagram
	\[
	\begin{tikzcd}
		X''\ar[r,"g''"]\ar[d,"f'"] & Y''\ar[d,"f"]\\
		X'\ar[r,"g'"]\ar[d]& Y'\ar[d]\\
		X\ar[r,"g"]& Y
	\end{tikzcd}
	\]
	with both $g$ and $g'$ regular embeddings.
	Then $(g'')_g^!=e(h'^*(N_X Y)/N_{X'} Y')\cdot (g'')_{g'}^!$;
	\item \label{GysinProp:b2} consider a fiber diagram as in (b).
	If $f$ is proper, then $(g')^!_g\circ f_* = f'_*\circ (g'')_g^!$;
	\item \label{GysinProp:c} consider a fiber diagram
	\[
	\begin{tikzcd}
		X''\ar[r,"g''"]\ar[d,"h''"] & Y''\ar[r]\ar[d,"h'"]&Z'\ar[d,"h"]\\
		X'\ar[r,"g'"]\ar[d]& Y'\ar[r]\ar[d]&Z\\
		X\ar[r,"g"]& Y &
	\end{tikzcd}
	\]
	with $g$ and $h$ lci morphisms.
	Then $(g'')^!\circ(h')^!=(h'')^!\circ(g')^!$;
	\item \label{GysinProp:d} suppose $g$ in~\eqref{CartforGysin} is a closed embedding.
	Then $(g')^!\circ (g')_*(-)=e((h')^*N_X Y)\cdot -$.
\end{proplist}
\end{prop}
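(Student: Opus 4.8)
The plan is to reduce every assertion to the case of a regular embedding and then argue by deformation to the normal cone. If $g$ is smooth, then $(g')^!_g$ is just the flat pullback along the smooth map $g'$, and parts (a)--(e) are immediate from base change, the projection formula and the Thom isomorphism for flat pullback. A general lci morphism factors as a smooth morphism preceded by a regular embedding; once (a) is known for these two special cases it extends to arbitrary lci morphisms by composing factorizations, and then (b)--(e) reduce to the regular-embedding situation as well. Equivariant Borel--Moore homology carries the same formal package (proper pushforward, lci and refined pullback, Thom isomorphism, specialization) with the same compatibilities, so it suffices to work non-equivariantly. Assume henceforth that $g$, $g'$, $h$ are regular embeddings.

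For a regular embedding $g\colon X\hookrightarrow Y$ of codimension $c$, with normal bundle $N_X Y$, and an arbitrary $h\colon Y'\to Y$ with base change $X'=Y'\times_Y X$, recall that the refined pullback is the composite
\[
H_*(Y')\ \xrightarrow{\ \mathrm{sp}\ }\ H_*(C_{X'}Y')\ \xrightarrow{\ \iota_*\ }\ H_*\bigl((h')^*N_X Y\bigr)\ \xrightarrow{\ \mathrm{Th}^{-1}\ }\ H_{*-2c}(X'),
\]
where $\mathrm{sp}$ is the specialization map of the deformation to the normal cone $M^{\circ}_{X'}Y'\to\mathbb{A}^1$ (a flat family with general fibre $Y'$ and special fibre $C_{X'}Y'$), $\iota$ is the closed embedding of $C_{X'}Y'$ into the total space of the vector bundle $(h')^*N_X Y$ provided by the regularity of $g$, and $\mathrm{Th}^{-1}$ inverts the Thom isomorphism $\pi^*\colon H_{*-2c}(X')\xrightarrow{\sim}H_*((h')^*N_X Y)$. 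This is the description I will analyze; everything below is the translation to Borel--Moore homology of standard facts about specialization maps and Thom classes, for which one may alternatively invoke Fulton's \emph{Intersection Theory}, Chapter~6, or the book of Chriss and Ginzburg.

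Parts (c) and (e) are the easy ones. For (c): a proper morphism $Y''\to Y'$ over $Y$ induces a proper morphism of deformation spaces over $\mathbb{A}^1$ restricting on special fibres to the induced proper morphism $C_{X''}Y''\to C_{X'}Y'$, so $\mathrm{sp}$ commutes with proper pushforward; combined with the compatibility of $\iota_*$ and of $\mathrm{Th}^{-1}$ with pushforward along the bundle morphism $(h'')^*N_X Y\to (h')^*N_X Y$, this gives the identity. For (e): if $\alpha\in H_*(X')$, then $(g')_*\alpha$ is supported on the centre $X'$, hence its image under $\mathrm{sp}$ is the pushforward of $\alpha$ along the zero section $X'\hookrightarrow C_{X'}Y'$ (the deformation space restricts to the trivial family $X'\times\mathbb{A}^1$ over $X'$); pushing further into $(h')^*N_X Y$ along the zero section and applying $\mathrm{Th}^{-1}$ returns $e\bigl((h')^*N_X Y\bigr)\cdot\alpha$, by the very definition of the Euler class as the self-intersection of the zero section. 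Part (b) is then deduced from (a) and (e): writing $V$ for the pullback of $N_{X'}Y'$ and $W$ for the pullback of $N_X Y$, both to $X''$, one has $C_{X''}Y''\subset V\subset W$ with locally free quotient $\bar E=W/V$ the pulled-back excess bundle, and the discrepancy between $(g'')^!_g$ and $(g'')^!_{g'}$ is the operator $\mathrm{Th}_W^{-1}\circ j_*\circ\mathrm{Th}_V$ for the sub-bundle inclusion $j\colon V\hookrightarrow W$; factoring the zero section of $W$ through that of $V$ and using $j^*j_*=e(N_V W)\cdot(-)=e(\pi_V^*\bar E)\cdot(-)$ from (e), this operator equals $e(\bar E)\cdot(-)$, which is the claimed formula.

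It remains to prove (a) and (d), and here I expect the only genuinely non-formal work to lie. Both are instances of the principle that specializations along ``independent directions'' commute. For (a), with $X_2\hookrightarrow X_1\hookrightarrow Y$, one compares the iterated deformation --- first of $Y$ to $N_{X_1}Y$, then of (the base change of) $N_{X_1}Y$ to $N_{X_2}\bigl(N_{X_1}Y\bigr)$ --- with the single deformation of $Y$ to $N_{X_2}Y$, the key input being the canonical flat family over $\mathbb{A}^1$ interpolating between the two iterated normal cones and identifying the associated specialization maps; this is exactly the functoriality of refined Gysin maps. For (d) one constructs the double deformation to the normal cone over $\mathbb{A}^1\times\mathbb{A}^1$ attached to the two regular embeddings $X\hookrightarrow Y$ and $Z'\hookrightarrow Z$, and checks that specializing in each $\mathbb{A}^1$-factor, together with the two Thom isomorphisms, may be performed in either order with the same outcome. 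Once these interpolating families are in place and their special fibres correctly identified, all five identities fall out; I would either carry out this bookkeeping directly along the lines of Fulton, Chapter~6, or simply cite the corresponding statements there and in Chriss--Ginzburg and note that they pass to Borel--Moore homology, and to its equivariant version, without change.
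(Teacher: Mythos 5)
Your proposal is correct, but it takes a somewhat different logical route from the paper. The paper's proof of \cref{GysinProp} is essentially a citation: parts (a)--(d) are taken directly from Fulton--MacPherson and \cite[Chapter 6]{Ful_IT1998}, and part (e) is then derived formally from (c) and (b) by specializing the cartesian squares --- namely, set $Y''=X'$ with $f=g'$ in (c) to obtain $(g')^!_g\circ(g')_* = (\mathrm{id}_{X'})^!_g$, and then apply the excess intersection formula (b) with $Y'=Y''=X'$ (so that $N_{X'}Y'=0$ and the excess bundle is all of $(h')^*N_XY$). You instead give direct deformation-to-the-normal-cone arguments for (c) and (e), deduce (b) from (a) and (e) via the factorization of the Thom class through the sub-bundle $V\hookrightarrow W$, and ultimately still defer to Fulton Chapter 6 for (a) and (d). Both logical organizations are sound. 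What your version buys is a more self-contained and explicit picture of the mechanisms (specialization, Thom isomorphism, excess bundle) at work; what the paper's version buys is brevity, and it also exhibits (e) as a purely formal consequence of the statements already proved in the references rather than requiring a separate calculation. Your sketch of the reduction to regular embeddings via factoring lci morphisms, and your remark that the equivariant case is formally identical, are both correct and consistent with the paper, which handles the equivariant upgrade separately by citing \cite[Appendix B.9]{AHR_GSSC2015}.
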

\begin{proof}
For (a-d), see~\cite{FM_CFSS1981} and \cite[Chapter 6]{Ful_IT1998}.
The part (e) follows by setting $Y''=X'$ in (c), and further $Y'=Y''=X'$ in (b).
\end{proof}

\subsection{Localization theorem}
Let $T\subset G$ be a reductive group together with a fixed maximal torus, and denote by $W$ the corresponding Weyl group.
In this paper, we will be chiefly considering equivariant Borel-Moore homology groups.
\cref{GysinProp} extends to the equivariant case by the argument in~\cite[Appendix B.9]{AHR_GSSC2015}.
For brevity, we will always denote the $G$-equivariant cohomology of a point $H_G^*(pt)$ by $H_G$.

\begin{prop}[{\cite[III.\S 1]{Hsi_CTTT1975}}]\label{GvsTW}
Let $X$ be a $G$-variety, and $\bbk$ a field of characteristic $0$.
Then $H^G_*(X,\bbk)\simeq H^T_*(X,\bbk)^W$.
\end{prop}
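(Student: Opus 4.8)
The plan is to reduce the statement to a standard fact about equivariant cohomology of $G$-spaces, taking care to pass between Borel–Moore homology and cohomology correctly. Recall that $H^G_*(X)$ is by definition the Borel–Moore homology of the Borel construction $X \times_G EG$ (or its finite-dimensional approximations), and similarly for $T$. The key input is that for a compact connected Lie group $G$ with maximal torus $T$ and Weyl group $W$, and any space $Y$ carrying a $G$-action, the natural map $H^*_G(Y,\bbk) \to H^*_T(Y,\bbk)$ is injective with image the $W$-invariants, whenever $\bbk$ is a field in which $|W|$ is invertible — in particular in characteristic $0$. This is exactly the content of \cite[III.\S 1]{Hsi_CTTT1975}, applied not to $Y=pt$ but to $Y=X$.

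First I would recall the transfer argument. The fibration $X\times_T EG \to X\times_G EG$ has fiber $G/T$, whose cohomology is concentrated in even degrees and torsion-free, so the Leray–Hirsch theorem makes $H^*_T(X,\bbk)$ a free module over $H^*_G(X,\bbk)$. The $W$-action on $H^*_T(X,\bbk)$ commutes with the $H^*_G(X,\bbk)$-module structure, and averaging over $W$ (legitimate since $\operatorname{char}\bbk=0$) produces a projector onto $H^*_G(X,\bbk) = H^*_T(X,\bbk)^W$; injectivity of $H^*_G\to H^*_T$ follows because the composite $H^*_G \to H^*_T \xrightarrow{\text{transfer}} H^*_G$ is multiplication by a nonzero integer (namely $|W|$, up to the Euler class bookkeeping), hence invertible. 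This establishes the cohomological statement $H^*_G(X,\bbk)\simeq H^*_T(X,\bbk)^W$.

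Next I would transport this to Borel–Moore homology. If $X$ were smooth this would be immediate from Poincaré duality, since the recipe $H^*_G(X) := H^G_{2\dim X - *}(X)$ of the Notations section then literally recovers ordinary equivariant cohomology. For general (possibly singular) $X$ one instead uses that Borel–Moore homology of the Borel construction is the colimit of ordinary Borel–Moore homologies of the finite-dimensional approximations $X\times_G G_m$, where $G_m \to BG$ approximates $EG$; each approximation is a genuine variety, and on any such space the comparison between $G$- and $T$-approximations is governed by the same $G/T$-bundle. Concretely, $X\times_T (G\times_? \,G_m) \to X\times_G G_m$ is a $G/T$-bundle, so the projection formula and proper pushforward along this bundle (finite fibers after passing to a flag-type cell decomposition of $G/T$) give the transfer map on Borel–Moore homology, and averaging over $W$ works verbatim. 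Passing to the colimit yields $H^G_*(X,\bbk)\simeq H^T_*(X,\bbk)^W$ as claimed, with the $W$-action being the one induced functorially by the Weyl group action on $H^T_*$.

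The main obstacle, and the only subtle point, is the bookkeeping of degree shifts in the singular case: the pushforward along the $G/T$-bundle shifts Borel–Moore degree by $2\dim(G/T)$, and one must check that after composing with the pullback the net effect is the identity on $H^G_*$ up to the scalar $|W|$, i.e. that the Euler class of the normal direction contributes trivially (it does, because $G/T$ is stably parallelizable as a base and the relevant class is $\pm|W|$ times a generator by the classical computation of $H^*(G/T)$ as a regular representation of $W$). Everything else is a formal application of the transfer/averaging argument together with the compatibility of \cref{GysinProp} with the equivariant formalism recorded after that proposition. I expect no essential difficulty here; the result is classical and we cite \cite{Hsi_CTTT1975} for the core statement, adding only the (routine) reduction from cohomology to Borel–Moore homology.
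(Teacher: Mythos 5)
The paper itself does not prove this proposition --- it cites Hsiang's book and moves on --- so there is no in-paper argument to compare against. What you have written is a reasonable reconstruction of the standard argument behind that citation, and the two main ingredients (transfer along the $G/T$-bundle and Leray--Hirsch/averaging to identify the $W$-invariants) are the right ones. The passage to Borel--Moore homology via finite-dimensional approximations of $EG$ is also the correct device, and the paper does implicitly use it throughout.

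Two spots deserve a correction, though neither affects the substance. First, the parenthetical about $G/T$ being ``stably parallelizable'' is both false in general (the Euler class of $T(G/T)$ integrates to $|W|\neq 0$) and beside the point: the transfer exists for any proper fibration with compact fiber, and the fact that transfer-followed-by-pullback is multiplication by $\chi(G/T)=|W|$ is just integration of the Euler class over the fiber. Second, you should be a bit more careful separating the two halves of the isomorphism: the transfer gives injectivity of $H^*_G\hookrightarrow H^*_T$, while identifying the image with the $W$-invariants needs the Leray--Hirsch statement phrased $W$-equivariantly, namely $H^*_T(X)\simeq H^*_G(X)\otimes_{\bbk}H^*(G/T)$ with $W$ acting only on $H^*(G/T)$, together with the classical fact that $\bigl(H^*(G/T,\bbk)\bigr)^W=\bbk$ in characteristic $0$ (the coinvariant algebra of $W$ has one-dimensional invariants). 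Your text gestures at both but somewhat conflates them; it is worth saying explicitly which step uses which.
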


Let $X$ be a $T$-variety.
The homology group $H_*^T(X)$ is naturally an $H_T$-module; we will write $H_*^T(X)_\loc$ for its localization $H_*^T(X)\otimes_{H_T}\ona{Frac}(H_T)$.

Let $X^T$ be the subvariety of points in $X$ fixed by $T$, and the inclusion $i_X:X^T\hookrightarrow X$ the natural embedding.
\begin{prop}[Localization theorem]\label{LocThm}
Let $T$ be an algebraic torus, and $X$ a $T$-variety. 
Suppose that $X^T$ is not empty.
Then the $\ona{Frac}(H_T)$-linear map 
\[
i_{X*}:H^T_*(X^T)_\loc\to H^T_*(X)_\loc
\]
is an isomorphism.
Moreover, assume that $\bfk$ is a torsion-free $\bbZ$-module.
Then for any $T$-equivariant closed embedding $X\hookrightarrow Y$ into a smooth $T$-variety $Y$, the map
\[
(i_X)^!_{i_Y}:H^T_*(X)_\loc\to H^T_*(X^T)_\loc
\]
is an isomorphism as well.
\end{prop}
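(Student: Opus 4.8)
The plan is to treat the two assertions separately, although they are linked. For the first isomorphism, the statement $i_{X*}\colon H^T_*(X^T)_\loc\xrightarrow{\ \sim\ } H^T_*(X)_\loc$ is the classical Atiyah--Bott--Borel--Quillen localization theorem for equivariant Borel--Moore homology; I would not reprove it from scratch but rather cite the standard reference (e.g.\ \cite[Chapter 6]{Ful_IT1998} combined with the equivariant formalism of \cite{AHR_GSSC2015}, or the original sources). The one point worth spelling out is reduction to the case where $T$ acts with a single fixed-point stratum type: one filters $X$ by $T$-invariant closed subsets whose successive complements are $T$-equivariant vector bundles (or unions thereof) over components of $X^T$ with nontrivial weights; the long exact sequence of the pair then shows, via the projective bundle/Thom isomorphism, that each nontrivial piece vanishes after localizing $H_T$, since a $T$-representation with no zero weights has equivariant Euler class invertible in $\ona{Frac}(H_T)$. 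Five lines of bookkeeping suffice once the two inputs are granted.

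For the second isomorphism, I would build it directly from the refined pullback and \cref{GysinProp}. The key observation is that on the fixed locus the two refined pullbacks fit into a commuting diagram: applying $(g'')^!$--type functoriality (\cref{GysinProp:a,GysinProp:b2}) to the fiber diagram obtained by intersecting the closed embedding $X\hookrightarrow Y$ with $X^T\hookrightarrow Y^T$, one gets that $(i_X)^!_{i_Y}\circ i_{X*}$ and $i_{X^T*}\circ(i_{X^T})^!_{i_{Y^T}}$ agree on $H^T_*(X^T)_\loc$, and by \cref{GysinProp:d} the composite $(i_{X^T})^!\circ i_{X^T*}$ on $H^T_*(X^T)$ is multiplication by the equivariant Euler class $e(N_{X^T}Y^T)$. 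Since $Y^T\subset Y$ is again a smooth $T$-variety and the normal bundle $N_{Y^T}Y$ has no zero $T$-weights along the fibers, $e(N_{X^T}Y^T)=e(N_{X^T}X^T)\cdot(\text{something})$—more precisely the relevant Euler class is invertible in $\ona{Frac}(H_T)$ because $X^T$ is pointwise fixed so $Y^T$ meets $X$ exactly in $X^T$ and the excess normal directions carry only nonzero weights. Combined with the Euler-class clearing already used in part one, this exhibits $(i_X)^!_{i_Y}$ as a two-sided inverse of $i_{X*}$ up to an invertible scalar, hence an isomorphism; the torsion-freeness hypothesis on $\bfk$ is exactly what guarantees $H_T=\bfk[\mathfrak t^*]$ has no zero-divisors obstructing the inversion of these Euler classes.

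The main technical obstacle is the compatibility square relating $(i_X)^!_{i_Y}$ with $i_{X*}$: one must be careful that the cartesian square defining the refined pullback for $X\hookrightarrow Y$ restricts correctly along $X^T\hookrightarrow X$ and $Y^T\hookrightarrow Y$, and that the excess intersection formula \cref{GysinProp:b} is applied to the right pair of regular embeddings so that the error term is precisely an Euler class of a bundle with invertible class after localization. Once this diagram chase is set up correctly, everything else is a formal consequence of \cref{GysinProp} and the first isomorphism; the subtlety is entirely in identifying which normal bundle appears and checking it has no zero weights, which follows since any $T$-fixed point of $Y$ lying in $X$ automatically lies in $X^T$, so the normal directions to $X^T$ inside $Y^T$ are genuinely the $T$-moving directions.
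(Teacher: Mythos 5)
Your strategy --- compose $(i_X)^!_{i_Y}$ with $i_{X*}$, prove the composite is multiplication by an invertible Euler class, and conclude via the first isomorphism --- is exactly the paper's, but you have misidentified the Euler class, and the misidentification is not cosmetic: it undermines the very invertibility you need. Applying \cref{GysinProp:d} with $g=i_Y\colon Y^T\hookrightarrow Y$ (a closed embedding of \emph{smooth} varieties), $Y'=X$, $h\colon X\hookrightarrow Y$, one has $Y'\times_Y Y^T = X\cap Y^T = X^T$, $g'=i_X$, $h'\colon X^T\hookrightarrow Y^T$, whence
\[
(i_X)^!_{i_Y}\circ i_{X*} \;=\; e\bigl((h')^* N_{Y^T}Y\bigr)\cdot(-) \;=\; e\bigl((N_{Y^T}Y)|_{X^T}\bigr)\cdot(-).
\]
The bundle $(N_{Y^T}Y)|_{X^T}$ has no zero $T$-weights precisely because $Y^T$ is the fixed locus, so its Euler class is invertible in $H^T_*(X^T)_\loc$ (torsion-freeness of $\bfk$ guaranteeing that products of nonzero characters are not zero divisors in $H_T$). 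Combined with the first part, $(i_X)^!_{i_Y}$ is an isomorphism; this single application of \cref{GysinProp:d} is the paper's entire proof of the second claim.

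Your proposal instead names the relevant Euler class $e(N_{X^T}Y^T)$ and asserts that ``the normal directions to $X^T$ inside $Y^T$ are genuinely the $T$-moving directions.'' This is backwards: $N_{X^T}Y^T$ sits inside $TY^T|_{X^T}$, and $TY^T$ is, by definition of the fixed locus, the weight-zero part of $TY|_{Y^T}$. So $N_{X^T}Y^T$ carries \emph{only} zero $T$-weights and its Euler class is generically \emph{not} a unit after localization; the moving directions live in the complementary bundle $N_{Y^T}Y$. The intermediate comparison with $i_{X^T*}\circ(i_{X^T})^!_{i_{Y^T}}$ is also ill-specified (it is not clear which maps these denote, and $X^T$ need not be smooth, so a self-intersection formula on the fixed locus is not available), and in any case it is unnecessary once the correct cartesian square for \cref{GysinProp:d} is chosen.
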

\begin{proof}
First claim is proved in~\cite[III.\S 1]{Hsi_CTTT1975}.
Second claim is obtained by applying \cref{GysinProp:d}.
\end{proof}

\begin{rmq}\label{rmq:locthmcoef}
Note that we only need the assumption on $\bbk$ to assure that the Euler class in \cref{GysinProp:d} is not a zero divisor.
Thus the proposition will hold for other $\bbk$, if we can check this condition separately.
\end{rmq}

Applying \cref{GysinProp:d}, we get a useful corollary.
\begin{lm}\label{LocPullPush}
	Let $f:X'\ra Y'$ be a projective morphism of smooth $T$-varieties, $Y\subset Y'$ is a closed $T$-stable subvariety, and $X=Y\times_{Y'} X'$.
	Assume that fixed point sets $X^T$, $Y^T$ are non-empty, and let $f_T:X^T\to Y^T$ be the restriction of $f$.
	Then we have a base change formula in localized homology groups:
	\[
	i^!_Yf_*(-)=e(N_{(Y')^T}Y')\cdot f_{T*}\left( e(N_{(X')^T}X')^{-1}\cdot i^!_X(-) \right).
	\]
\end{lm}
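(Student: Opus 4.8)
The plan is to reduce the base change formula to a combination of the localization isomorphisms of \cref{LocThm} and the compatibility statements of \cref{GysinProp}. First I would set up the relevant fiber diagram. Since $Y\subset Y'$ is a closed $T$-stable subvariety of a smooth variety, and $X=Y\times_{Y'}X'$, I form the cartesian square with corners $X, X', Y, Y'$ and maps $f:X'\to Y'$, $f':X\to Y$ (the restriction of $f$), and the closed embeddings $i_Y:Y\hookrightarrow Y'$, $i_{X}:X\hookrightarrow X'$. Over this square I further stack the embeddings of $T$-fixed loci: $(Y')^T\hookrightarrow Y'$ and $(X')^T\hookrightarrow X'$. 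The key point is that $i_Y^!$ can be computed by first pulling back to $(Y')^T$ and then pushing the result back down, because on the smooth side the localization map $i_{Y'*}:H^T_*((Y')^T)_\loc\to H^T_*(Y')_\loc$ is an isomorphism with inverse given (up to an Euler class) by $(i_{Y'})^!$.

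The main computation then runs as follows. Apply \cref{GysinProp:b2} (proper base change for refined pullbacks, which applies since $f$ is projective hence proper) to move $i_Y^!$ past $f_*$: this gives $i_Y^!f_* = f'_*\, (i_X')^!_{i_Y}$, where $(i_X')^!_{i_Y}$ is the refined pullback along the embedding $X\hookrightarrow X'$ taken with respect to the square whose bottom map is $i_Y$. The issue is that this refined pullback is taken with respect to $i_Y$, not with respect to $i_{X}$ itself, so I next invoke \cref{GysinProp:b} to compare the two: $(i_X')^!_{i_Y} = e\big(p^*(N_{(Y')}Y')/N_{X}X'\big)^{\pm 1}\cdot(i_X')^!_{i_{X}}$ — except that here I want to phrase everything through the fixed-point loci rather than through the normal bundle of $Y$ in $Y'$ directly. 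So the cleaner route is: compose with the fixed-point embeddings. Using \cref{GysinProp:a} to factor refined pullbacks through $(X')^T$ and $(Y')^T$, and using that on smooth $T$-varieties $(i_{X'})^!\circ (i_{X'})_* = e(N_{(X')^T}X')\cdot(-)$ and similarly for $Y'$ (this is exactly \cref{GysinProp:d}, which is also how the second part of \cref{LocThm} was derived), I can express $i_Y^!$ on the $Y$-side as $e(N_{(Y')^T}Y')\cdot i_{(Y')^T*}^{-1}$ in localized homology, and dually for the $X$-side. Feeding these substitutions into $i_Y^!f_* = f'_*(i_X')^!$ and using that $f$ and $f_T$ are compatible with the fixed-point embeddings (the square with corners $X^T, X, (X')^T, X'$ versus $Y^T,\dots$ is cartesian because $X^T = X\cap (X')^T = X\times_{X'}(X')^T$, and $f$ restricts to $f_T$ on fixed loci) yields precisely $i_Y^! f_*(-) = e(N_{(Y')^T}Y')\cdot f_{T*}\big(e(N_{(X')^T}X')^{-1}\cdot i_X^!(-)\big)$.

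The step I expect to be the main obstacle is bookkeeping the Euler classes and making sure the refined pullbacks are taken with respect to the correct cartesian squares throughout — in particular, checking that $i_X^!$ on the left-hand side (which is the refined pullback of $X\hookrightarrow X'$ along the square over $i_Y$) matches the $i_X^!$ appearing on the right after the fixed-point locus substitutions, so that no extra normal bundle factor of $X$ in $X'$ is left over. This is where \cref{GysinProp:b,GysinProp:c} do the real work: commuting the fixed-point refined pullback past $f$ and reconciling it with $(i_X)^!$. Everything else is a formal manipulation in $\ona{Frac}(H_T)$-linear localized homology, legitimate because all the relevant fixed-point sets are assumed non-empty so the localization maps of \cref{LocThm} are isomorphisms, and the Euler classes $e(N_{(X')^T}X')$, $e(N_{(Y')^T}Y')$ are invertible in $\ona{Frac}(H_T)$.
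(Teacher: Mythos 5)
Your ``cleaner route'' in the second half is the right argument, and it matches the paper's hint that \cref{LocPullPush} follows from \cref{GysinProp:d}: in localized homology one writes $i_Y^! = e(N_{(Y')^T}Y')\cdot (i_{Y^T})_*^{-1}$ (which is legitimate because $(i_{Y^T})_*:H_*^T(Y^T)_\loc\to H_*^T(Y)_\loc$ is invertible by \cref{LocThm}, and $i_Y^!\circ(i_{Y^T})_*=e(N_{(Y')^T}Y')\cdot(-)$ by \cref{GysinProp:d} applied with $g=i_{Y'}:(Y')^T\hookrightarrow Y'$), similarly for $X$, and then uses the plain functoriality $f'_*\circ(i_{X^T})_* = (i_{Y^T})_*\circ f_{T*}$ to commute the two. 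This gives $i_Y^!f'_* = e(N_{(Y')^T}Y')\,(i_{Y^T})_*^{-1}f'_* = e(N_{(Y')^T}Y')\,f_{T*}(i_{X^T})_*^{-1}$, and substituting $(i_{X^T})_*^{-1}=e(N_{(X')^T}X')^{-1}\,i_X^!$ finishes the proof.

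However, your first paragraph cannot stand as written and should be dropped, not ``fed into'' the final step. The identity $i_Y^!f_* = f'_*\,(i_X')^!_{i_Y}$ cannot hold: if $i_Y^!$ is the map $H_*^T(Y)_\loc\to H_*^T(Y^T)_\loc$ of the lemma statement, the two sides have different sources and targets, and if you instead mean $i_Y^!$ to be the refined pullback along $Y\hookrightarrow Y'$, that map is not defined because $Y$ is an arbitrary closed subvariety so $i_Y:Y\hookrightarrow Y'$ need not be lci. The same problem appears in the next step, where you write $N_XX'$ — this normal bundle need not exist since $X$ is singular in general. If you want to run a two-step argument through \cref{GysinProp:b,GysinProp:b2}, you must route all refined pullbacks through the smooth fixed-point embeddings: apply \cref{GysinProp:b2} to the fiber diagram with bottom row $i_{Y'}:(Y')^T\hookrightarrow Y'$ to get $i_Y^!f'_* = f_{T*}\,(i_{X^T})^!_{i_{Y'}}$, then apply \cref{GysinProp:b} to the diagram $X^T\hookrightarrow(X')^T\hookrightarrow X'$ over $(Y')^T\hookrightarrow Y'$ (all four smooth, so both embeddings are regular) to replace $(i_{X^T})^!_{i_{Y'}}$ by $e(f_T^*N_{(Y')^T}Y')\,e(N_{(X')^T}X')^{-1}\,i_X^!$, and finally pull the first Euler class out of $f_{T*}$ by the projection formula. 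Both this corrected version and your ``cleaner route'' work; but do not leave the uncorrected $i_Y^!f_*=f'_*(i_X')^!_{i_Y}$ in the argument.
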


\begin{corr}
Let $X$ be a $G$-variety.
Assume that $H_*^G(X)$ is a torsion-free $H_G$-module.
Then the composition
\[
H_*^G(X)\subset H_*^T(X) \to H_*^T(X)_\loc \simeq H_*^T(X^T)_\loc
\]
is injective.
\end{corr}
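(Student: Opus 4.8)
The plan is to deduce injectivity of the composition from the torsion-freeness hypothesis by means of a Weyl-group norm argument. First I would invoke \cref{GvsTW} to identify $H^G_*(X)$ with the submodule $H^T_*(X)^W$ of $H^T_*(X)$; this makes the first map of the composition the honest inclusion of $W$-invariants, and it is $H_G$-equivariant with $H_G$ acting on $H^T_*(X)$ through its image $H_G = H_T^W \subseteq H_T$. (If $X^T = \emptyset$ the localization of $H^T_*(X)$ vanishes and the argument below forces $H^G_*(X) = 0$, so the statement is trivial; hence we may assume $X^T \neq \emptyset$, and the final identification is the isomorphism provided by \cref{LocThm}.) I would then record the elementary fact that $H_T$ is an integral domain, being a polynomial ring over $\bbk$, and that the kernel of the localization map $H^T_*(X) \to H^T_*(X) \otimes_{H_T} \Frac(H_T) = H^T_*(X)_\loc$ is precisely the $H_T$-torsion submodule. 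It therefore suffices to prove: if $a \in H^G_*(X)$ has $H_T$-torsion image in $H^T_*(X)$, then $a = 0$.

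To this end, suppose $f \cdot a = 0$ in $H^T_*(X)$ for some nonzero $f \in H_T$, and set $g := \prod_{w \in W} w(f)$. Then $g$ lies in $H_T^W = H_G$ and is nonzero, since $H_T$ is a domain and each $w(f) \neq 0$; and as $f$ divides $g$ in $H_T$, we already have $g \cdot a = 0$ in $H^T_*(X)$. Since the inclusion $H^G_*(X) \hookrightarrow H^T_*(X)$ is $H_G$-equivariant and injective, it follows that $g \cdot a = 0$ in $H^G_*(X)$. Finally, $H^G_*(X)$ being torsion-free over $H_G$ and $g$ being a nonzero element of $H_G$, we conclude $a = 0$.

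The only point needing care — and the main obstacle, such as it is — is the pair of facts supplied by \cref{GvsTW}: the injectivity of the restriction $H^G_*(X) \hookrightarrow H^T_*(X)$ and the equality $H_G = H_T^W$ (the latter being what allows the norm $g$ to be viewed as an element of $H_G$). Both require the coefficient field to have characteristic $0$, which is exactly the setting in which the composition displayed in the statement makes sense; in positive characteristic neither holds in general. Everything else in the argument is formal, using only that $H_T$ is a domain.
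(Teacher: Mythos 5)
Your proof is correct and uses the same argument as the paper's: identify $H^G_*(X)$ with $H^T_*(X)^W$ via \cref{GvsTW}, observe that the kernel of localization is exactly $H_T$-torsion, and kill a putative torsion relation by passing to the Weyl-group norm $\prod_{w\in W} w(f)$, which lands in $H_G$ and is nonzero because $H_T$ is a domain, then invoke torsion-freeness over $H_G$. The paper phrases it as ``the annihilator of $a$ in $H_T$ meets $H_G$ trivially, hence is zero,'' while you phrase it contrapositively, but the substance is identical; your remarks on the empty-fixed-locus case and on why the argument is tied to characteristic zero are harmless additions.
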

\begin{proof}
It's enough to check that for any $a\in H_*^G(X)$, its annihilator inside the $H_T$-module $H^T_*(X)$ is trivial.
Since $H_*^G(X)$ is torsion free, we have $\ona{Ann}(a)\cap H_G = 0$.
If $pa = 0$ for $p\in H_T$, then $\prod_{\sigma \in W} \sigma(p)$ lies in the above intersection.
Since $H_T$ is integral, we conclude that $p=0$.
\end{proof}

Note that for a $G$-variety $X$, its homology $H_*^T(X^T)$ acquires a $W$-action, induced diagonally from the actions on $X^T$ and $T$.
We therefore obtain an embedding $H_*^G(X)\subset \left( H_*^T(X^T)_\loc \right)^W$.

\subsection{Convolution algebras}\label{convalg}
Let $\pi: Y\to X$ be a proper morphism between smooth varieties.
For any $k\geq 1$, define
\[
Z^{(k)}=\underbrace{Y \times_X \ldots\times_X Y }_{k\text{ times}},
\]
and let $j_k:Z^{(k)}\hookrightarrow Y^{k+1}$ be the natural embedding.
We will write $Z=Z^{(1)}$, $Z^{(0)}=Y$, and $j=j_1$.

For any finite set of indices $I=\{ i_1<i_2<\ldots <i_k\}$, where $i_k\leq n$, consider the natural projections onto the coordinates contained in $I$:
\[
p_I=p_{i_1\ldots i_k}:Y^{n}\to Y^{k}.
\]
We will denote the corresponding restrictions $Z^{(n-1)}\to Z^{(k-1)}$ by the same letter.
In particular, for any $1\leq i\leq k+1$, we have a map $p_i:Z^{(k)}\to Y$.
Since $Y$ is smooth, the embedding $(p_i,\id_Z):Z^{(k)}\hookrightarrow Y\times Z^{(k)}$ is regular, so that the pullback along it provides us with an $H^{-*}(Y)$-module structure on $H_*(Z^{(k)})$.
We will denote this action by $\gamma\cdot_i x$, where $\gamma\in H^*(Y)$, $x\in H_*(Z^{(k)})$.

Consider the following diagram with cartesian square:
\[
\begin{tikzcd}
Z\times Z\ar[d,hook] & Z^{(2)}\ar[l,"{(p_{12}, p_{23})}"']\ar[r,"p_{13}"]\ar[d,hook]& Z\\
Y^{2}\times Y^{2} & Y^{3}\ar[l] &
\end{tikzcd}
\]
For each $\gamma\in H^*(Y)$, we have the following convolution product on $H_*(Z)$:
\begin{align*}
*_\gamma:H_*(Z)\otimes H_*(Z)\to H_{*-2\dim Y-\deg \gamma}(Z),\\
a\otimes b\mapsto (p_{13})_*\left(\gamma\cdot_2(p_{12}, p_{23})^!(a\otimes b) \right),
\end{align*}
where the refined pullback $(p_{12}, p_{23})^!$ is defined with respect to the regular embedding $Y^{(3)}\hookrightarrow Y^{(2)}\times Y^{(2)}$.

\begin{prop}\label{ConvProduct}
$\cal A_\gamma=\cal A_\gamma(\pi)=(H_*(Z),*_\gamma)$ is an associative algebra.
\end{prop}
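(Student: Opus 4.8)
The plan is to verify associativity by a standard diagram chase, organizing the various fiber products of copies of $Y$ over $X$ and repeatedly invoking the functoriality properties of refined pullbacks from \cref{GysinProp}. Concretely, for $a,b,c\in H_*(Z)$ one must show that
\[
(a *_\gamma b) *_\gamma c = a *_\gamma (b *_\gamma c)
\]
inside $H_*(Z)$. The natural arena for this computation is $Z^{(3)} = Y\times_X Y\times_X Y\times_X Y$, together with its embedding $j_3 : Z^{(3)}\hookrightarrow Y^4$ and the various coordinate projections $p_I$.

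First I would write both sides as a single convolution-type expression on $Z^{(3)}$. The left-hand side is obtained by first forming $(a *_\gamma b)$ via the square over $Y^3$ using coordinates $(1,2,3)$, then convolving with $c$; unwinding the definitions and pushing forward along $p_{14}: Z^{(3)}\to Z$, this becomes
\[
(p_{14})_*\Bigl( \gamma\cdot_2\,\gamma\cdot_3\, q^!(a\otimes b\otimes c) \Bigr),
\]
where $q$ is the appropriate refined pullback from $H_*(Z\times Z\times Z)$ (or rather $H_*(Z^{\times 3})$ embedded in $Y^2\times Y^2\times Y^2$) to $H_*(Z^{(3)})$, built from the projections $(p_{12},p_{23},p_{34})$. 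The right-hand side yields the same kind of expression with the roles of the middle coordinates permuted. The key technical inputs are: (i) \cref{GysinProp:a}, to break the refined pullback $q^!$ into a composition of two refined pullbacks, so that the nested structure of the two sides can be matched; (ii) \cref{GysinProp:b2}, the base-change compatibility of refined pullback with proper pushforward, which lets one commute the $(p_{13})_*$ (resp.\ $(p_{24})_*$) appearing in the inner convolution past the outer refined pullback; and (iii) \cref{GysinProp:c}, the commutativity of refined pullbacks coming from independent cartesian squares, which handles the fact that the two orders of bracketing correspond to pulling back along "horizontally" versus "vertically" arranged squares. The operators $\gamma\cdot_i$ are pullbacks along regular embeddings $Z^{(k)}\hookrightarrow Y\times Z^{(k)}$, and these commute with all the refined pullbacks and with each other by the same proposition, so the factor $\gamma\cdot_2\,\gamma\cdot_3$ can be freely moved to the appropriate slot on each side.

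I expect the main obstacle to be purely bookkeeping: correctly setting up the (rather large) fiber diagrams of products of $Y$ over $X$, keeping track of which coordinates each projection forgets, and ensuring that every square one uses is genuinely cartesian so that \cref{GysinProp} applies. In particular one must be careful that the embedding $Z^{(3)}\hookrightarrow Y^4$ factors in two ways compatible with the two bracketings, and that the relevant normal bundles match (there are no correction Euler classes here because all the embeddings in sight are pullbacks of the single embedding $Y\times_X Y\hookrightarrow Y\times Y$, so \cref{GysinProp:b} contributes trivial factors). Once the diagrams are drawn, associativity falls out by applying (i)--(iii) in sequence; the degree shift $*-2\dim Y - \deg\gamma$ is tracked automatically since each $p_{ij}$ and each refined pullback has a well-defined relative dimension. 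This is precisely the argument used for KLR algebras in \cite{VV_CBK2011,Rou_QHA22012,Mak_CBKA2015}, and it goes through verbatim in our setting once the equivariance is handled via \cite[Appendix B.9]{AHR_GSSC2015}.
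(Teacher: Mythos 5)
Your proposal is correct and takes essentially the same route as the paper: both rewrite $(a*_\gamma b)*_\gamma c$ and $a*_\gamma(b*_\gamma c)$ as a single pushforward from $Z^{(3)}$ of one big refined pullback, after which the two sides are visibly equal by symmetry. The only substantive difference is that the paper does the key base-change step (commuting the inner $(p_{13})_*$ past the outer refined pullback) by citing an external lemma (Lemma A.12(2) of the reference) applied to the explicit cartesian square with vertices $Z^{(3)}$, $Z^{(2)}$, $Y\times Z^{(2)}\times Z$, $Y\times Z\times Z$, whereas you reconstruct it from \cref{GysinProp:a}, \cref{GysinProp:b2}, and \cref{GysinProp:c}; this is fine and arguably more self-contained. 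One small notational slip: after restricting to $Z^{(3)}\subset Y^4$, the inner projection for $b*_\gamma c$ is $p_{124}$ (forget the third coordinate), not $p_{24}$, and the outer one is $p_{14}$ in both bracketings; you should also fold the two multiplications $\gamma\cdot_2$, $\gamma\cdot_3$ into the refined pullback against $Y\times Y\times Z^{\times 3}$ exactly as the paper does so that the final formula is manifestly symmetric in $(a,b,c)$.
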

\begin{proof}
We have the following diagram with cartesian square:
\[
\begin{tikzcd}
Z^{(3)}\ar[d,"{(p_3,p_{123},p_{34})}"']\ar[r,"p_{134}"]&Z^{(2)}\ar[d,,"{(p_2,p_{12},p_{23})}"]\ar[r,"p_{13}"] &Z \\
Y\times Z^{(2)}\times Z\ar[d,"{\id_Y\times (p_2,p_{12},p_{23})\times \id_Z}"']\ar[r,"{\id_Y\times p_{13}\times \id_Z}"]& Y\times Z\times Z&\\
Y\times Y\times Z\times Z\times Z& &
\end{tikzcd}
\]
Lemma A.12(2) in~\cite{Min_CHAH2020} shows that we can do base change along the square.
In particular,
\[
(a*_\gamma b)*_\gamma c=p_{14*}(p_2\times p_3\times p_{12}\times p_{23}\times p_{34})^!(\gamma\otimes \gamma\otimes a\otimes b\otimes c).
\]
Using a similar diagram, we can prove the same equality for $a*_\gamma (b*_\gamma c)$, so that the associativity follows.
\end{proof}

In the same fashion, we have a map
\begin{equation}\label{Aaction}
\begin{aligned}
H_*(&Z) \otimes H_*(Y)\to H_{*-\deg\gamma}(Y),\\
a & \otimes  x\mapsto (p_{1})_*((\gamma x)\cdot_2 a).
\end{aligned}
\end{equation}

The following statement is proved analogously to \cref{ConvProduct}.
\begin{prop}\label{ConvModule}
The map~\eqref{Aaction} defines an $\cal A_\gamma$-module structure on $H_*(Y)$.
\end{prop}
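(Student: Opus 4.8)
The proof will mirror that of \cref{ConvProduct} almost word for word. Writing $\cdot$ for the operation in~\eqref{Aaction}, the only thing to check is the associativity constraint $(a*_\gamma b)\cdot x=a\cdot(b\cdot x)$ for $a,b\in H_*(Z)$ and $x\in H_*(Y)$, as $\cal A_\gamma$ need not be unital. The strategy is to rewrite both sides as one and the same pushforward along $p_1\colon Z^{(2)}\to Y$ of a refined pullback of the external class $\gamma\otimes(\gamma x)\otimes a\otimes b$ on $Y\times Y\times Z\times Z$; here the refined pullback is taken along the partial-diagonal embedding $Z^{(2)}\hookrightarrow Y\times Y\times Z\times Z$, $z\mapsto(p_2(z),p_3(z),p_{12}(z),p_{23}(z))$, which is regular because $Y$ is smooth.

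For the left-hand side, unfolding~\eqref{Aaction} and the definition of $*_\gamma$ gives
\[
(a*_\gamma b)\cdot x=p_{1*}\Big((\gamma x)\cdot_2\,p_{13*}\big(\gamma\cdot_2\,(p_{12},p_{23})^!(a\otimes b)\big)\Big),
\]
where the outer $\cdot_2$ is the $H^*(Y)$-action on $H_*(Z)=H_*(Z^{(1)})$ through its second coordinate. Since $p_2\circ p_{13}=p_3$ and $p_1\circ p_{13}=p_1$ as maps $Z^{(2)}\to Y$, base change along the cartesian square
\[
\begin{tikzcd}
Z^{(2)}\ar[r,"p_{13}"]\ar[d,"{(p_3,\id)}"'] & Z\ar[d,"{(p_2,\id)}"]\\
Y\times Z^{(2)}\ar[r,"{\id\times p_{13}}"] & Y\times Z
\end{tikzcd}
\]
(legitimate since both vertical maps are regular embeddings) slides $(\gamma x)\cdot_2$ past $p_{13*}$ and turns it into $(\gamma x)\cdot_3$, after which $p_{1*}\circ p_{13*}=p_{1*}$ leaves
\[
(a*_\gamma b)\cdot x=p_{1*}\big((\gamma x)\cdot_3\,\gamma\cdot_2\,(p_{12},p_{23})^!(a\otimes b)\big).
\]
Now each of $\gamma\cdot_2$ and $(\gamma x)\cdot_3$ is itself a refined pullback along a partial-diagonal embedding $Z^{(2)}\hookrightarrow Y\times Z^{(2)}$, so functoriality of refined pullbacks together with the Künneth formula (exactly as in \cref{ConvProduct}, cf. Lemma A.12(2) in~\cite{Min_CHAH2020}) assembles $\gamma\cdot_2$, $(\gamma x)\cdot_3$ and $(p_{12},p_{23})^!$ into the single pullback $(p_2\times p_3\times p_{12}\times p_{23})^!$, producing the common form announced above.

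For the right-hand side, one first writes $b\cdot x=p_{1*}\big((\gamma x)\cdot_2 b\big)$ and uses the projection formula to rewrite $\gamma\cdot(b\cdot x)=p_{1*}\big(\gamma\cdot_1((\gamma x)\cdot_2 b)\big)$. Substituting into $a\cdot(b\cdot x)=p_{1*}\big((\gamma\cdot(b\cdot x))\cdot_2 a\big)$ and base-changing the inner pushforward past the pullback $\cdot_2$ — the relevant fibre product, obtained by gluing the first coordinate of the copy of $Z$ carrying $b$ to the second coordinate of the copy of $Z$ carrying $a$, is again $Z^{(2)}$, with $a$ pulled back along $p_{12}$, $b$ along $p_{23}$, $\gamma$ along $p_2$ and $\gamma x$ along $p_3$ — yields, after the same Künneth-and-functoriality assembly, precisely $p_{1*}\big((p_2\times p_3\times p_{12}\times p_{23})^!(\gamma\otimes(\gamma x)\otimes a\otimes b)\big)$. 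Comparing with the left-hand side completes the proof.

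There is no real obstacle beyond the one already present in \cref{ConvProduct}: the computation is pure bookkeeping with base change, and the two points needing attention are (i) verifying that every square invoked for base change is cartesian — automatic here, since in each one of the two maps is a coordinate projection of some $Z^{(k)}$ — and (ii) matching up, on the nose, along which coordinate of $Z^{(2)}$ each of $\gamma$, $\gamma x$, $a$, $b$ gets pulled back in the two expressions, so that the displayed classes literally coincide. Smoothness of $Y$ is what makes all the partial-diagonal embeddings regular and hence all the refined pullbacks available.
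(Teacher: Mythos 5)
Your proof is correct and takes essentially the same route as the paper, which simply states that \cref{ConvModule} is proved analogously to \cref{ConvProduct}; you have spelled out that analogy explicitly (tracking $p_2\circ p_{13}=p_3$, $p_1\circ p_{13}=p_1$, and the base changes that reassemble both $(a*_\gamma b)\cdot x$ and $a\cdot(b\cdot x)$ into a single pushforward of a refined pullback on $Z^{(2)}$). The bookkeeping and the invocation of Lemma A.12(2) in~\cite{Min_CHAH2020} for the K\"unneth-and-functoriality assembly match the paper's argument for associativity of $*_\gamma$.
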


\begin{nota}
In what follows, we will call $\gamma$ the \textit{twist}, and drop the subscript if $\gamma=1$.
\end{nota}

\begin{exe}\label{DiagEmb}
Consider the identity map $Y\to Y$.
The associated convolution algebra is simply $H_*(Y)$ together with intersection product.
Moreover, the closed embedding $Y\simeq Y\times_Y Y\hookrightarrow Y\times_X Y = Z$ defines a homomorphism of algebras $H_*(Y)\to \cal A$, and restriction of the action in \cref{ConvModule} to $H_*(Y)$ coincides with the left action of $H_*(Y)$ on itself.
\end{exe}

\begin{exe}\label{EmbMatrix}
Suppose $Y$ is proper, and consider the map $Y\to pt$.
The associated convolution algebra is the matrix algebra $\End(H_*(Y))$.
Moreover, the closed embedding $Z = Y\times_X Y\hookrightarrow Y\times Y$ defines a homomorphism of algebras $\cal A\to \End(H_*(Y))$, which coincides with the map induced by~\eqref{Aaction}.
\end{exe} 

\subsection{Localization of convolution algebras}
Suppose now that $T$ is an algebraic torus, $X$ and $Y$ are $T$-varieties, and $\pi$ is $T$-equivariant.
Note that $Z^T=Y^T\times_{X^T} Y^T$.
Let us further assume that $\pi_{Y^T}:Y^T\to X^T$ is a submersion, so that $Z^{(k)T}$ is smooth for any $k\geq 1$.
Therefore, \cref{ConvProduct} produces algebra structures on $H_*^T(Z)$ and $H_*^T(Z^T)_\loc$.
Let us call these algebras $\cal A_\bullet$ and $\cal A_\bullet^T$ respectively, where subscripts stand for the twist.

Let $e(Y)\in H^*_T(Y^T)$ denote the equivariant Euler class of the normal bundle $N_{Y^T}Y$.
\begin{prop}\label{ConvLocAlg}
The localization map $i_Z^!$ induces an algebra homomorphism $\cal A\to \cal A^T_{e(Y)^{-1}}$.
\end{prop}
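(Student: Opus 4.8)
The plan is to reduce the claim to the already-established compatibility between the refined pullback and the building blocks of the convolution product, namely proper pushforward (\cref{GysinProp:b2}), composition of refined pullbacks (\cref{GysinProp:a}), and commutativity of refined pullbacks with each other (\cref{GysinProp:c}). Concretely, I would first check that $i_Z^!$ is the correct algebra map on the nose: applying the localization isomorphism of \cref{LocThm} to the smooth $T$-variety $Y^2$ (into which $Z$ is closed-embedded via $j$), the refined pullback $(i_Z)^!_{i_{Y^2}}\colon H^T_*(Z)_\loc \to H^T_*(Z^T)_\loc$ is an isomorphism, and this is the map inducing $\cal A\to \cal A^T$ at the level of underlying modules. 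So the content of the proposition is purely the multiplicativity statement
\[
i_Z^!(a *_1 b) = i_Z^!(a) *_{e(Y)^{-1}} i_Z^!(b).
\]

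To prove this, I would write the defining diagram for $*_\gamma$ in the $T$-fixed-point world and compare it with the ambient one via the vertical inclusions $Z^{(k),T}\hookrightarrow Z^{(k)}$. The product $a*_1 b = (p_{13})_*\bigl((p_{12},p_{23})^!(a\otimes b)\bigr)$ is a composite of a refined pullback along the regular embedding $Y^3\hookrightarrow Y^2\times Y^2$ and a proper pushforward along $p_{13}$. I would push $i_{Z^{(2)}}^!$ through each stage: first use \cref{GysinProp:b2} (base change of refined pullback past a proper map) to move the fixed-point inclusion past $(p_{13})_*$, obtaining $(p_{13})_{T*}$ on fixed points; then use \cref{GysinProp:c} together with \cref{GysinProp:a} to commute $i_{Z^{(2)}}^!$ past $(p_{12},p_{23})^!$ and identify the result with the analogous refined pullback for $Y^{3,T}\hookrightarrow (Y^2)^T\times(Y^2)^T$, applied to $i_Z^!(a)\otimes i_Z^!(b)$. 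The only discrepancy between the naive composite and the intended product $*_{e(Y)^{-1}}$ is an Euler-class correction, which is exactly what \cref{LocPullPush} (equivalently \cref{GysinProp:b}, measuring the defect between $(g'')^!_g$ and $(g'')^!_{g'}$ by $e(h'^*N_XY/N_{X'}Y')$) records: commuting the fixed-point restriction past the various structure maps of the fiber product $Z^{(2)}$ produces Euler classes of normal bundles of $Y^{k,T}$ in $Y^k$, and bookkeeping these across the three copies of $Y$ indexed by $1,2,3$ yields precisely the twist $\gamma = e(Y)^{-1}$ inserted at the middle strand (the factors on strands $1$ and $3$ cancel against the Euler classes produced by restricting the source and target of $(p_{13})_*$, by the base-change formula of \cref{LocPullPush}). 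I would also invoke \cref{GvsTW}/\cref{rmq:locthmcoef} as needed to ensure the relevant Euler classes are invertible and the localization theorem applies, using the standing hypothesis that $\pi_{Y^T}$ is a submersion so that all $Z^{(k),T}$ are smooth.

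The main obstacle I anticipate is the careful Euler-class accounting in the middle step: one must verify that exactly one copy of $e(Y)$ survives, attached to the correct ($i=2$) strand, rather than zero or two, and that it enters as $e(Y)^{-1}$ with the right sign/normalization relative to the $\cdot_i$-action convention. This is bookkeeping rather than a conceptual difficulty, but it is where the proof can go wrong, so I would set it up by tracking normal bundles strand-by-strand: $N_{Z^{(2),T}}Z^{(2)}$ decomposes (after restriction) in a way compatible with the three projections $p_1,p_2,p_3$, and the pushforward $(p_{13})_*$ versus $(p_{13})_{T*}$ contributes $e$ of the normal bundles on strands $1,3$ by \cref{LocPullPush}, while the refined pullback defining the product contributes, via \cref{GysinProp:b}, the remaining normal bundle on strand $2$; a clean way to organize this is to apply \cref{LocPullPush} directly to the projective morphism $p_{13}\colon Z^{(2)}\to Z$ with the closed subvariety structure coming from $Z\hookrightarrow Y^2$, which packages the source and target Euler classes into one identity and leaves only the strand-$2$ class as the twist.
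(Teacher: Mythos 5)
Your proposal follows essentially the same route as the paper's proof: set up the localization diagram for the convolution square, commute the fixed-point restriction past the refined pullback $(p_{12},p_{23})^!$ via \cref{GysinProp:c}, commute it past the proper pushforward $(p_{13})_*$ via \cref{LocPullPush} (which packages \cref{GysinProp:b2} together with \cref{GysinProp:d}), and then apply \cref{GysinProp:b} to collect the Euler-class defect. The bookkeeping you anticipated also works out as you guessed: the paper finds $e(Y)^{-2}\cdot_2$ from the pushforward step and $e(Y)\cdot_2$ from the pullback step, combining to the single factor $e(Y)^{-1}$ on the middle strand, with the outer-strand Euler classes cancelling exactly as you predicted.
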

\begin{proof}
Consider the localization diagram:

\[
\begin{tikzcd}
Z\times Z & Z^{(2)}\ar[l,"p_{12}\times p_{23}"']\ar[r,"p_{13}"] & Z\\
Z^T\times Z^T\ar[u,"i_Z\times i_Z"] & Z^{(2)T}\ar[l,"p_T"']\ar[r,"q_T"]\ar[u,"i_{Z^{(2)}}"] & Z^T\ar[u,"i_Z"]
\end{tikzcd}
\]
Note that the left square is cartesian, while the one on the right is only commutative.
By \cref{GysinProp:c}, we have 
\[
(i_{Z^{(2)}})^!_{i_{Y^4}} \circ (p_{12}\times p_{23})^!=(p_T)^!_{p_{12}\times p_{23}}\circ (i_{Z\times Z})^!_{i_{Y^4}}.
\]
On the other hand, by \cref{LocPullPush} we have 
\begin{align*}
i_Z^! p_{13*}(-) & = q_{T*}(e(N_{Y^{4T}}Y^4)^{-1}f_T^*(e(N_{Y^{2T}}Y^2))\cdot (i_{Z^{(2)}})^!_{i_{Y^4}}(-))\\
& = q_{T*}\left(e(Y)^{-2}\cdot_2 (i_{Z^{(2)}})^!_{i_{Y^4}}(-)\right)
\end{align*}
Finally, \cref{GysinProp:b} shows that
\[
(p_T)^!_{p_{12}\times p_{23}}=e(Y)\cdot_2 p_T^*.
\]
Putting everything together, we get
\begin{align*}
i_Z^!\circ p_{13*}\circ (p_{12}\circ p_{23})^!(-) & =q_{T*}\left(e(Y)^{-2}\cdot_2 (i_{Z^{(2)}})^!_{i_{Y^4}}\circ (p_{12}\circ p_{23})^!(-)\right)\\
& = q_{T*}\left(e(Y)^{-2}\cdot_2 (p_T)^!_{p_{12}\times p_{23}}\circ i_{Z^2}^!(-)\right)\\
& = q_{T*}\left(e(Y)^{-1}\cdot_2 p_T^*\circ i_{Z^2}^!(-)\right),
\end{align*}
which proves that $i_Z^!$ commutes with multiplication.
\end{proof}

\begin{prop}\label{ConvLocMod}
We have a commutative square
\[
\begin{tikzcd}
	\cal A\otimes H_*(Y)\ar[d,"i_Z^!\times i_Y^*"]\ar[r] & H_*(Y)\ar[d,"i_Y^*"]\\
	\cal A^T_{e(Y)^{-1}}\otimes H_*^T(Y^T)_\loc \ar[r] & H_*^T(Y^T)_\loc
\end{tikzcd}
\]
where the horizontal maps are defined by~\eqref{Aaction}.
\end{prop}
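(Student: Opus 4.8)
The plan is to follow the proof of \cref{ConvLocAlg} essentially verbatim, with the second copy of $Z$ in the convolution diagram replaced by $Y$; I will only point out the modifications. Recall from \eqref{Aaction} (taken with twist $\gamma=1$) that the $\cal A$-action on $H_*(Y)$ sends $a\otimes x$ to $p_{1*}\bigl(\iota^!(a\boxtimes x)\bigr)$, where $\iota\colon Z\hookrightarrow Z\times Y$ is the graph of $p_2$ and $\iota^!$ is the refined pullback attached to the regular embedding $\Delta\colon Y\hookrightarrow Y^2$ and the projection $Z\times Y\to Y^2$ onto the last two of the three factors of the ambient $Y^3\supset Z\times Y$. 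Accordingly, I would write down the localization diagram
\[
\begin{tikzcd}
Z\times Y & Z\ar[l,"\iota"']\ar[r,"p_1"] & Y\\
Z^T\times Y^T\ar[u,"i_Z\times i_Y"] & Z^T\ar[l,"\iota_T"']\ar[r,"p_{1,T}"]\ar[u,"i_Z"] & Y^T\ar[u,"i_Y"]
\end{tikzcd}
\]
whose left-hand square is cartesian --- here one uses the hypothesis $Z^T=Y^T\times_{X^T}Y^T$, which guarantees that a point of $Z$ mapping into $Z^T\times Y^T$ already lies in $Z^T$ --- while the right-hand square is merely commutative, exactly as in \cref{ConvLocAlg}. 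Note also that under the embedding $Z\times Y\hookrightarrow Y^3$ the localization map $i_{Z\times Y}^!$ coincides with $i_Z^!\times i_Y^*$, the left vertical arrow of the square in the statement.

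One then runs through the same three reductions as in \cref{ConvLocAlg}. First, \cref{GysinProp:c}, applied to the fiber diagram obtained by stacking the localization square of $Z\times Y$ beneath the cartesian square defining $\iota$, gives $i_Z^!\circ\iota^!=(\iota_T)^!_\iota\circ(i_Z^!\times i_Y^*)$, pushing the localization maps past the action pullback. Second, \cref{LocPullPush} applied to the projective morphism $p_1\colon Z\to Y$ rewrites $i_Y^*\circ p_{1*}$ through $p_{1,T*}$, the localization $i_Z^!$ of the correspondence, and the Euler classes of the normal bundles of the fixed loci inside the ambient smooth varieties ($Y^3$ for $Z$, $Y$ for the target); exactly as in \cref{ConvLocAlg} these assemble into the single factor $e(Y)^{-2}\cdot_2$. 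Third, \cref{GysinProp:b} supplies the identity $(\iota_T)^!_\iota=e(Y)\cdot_2\,\iota_T^*$, the discrepancy being one copy of $N_{Y^T}Y$ in the ``merge'' direction; after multiplying, the net twist is $e(Y)^{-1}\cdot_2$. Collecting the pieces and recognising the outcome as the \eqref{Aaction}-action of $i_Z^!(a)$ on $i_Y^*(x)$ in the fixed-point world with twist $e(Y)^{-1}$ --- this is the bottom horizontal arrow of the square, a genuine module structure by \cref{ConvModule}, with $i_Z^!(a)$ lying in $\cal A^T_{e(Y)^{-1}}$ by \cref{ConvLocAlg} --- one obtains the asserted commutativity.

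As in \cref{ConvLocAlg}, the only point requiring care is the bookkeeping of Euler classes: one must verify that the inner Euler contribution of the ambient $Y^3$ --- coming from its three coordinate factors, namely the ``output'' factor, on which $p_1$ restricts to an isomorphism, and the two copies of the ``merge'' factor --- cancels against the target Euler class pulled back along $p_{1,T}$ so as to leave precisely $e(Y)^{-2}\cdot_2$, so that after the \cref{GysinProp:b} correction the resulting twist is exactly $e(Y)^{-1}$ and not some other power. Everything else --- cartesianness of the left square, smoothness of $Z^T$ and $Y^T$ (needed so that all refined pullbacks involved are defined, which is where the standing assumption that $\pi_{Y^T}\colon Y^T\to X^T$ is a submersion enters), and the identity $i_{Z\times Y}^!=i_Z^!\times i_Y^*$ --- is a formal consequence of \cref{GysinProp} and \cref{LocPullPush}.
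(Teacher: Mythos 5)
Your proof is correct and follows essentially the same route as the paper's (the paper's own proof is a two-line computation that applies \cref{LocPullPush}, \cref{GysinProp:c}, and \cref{GysinProp:b} ``analogously to \cref{ConvLocAlg}''). You carry out the same three reductions, just with the Euler class bookkeeping split as $e(Y)^{-2}\cdot_2$ followed by a $e(Y)\cdot_2$ correction, whereas the paper records the net factor $e(Y)^{-1}$ directly.
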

\begin{proof}
Analogously to \cref{ConvLocAlg}, we have
\begin{align*}
i_Y^*(p_1)_*(p_2\times \id)^* & = (p_{1T})_*\left( e(Y)^{-1}\cdot i^*_Z\circ(\id\times p_2)^* \right)\\
& = (p_{1T})_*\left( e(Y)^{-1}\cdot (\id\times p_{2T})^*\cdot (i_Z\times i_Y)^*\right),
\end{align*}
which proves the statement.
\end{proof}

\begin{rmq}
Suppose $Y^T$ is proper, and write $\gamma = e(Y)$.
Consider the following commutative square:
\[
\begin{tikzcd}
Z\ar[r,"j"] & Y\times Y\\
Z^T\ar[r,"j_T"]\ar[u,"i_Z"] & Y^T\times Y^T\ar[u,"i_{Y^2}"]
\end{tikzcd}
\]
Similarly to \cref{ConvLocAlg}, the composition $i_{Y^2}^*\circ j_*:H_*^T(Z)\to H_*^T((Y^T)^2)_{loc}$ defines a homomorphism
\begin{equation}\label{PolyRepFactor}
\cal A\to \End_{\deg \gamma}H_*(Y^T)_\loc,
\end{equation}
where the product on the right is given by $(a,b) = \gamma^{-1}\cdot (a\circ b)$.
\cref{LocPullPush} applied to the square above shows that~\eqref{PolyRepFactor} factors as
\[
\cal A\xra{i_Z^!} \cal A^T_{\gamma^{-1}}\to \End_{\deg \gamma}H_*(Y^T)_\loc,
\]
where the second map is defined as in \cref{EmbMatrix}.
\end{rmq}

\subsection{Convolution from finite group action}\label{ConvEx}
Let us conclude this section with an easy example, which will become useful later.
Namely, let $\Gamma$ be a finite group acting on a smooth variety $X$, and set $Y=\Gamma\times X$, with $\pi: Y\to X$ being the projection.
We clearly have $Z = \Gamma^2\times X$, and
\begin{equation}\label{ExAlg}
H_*(Y)\simeq \bfk[\Gamma]\otimes H_*(X),\quad H_*(Z)\simeq \bfk[\Gamma]^{\otimes 2}\otimes H_*(X).
\end{equation}

Fix a class $\gamma \in H^*(X)$, and let
\[
\gamma^\circ = \sum_{g\in\Gamma} g\otimes \gamma^g \in H_*(Y),
\]
where $x^g$ denotes the image of $x\in H_*(X)$ under the action of $g\in \Gamma$.
Consider the algebra $\cal A_{\gamma^\circ}$.
As a vector space, it is isomorphic to $H_*(Z)$, while the product is given by
\begin{equation}\label{ExProd}
(g_1\otimes g_2\otimes x)* (h_1\otimes h_2\otimes y)=\delta_{g_2,h_1}(g_1\otimes h_2\otimes xy\gamma^{g_2}),
\end{equation}

Note that if we equip $Y=\Gamma\times X$ with diagonal $\Gamma$-action, $\pi$ becomes $\Gamma$-equivariant.
Moreover, $\gamma^\circ\in H_*(Y)$ is a $\Gamma$-invariant class.
Therefore, $\Gamma$ acts on $\cal A_{\gamma^\circ}$ via algebra automorphisms; under the isomorphism~\eqref{ExAlg}, it gets identified with the diagonal action on $\bfk[\Gamma]^{\otimes 2}\otimes H_*(X)$.
Consider the $\Gamma$-invariant subalgebra $\cal A^\Gamma_{\gamma^\circ}\subset \cal A_{\gamma^\circ}$.
Its basis is given by elements
\[
\xi_{(g,x)}=\sum_{h\in \Gamma}h\otimes hg\otimes x^h.
\]
Using the formula~\eqref{ExProd}, we get
\begin{align}
\label{eq:mult-xi}
\xi_{(g,x)}*\xi_{(h,y)} & =\sum_{f_1,f_2\in \Gamma}\delta_{f_1g,f_2}(f_1\otimes f_2h\otimes x^{f_1}(y\gamma)^{f_2})= \sum_{f\in \Gamma} f\otimes fgh\otimes x^{f}(y\gamma)^{fg}\\
&=\xi_{(gh,x(y\gamma)^g)}.\nonumber
\end{align}
In particular, assume that $\gamma$ is invertible and of even degree.
Denote $\widetilde{\xi}_{(g,x)}=\xi_{(g,x\gamma^{-1})}$.
Then $\widetilde{\xi}_{(g,x)}*\widetilde{\xi}_{(h,y)}=\widetilde{\xi}_{(gh,xy^g)}$, so that $\cal A^\Gamma_{\pi^*\gamma}$ is isomorphic to the semi-direct tensor product $H_*(X)\rtimes\bbC[\Gamma]$.

In the same way, $H_*(Y)^\Gamma$ is an $\cal A^\Gamma_{\gamma^\circ}$-module.
We have
\[
H_*(X)=H_*(Y)^\Gamma\hookrightarrow H_*(Y),\quad x\mapsto x^\circ.
\]
Under this identification, the action is given by
\begin{equation}\label{ExAct}
\xi_{(g,x)}.y=x(y\gamma)^g,
\end{equation}
or equivalently $\widetilde{\xi}_{(g,x)}.y = xy^g\frac{\gamma^g}{\gamma}$.
Setting $\widetilde{\psi}_y=y\gamma^{-1}$, we get $\widetilde{\xi}_{(g,x)}.\widetilde{\psi}_y=\widetilde{\psi}_{xy^g}$.

\begin{rmq}
The action map $a:Y=\Gamma\times X\to X$ is $\Gamma$-equivariant, where $\Gamma$ acts by multiplication on the first coordinate of $Y$.
It is easy to check that the map $Y\to Y$, $(g,x)\mapsto (g,g.x)$ induces an isomorphism of algebras $\cal A_{\pi^*\gamma}^\Gamma(a)\simeq \cal A^\Gamma_{\gamma^\circ}(\pi)$.
\end{rmq}
\section{Torsion sheaves on curves}\label{sec:TorSh}
\subsection{Flag varieties}\label{subs:Flag}
In this subsection, we recall some standard facts about flag varieties.

For each $n$, consider $\bbC^n$ together with its standard basis $e_1,\ldots, e_n$, and let $V_k=\bigoplus_{i=1}^k \bbC e_i$ for any $1\leq k\leq n$.
Let $G_n=GL(\bbC^n)$, $\fk{S}_n$ the symmetric group on $n$ symbols, and let $T_n\subset B_n\subset G_n$ be the maximal torus and the Borel subgroup, associated to the basis above.
We call a tuple of positive integers $\lambda=(\lambda_1,\ldots \lambda_k)$ a \textit{composition} of $n$, if $\sum_i \lambda_i = n$ (the length $k$ is not fixed), and denote by $\Comp(n)$ the set of thereof. 
We also set $\tilde{\lambda}_i = \lambda_1+\ldots+\lambda_i$.
We introduce the following index subsets of $\bbZ^2$:
\begin{align*}
N_\lambda & = \bigcup_{0\leq i< j\leq r-1} [\tilde{\lambda}_i+1,\tilde{\lambda}_{i+1}]\times [\tilde{\lambda}_j+1,\tilde{\lambda}_{j+1}],\\
I_\lambda & = N_\lambda\cup\left(\bigcup_i [\tilde{\lambda}_i+1,\tilde{\lambda}_{i+1}]^2\right)\setminus \left\{ (i,i):1\leq i\leq n \right\}.
\end{align*}
For each $\lambda\in \Comp(n)$, denote 
\[
\fk{S}_\lambda=\fk{S}_{\lambda_1}\times \ldots\times \fk{S}_{\lambda_k}\subset \fk{S}_n,\qquad G_\lambda=G_{\lambda_1}\times \ldots\times G_{\lambda_k}\subset G_n,\qquad P_\lambda=G_\lambda B_n.
\]
The partial flag variety $G_n/P_\lambda$ will be denoted by $\mathscr{F}_\lambda$; we will also write $\mathscr{F}_n:=\mathscr{F}_{1^n}$ for the complete flag variety.

One can identify $\fk{S}_n/\fk{S}_\lambda$ with the set of minimal length coset representatives, which we denote by $\fk{S}^\lambda$:
\[
\fk{S}^\lambda = \left\{ \sigma\in \fk{S}_n \mid \sigma(i)<\sigma(j)\text{ if } \tilde\lambda_k < i<j\leq \tilde\lambda_{k+1} \text{ for some $k$} \right\}.
\]
We have analogous identifications for right and double cosets:
\begin{align*}
\fk{S}_\lambda\backslash \fk{S}_n & \simeq \tensor[^\lambda]{\fk{S}}{} :=(\fk{S}^\lambda)^{-1} = \left\{ \sigma^{-1}\mid \sigma \in \fk{S}^\lambda \right\},\\
\fk{S}_\mu\backslash \fk{S}_n/\fk{S}_\lambda & \simeq \doubleS{\mu}{\lambda} := \tensor[^\mu]{\fk{S}}{} \cap \fk{S}^\lambda.
\end{align*}

For any $w\in\fk{S}_n$, let $F_{w}\in\mathscr{F}_\lambda$ be the flag $w.V_{\lambda_1}\subset w.V_{\lambda_1+\lambda_2}\subset\ldots\subset \bbC^n$.
Note that $F_w$ depends only on $w\frakS_\lambda$.
The flags $F_{w}$ are precisely the $T_n$-fixed points in $\mathscr F_\lambda$.
Moreover, they are in one-to-one correspondence with left $B_n$-orbits in $\mathscr F_\lambda$:
\begin{equation*}
	\mathscr F_\lambda = \bigsqcup_{w\in \fk{S}^\lambda} B_n.F_w.
\end{equation*}
Let us denote $O_{w}^\lambda=B_n.F_w\subset \mathscr F_\lambda$; we will omit the superscript when the choice of parabolic subgroup is clear.
Each of these strata is an affine space.

The Bruhat order on $\fk{S}_n$ induces a partial order on $\fk{S}^\lambda$.
It coincides with the orbit closure order on $\mathscr{F}_\lambda$:
\[
\forall w_1,w_2\in \frakS^\lambda,\qquad [w_1]\leq [w_2] \Leftrightarrow O_{w_1}\subseteq \overline{O_{w_2}}.
\]

For any two composition $\lambda,\mu\in\Comp(n)$, the orbits in $\mathscr F_\mu\times \mathscr F_\lambda$ with respect to the diagonal action of $G_n$ are parametrized by double cosets.
Moreover, we have two stratifications, the first one is compatible with the $G_n$-action, the second one is a stratification by affine spaces:
\begin{equation}\label{SchubertCells}
\mathscr F_\mu\times \mathscr F_\lambda
 = \bigsqcup_{w\in \doubleS{\mu}{\lambda}} \Omega_w
 = \bigsqcup_{w\in \doubleS{\mu}{\lambda}} \left(\bigsqcup_{\substack{(w_1,w_2)\in \fk{S}^\lambda\times \fk{S}^\mu\\ w_1^{-1}w_2\in \fk{S}_\lambda w \fk{S}_\mu}} O_{w_1,w_2}\right).
\end{equation}
where $\Omega_w = G_n.(F_e,F_{w})$, and $O_{w_1,w_2} = O^{\mu\lambda}_{w_1,w_2} = \Omega_w\cap (O_{w_1}\times \mathscr{F}_\lambda)$.
Note that each strata $O_{w_1,w_2}$ contains exactly one $T_n$-fixed point $(F_{w_1},F_{w_2})\in O_{w_1,w_2}$.

For later use, we denote $P_{\mu\lambda}^w := P_\mu \cap w.P_\lambda = \ona{Stab}_{G_n}(F_e,F_{w})$.
It is clear that $P_{\mu\lambda}^w$ retracts to the reductive group $G_{\mu\lambda}^w:=G_\mu\cap w.G_\lambda$, whose Weyl group is given by $\frakS_\mu \cap w.\frakS_\lambda$.

We will write $H^*_{T_n}(pt)=\bfk[x_1,\ldots, x_n]$, where $x_i$ is the first Chern class of the line bundle $\bbC e_i$, and $\deg x_i=2$.
We will use $x_i$ and $\bbC e_i$ interchangeably.
In accordance with \cref{GvsTW}, we have
\[
H_{G_n}^*(pt) = \bfk[x_1,\ldots, x_n]^{\fk{S}_n},\qquad H_{G_n}^*(\mathscr{F}_\lambda) = H_{P_\lambda}^*(pt) = \bfk[x_1,\ldots, x_n]^{\fk{S}_\lambda}.
\]
The Euler classes of tangent spaces at $T_n$-fixed points are expressed by the following formulae:
\begin{gather*}
e(T_{F_w}\mathscr{F}_\gamma) = \prod_{(i,j)\in wN_\lambda} (x_j-x_i),\qquad\quad 
e\left( T_{(F_{w_1},F_{w_2})} G_n.(F_{w_1},F_{w_2})\right) = \prod_{(i,j)\in w_1N_\mu\cup w_2N_\lambda}(x_j-x_i).
\end{gather*}

\subsection{Torsion sheaves on a smooth curve}
Let $C$ be a smooth projective curve over $\bbC$, and denote by $\Ocal=\Ocal_{C}$ its structure sheaf.
Let $\T = \ona{Tor}C$ be the moduli stack of torsion sheaves on $C$.
It has a decomposition into connected components
\[
\T = \bigsqcup_{n\in\bbZ_{\geqslant 0}} \T_n,
\]
where $\T_n$ stands for the moduli stack of torsion sheaves of degree $n$.

The stack $\T_n$ possesses an explicit presentation as a quotient.
Namely, let ${\cal Quot}_{n}(\bbC^n\otimes \Ocal)$ be the $\cal Quot$-scheme for constant Hilbert polynomial $P_{\cal E}=n$.
Recall (see~\cite{potierlectures} for details) that its $\bbC$-points are given by quotients $\phi:\bbC^n\otimes \Ocal\twoheadrightarrow \cal E$, where $\cal E$ is a torsion sheaf of degree $n$.
This $\cal Quot$-scheme is smooth, and its tangent space at $\phi$ is
\begin{equation}\label{tanQuot}
	T_\phi {\cal Quot}_{n}(\bbC^n\otimes \Ocal) \simeq \Hom(\Ker\phi,\cal E).
\end{equation}
Moreover, $\cal Quot$-scheme has a natural $G_n$-action by automorphisms of $\bbC^n\otimes \Ocal$.
Define $Q_n$ as its open subscheme, consisting of quotients which induce isomorphism on global sections:
\[
Q_n= \left\{ \phi:\bbC^n\otimes  \Ocal\twoheadrightarrow \cal E \mid H^0(\phi)\text{ is an isomorphism} \right\} \subset {\cal Quot}_{n}(\bbC^n\otimes \Ocal).
\]
Note that $Q_n$ inherits $G_n$-action.

\begin{lm}[{\cite{potierlectures}}]\label{quot}
We have an isomorphism of stacks $[Q_n/G_n]\simeq \T_n$.
\end{lm}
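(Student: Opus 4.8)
The plan is to exhibit a morphism $Q_n \to \T_n$ and show it is a $G_n$-torsor, which immediately yields the claimed identification of stacks $[Q_n/G_n]\simeq\T_n$. First I would describe the universal object: the $\cal Quot$-scheme carries a universal quotient $\bbC^n\otimes\Ocal_{Q_n\times C}\twoheadrightarrow \cal E_{\mathrm{univ}}$, whose restriction to $Q_n\times C$ gives a flat family of torsion sheaves of degree $n$, hence a morphism $q:Q_n\to\T_n$ by the modular property of $\T_n$. This map is manifestly $G_n$-equivariant for the trivial action on $\T_n$, so it factors through $[Q_n/G_n]\to\T_n$. The content is that this induced map is an isomorphism of stacks, equivalently that $q$ is a $G_n$-torsor over $\T_n$.

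The key step is the local triviality. Given a test scheme $S$ and an $S$-family of torsion sheaves $\cal F$ on $S\times C$ of degree $n$, flatness over $S$ together with the vanishing $H^1(C_s,\cal F_s)=0$ (automatic since $\cal F_s$ is torsion on a curve) implies by cohomology-and-base-change that $p_{S*}\cal F$ is a locally free $\Ocal_S$-module of rank $n$, whose formation commutes with base change, and that the surjection $p_S^*p_{S*}\cal F\twoheadrightarrow \cal F$ is surjective (again because $\cal F_s$ is globally generated: it is torsion on a curve, so a skyscraper-type sheaf, and $H^0(\phi_s)$ controls it). After trivializing $p_{S*}\cal F\simeq \bbC^n\otimes\Ocal_S$ locally on $S$, we obtain a quotient $\bbC^n\otimes\Ocal_{S\times C}\twoheadrightarrow\cal F$ which induces an isomorphism on pushforward to $S$, i.e.\ a morphism $S\to Q_n$ lifting $S\to\T_n$; and two such liftings differ precisely by the choice of trivialization, i.e.\ by an element of $G_n(S)$. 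This shows $q$ is a $G_n$-torsor locally in the Zariski (indeed fppf) topology, which is exactly the assertion $[Q_n/G_n]\simeq\T_n$.

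The main obstacle, and the point requiring care, is the interplay between the two conditions cutting out $Q_n$ inside the full $\cal Quot$-scheme — namely that $H^0(\phi)$ is an isomorphism — and checking that this is the \emph{correct} open condition so that $q$ hits all of $\T_n$ and is smooth (equivalently, that every torsion sheaf of degree $n$ arises, with the right automorphisms). Concretely one must verify: (i) for a single torsion sheaf $\cal E$ of degree $n$, $\dim_\bbC H^0(C,\cal E)=n$ and $\cal E$ is globally generated, so that a choice of basis of $H^0$ yields a point of $Q_n$; (ii) the locus $\{H^0(\phi)\text{ iso}\}$ is open in $\cal Quot_n$, which follows from semicontinuity of $h^0(\Ker\phi)$ and the fact that $H^0(\phi)$ is always injective on the relevant component; and (iii) the torsor structure is compatible with the stack structure of $\T_n$, i.e.\ the stabilizer in $G_n$ of a point $\phi$ over $\cal E$ is canonically $\Aut(\cal E)$, which one reads off from $\Aut(\cal E)$ acting simply transitively on the bases of $H^0(C,\cal E)$ compatible with $\phi$. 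All three points are standard for torsion (hence $0$-dimensional-supported) sheaves on a smooth curve, and indeed are recorded in~\cite{potierlectures}; the proof amounts to assembling them. The smoothness of $Q_n$ and the tangent space computation~\eqref{tanQuot} are inherited from the ambient $\cal Quot$-scheme and need no separate argument.
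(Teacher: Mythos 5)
The paper does not actually prove this lemma; it is stated as a citation to Le Potier's lecture notes, with no argument supplied. Your proposal reconstructs the standard argument correctly and completely: you build the classifying map $Q_n\to\T_n$ from the universal quotient, observe $G_n$-equivariance, and reduce the assertion to $q$ being a $G_n$-torsor, which you establish by cohomology-and-base-change (using that $H^1$ of a torsion sheaf on a curve vanishes) to get a rank-$n$ locally free pushforward, global generation to get the quotient map, and Zariski-local trivialization of that locally free sheaf to produce local sections differing by $G_n$. The identification of stabilizers with automorphism groups is the right way to see that the morphism of stacks (not just of coarse spaces) is an isomorphism. One cosmetic remark: since $G_n=GL_n$ is a special group, Zariski-local triviality is automatic once you have fppf-local triviality, but in fact your argument produces Zariski-local sections directly, so this is a non-issue. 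Your three verification points (i)--(iii) are exactly what one must check, and they are all standard for torsion sheaves on a smooth projective curve. In short: correct, and it is the same argument one would find in the cited reference.
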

In particular, each $\T_n$ is smooth, since the lemma above provides it with a smooth atlas.

When $n=1$, we have isomorphisms $\cal Quot_1(\Ocal)\simeq Q_1\simeq C$, and the action of $G_1\simeq \bb G_m$ is trivial.
In view of this, denote by $p_{ij}:Q_1\times Q_1\times C\simeq C\times C\times C\to C\times C$ the natural projections (as in \cref{convalg}).

\begin{lm}[{\cite[Lemma 3.2]{Min_CHAH2020}}]
Let $\cal K, \cal E\in \ona{Coh}(Q_1\times C)\simeq \ona{Coh}(C\times C)$ be the universal families of kernels and images of quotients $\Ocal\ra \cal E$ respectively.
Then
\begin{gather*}
	p_{12*}\cal Hom(p_{13}^* \cal K,p_{23}^* \cal E)\simeq \frac{x_2}{x_1}\Ocal_{C\times C}(\Delta),
\end{gather*}
where $\Delta\subset C\times C$ is the diagonal.
\end{lm}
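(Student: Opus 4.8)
The plan is to make the universal families fully explicit and thereby reduce the statement to the restriction of a line bundle to a diagonal inside $C^3$. Under the identification $Q_1\cong C$ --- the Hilbert scheme of length-one subschemes of $C$ --- the universal subscheme is the diagonal $\Delta\subset C\times C$, so the universal image and kernel are $\cal E\simeq\Ocal_\Delta$ and $\cal K\simeq\Ocal_{C\times C}(-\Delta)=\cal I_\Delta$ (the normalization ``$H^0(\phi)$ an isomorphism'' forces no extra twist, as $H^0$ of this universal quotient is canonically trivial). The key structural point I would exploit is that $\cal K$ is \emph{invertible}, $\Delta$ being a Cartier divisor; this is what lets me avoid any derived considerations. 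Writing the three factors of $Q_1\times Q_1\times C\cong C^3$ as $(a,b,c)$, the preimages $\Delta_{13}=p_{13}^{-1}(\Delta)=\{a=c\}$ and $\Delta_{23}=p_{23}^{-1}(\Delta)=\{b=c\}$ are smooth divisors, and by flatness of $p_{13},p_{23}$ we get $p_{13}^*\cal K\simeq\Ocal_{C^3}(-\Delta_{13})$ and $p_{23}^*\cal E\simeq\Ocal_{\Delta_{23}}$.

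The main step is then a short computation. As $p_{13}^*\cal K$ is a line bundle, $\cal Hom(p_{13}^*\cal K,p_{23}^*\cal E)$ is just $\Ocal_{\Delta_{23}}\otimes_{\Ocal_{C^3}}\Ocal_{C^3}(\Delta_{13})$, i.e.\ the restriction $\Ocal_{C^3}(\Delta_{13})|_{\Delta_{23}}$. Since $\Delta_{23}$ is irreducible and not contained in $\Delta_{13}$, this restriction is the line bundle $\Ocal_{\Delta_{23}}(\Delta_{13}\cap\Delta_{23})$ attached to the Cartier divisor $\Delta_{13}\cap\Delta_{23}=\{a=b=c\}$, the small diagonal. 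Now $p_{12}|_{\Delta_{23}}\colon(a,b,b)\mapsto(a,b)$ is an isomorphism onto $C\times C$ carrying the small diagonal to $\Delta$; in particular $p_{12}$ restricts to an isomorphism on the support $\Delta_{23}$, so it has no higher direct images, and pushing forward yields $p_{12*}\cal Hom(p_{13}^*\cal K,p_{23}^*\cal E)\simeq\Ocal_{C\times C}(\Delta)$.

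It remains to restore the equivariant twist, which is pure bookkeeping. The sheaf $\cal K$ sits inside a tautological bundle of weight $x_1$ on the first $Q_1$-factor, while $\cal E$ is a quotient of a tautological bundle of weight $x_2$ on the second; hence $\cal Hom(p_{13}^*\cal K,p_{23}^*\cal E)$, and therefore its pushforward along the proper map $p_{12}$, carries the additional weight $x_2-x_1$ --- that is, the twist by $x_2/x_1$ in the notation of the statement --- giving the claimed isomorphism. I do not expect a genuine obstacle here: the only things needing care are the two ``transversality'' checks, namely that $p_{13}^*\cal K$ is invertible (so that $\cal Hom$ coincides with its derived version) and that $\Delta_{13},\Delta_{23}$ meet properly in $C^3$ (so that $\Ocal(\Delta_{13})|_{\Delta_{23}}$ is read off from the intersection divisor), together with correctly matching the weights $x_1,x_2$ to the two arguments of $\cal Hom$.
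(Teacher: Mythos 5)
Your proof is correct. The paper does not reproduce a proof of this lemma: it is cited verbatim from \cite{Min_CHAH2020}, so there is no in-text argument to compare against. Your argument is a clean, self-contained derivation: identifying $\cal E\simeq x_1\Ocal_\Delta$ and $\cal K\simeq x_1\Ocal_{C\times C}(-\Delta)$ on $Q_1\times C$, observing that $\cal K$ is invertible so $\cal Hom(p_{13}^*\cal K,p_{23}^*\cal E)\simeq \Ocal_{C^3}(\Delta_{13})|_{\Delta_{23}}$, reading this as $\Ocal_{\Delta_{23}}(\Delta_{13}\cap\Delta_{23})$ by the proper (indeed transverse) intersection of the two big diagonals along the small diagonal, and pushing forward along the isomorphism $p_{12}|_{\Delta_{23}}\colon\Delta_{23}\xra{\sim}C\times C$ to get $\Ocal_{C\times C}(\Delta)$, with the $\frac{x_2}{x_1}$-twist coming from the equivariant weights of the two tautological factors in $\cal Hom$. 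One cosmetic remark: you might make explicit that the equivariant weight of $\cal E$ on $Q_1\times C$ is $x_1$ as a quotient of $\bbC e_1\otimes\Ocal$ and that $p_{23}^*$ merely relabels this $x_1$ as $x_2$ (the variable for the second $\bb G_m$-factor); the assertion that ``$\cal E$ is a quotient of a tautological bundle of weight $x_2$ on the second [factor]'' is shorthand for this pullback, and is correct.
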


\begin{nota}
When working with $Q_1^n$, we will write $\cal K_i = p^*_{i,n+1}\cal K$, $\cal E_i = p^*_{i,n+1}\cal E$, and further denote the sheaf $p_{1\ldots n*}\cal Hom(\cal K_i,\cal E_j)$ of global sections along $C$ by $\Hom(\cal K_i,\cal E_j)$.
\end{nota}

By~\cite[Lemma 3.1]{Min_CHAH2020}, we have an identification
\[Q_n^{T_n}=(Q_1)^n=C^n.\]
The normal bundle to the fixed point set $N_{Q_1^n}Q_n$ is given by the following formula:
\begin{align}\label{QuotVBun}
N_{Q_1^n}Q_n = (TQ_n)|_{Q^n_1}/TQ^n_1 = \bigoplus_{i\neq j} \Hom(\cal K_i,\cal E_j) = \bigoplus_{i\neq j} \frac{x_j}{x_i}\Ocal(\Delta_{ij}),
\end{align}
where $\Delta_{ij}\subset C^n$ is the preimage of $\Delta$ under the natural projection $p_{ij}:C^n\to C^2$.

For any $I\subset [1,n]$, write $V_I = \bigoplus_{i\in I}\bbC e_i$.
Let $S\in 2^{[1,n]}$ be a collection of subsets of $[1,n]$.
Let $\overline{S}\in 2^{[1,n]}$ be the smallest collection which contains $S$ and is stable under taking intersections and complements.
It gives rise to a disjoint union $[1,n] = \bigsqcup_j I_j$, with subsets $I_j$ being subsets in $\overline{S}$, which are minimal under inclusion.
Consider the subset $\widetilde{Q}_S\subset Q_n$ consisting of quotients $\phi: \bbC^n\twoheadrightarrow\cal E$, such that
\begin{equation}
H^0(\phi)|_{V_I\otimes \Ocal}: V_I\to H^0(\Im \phi|_{V_I\otimes \Ocal})
\end{equation}
is an isomorphism for any $I\in S$.
Further, we denote $Q_S:=\prod_j Q_{V_{I_j}\otimes \Ocal}$.
\begin{lm}\label{lm:flags-smooth}
$\widetilde{Q}_S$ is a smooth closed subvariety of $Q_n$.
More specifically, $\widetilde{Q}_S$ is a vector bundle over $Q_S$.
\end{lm}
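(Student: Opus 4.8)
The plan is to prove the two assertions — that $\widetilde{Q}_S$ is a smooth closed subvariety of $Q_n$, and that it is in fact a vector bundle over $Q_S$ — by exhibiting a projection map $\widetilde{Q}_S \to Q_S$ and identifying its fibers. First I would reduce to the case where $\overline S$ is already closed under intersections and complements, i.e.\ $S = \overline S$; the conditions imposed by $S$ and by $\overline S$ on a quotient $\phi$ are equivalent (if $H^0(\phi)$ restricts to an isomorphism on $V_I$ and $V_J$, the same holds for $V_{I\cap J}$ and $V_{I^c}$ by a diagram chase, using that all the sheaves in sight have no higher cohomology and that degrees are additive along the decomposition). So we may work with the partition $[1,n] = \bigsqcup_j I_j$ directly, and the defining condition becomes: $H^0(\phi)|_{V_{I_j}\otimes\Ocal}$ is an isomorphism onto $H^0$ of its image, for every $j$.

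The key construction is the following. Given $\phi\in\widetilde Q_S$ with quotient $\cal E$, let $\cal E^{(j)} = \Im(\phi|_{V_{I_j}\otimes\Ocal})$. The condition on $\phi$ forces $\phi|_{V_{I_j}\otimes\Ocal}: V_{I_j}\otimes\Ocal \twoheadrightarrow \cal E^{(j)}$ to lie in $Q_{V_{I_j}\otimes\Ocal}$, so we get a morphism $r: \widetilde Q_S \to Q_S = \prod_j Q_{V_{I_j}\otimes\Ocal}$, $\phi\mapsto (\phi|_{V_{I_j}\otimes\Ocal})_j$. I would then show $r$ is a vector bundle. The fiber over a point $(\phi_j)_j$, with $\phi_j: V_{I_j}\otimes\Ocal\twoheadrightarrow\cal E^{(j)}$, consists of quotients $\phi:\bbC^n\otimes\Ocal\twoheadrightarrow\cal E$ restricting to each $\phi_j$; I claim these are parametrized linearly by $\bigoplus_{j\neq j'}\Hom(V_{I_j}\otimes\Ocal,\,\cal E^{(j')})$, or rather by its quotient that records the "off-diagonal" part of $\phi$. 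Concretely, once the $\phi_j$ are fixed, $\cal E$ must be a quotient of $\bigoplus_j\cal E^{(j)}$, the induced maps $H^0(\phi_j)$ are isomorphisms so $\bigoplus_j\cal E^{(j)}$ has the right $H^0$, and since everything is torsion on a curve the only freedom is in how the components glue; this is governed by $\Hom$ groups which form a vector bundle over $Q_S$ (compare the computation of $N_{Q_1^n}Q_n$ in~\eqref{QuotVBun}, which is exactly the extreme case $I_j = \{j\}$). Making this identification precise — checking it is an isomorphism of schemes over $Q_S$, not merely a bijection on points — is where I would spend the most care: one wants a universal family argument, using the universal quotients on $Q_{V_{I_j}\otimes\Ocal}\times C$ and the relative $\Hom$ sheaves along $C$, which are locally free since all relevant higher direct images vanish (torsion sheaves on a curve have no $\mathrm{Ext}^{\geq 2}$, and the $\mathrm{Ext}^1$ terms vanish because the kernels are locally free). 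Smoothness and closedness of $\widetilde Q_S$ in $Q_n$ then follow for free: a vector bundle over the smooth scheme $Q_S$ is smooth, and the conditions defining $\widetilde Q_S$ are closed (the locus where a map of coherent sheaves flat over the base drops rank, or here where $H^0(\phi)|_{V_I}$ fails to be an isomorphism, is closed).

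The main obstacle I anticipate is precisely the scheme-theoretic (as opposed to set-theoretic) identification of the fibers of $r$ and the verification that $r$ is Zariski-locally trivial: one must produce the affine bundle structure functorially, which requires choosing the universal $\Hom$-sheaf and checking it is a vector bundle, and then arguing that the "total space" of this bundle maps isomorphically to $\widetilde Q_S$. An alternative, possibly cleaner route is to note that $\widetilde Q_S$ is visibly a $G_n$-stable locus and to use the stacky description $[Q_n/G_n]\simeq\T_n$ from \cref{quot}: the stack $[\widetilde Q_S/G_n]$ should be identified with a stack of torsion sheaves equipped with a direct-sum decomposition compatible with a filtration, and the vector bundle structure becomes the statement that forgetting the gluing data of a direct sum is an affine fibration. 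I would first attempt the direct approach and fall back on the stacky reformulation if the universal family bookkeeping becomes unwieldy.
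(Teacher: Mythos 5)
The first step of your argument --- the reduction to $S = \overline S$ --- is false, and the rest of the proof depends on it. You claim that if $H^0(\phi)|_{V_I\otimes\Ocal}$ is an isomorphism then the same holds automatically on $V_{I^c}$, citing additivity of degrees; but $\Im\phi|_{V_I\otimes\Ocal}$ and $\Im\phi|_{V_{I^c}\otimes\Ocal}$ need not intersect trivially inside $\cal E$, so degrees are not additive. Concretely, take $n=2$, $S=\{\{1\}\}$, a point $x\in C$, and $\phi\colon \Ocal^{\oplus 2}\twoheadrightarrow \Ocal/\Ocal(-2x)$ given by $e_1\mapsto \bar z$, $e_2\mapsto \bar 1$, where $z$ is a local uniformizer at $x$. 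Then $\Im\phi|_{V_1\otimes\Ocal}$ is the length-one subsheaf, so $\phi\in\widetilde Q_S$, while $\Im\phi|_{V_2\otimes\Ocal} = \Ocal/\Ocal(-2x)$ has degree $2\neq 1$, so $\phi\notin\widetilde Q_{\overline S}$. In particular your restriction map $r$ does not land in $Q_S$ on all of $\widetilde Q_S$. If one restricts to the closed locus $\widetilde Q_{\overline S}$ where $r$ is defined, the conditions on all the $I_j$ force $\cal E = \bigoplus_j \cal E^{(j)}$ and $\phi = \bigoplus_j \phi_j$, so $r$ is an isomorphism there; your argument therefore only establishes the trivial rank-zero case, which does not cover the flag-type $S = \{[1,\tilde\lambda_i]\}$ actually needed in the paper.

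The paper's proof takes a genuinely different route: it identifies $\widetilde Q_S$ as the Bia{\l}ynicki-Birula attracting set of the torus $T_S = \prod_{I\in S}(\bb G_m)_I$ on $Q_n$ (each factor acting with weight one on $V_I$ and zero on $V_{I^c}$), with fixed locus $Q_S$, and then Bia{\l}ynicki-Birula's theorem gives the vector bundle structure --- hence smoothness --- with no further work. The resulting projection $\widetilde Q_S\to Q_S$ is the \emph{limit} map $\phi\mapsto\lim_{t\to 0}t\cdot\phi$, not your image map $r$: for the $\phi$ above the limit is the split quotient $\Ocal^{\oplus 2}\twoheadrightarrow \Ocal_x^{\oplus 2}$, a point of $Q_1\times Q_1$ at which $r$ is not even defined. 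If you wish to repair the direct approach, the map to analyze is the associated-graded map rather than the image map, and the fiber identification would have to be redone accordingly; at that point the universal-family bookkeeping is likely heavier than the short torus-action argument.
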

\begin{proof}
If $S=\{I\}$ consists of one subset, then $\widetilde{Q}_S$ is closed in $Q_n$ by~\cite[Proposition 1.8]{Min_CHAH2020}.
For general $S$, $\widetilde{Q}_S$ is closed as an intersection of closed subvarieties.

Let $I\subset [1,n]$, and $J$ its complement.
Consider an action of $\bbC^*$ on $\bbC^n$, which has weight $1$ on $V_I$ and is trivial on $V_J$.
This induces an action $a_I$ of $\bbC^*$ on $Q_n$.
Moreover, analogously to~\cite[Lemma 3.1]{Min_CHAH2020} we have $(Q_n)^{\bbC^*} = Q_{V_I\otimes \Ocal}\times Q_{V_J\otimes \Ocal}$, and the corresponding attracting set is $\widetilde{Q}_{\{I\}}$.

For a general $S$, consider an action of torus $T_S = \prod_{I\in S} (\bb G_m)_I$ on $Q_d$, where for each $I$ the action of $(\bb G_m)_I$ is given by $a_I$. 
Taking intersections, we see that the fixed point set of this action is $Q_S$, and the attracting set is $\widetilde{Q}_S$.
Bia{\l}ynicki-Birula theorem~\cite{Bia_TAAG1973a} then implies that $\widetilde{Q}_S$ is a vector over $Q_S$, and as such is smooth.
\end{proof}

Let $\lambda=(\lambda_1,\ldots,\lambda_k)$ be a composition of $n$.
Consider the stack of flags of torsion sheaves of type $\lambda$:
\[
\FT_\lambda=\left\{ 0=\cal E_0\subset \cal E_1\subset \ldots \subset \cal E_k : \deg \left(\cal E_i/\cal E_{i-1}\right)=\lambda_i \right\}.
\]
The stack $\FT_\lambda$ has a quotient presentation analogous to \cref{quot}:
\[
\FT_\lambda\simeq[\widetilde{Q}_\lambda/P_\lambda].
\]
Here, $\widetilde{Q}_\lambda = \widetilde{Q}_S$ for $S = \{[1,\tilde{\lambda}_i]\}_{i=1}^{k}$; we will also call this collection of intervals $\lambda$ by abuse of notation.
Analogously to~\eqref{QuotVBun}, we have
\begin{equation}\label{FlagQuotVBun}
N_{Q_1^n}\widetilde{Q}_\lambda = (T\widetilde{Q}_\lambda)|_{Q^n_1}/TQ^n_1 =  \bigoplus_{\substack{(i,j)\in [1,n]^2\setminus N_\lambda\\i\neq j}} \Hom(\cal K_i,\cal E_j) = \bigoplus_{(i,j)\in I_\lambda} \frac{x_i}{x_j}\Ocal(\Delta_{ij}).
\end{equation}

We have maps
\begin{center}
\begin{tabular}{rc}
$
\begin{tikzcd}
& \FT_\lambda\ar[dl,"q_\lambda"']\ar[dr,"p_\lambda"] &\\
\T_\lambda & & \T_d
\end{tikzcd}
$&
\begin{tabular}{l}\vspace{0.5ex}
$q_\lambda(\cal E_1\subset \cal E_2\subset\ldots \subset\cal E_k)=(\cal E_1,\cal E_2/\cal E_1,\ldots, \cal E_k/\cal E_{k-1}),$\\
$p_\lambda(\cal E_1\subset \cal E_2\subset\ldots \subset\cal E_k)=\cal E_k,$\\
\end{tabular}
\end{tabular}
\end{center}
where $\T_\lambda\coloneqq \T_{\lambda_1}\times \ldots \times \T_{\lambda_k}$.
Note that the map $q_\lambda$ is not representable, since it is not faithful on automorphism groups of points.
However, it is a stack vector bundle, that is it comes from a two-term complex of vector bundles on the base, see~\cite[Corollary 3.2]{GHS_MMCH2011}.
For instance, when $k=2$, this complex is $R\Hom_C(\cal E_1,\cal E_2)[1]$, where $\cal E_i$ is the universal sheaf on $\T_{\lambda_i}\times C$.
In particular, pulling back along $q_\lambda$ induces an isomorphism $H^*(\FT_\lambda)\simeq H^*(\T_\lambda)$.

On the other hand, $p_\lambda$ is induced by the embedding $\widetilde{Q}_\lambda\hookrightarrow Q_n$:
\[
p_\lambda:\FT_\lambda \simeq[\widetilde{Q}_\lambda/P_\lambda] \simeq [G_n\times_{P_\lambda}\widetilde{Q}_\lambda/G_n]\to [Q_n/G_n]\simeq \T_n.
\]
In particular, it is a projective morphism.

Denote $Y_\lambda=G_n\times_{P_\lambda}\widetilde{Q}_\lambda$.
Then $(Y_\lambda)^{T_n}=\fk{S}_n/\fk{S}_\lambda \times C^n$, and the projection $(Y_\lambda)^{T_n}\to Q_n^{T_n}$ gets identified with the projection $\fk{S}_n/\fk{S}_\lambda \times C^n\to C^n$.

Let us denote $\bf{P}_n = \bf{P}_n(C) = H^*(C^n)[x_1,\ldots,x_n]$, where $\deg x_i = 2$.
\begin{prop}[{\cite[Theorem 1]{Hei_CMSC2012}}]\label{HofTor}
We have $H^*(\T_n)\simeq \bf{P}_n^{\frakS_n}$.
\end{prop}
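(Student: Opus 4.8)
The plan is to compute $H^*(\T_n)$ via the quotient presentation $\T_n\simeq[Q_n/G_n]$ from \cref{quot}, i.e.\ to identify $H^*(\T_n)$ with $H^*_{G_n}(Q_n)$. Since $Q_n$ is a smooth $G_n$-variety, we may work with ordinary equivariant cohomology. By \cref{GvsTW} it suffices to compute $H^*_{T_n}(Q_n)$ together with its $\frakS_n$-action and then take invariants, so the heart of the matter is the statement $H^*_{T_n}(Q_n)\simeq \bf{P}_n=H^*(C^n)[x_1,\ldots,x_n]$, compatibly with the $\frakS_n$-action permuting the factors and the $x_i$ simultaneously.

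The key tool is the torus action whose fixed locus is understood: by \cite[Lemma 3.1]{Min_CHAH2020} we have $Q_n^{T_n}=C^n$, with normal bundle given explicitly by~\eqref{QuotVBun}. First I would invoke the localization theorem (\cref{LocThm}) to see that $H^*_{T_n}(Q_n)_\loc\simeq H^*_{T_n}(C^n)_\loc = H^*(C^n)\otimes\Frac(H_{T_n})$. To promote this to an integral (unlocalized) statement, I would apply a Bia\l ynicki-Birula / filtration argument: choose a generic one-parameter subgroup of $T_n$ so that the attracting-set decomposition of $Q_n$ has strata that are affine (vector) bundles over the connected components of $C^n$ — exactly the type of decomposition already used in the proof of \cref{lm:flags-smooth}. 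This equips $H^*_{T_n}(Q_n)$ with a filtration whose associated graded is $\bigoplus$ of the cohomologies of these bundles, each of which is a free $H_{T_n}$-module; hence $H^*_{T_n}(Q_n)$ is a free $H_{T_n}$-module and the localization map is injective. Comparing ranks (or directly identifying the pullback $H^*_{T_n}(Q_n)\to H^*_{T_n}(C^n)$ with the obvious inclusion) gives $H^*_{T_n}(Q_n)\simeq H^*(C^n)[x_1,\ldots,x_n]=\bf{P}_n$. Tracking the $\frakS_n$-action through the BB decomposition shows it acts in the evident diagonal way, so $H^*_{G_n}(Q_n)\simeq \bf{P}_n^{\frakS_n}$ by \cref{GvsTW}.

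The main obstacle is the passage from the localized statement to the integral one — i.e.\ establishing freeness of $H^*_{T_n}(Q_n)$ over $H_{T_n}$ and surjectivity of the natural map onto the expected answer. Concretely one must check that the BB strata of $Q_n$ really are affine bundles over pieces of $C^n$ (not merely that $Q_n^{T_n}=C^n$), which requires knowing $Q_n$ is, say, smooth with a well-behaved $T_n$-action — plausible from the $\cal Quot$-scheme description and \eqref{tanQuot}, but needing care because $Q_n$ is not proper. Since the result is attributed to \cite[Theorem 1]{Hei_CMSC2012}, the cleanest route is to simply cite Heinloth's computation of $H^*(\ona{Tor}_n C)$ and note its compatibility with the quotient presentation; the argument above is then the sketch one would give if reproving it. Either way, once $H^*_{T_n}(Q_n)\simeq\bf{P}_n$ is in hand, \cref{GvsTW} finishes the proof immediately.
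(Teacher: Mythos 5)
Your ultimate fallback---citing Heinloth---is exactly what the paper does: \cref{HofTor} is stated with the citation and no independent proof is given. Your reduction via \cref{quot} and \cref{GvsTW} to the claim $H^*_{T_n}(Q_n)\simeq\bf{P}_n$, and the localization step, are both sound.

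The Bia{\l}ynicki-Birula sketch in the middle, however, does not work as stated. For a \emph{generic} one-parameter subgroup of $T_n$ the fixed locus is $Q_n^{T_n}=C^n$, which is connected, so the attracting-set decomposition has a single stratum and yields no useful filtration; and since $Q_n$ is not proper, that one attracting set need not even cover $Q_n$. The one-parameter subgroups appearing in the proof of \cref{lm:flags-smooth} are non-generic, and their attracting sets $\widetilde{Q}_S$ are proper closed subvarieties of $Q_n$, so they likewise do not produce a cell decomposition of $Q_n$. Heinloth's actual argument runs through the decomposition theorem for perverse sheaves; this is precisely why the paper remarks (just before \cref{prop:CohP1Z}) that the result is tied to $\bbQ$-coefficients, and why the integral version for $\bbP^1$ is proved there by a wholly different stratification---by multiplicity of a fixed point in the support, which does not come from a $\bbC^*$-action and does not obviously generalize to other curves. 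Finally, even granting freeness of $H^*_{T_n}(Q_n)$ over $H_{T_n}$, ``comparing ranks'' does not determine the image inside $(\bf{P}_n)_\loc$: a proper submodule can have full rank, so identifying the image with $\bf{P}_n$ requires a direct argument (a GKM-type description or an explicit tautological-class computation), which is where the real work lies.
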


Since $q_\lambda$ is a stack vector bundle, we also have
\begin{equation}\label{FlagCoh}
	H^*(\FT_\lambda)\simeq H^*(\T_\lambda) \simeq \bigotimes_i \bf{P}_{\lambda_i}^{\fk{S}_{\lambda_i}}=\bf{P}_n^{\fk{S}_\lambda}.
\end{equation}

\begin{lm}\label{isovialoc}
Let $co$ be the composition $H^*(\T_n)\simeq H^*_{G_n}(Q_n)\hookrightarrow H^*_{T_n}(Q_n) \xra{i_{Q_n}}H^*_{T_n}(C^n)\simeq H^*(\T_1^n)$.
Then the following square commutes:
\[\begin{tikzcd}
H^*(\T_n) \ar[r,"\sim"]\ar[d,hook,"co"] & \bf{P}_n^{\frakS_n}\ar[d,hook]\\
H^*(\T_1^n) \ar[r,equal] & \bf{P}_n
\end{tikzcd}\]
\end{lm}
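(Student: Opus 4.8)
The plan is to unwind all four corners of the square explicitly and to check that the two composite maps $\bf P_n^{\frakS_n}\to\bf P_n$ agree. By \cref{HofTor} the top horizontal arrow is an isomorphism, and by \eqref{FlagCoh} (with $\lambda = 1^n$) together with the fact that $Q_n^{T_n} = C^n = \T_1^n$, the bottom horizontal arrow is the tautological equality $\bf P_n = H^*(\T_1^n)$. So the content is to identify the composite $co$ on the left with the standard inclusion $\bf P_n^{\frakS_n}\hookrightarrow \bf P_n$ of invariants. First I would recall that the isomorphism of \cref{HofTor}, read through \cref{quot}, is exactly $H^*(\T_n)\simeq H^*_{G_n}(Q_n)$, and that the inclusion $H^*_{G_n}(Q_n)\hookrightarrow H^*_{T_n}(Q_n)$ corresponds under \cref{GvsTW} (recall $\bfk$ has characteristic $0$ here, so \cref{GvsTW} applies) to the inclusion of $\frakS_n$-invariants $\bf P_n^{\frakS_n}\hookrightarrow H^*_{T_n}(Q_n)$; the only remaining point is that the refined pullback $i_{Q_n}^!$ along the $T_n$-fixed-point embedding $C^n = Q_n^{T_n}\hookrightarrow Q_n$, followed by the identification $H^*_{T_n}(C^n)\simeq \bf P_n$, sends this copy of $\bf P_n^{\frakS_n}$ identically into $\bf P_n$.

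The key step is therefore to describe $i_{Q_n}^!$ on equivariant cohomology classes concretely. Since $Q_n$ is smooth (\cref{quot}) and $C^n$ is a smooth closed $T_n$-stable subvariety, $i_{Q_n}$ is a regular embedding, and $i_{Q_n}^!$ is the ordinary pullback $i_{Q_n}^*$ on equivariant cohomology (recall our convention $H^*_G(X) = H^{G}_{2\dim X - *}(X)$, under which refined pullback along a regular embedding of smooth varieties is honest cohomological pullback, up to the dimension shift that the $H^*$ convention absorbs). The crucial computation is then the compatibility of the three descriptions of $H^*_{T_n}$: on $Q_n$ we have $H^*_{T_n}(Q_n) = H^*_{G_n}(Q_n)\otimes_{H_{G_n}} H_{T_n}$, which by \cref{HofTor} and the paragraph above \cref{HofTor} is $\bf P_n^{\frakS_n}\otimes_{\bfk[x]^{\frakS_n}}\bfk[x]$; on $C^n = \T_1^n$ we have, by iterating \cref{HofTor} with $n=1$ and the $T_n$-action weighting $\bbC e_i$ by $x_i$, precisely $H^*_{T_n}(C^n)\simeq H^*(C^n)[x_1,\dots,x_n] = \bf P_n$; and $i_{Q_n}^*$ is the base change of the tautological map $\bf P_n^{\frakS_n}\to \bf P_n$. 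The point is that the $x_i$ on both sides are the same classes — the equivariant parameters of $T_n$, identified via $x_i = c_1(\bbC e_i)$ as fixed in \cref{subs:Flag} — and the cohomology of $C$ transports identically, because the universal sheaf $\cal E$ on $\T_n\times C$ restricts on $\T_1^n\times C$ to $\bigoplus_i \cal E_i$, so the tautological classes on $\T_n$ pull back to the evident symmetric functions of the tautological classes on the factors. Thus the square commutes on generators, hence everywhere.

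The main obstacle is bookkeeping rather than anything deep: one must be careful that the three incarnations of $H^*_{T_n}$ are glued by the \emph{same} choice of equivariant parameters and the \emph{same} Künneth identification of $H^*(C)$-factors, so that "the copy of $\bf P_n^{\frakS_n}$ inside $H^*_{T_n}(Q_n)$" literally maps to "$\bf P_n^{\frakS_n}\subset \bf P_n$" and not to some twist of it by Euler classes. Here the subtlety in \cref{GysinProp:d} (Euler-class factors appearing in $i^!\circ i_*$) does not enter, because we only compose $i_{Q_n}^!$ \emph{forward} with nothing afterwards that pushes back; we are simply computing the restriction of explicit tautological classes to the fixed locus, and these are characterized by their images under the inclusion of the generic point of each fixed component, which is transparent. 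Once the identification of parameters is pinned down — most cleanly by noting that both $co$ and the standard inclusion are $H_{T_n} = \bfk[x_1,\dots,x_n]$-linear and agree on the subring $H^*(\T_n)$ of tautological classes coming from $\cal E$, which generate after inverting nothing — commutativity is immediate.
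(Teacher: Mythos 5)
Your proposal takes a genuinely different route from the paper, but it has a real gap in the crucial step.

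The top horizontal arrow in the square is not a tautology: \cref{HofTor} only asserts an abstract isomorphism $H^*(\T_n)\simeq \bf{P}_n^{\frakS_n}$, and the entire content of \cref{isovialoc} is to verify that this particular isomorphism — the one built in Heinloth's proof via the small map $\FT_{1^n}\to\T_n$ — is compatible with $co$. Your argument reduces the problem to checking agreement of the two composites $H^*(\T_n)\to\bf{P}_n$ on the K\"unneth--Chern classes of $\cal E$, and you compute correctly that $co$ sends $c_{i,j}(\cal E)$ to $c_{i,j}(\bigoplus_k\cal E_k)$ using $\bigoplus^*\cal E\simeq\bigoplus_k\cal E_k$. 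But you never establish what the \emph{top arrow} does to these classes; you implicitly treat the Heinloth isomorphism as if it were \emph{defined} to send $c_{i,j}(\cal E)$ to the corresponding symmetric polynomial, at which point the statement becomes circular. The same issue infects the assertion that ``$i_{Q_n}^*$ is the base change of the tautological map $\bf{P}_n^{\frakS_n}\to\bf{P}_n$'': writing $H^*_{T_n}(Q_n)\simeq H^*_{G_n}(Q_n)\otimes_{H_{G_n}}H_{T_n}$ (which itself requires justification, e.g.\ freeness over $H_{G_n}$) does not by itself determine the restriction to $C^n$; one needs an identification of where the $\bf{P}_n^{\frakS_n}$-summand lands, which is again the thing being proved. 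Similarly, the claim that the standard inclusion $\bf{P}_n^{\frakS_n}\hookrightarrow\bf{P}_n$ is ``$H_{T_n}$-linear'' does not parse on its face, since $\bf{P}_n^{\frakS_n}$ is not an $H_{T_n}$-submodule of $\bf{P}_n$ under multiplication.

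The paper avoids all of this by unwinding Heinloth's construction: it factors the top arrow through $H^*_{G_n}(Y_{1^n})\simeq H^*_{B_n}(\widetilde{Q}_{1^n})\simeq H^*_{T_n}(Q_{1^n})$, cites the proof in~\cite{Hei_CMSC2012} for the fact that this composition \emph{is} the inclusion of invariants, and then compares with $co$ via a fixed-point diagram whose one nonobvious square commutes by~\cite[Lemma~A.17]{Min_CHAH2020} (a compatibility of localization with the $Y_{1^n}\to Q_n$ geometry). That reference is doing work your argument silently skips. If you want to salvage a tautological-class argument, you would need to independently verify that Heinloth's isomorphism sends $c_{i,j}(\cal E)$ to the expected element of $\bf{P}_n^{\frakS_n}$, and that verification is essentially equivalent to the diagram chase the paper performs.
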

\begin{proof}
Consider the following diagram, where the vertical arrows are given by restriction to $T_n$-fixed points, and $a:\frakS_n\times C^n \to C^n$ is the natural action:
\[\begin{tikzcd}
H^*_{G_n}(Q_n) \ar[r]\ar[d,"co"] & H^*_{G_n}(Y_{1^n}) \ar[d] & H^*_{B_n}(\widetilde{Q}_{1^n}) \ar[r,"\sim"]\ar[d]\ar[l,"\sim"] & H^*_{T_n}(Q_{1^n})\ar[d,equal]\\
H^*_{T_n}(C^n) \ar[r,"a^*"]& H^*_{T_n}(\frakS_n\times C^n) & H^*_{T_n}(C^n)\ar[l,"a^*"']\ar[r,equal] & H^*_{T_n}(C^n)
\end{tikzcd}\]
All squares above are obviously commutative, except for the second one, which commutes by~\cite[Lemma A.17]{Min_CHAH2020}.
The proof of \cref{HofTor} in~\cite{Hei_CMSC2012} shows that the composition of upper horizontal maps coincides with the inclusion $H^*(\T_n)\simeq \bf{P}_n^{\frakS_n}\subset \bf{P}_n$.
On the other hand, lower horizontal row can be replaced with the identity map without breaking commutativity.
This concludes the proof.
\end{proof}

\begin{corr}\label{ind-vs-act}
Let $\lambda\in \Comp(n)$.
The following square commutes:
\[
\begin{tikzcd}
H^*(\FT_\lambda) \ar[d,hook,"co"] & H^*(\T_\lambda)\ar[l,"\sim"]\ar[d,hook]\\
H^*_{T_n}(\fk{S}_n/\fk{S}_\lambda \times C^n)  & H^*_{T_n}(C^n)\ar[l,"a^*"]
\end{tikzcd}
\]
where $a: \fk{S}_n/\fk{S}_\lambda \times C^n = \fk{S}^\lambda \times C^n \to C^n$ is the natural action.
\end{corr}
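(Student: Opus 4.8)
The plan is to mimic the proof of \cref{isovialoc}, with the Borel subgroup $B_n$ replaced by the parabolic $P_\lambda$ and the flag space $\widetilde{Q}_{1^n}$ by $\widetilde{Q}_\lambda$.

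First I would record the relevant presentations and fixed-point loci. By \cref{lm:flags-smooth} and the discussion following it, $\FT_\lambda\simeq[\widetilde{Q}_\lambda/P_\lambda]\simeq[Y_\lambda/G_n]$ with $Y_\lambda=G_n\times_{P_\lambda}\widetilde{Q}_\lambda$, and $q_\lambda$ is induced by the $P_\lambda$-equivariant vector bundle projection $\pi_\lambda\colon\widetilde{Q}_\lambda\to Q_\lambda$, where $Q_\lambda=\prod_i Q_{\lambda_i}$ and $\T_\lambda=[Q_\lambda/G_\lambda]$. Hence the isomorphism $q_\lambda^*\colon H^*(\T_\lambda)\xrightarrow{\sim}H^*(\FT_\lambda)$ of \eqref{FlagCoh} factors as
\[
H^*(\T_\lambda)=H^*_{G_\lambda}(Q_\lambda)\xrightarrow{\theta^{-1}}H^*_{P_\lambda}(\widetilde{Q}_\lambda)\xrightarrow{\iota}H^*_{G_n}(Y_\lambda)=H^*(\FT_\lambda),
\]
where both maps are isomorphisms: $\theta$ is the retraction isomorphism $H^*_{P_\lambda}(\widetilde{Q}_\lambda)\simeq H^*_{G_\lambda}(\widetilde{Q}_\lambda)$ composed with the inverse of $\pi_\lambda^*$, and $\iota$ is the tautological identification coming from $[\widetilde{Q}_\lambda/P_\lambda]=[Y_\lambda/G_n]$. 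Since $\widetilde{Q}_\lambda$ is a $T_n$-stable subvariety of $Q_n$, we have $(\widetilde{Q}_\lambda)^{T_n}=(Q_n)^{T_n}=C^n$; likewise $(Q_\lambda)^{T_n}=C^n$ and $\pi_\lambda$ restricts to the identity on $T_n$-fixed loci, while $(Y_\lambda)^{T_n}=\frakS^\lambda\times C^n$, the map to $(Q_n)^{T_n}=C^n$ induced by $p_\lambda$ being the natural action $a$ --- this is the same computation as in \cref{subs:Flag}, using that $\frakS_n\subset G_n$ permutes the standard basis of $\bbC^n$.

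The heart of the matter is the commutativity of the two squares
\[
\begin{tikzcd}
H^*_{G_n}(Y_\lambda)\ar[d,"co"'] & H^*_{P_\lambda}(\widetilde{Q}_\lambda)\ar[l,"\iota"',"\sim"]\ar[d,"r"]\ar[r,"\theta","\sim"'] & H^*_{G_\lambda}(Q_\lambda)\ar[d,"co"]\\
H^*_{T_n}(\frakS^\lambda\times C^n) & H^*_{T_n}(C^n)\ar[l,"a^*"]\ar[r,equal] & H^*_{T_n}(C^n)
\end{tikzcd}
\]
in which all vertical maps are restriction to $T_n$-fixed points (after forgetting down to $T_n$-equivariant cohomology), $r$ being restriction to $(\widetilde{Q}_\lambda)^{T_n}=C^n$. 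Granting these, the corollary follows by the chase $co\circ q_\lambda^*=co\circ\iota\circ\theta^{-1}=a^*\circ r\circ\theta^{-1}=a^*\circ co$, using the left square and then the right one. For the right square, I would first observe that its left-hand vertical $H^*(\T_\lambda)=H^*_{G_\lambda}(Q_\lambda)\to H^*_{T_n}(C^n)$ is exactly the map $co$ of the statement, i.e.\ the inclusion $\bf{P}_n^{\frakS_\lambda}\hookrightarrow\bf{P}_n$: this is \cref{isovialoc} applied to each factor $\T_{\lambda_i}$ together with multiplicativity (the identification $T_n=\prod_i T_{\lambda_i}$ and \eqref{FlagCoh}). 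The right square then commutes by naturality of restriction to $T_n$-fixed points with respect to $\pi_\lambda^*$ and the retraction, since $\pi_\lambda$ is the identity on fixed-point loci.

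The only nonformal ingredient, and the step I expect to be the main obstacle, is the left square: it asserts that the induction isomorphism $\iota$ intertwines restriction to $T_n$-fixed points with $a^*$, which encodes the way the Weyl group twists the $T_n$-action on the components $\{\dot w\}\times C^n$ of $(Y_\lambda)^{T_n}=\frakS^\lambda\times C^n$. This is precisely \cite[Lemma A.17]{Min_CHAH2020}; it was invoked for $P_\lambda=B_n$ in the proof of \cref{isovialoc}, and the general parabolic case is the identical statement. Everything else is routine bookkeeping with fixed-point loci and induced spaces.
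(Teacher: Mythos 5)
Your proposal is correct and follows essentially the same approach as the paper's proof. The paper draws a three-square commutative diagram with the same four main ingredients: the induction isomorphism $H^*_{P_\lambda}(\widetilde{Q}_\lambda)\simeq H^*_{G_n}(Y_\lambda)$, the vector bundle isomorphism $H^*_{P_\lambda}(\widetilde{Q}_\lambda)\simeq H^*_{G_\lambda}(Q_\lambda)$, commutativity of the induction square with restriction to $T_n$-fixed points via \cite[Lemma A.17]{Min_CHAH2020}, and identification of the remaining restriction map with the polynomial inclusion via \cref{isovialoc}; you have merely collapsed the last two columns of the paper's diagram into one step, which is purely cosmetic.
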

\begin{proof}
Consider the following diagram:
\[\begin{tikzcd}
H^*_{G_n}(Y_\lambda) \ar[d] & H^*_{P_\lambda}(\widetilde{Q}_\lambda)\ar[d]\ar[l,"\sim"']\ar[r,dash,"\sim"] & H^*_{G_\lambda}(Q_\lambda)\ar[d,hook,"co"]\ar[r,dash,"\sim"] & \bf{P}_n^{\frakS_\lambda}\ar[d,hook]\\
H^*_{T_n}(\fk{S}_n/\fk{S}_\lambda \times C^n) & H^*_{T_n}(C^n)\ar[r,equal]\ar[l,"a^*"] & H^*_{T_n}(C^n)\ar[r,dash,"\sim"] & \bf{P}_n
\end{tikzcd}\]
The first square commutes by~\cite[Lemma A.17]{Min_CHAH2020}, and the last one does by \cref{isovialoc}.
We are done.
\end{proof}

\begin{rmq}\label{rmk:purity}
Recall that $\T_1^n \simeq C^n\times \mathrm{B}T_n$.
In particular, its cohomology has pure weight filtration, and therefore so do $H^*(\T_n)$ and $H^*(\FT_\lambda)$.
\end{rmq}

\subsection{Steinberg varieties}
\begin{defi}
The fiber product
\[
Z_{\mu,\lambda}:=Y_\mu\times_{Q_n} Y_\lambda\subset Y_\mu\times Y_\lambda
\]
is called the \textit{partial Steinberg variety} of type $(\mu,\lambda)$.
\end{defi}
For each $\lambda\in \Comp(n)$, there is a natural map
$$
Y_\lambda=G_n\times_{P_\lambda}\widetilde{Q}_\lambda\to \mathscr{F}_\lambda=G_n/P_\lambda,\qquad (g,q)\mapsto gP_\lambda.
$$
Then the ambient variety $Y_\mu\times Y_\lambda$ comes equipped with a projection to the product of partial flag varieties:
\[
f_{\mu,\lambda}:Y_\mu\times Y_\lambda\to \mathscr{F}_\mu\times \mathscr{F}_\lambda.
\]
Stratification~\eqref{SchubertCells} induces the following stratification on $Z_{\mu,\lambda}$:
\[
Z_{\mu,\lambda}=\bigsqcup_{w\in \doubleS{\mu}{\lambda}} Z_{\mu,\lambda}^w:=\bigsqcup_{w\in \doubleS{\mu}{\lambda}} f_{\mu,\lambda}^{-1}(\Omega_w).
\]

Let us compute the fiber $f^{-1}(F_e,F_{w})$.
By definition, we have 
\[
f^{-1}(F_e,F_w) = \widetilde{Q}_\mu\cap w.\widetilde{Q}_\lambda \subset Q_n,
\]
where elements of $\fk{S}_d$ are identified with permutation matrices in $G_n$.
This subvariety is smooth by \cref{lm:flags-smooth}; therefore, each strata $Z_{\mu,\lambda}^w$ is smooth as well.

\begin{lm}\label{KLRLocInj}
Let $\bbk$ be a field of characteristic $0$.
Homology groups $H_*^{G_n}(Z_{\mu,\lambda},\bbk)$ are torsion-free as $H_{G_n}$-modules.
\end{lm}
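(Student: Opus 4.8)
The plan is a dévissage along the $G_n$-orbit stratification $Z_{\mu,\lambda}=\bigsqcup_{w\in\doubleS{\mu}{\lambda}}Z_{\mu,\lambda}^w$, exploiting that the strata are smooth with \emph{pure} equivariant cohomology. It is harmless to take $\bbk=\bbQ$ and extend scalars at the end. I would establish (i) that each $H^{G_n}_*(Z_{\mu,\lambda}^w)$ is a free, hence torsion-free, $H_{G_n}$-module, and (ii) that the spectral sequence of the stratification degenerates at $E_1$, so that $H^{G_n}_*(Z_{\mu,\lambda})$ carries a finite filtration with subquotients the $H^{G_n}_*(Z_{\mu,\lambda}^w)$. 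The conclusion then follows, since an iterated extension of torsion-free modules over the domain $H_{G_n}$ is again torsion-free (split the filtration into short exact sequences and induct on its length).

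For (i): the fibre $f_{\mu,\lambda}^{-1}(F_e,F_w)=\widetilde{Q}_\mu\cap w.\widetilde{Q}_\lambda$ is of the form $\widetilde{Q}_S$ for $S=\{[1,\tilde\mu_i]\}_i\cup\{w.[1,\tilde\lambda_j]\}_j$, so by \cref{lm:flags-smooth} it is a vector bundle over $Q_S=\prod_k Q_{V_{I_k}\otimes\Ocal}$, the $I_k$ being the blocks of the common refinement of the flags $F_e\in\mathscr F_\mu$ and $F_w\in\mathscr F_\lambda$. Correspondingly $\frakS_\mu\cap w.\frakS_\lambda=\prod_k\frakS_{I_k}$ and $G_{\mu\lambda}^w=\prod_k GL(V_{I_k})$ acts factor-wise on $Q_S$. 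Using $Z_{\mu,\lambda}^w=G_n\times_{P_{\mu\lambda}^w}(\widetilde{Q}_\mu\cap w.\widetilde{Q}_\lambda)$, the retraction of $P_{\mu\lambda}^w$ onto the reductive subgroup $G_{\mu\lambda}^w$, the vector bundle above, the K\"unneth formula, and \cref{quot,HofTor}, one obtains
\[
H^*_{G_n}(Z_{\mu,\lambda}^w)\;\cong\;H^*_{G_{\mu\lambda}^w}(Q_S)\;\cong\;\bigotimes\nolimits_k H^*(\T_{|I_k|})\;\cong\;\bigotimes\nolimits_k\bf{P}_{|I_k|}^{\frakS_{|I_k|}}.
\]
Now $\bf{P}_m=H^*(C^m)\otimes\bbk[x_1,\dots,x_m]$ is free over $\bbk[x_1,\dots,x_m]$, hence free over $H_{G_m}=\bbk[x_1,\dots,x_m]^{\frakS_m}$ by Chevalley; in characteristic $0$ its direct summand $\bf{P}_m^{\frakS_m}$ is then a finitely generated graded projective, hence free, $H_{G_m}$-module. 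So the tensor product above is free over $\bigotimes_k H_{G_{|I_k|}}=\bbk[x_1,\dots,x_n]^{\frakS_\mu\cap w.\frakS_\lambda}$, which is in turn free over $H_{G_n}=\bbk[x_1,\dots,x_n]^{\frakS_n}$ by the same averaging argument; this proves the freeness claimed in (i). Moreover $H^*(\T_m)$ is pure by \cref{rmk:purity}, hence so is $H^*_{G_n}(Z_{\mu,\lambda}^w)$, and since $Z_{\mu,\lambda}^w$ is smooth, Poincar\'e duality shows $H^{G_n}_k(Z_{\mu,\lambda}^w)$ is pure of weight $-k$, uniformly in $w$.

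For (ii): refine the closure order on $\doubleS{\mu}{\lambda}$ to a total order $w_1,\dots,w_m$ and filter $Z_{\mu,\lambda}$ by the closed subvarieties $Z_{(i)}=\bigcup_{j\le i}Z_{\mu,\lambda}^{w_j}$, so that $Z_{(i)}\setminus Z_{(i-1)}=Z_{\mu,\lambda}^{w_i}$; the associated spectral sequence $E_1=\bigoplus_w H^{G_n}_*(Z_{\mu,\lambda}^w)\Rightarrow H^{G_n}_*(Z_{\mu,\lambda})$ is one of mixed Hodge structures, with differentials lowering the Borel--Moore degree by $1$. By (i) the Borel--Moore degree $k$ part of $E_1$ is pure of weight $-k$, whereas a differential out of it lands in Borel--Moore degree $k-1$, which is pure of weight $-k+1$; since every subquotient of a pure Hodge structure keeps its weight, all differentials vanish and the sequence degenerates at $E_1$. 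I expect (ii) to be the main obstacle: one must set up the stratification spectral sequence in \emph{equivariant} Borel--Moore homology together with its mixed Hodge structure (via finite-dimensional approximations of the Borel construction) and fix the weight conventions so that the degree/weight mismatch above is genuine. The identification of $H^*_{G_n}(Z_{\mu,\lambda}^w)$ in (i), while expected from \cref{lm:flags-smooth,quot,HofTor}, also requires some care with the torus appearing in the proof of \cref{lm:flags-smooth} and with the non-reductive group $P_{\mu\lambda}^w$.
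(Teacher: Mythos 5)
Your proposal follows essentially the same route as the paper: stratify $Z_{\mu,\lambda}$ by $G_n$-orbit type, identify each stratum's equivariant cohomology with $H^*(\T_S)\simeq\bigotimes_k\bf{P}_{|I_k|}^{\frakS_{|I_k|}}$, and use purity of these pieces to split the stratification filtration (the paper invokes \cite[Lemma 4.9]{Min_CHAH2020} for the splitting of the open--closed long exact sequences, which is the same mechanism as your spectral-sequence degeneration via the weight/degree mismatch). The only difference is that you prove the pieces are \emph{free} over $H_{G_n}$ via the averaging/Chevalley argument, whereas the paper gets the weaker but sufficient torsion-freeness immediately from the embedding $H^*(\T_S)\hookrightarrow\bf{P}_n$ into a free module.
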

\begin{proof}
Without loss of generality, we can assume that $\bbk = \bbQ$.
For each stratum $Z_{\mu,\lambda}^w$, we have
\[
H_*^{G_n}(Z_{\mu,\lambda}^w) = H_*^{G_n}\left( G_n \times_{P_{\mu\lambda}^w} \left( \widetilde{Q}_\mu\cap w.\widetilde{Q}_\lambda \right) \right) = H_*^{P_{\mu\lambda}^w} \left( \widetilde{Q}_\mu\cap w.\widetilde{Q}_\lambda \right) = H_*^{G_{\mu\lambda}^w}(Q_{\mu\cup w.\lambda}) = H_*(\T_{\mu\cup w.\lambda}).
\]
Here, we have used \cref{lm:flags-smooth}.
The homology $H_*(\T_{S})$ sits inside $\bf{P}_n$ by \cref{HofTor}, and is therefore a torsion-free $H_{G_n}$-module.
Moreover, it has pure weight filtration by \cref{rmk:purity}.

Let us choose a total order $\prec$ on $\doubleS{\mu}{\lambda}$ compatible with the orbit closure order, and define
\[
Z_{\mu,\lambda}^{\prec w} = \bigsqcup_{w'\prec w}Z_{\mu,\lambda}^{w'},\quad Z_{\mu,\lambda}^{\preccurlyeq w} = Z_{\mu,\lambda}^{\prec w} \sqcup Z_{\mu,\lambda}^w.
\]
Since each strata $Z_{\mu,\lambda}^w$ has pure Borel-Moore homology, the associated open-closed long exact sequences split into short exact sequences:
\[
0\to H_*^{G_n}(Z_{\mu,\lambda}^{\prec w}) \to H_*^{G_n}(Z_{\mu,\lambda}^{\preccurlyeq w}) \to H_*^{G_n}(Z_{\mu,\lambda}^{w}) \to 0,
\]
see e.g.~\cite[Lemma 4.9]{Min_CHAH2020}.
In particular, $H_*^{G_n}(Z_{\mu,\lambda})$ has a filtration with associated graded $\bigoplus_w H_*^{G_n}(Z_{\mu,\lambda}^w)$.
Since $H_*^{G_n}(Z_{\mu,\lambda}^w)$ is a torsion-free $H_{G_n}$-module for all $w$, the same holds for $H_*^{G_n}(Z_{\mu,\lambda})$.
\end{proof}
\section{Schur algebra of a smooth curve}\label{sec:schur}
In this section, as well as \cref{sec:KLRcurve}, we assume that $\bbk$ is a field of characteristic $0$.
\subsection{Schur algebras of curves}
Let $Y_n=\bigsqcup_{\lambda}Y_\lambda$, and consider the projection $\pi:Y_n\to Q_n$.
Denote $Z_n=Y_n\times_{Q_n} Y_n$; we have decomposition into connected components $Z_n = \bigsqcup_{\mu,\lambda} Z_{\mu,\lambda}$.
Let us apply the general construction from \cref{ConvSetup} to $\pi$.

\begin{defi}
The \textit{(torsion) Schur algebra} of $C$, denoted by $\Sc_n=\Sc_n^C$, is the convolution algebra $\cal A(\pi)=H^*_{G_n}(Z_n)$.
\end{defi}

One can easily check that the map $\pi$ is small, so that $\dim Z_{\mu,\lambda} = \dim Y_\lambda = \dim Q_n = n^2$ for any $\lambda$, $\mu$.
Thanks to our conventions in \cref{subs:Flag}, $\Sc_n^C$ is a graded algebra.

\begin{rmq}
Since we're only concerned with torsion sheaves in this article, we will omit the qualifier ``torsion'' from now on.
We still mention it in the definition, because one would like to eventually consider a similar algebra for coherent sheaves of positive rank.
\end{rmq}

We denote $\Sc_{\mu,\lambda} = H^*_{G_n}(Z_{\mu,\lambda})$.
By definition $\Sc_n=\bigoplus_{\mu,\lambda}\Sc_{\mu,\lambda}$, and the product in $\Sc_n$ decomposes into a direct sum of maps $\Sc_{\nu,\mu}\otimes \Sc_{\mu,\lambda}\to \Sc_{\nu,\lambda}$.
Moreover, $\Sc_n$ acts on the space
\begin{equation}
P_n = \bigoplus_{\lambda} H^*_{G_n}(Y_\lambda) = \bigoplus_{\lambda} \bf{P}_n^{\fk{S}_\lambda}
\end{equation}
by \cref{ConvModule}.
We call $P_n$ the \textit{polynomial representation} of $\Sc_n$.
This action decomposes into a direct sum of maps $\Sc_{\mu,\lambda}\otimes H^*_{G_n}(Y_\lambda)\to H^*_{G_n}(Y_\mu)$.

\subsection{Localized Schur algebra}
Let us apply equivariant localization to the algebra $\Sc_n$.
Under identifications in \cref{sec:TorSh}, the restriction of $\pi$ to $T_n$-fixed points $(Y_\lambda)^{T_n}\to (Q_n)^{T_n}$ equals to the projection $\fk S_n/\fk{S}_\lambda \times C^n\to C^n$; denote it $\pi_T$.
Let us also denote by $a_\lambda: \frakS_n/\frakS_{\lambda}\times C^n = \frakS^\lambda\times C^n\to C^n$ the natural map induced by action.
We also have
\[
(Z_n)^{T_n} = \bigsqcup_{\mu,\lambda} (Z_{\mu,\lambda})^{T_n} = \bigsqcup_{\mu,\lambda} \fk S_n/\fk{S}_\mu\times \fk S_n/\fk{S}_\lambda\times C^n.
\]
For each connected component of $(Y_\lambda)^{T_n}$, its normal bundle splits into the direct sum of $N_{C^n}\widetilde{Q}_\lambda$ and the tangent space to $\mathscr{F}_\lambda$.
We identify $(Y_\lambda)^{T_n}\simeq \coprod_{w\in \frakS_n/\frakS_\lambda} \{w\}\times C^n$ as above. Denote  by $\gamma_\lambda\in H^*_{T_n}(C^n)$ the Euler class of the normal bundle to $\{\Id\}\times C^n\subset Y_\lambda$.

The formula~\eqref{FlagQuotVBun} implies that $e(N_{C^n}\widetilde{Q}_\lambda)=\prod_{(i,j)\in I_\lambda} (x_i-x_j+\Delta_{ij})$.
Therefore,
\begin{equation}\label{NorBunFlag}
	\gamma_\lambda = e(N_{C^n}\widetilde{Q}_\lambda)e(T_{F_{\Id}}\mathscr{F}_\lambda) = \prod_{(i,j)\in I_\lambda} (x_i-x_j+\Delta_{ij})\prod_{(i,j)\in N_\lambda}(x_j-x_i).
\end{equation}
\cref{ConvLocAlg} provides us with an algebra homomorphism
\[
\Xi_n:\Sc_n\to (\cal A^{T_n}_{\gamma^{-1}})^{\fk{S}_n}=:\Sc_n^{\text{loc}},
\]
where $\gamma = \bigoplus_\lambda a_\lambda^*(\gamma_\lambda)$.
Furthermore, $\Xi_n$ is injective by \cref{KLRLocInj}.
As a vector space, $\Sc_n^{\text{loc}}$ is isomorphic to 
\[
\Sc_n^{\text{loc}} = \bigoplus_{\mu,\lambda} \Sc_{\mu,\lambda}^{\text{loc}} = \bigoplus_{\mu,\lambda}\left(\bfk[\fk{S}_n/\fk{S}_\mu]\otimes\bfk[\fk{S}_n/\fk{S}_\lambda]\otimes \left( H^*(C^n)[x_1,\ldots,x_n]\right)_{\loc}\right)^{\fk S_n}.
\]

Let us describe the multiplication in $\Sc_n^{\text{loc}}$ explicitly.
Pulling back along the quotient map $r_{\mu\lambda}:\fk{S}_n^2\to \fk S_n/\fk{S}_\mu\times \fk S_n/\fk{S}_\lambda$, we obtain the natural inclusion
\begin{equation}\label{SchurDesym}
\Sc_{\mu,\lambda}^{\text{loc}}\subset \left(\bfk[\fk S_n]^{\otimes 2}\otimes \left( H^*(C^n)[x_1,\ldots,x_n]\right)_{\loc}\right)^{\fk S_n}.
\end{equation}
We use the notations from \cref{ConvEx} with $X=C^n$ and $\Gamma = \fk{S}_n$ for the right-hand side.
The image of the inclusion above can be obtained by partial symmetrization.
Namely, for any $g\in\fk{S}_n$ consider the element $(\fk{S}_\mu, g\fk{S}_\lambda)\in \fk{S}_n/\fk{S}_\mu\times \fk{S}_n/\fk{S}_\lambda$.
Denote its stabilizer under the diagonal $\fk{S}_n$-action by $\Gamma_g^{\mu\lambda}:=\fk{S}_\mu\cap g\fk{S}_\lambda g^{-1}$.
Note that $\Gamma_g^{\mu\lambda}\simeq \fk{S}_{\nu}$ for some $\nu \in\Comp(n)$, and depends only on the image of $g$ in $\fk{S}_\mu\backslash \fk{S}_n/\fk{S}_\lambda$.
We have an inclusion
\[
\Gamma_g^{\mu\lambda}\hookrightarrow \fk{S}_\mu\times \fk{S}_\lambda,\qquad f\mapsto (f,g^{-1}fg).
\]

The algebra $\Sc_{\mu,\lambda}^{\text{loc}}$ is spanned by the elements
\[
\xi_{(g,x)}^{\mu\lambda}=\sum_{h\in \frakS_n/\Gamma_{g}^{\mu\lambda}}h\frakS_\mu\otimes hg\frakS_\lambda\otimes x^h,
\]
where $g\in \frakS_n$ and $x\in\left(H^*(C^n)[x_1,\ldots,x_n]\right)_{\loc}^{\Gamma_g^{\mu\lambda}}$.
Moreover, it is clear that $\xi_{(gg',x)}^{\mu\lambda}=\xi_{(g,x)}^{\mu\lambda}$ for $g'\in \frakS_\lambda$ and $\xi_{(g'g,g'x)}^{\mu\lambda}=\xi_{(g,x)}^{\mu\lambda}$ for $g'\in \frakS_\mu$.
Thus it is enough to consider $\xi_{(g,x)}^{\mu\lambda}$ with $g\in \doubleS{\mu}{\lambda}$.
Let us compute its image under the inclusion~\eqref{SchurDesym}:
\begin{align*}
\xi_{(g,x)}^{\mu\lambda} 
& 	= \sum_{h\in \frakS_n/\Gamma_{g}^{\mu\lambda}}h\frakS_\mu\otimes hg\frakS_\lambda\otimes x^h 
	\mapsto \frac{1}{|\Gamma_{g}^{\mu\lambda}|}\sum_{(h,h_1,h_2)\in \frakS_n\times \frakS_\mu\times\frakS_\lambda}hh_1\otimes hgh_2\otimes x^h\\
& 	= \frac{1}{|\Gamma_{g}^{\mu\lambda}|}\sum_{(h_1,h_2)\in \frakS_\mu\times\frakS_\lambda}\xi_{(h_1^{-1}gh_2,x^{h_1^{-1}})} 
	= \sum_{(h_1,h_2)\in (\fk{S}_\mu\times \fk{S}_\lambda)/\Gamma_g^{\mu\lambda}} \xi_{(h_1gh_2,x^{h_1})}.
\end{align*}
In what follows, we will abuse the notations and identify $\xi_{(g,x)}^{\lambda\mu}$ with its image under~\eqref{SchurDesym}.

Consider the following commutative diagram:
\[
\begin{tikzcd}
\fk{S}_n^2\times\fk{S}_n^2\ar[d,"r"] & \fk{S}_n^3\ar[l,"p"']\ar[r,"q"]\ar[d,"r"] & \fk{S}_n^2\ar[d,"r"]\\
(\fk{S}_n/\fk{S}_\lambda\times\fk{S}_n/\fk{S}_\mu)\times (\fk{S}_n/\fk{S}_\mu\times\fk{S}_n/\fk{S}_\nu) & \fk{S}_n/\fk{S}_\lambda\times\fk{S}_n/\fk{S}_\mu \times\fk{S}_n/\fk{S}_\nu\ar[l,"p"']\ar[r,"q"] & \fk{S}_n/\fk{S}_\lambda \times\fk{S}_n/\fk{S}_\nu
\end{tikzcd}
\]
We have $p^*r^* = r^*p^*$, and $q_*r^* =|\fk{S}_\mu| r^*q_*$.
Therefore, up to the factor $|\fk{S}_\mu|$, pullbacks along $r_{\lambda\mu}$ fit in the following commutative square, where horizontal maps are given by multiplication in the corresponding algebra:
\[
\begin{tikzcd}
\Sc_{\lambda,\mu}^{\text{loc}}\otimes \Sc_{\mu,\nu}^{\text{loc}}\ar[r]\ar[d,"r_{\lambda\mu}\otimes r_{\mu\nu}"] &\Sc_{\lambda,\nu}^{\text{loc}}\ar[d,"r_{\lambda\nu}"]\\
\cal A_{(\gamma_\mu^{-1})^\circ}^{\fk S_n}\otimes \cal A_{(\gamma_\mu^{-1})^\circ}^{\fk S_n}\ar[r]& \cal A_{(\gamma_\mu^{-1})^\circ}^{\fk S_n}
\end{tikzcd}
\]
In particular, using~\eqref{eq:mult-xi} we see that 
\begin{equation}\label{SchurLocComp}
\begin{aligned}
	|\Gamma_g^{\lambda\mu}||\Gamma_h^{\mu\nu}|\xi_{(g,x)}^{\lambda\mu}*\xi_{(h,y)}^{\mu\nu} & = \frac{1}{|\fk{S}_\mu|}\left( \sum_{(a_1,a_2)\in \fk{S}_\lambda\times \fk{S}_\mu} \xi_{(a_1ga_2,x^{a_1})} \right)*\left( \sum_{(b_1,b_2)\in \fk{S}_\mu\times \fk{S}_\nu} \xi_{(b_1hb_2,y^{b_1})} \right)\\
	& = \sum_{(a,b,c)\in \fk{S}_\lambda\times \fk{S}_\mu\times \fk{S}_\nu} \xi_{(agbhc,x^a(y\gamma^{-1}_\mu)^{agb})} \\
	& = \sum_{b\in \fk{S}_\mu} |\Gamma_{gbh}^{\lambda\nu}|\xi_{(gbh,x(y^b\gamma^{-1}_\mu)^g)}^{\lambda\nu}.
\end{aligned}
\end{equation}

\subsection{Generators}
Let us introduce some elements in $\Sc_n$, and compute their images under localization.

\paragraph{\textbf{Polynomials}}
Let $\lambda\in\Comp(n)$.
\cref{DiagEmb} provides us with a homomorphism of algebras $\delta_\lambda: H_{G_n}^*(Y_\lambda)\to \Sc_{\lambda,\lambda}\subset \Sc_n$.
For any $P\in\bf{P}_n^{\fk{S}_\lambda}\simeq H^*_{G_n}(Y_\lambda)$, we will identify $P$ with its image under $\delta_\lambda$ by abuse of notation.

\begin{lm}\label{TautElts}
For any $P\in \bf{P}_n^{\fk{S}_\lambda}$, we have $\Xi_n(P)=\xi^{\lambda\lambda}_{(1,\gamma_\lambda P)}$.
\end{lm}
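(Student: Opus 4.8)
The statement asserts that the taut polynomial $P \in \bf P_n^{\frakS_\lambda}$, viewed inside $\Sc_{\lambda,\lambda}$ via the diagonal embedding $\delta_\lambda$, has localized image $\xi^{\lambda\lambda}_{(1,\gamma_\lambda P)}$. The plan is to trace through the definition of $\delta_\lambda$ from \cref{DiagEmb} together with the localization homomorphism $\Xi_n = i_{Z_n}^!$ from \cref{ConvLocAlg}, and identify the output on the diagonal component. First I would recall that $\delta_\lambda(P)$ is the image of $P \in H^*_{G_n}(Y_\lambda) \simeq H^*_{G_n}(Y_\lambda \times_{Q_n} Y_\lambda \cdot \Delta)$ under the pushforward along the diagonal embedding $\Delta_{Y_\lambda}: Y_\lambda \simeq Y_\lambda \times_{Y_\lambda} Y_\lambda \hookrightarrow Z_{\lambda,\lambda}$, multiplied by $1$ under the intersection product; concretely $\delta_\lambda(P) = (\Delta_{Y_\lambda})_* P$.

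The core computation is then the base change of this class along the $T_n$-fixed-point localization. Restricting to $T_n$-fixed points, the diagonal $\Delta_{Y_\lambda}^{T_n}$ sits inside $Z_{\lambda,\lambda}^{T_n} = \frakS_n/\frakS_\lambda \times \frakS_n/\frakS_\lambda \times C^n$ as the "totally diagonal" locus $\{(w\frakS_\lambda, w\frakS_\lambda)\}_w \times C^n$, i.e.\ $g=1$ in the coset notation. I would apply \cref{LocPullPush} (or directly \cref{GysinProp:d}) to the square comparing $\Delta_{Y_\lambda}$ with its fixed-point restriction; the Euler-class correction factor is precisely $e(N_{(Y_\lambda)^{T_n}} Y_\lambda)$, which by \eqref{NorBunFlag} equals $\gamma_\lambda$ on the component $\{\Id\}\times C^n$. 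Since $P$ is $\frakS_\lambda$-invariant and pulled back from $H^*_{G_n}(Y_\lambda)$, its fixed-point restriction is just $P$ on each component, summed $\frakS_n$-equivariantly; combining with the Euler factor gives exactly the coefficient $\gamma_\lambda P$ sitting on the diagonal element, which is the claimed $\xi^{\lambda\lambda}_{(1,\gamma_\lambda P)}$.

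To make this precise I would: (i) unwind that in the notation of \cref{ConvEx} with $X = C^n$, $\Gamma = \frakS_n$, the element $\xi^{\lambda\lambda}_{(1,x)}$ corresponds under \eqref{SchurDesym} to the class supported on the diagonal $\frakS_n$-orbit with coefficient $x$ at the base point $(\Id,\Id)$; (ii) check via \cref{ind-vs-act} that the taut class $P$ restricts to $a_\lambda^* P$, i.e.\ to $P$ on every component $\{w\}\times C^n$; (iii) invoke \cref{GysinProp:d} for the self-intersection of the diagonal to produce the Euler factor $\gamma_\lambda = e(N_{C^n}\widetilde Q_\lambda)\,e(T_{F_{\Id}}\mathscr F_\lambda)$, using the identification of the normal bundle to $(Y_\lambda)^{T_n}$ in $Y_\lambda$ from the discussion preceding \eqref{NorBunFlag}. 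The main obstacle I anticipate is purely bookkeeping: keeping the two sources of Euler classes straight — the one coming from $\Xi_n$ itself (which localizes $Z_{\lambda,\lambda}$, contributing $e(Y_\lambda)^{-1}$ in the twist, cf.\ \cref{ConvLocAlg}) versus the one from self-intersecting the diagonal — and confirming these combine to leave exactly $\gamma_\lambda$ and not $\gamma_\lambda^{\pm 2}$ or $1$. A clean way to avoid sign/power errors is to note that $\Xi_n$ is an algebra map into $\Sc_n^{\loc}$ and $P \mapsto \delta_\lambda(P)$ is multiplicative, so it suffices to verify the formula for $P=1$ and for $P = x_i$ (degree reasons and the algebra structure \eqref{SchurLocComp} then force the general case); for $P=1$ the claim $\Xi_n(1_\lambda) = \xi^{\lambda\lambda}_{(1,\gamma_\lambda)}$ is exactly the statement that the unit of $\Sc_{\lambda,\lambda}$ localizes to the idempotent-like diagonal class, which follows directly from \cref{GysinProp:d} applied to $\Delta_{Y_\lambda}$ after localization.
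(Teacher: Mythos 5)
Your proposal follows essentially the same route as the paper's proof: apply \cref{LocPullPush} to the diagonal embedding $Y_\lambda\hookrightarrow Z_{\lambda,\lambda}$, use \cref{ind-vs-act} to identify $i_{Y_\lambda}^*P$ with $a_\lambda^*P$, and combine the Euler class from localizing $Z_{\lambda,\lambda}$ (giving $a_\lambda^*(\gamma_\lambda)^{\otimes 2}$ on the product) with the one from $Y_\lambda$ (giving $a_\lambda^*(\gamma_\lambda)^{-1}$) to leave exactly $\gamma_\lambda$. The only caveat is in your fallback multiplicativity argument: checking $P=1$ and $P=x_i$ alone does not suffice for general $C$, since $\mathbf P_n^{\frakS_\lambda} = (H^*(C^n)[x_1,\ldots,x_n])^{\frakS_\lambda}$ also has generators from $H^*(C^n)$, but this does not affect the main argument.
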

\begin{proof}
We have inclusions of fixed point sets:
\[
\begin{tikzcd}
	\fk{S}_n/\fk{S}_\lambda\times C^n\ar[r,hook,"\Delta"]\ar[d,hook,"i_{Y_\lambda}"] & (\fk{S}_n/\fk{S}_\lambda)^2\times C^n\ar[d,hook,"i_{Z_{\lambda,\lambda}}"]\\
	Y_\lambda\ar[r,hook] & Z_{\lambda,\lambda}
\end{tikzcd}
\]
where the upper horizontal arrow is given by the diagonal embedding $\fk{S}_n/\fk{S}_\lambda\to (\fk{S}_n/\fk{S}_\lambda)^2$.
Let $p_1,p_2:(\fk{S}_n/\fk{S}_\lambda)^2\times C^n\to \fk{S}_n/\fk{S}_\lambda \times C^n$ be the two natural projections.
Applying \cref{LocPullPush}, we get:
\begin{align*}
\Xi_n(P) & = \left( p_1^*a_\lambda^*(\gamma_\lambda)\cdot p_2^*a_\lambda^*(\gamma_\lambda) \right) \cdot \Delta_*\left(a_\lambda^*(\gamma_\lambda)^{-1}\cdot i^*_{Y_\lambda}(P)\right) \\
& = \Delta_*\left( a_\lambda^*(\gamma_\lambda)\cdot a_\lambda^*(P) \right)
= \xi^{\lambda\lambda}_{(1,\gamma_\lambda P)},
\end{align*}
where we have used \cref{ind-vs-act} to replace $i_{Y_\lambda}^*$ by $a_\lambda^*$.
\end{proof}

\paragraph{\textbf{Splits and merges}}
For $\lambda,\lambda'\in\Comp(n)$, we say that $\lambda'$ \textit{subdivides} $\lambda$ if $\fk{S}_{\lambda'}\subset \fk{S}_{\lambda}$, and write $\lambda'\subset\lambda$.
In other words, $\lambda'$ is obtained from $\lambda$ by replacing each $\lambda_i$ with $\lambda_i^{(1)},\ldots,\lambda_i^{(k_i)}$, which sum up to $\lambda_i$.
For such pair of compositions, the closed embedding $\widetilde{Q}_{\lambda'}\subset \widetilde{Q}_{\lambda}$ induces a proper map $Y_{\lambda'}\to Y_{\lambda}$.
We therefore have closed embeddings
\begin{gather*}
\delta_{\lambda',\lambda}:Y_{\lambda'}=Y_{\lambda'}\times_{Y_\lambda}Y_\lambda\hookrightarrow Y_{\lambda'}\times_{Q_n} Y_{\lambda} = Z_{\lambda',\lambda},\\
\delta_{\lambda,\lambda'}:Y_{\lambda'}=Y_\lambda\times_{Y_\lambda}Y_{\lambda'}\hookrightarrow Y_{\lambda}\times_{Q_n} Y_{\lambda'} = Z_{\lambda,\lambda'}.	
\end{gather*}
In particular, let us define
\[
\rmS_\lambda^{\lambda'} = (\delta_{\lambda',\lambda})_*[Y_{\lambda'}]\in \Sc_{\lambda',\lambda},\qquad \rmM^\lambda_{\lambda'} = (\delta_{\lambda,\lambda'})_*[Y_{\lambda'}]\in \Sc_{\lambda,\lambda'}.
\]

\begin{defi}
	We call $\rmS_\lambda^{\lambda'}$ \textit{split}, and $\rmM^\lambda_{\lambda'}$ \textit{merge}.
\end{defi}
\begin{lm}\label{SMLoc}
	The images of splits and merges under localization are given by
	\[
	\Xi_n(\rmS_\lambda^{\lambda'}) = \xi^{\lambda'\lambda}_{(1,\gamma_\lambda)},\qquad \Xi_n(\rmM^\lambda_{\lambda'}) = \xi^{\lambda\lambda'}_{(1,\gamma_\lambda)}.
	\]
\end{lm}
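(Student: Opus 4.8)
The plan is to compute the localizations of $\rmS_\lambda^{\lambda'}$ and $\rmM^\lambda_{\lambda'}$ by the same strategy as in \cref{TautElts}, exploiting the fact that both classes are pushforwards of fundamental classes along closed embeddings $\delta_{\lambda',\lambda}$, $\delta_{\lambda,\lambda'}$ of $Y_{\lambda'}$ into the relevant Steinberg variety, and then applying \cref{LocPullPush} to the resulting diagram of $T_n$-fixed points. First I would write down the cartesian-type diagram of fixed loci: since $(Y_{\lambda'})^{T_n} = \frakS_n/\frakS_{\lambda'}\times C^n$, $(Z_{\lambda',\lambda})^{T_n} = \frakS_n/\frakS_{\lambda'}\times \frakS_n/\frakS_\lambda\times C^n$, and the embedding $\delta_{\lambda',\lambda}$ on fixed points is induced by the natural projection $\frakS_n/\frakS_{\lambda'}\to \frakS_n/\frakS_\lambda$ (because $\frakS_{\lambda'}\subset\frakS_\lambda$) together with the identity on $C^n$. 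So the fixed-point embedding is $(\id,\mathrm{pr},\id)$, and \cref{LocPullPush} applies to the square formed by this embedding and the corresponding one on $Z^{(2)}$-type varieties.

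Next I would plug into the base-change formula of \cref{LocPullPush}. The Euler-class prefactor $e(N_{(Y')^T}Y')$ is, by construction, the product $\prod_w \gamma_\lambda$-type classes attached to each component of $(Z_{\lambda',\lambda})^{T_n}$, i.e. the Euler class of the normal bundle to $\frakS_n/\frakS_{\lambda'}\times\frakS_n/\frakS_\lambda\times C^n$ inside $Y_{\lambda'}\times Y_\lambda$, which decomposes as a product of the $\gamma_{\lambda'}$-class on the first factor and the $\gamma_\lambda$-class on the second. The correction term $e(N_{(X')^T}X')^{-1}\cdot i_X^!(-)$ applied to the fundamental class $[Y_{\lambda'}]$ gives back $1$ up to the $\gamma_{\lambda'}$-class, since restricting $[Y_{\lambda'}]$ to its fixed locus and dividing by the Euler class of the normal bundle to $(Y_{\lambda'})^{T_n}$ in $Y_{\lambda'}$ yields the identity class on $\frakS_n/\frakS_{\lambda'}\times C^n$ (this is exactly the self-intersection computation from \cref{GysinProp:d}). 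Tracking which $\gamma$'s cancel: the $\gamma_{\lambda'}$ from the source normal bundle cancels against the $\gamma_{\lambda'}$ correction, leaving only $\gamma_\lambda$, and the pushforward along $\mathrm{pr}\times\id$ on fixed points is precisely the operation that turns a class supported on the diagonal-type locus into $\xi^{\lambda'\lambda}_{(1,\gamma_\lambda)}$. The merge case is identical after swapping the roles of the two factors; note that the relevant Euler prefactor there is still $\gamma_\lambda$ (the class attached to the $\lambda$-component), which is why both answers feature $\gamma_\lambda$ rather than $\gamma_{\lambda'}$.

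Concretely, the computation mirrors \cref{TautElts} line by line: with $p_1,p_2$ the projections of $\frakS_n/\frakS_{\lambda'}\times\frakS_n/\frakS_\lambda\times C^n$ onto its two flag-coordinate-plus-$C^n$ factors, and $\iota$ the fixed-point embedding, \cref{LocPullPush} gives
\[
\Xi_n(\rmS_\lambda^{\lambda'}) = \bigl(p_1^*a_{\lambda'}^*(\gamma_{\lambda'})\cdot p_2^*a_\lambda^*(\gamma_\lambda)\bigr)\cdot \iota_*\bigl(a_{\lambda'}^*(\gamma_{\lambda'})^{-1}\cdot i_{Y_{\lambda'}}^*[Y_{\lambda'}]\bigr) = \iota_*\bigl(a_\lambda^*(\gamma_\lambda)\bigr) = \xi^{\lambda'\lambda}_{(1,\gamma_\lambda)},
\]
using \cref{ind-vs-act} to identify $i_{Y_{\lambda'}}^*$ with $a_{\lambda'}^*$ and the fact that $i_{Y_{\lambda'}}^*[Y_{\lambda'}]=1$; the merge computation is symmetric.

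The main obstacle I anticipate is bookkeeping: making sure the Euler classes are attached to the correct components of the fixed-point sets and that the $\gamma_{\lambda'}$ factors genuinely cancel, leaving exactly $\gamma_\lambda$ in both formulas (not, say, $\gamma_{\lambda'}$ in one and $\gamma_\lambda$ in the other). This requires carefully matching the normal-bundle decompositions coming from \eqref{FlagQuotVBun} for $\widetilde{Q}_{\lambda'}\subset\widetilde{Q}_\lambda\subset Q_n$ against the $\gamma$'s defined via \eqref{NorBunFlag}, and checking that the embedding $\delta_{\lambda',\lambda}$ (resp.\ $\delta_{\lambda,\lambda'}$) induces on $T_n$-fixed loci the expected projection of flag varieties with identity on $C^n$, so that \cref{LocPullPush} is applicable with the square genuinely of the required shape. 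Once the geometry of the fixed loci and the normal bundles is pinned down, the rest is the same formal manipulation as in \cref{TautElts}.
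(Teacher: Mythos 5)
Your proof is correct and follows essentially the same route as the paper: identify the $T_n$-fixed loci, recognize that $\delta_{\lambda',\lambda}$ restricts on fixed points to the graph-type embedding $(g\frakS_{\lambda'},x)\mapsto(g\frakS_{\lambda'},g\frakS_\lambda,x)$, apply \cref{LocPullPush} with the Euler-class bookkeeping you describe (the $\gamma_{\lambda'}$ factors cancel, leaving $\gamma_\lambda$), and invoke \cref{ind-vs-act} to replace $i^*_{Y_{\lambda'}}$ by $a^*_{\lambda'}$. The one very minor slip is writing $\iota_*\bigl(a_\lambda^*(\gamma_\lambda)\bigr)$ rather than $\iota_*\bigl(a_{\lambda'}^*(\gamma_\lambda)\bigr)$ — the class that survives lives on $\frakS_n/\frakS_{\lambda'}\times C^n$, so it is the pullback of $\gamma_\lambda$ under $a_{\lambda'}$ — but this is purely notational and does not affect the argument.
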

\begin{proof}
Consider the inclusions of fixed points sets:
\[
\begin{tikzcd}
	\fk{S}_n/\fk{S}_{\lambda'}\times C^n\ar[r,hook,"\Delta"]\ar[d,hook,"i_{Y_{\lambda'}}"] & \fk{S}_n/\fk{S}_{\lambda'}\times\fk{S}_n/\fk{S}_\lambda\times C^n\ar[d,hook,"i_{Z_{\lambda',\lambda}}"]\\
	Y_{\lambda'}\ar[r,hook,"\delta_{\lambda',\lambda}"] & Z_{\lambda',\lambda}
\end{tikzcd}
\]
where the upper horizontal arrow sends $(g\fk{S}_{\lambda'},x)$ to $(g\fk{S}_{\lambda'},g\fk{S}_{\lambda},x)$.
Let $p_\natural:\fk{S}_n/\fk{S}_{\lambda'}\times\fk{S}_n/\fk{S}_\lambda\times C^n\to \fk{S}_n/\fk{S}_\natural \times C^n$, $\natural\in \{\lambda,\lambda'\}$ be the two natural projections.
Applying \cref{LocPullPush}, we get:
\begin{align*}
	\Xi_n(\rmS_\lambda^{\lambda'}) & = \left( p_{\lambda'}^*a_{\lambda'}^*(\gamma_{\lambda'})\cdot p_{\lambda}^*a_{\lambda}^*(\gamma_{\lambda}) \right) \cdot \Delta_*\left(a_{\lambda'}^*(\gamma_{\lambda'})^{-1}\cdot i^*_{Y_{\lambda'}}[Y_{\lambda'}]\right) \\
	& = \Delta_*\left( a_{\lambda'}^*(\gamma_{\lambda'})\cdot a_{\lambda'}^*[Y_{\lambda'}] \right)
 	= \xi^{\lambda'\lambda}_{(1,\gamma_\lambda)},
\end{align*}
where we have used \cref{ind-vs-act} to replace $i_{Y_{\lambda'}}^*$ by $a_{\lambda'}^*$.
The expression for $\Xi_n(\rmM^\lambda_{\lambda'})$ is obtained in an analogous fashion.
\end{proof}

\begin{lm}\label{SMAssoc}
Let $\lambda,\lambda',\lambda''\in\Comp(n)$ such that $\lambda''\subset \lambda'\subset \lambda$.
Then $\rmS_{\lambda'}^{\lambda''}\rmS_\lambda^{\lambda'} = \rmS_{\lambda}^{\lambda''}$, $\rmM^{\lambda}_{\lambda'}\rmM^{\lambda'}_{\lambda''} = \rmM^{\lambda}_{\lambda''}$.
\end{lm}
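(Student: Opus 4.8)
The plan is to verify both relations after applying the localization homomorphism $\Xi_n$ of \cref{ConvLocAlg}, which is injective by \cref{KLRLocInj}; it therefore suffices to check the corresponding identities inside $\Sc_n^{\loc}$. The images of splits and merges under $\Xi_n$ are recorded in \cref{SMLoc}, and the product of two elements $\xi^{\mu\lambda}_{(g,x)}$ is given explicitly by \eqref{SchurLocComp}, so the whole proof comes down to a combinatorial manipulation of these elements, using repeatedly the relations $\xi^{\mu\lambda}_{(gg',x)}=\xi^{\mu\lambda}_{(g,x)}$ for $g'\in\fk{S}_\lambda$ and $\xi^{\mu\lambda}_{(g'g,g'x)}=\xi^{\mu\lambda}_{(g,x)}$ for $g'\in\fk{S}_\mu$.

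The only extra ingredient is that the Euler class $\gamma_\lambda$ of \eqref{NorBunFlag} is $\fk{S}_\lambda$-invariant: the index sets $I_\lambda$ and $N_\lambda$ are stable under the natural $\fk{S}_\lambda$-action on pairs, so $\fk{S}_\lambda$ permutes the factors of $\gamma_\lambda$ among themselves. In particular $\gamma_\lambda^b=\gamma_\lambda$ for every $b\in\fk{S}_{\lambda'}$ whenever $\lambda'\subset\lambda$.

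For the split relation, \cref{SMLoc} gives $\Xi_n(\rmS^{\lambda''}_{\lambda'})=\xi^{\lambda''\lambda'}_{(1,\gamma_{\lambda'})}$ and $\Xi_n(\rmS^{\lambda'}_\lambda)=\xi^{\lambda'\lambda}_{(1,\gamma_\lambda)}$. I apply \eqref{SchurLocComp} with $(\lambda,\mu,\nu)$ specialized to $(\lambda'',\lambda',\lambda)$, and $g=h=1$, $x=\gamma_{\lambda'}$, $y=\gamma_\lambda$. Every term on the right-hand side is then indexed by $b\in\fk{S}_{\lambda'}$; the twist $x(y^b\gamma_{\lambda'}^{-1})^g=\gamma_{\lambda'}\gamma_\lambda^b\gamma_{\lambda'}^{-1}$ collapses to $\gamma_\lambda$ since $\gamma_\lambda^b=\gamma_\lambda$, and $\xi^{\lambda''\lambda}_{(b,\gamma_\lambda)}=\xi^{\lambda''\lambda}_{(1,\gamma_\lambda)}$ because $b\in\fk{S}_{\lambda}$. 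Since $\lambda''\subset\lambda'\subset\lambda$ and $b\in\fk{S}_{\lambda'}\subset\fk{S}_\lambda$, the stabilizer groups $\Gamma^{\lambda''\lambda'}_1$, $\Gamma^{\lambda'\lambda}_1$, $\Gamma^{\lambda''\lambda}_b$ occurring as scalar prefactors have orders $|\fk{S}_{\lambda''}|$, $|\fk{S}_{\lambda'}|$, $|\fk{S}_{\lambda''}|$ respectively, so both sides carry the common factor $|\fk{S}_{\lambda''}|\,|\fk{S}_{\lambda'}|$; cancelling it (legitimate as $\bbk$ has characteristic zero) yields $\Xi_n(\rmS^{\lambda''}_{\lambda'}*\rmS^{\lambda'}_\lambda)=\xi^{\lambda''\lambda}_{(1,\gamma_\lambda)}=\Xi_n(\rmS^{\lambda''}_\lambda)$, hence the first identity. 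The merge relation is entirely parallel: starting from $\Xi_n(\rmM^\lambda_{\lambda'})=\xi^{\lambda\lambda'}_{(1,\gamma_\lambda)}$, $\Xi_n(\rmM^{\lambda'}_{\lambda''})=\xi^{\lambda'\lambda''}_{(1,\gamma_{\lambda'})}$ and specializing $(\lambda,\mu,\nu)$ to $(\lambda,\lambda',\lambda'')$, the relevant twist is $\gamma_\lambda\gamma_{\lambda'}^b\gamma_{\lambda'}^{-1}=\gamma_\lambda$ (now by $\fk{S}_{\lambda'}$-invariance of $\gamma_{\lambda'}$), the identity $\xi^{\lambda\lambda''}_{(b,\gamma_\lambda)}=\xi^{\lambda\lambda''}_{(1,\gamma_\lambda)}$ comes from the second reduction relation together with $\fk{S}_{\lambda'}$-invariance of $\gamma_\lambda$, and the scalar prefactors cancel in the same way.

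There is no serious obstacle here; the only delicate point is bookkeeping of the groups $\Gamma^{\mu\lambda}_g$ and their orders. Alternatively one could argue geometrically: $\rmS^{\lambda'}_\lambda$ and $\rmM^\lambda_{\lambda'}$ are the fundamental classes of the graphs of the proper forgetful maps $Y_{\lambda'}\to Y_\lambda$ induced by $\widetilde{Q}_{\lambda'}\subset\widetilde{Q}_\lambda$, and the convolution of two graph classes is the graph class of the composite provided the fibre product defining $*$ meets the diagonal transversally; checking this (equivalently, that the intersection is reduced of the expected dimension $n^2$, which holds since $\pi$ is small and the forgetful maps compose, $Y_{\lambda''}\to Y_{\lambda'}\to Y_\lambda$) is no shorter than the computation above, so I would present the localization argument.
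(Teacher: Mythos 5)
Your proof is correct, but it takes a genuinely different route from the paper. The paper's proof works directly at the level of (unlocalized) equivariant Borel--Moore homology: it writes $\rmS^{\lambda''}_{\lambda'}\boxtimes\rmS^{\lambda'}_\lambda$ as the pushforward of $[Y_{\lambda''}]\boxtimes[Y_{\lambda'}]$ along the product of graph embeddings, then uses the compatibilities in \cref{GysinProp} (parts (b) and (c)) to commute the refined pullback in the convolution past this pushforward, identifying the result as the graph class $i_*[Y_{\lambda''}]=\rmS^{\lambda''}_\lambda$; the merge case is symmetric. You instead pass to the localized algebra via the injective map $\Xi_n$, apply \cref{SMLoc} and formula \eqref{SchurLocComp}, and reduce to the $\fk{S}_\lambda$-invariance of $\gamma_\lambda$ (which indeed follows from stability of $I_\lambda$ and $N_\lambda$ under $\fk{S}_\lambda$) plus routine bookkeeping of the stabilizers $\Gamma^{\mu\lambda}_g$. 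Both arguments are sound. The geometric proof is coefficient-free, uses only formal properties of refined pullbacks, and does not need injectivity of $\Xi_n$; the localization computation is more bookkeeping-heavy and, a priori, uses denominators (the cancellation you note, or the remark in \cref{subs:P1anyring}, is needed to run it over $\bbZ$). One correction to your closing remark: you dismiss the geometric route as requiring a transversality check for the convolution of graph classes. It does not. The refined pullback formalism of \cref{GysinProp} dispenses with transversality hypotheses entirely; the identity $(p')^!_p\, i_* = i_*\Delta^*$ follows from \cref{GysinProp:b2} together with \cref{GysinProp:b} without any dimension or smoothness argument, which is precisely what the paper exploits and why its proof is shorter.
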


\begin{proof}
We will only prove the first equality, second being completely analogous.
Consider the following commutative diagram:
\[
\begin{tikzcd}
Y_{\lambda''}\times Y_{\lambda'}\ar[d,hook,"i"] & Y_{\lambda''}\ar[d,hook,"i"]\ar[l,"\Delta"']\ar[r,equal] & Y_{\lambda''}\ar[d,hook,"i"]\\
(Y_{\lambda''}\times_{Q_n} Y_{\lambda'})\times (Y_{\lambda'}\times_{Q_n} Y_{\lambda})\ar[d,hook] & Y_{\lambda''}\times_{Q_n} Y_{\lambda'}\times_{Q_n} Y_{\lambda}\ar[l,"p'"']\ar[r,"q"]\ar[d,hook] & Y_{\lambda''}\times_{Q_n} Y_{\lambda}\\
Y_{\lambda''}\times Y_{\lambda'}\times Y_{\lambda'}\times Y_{\lambda} & Y_{\lambda''}\times Y_{\lambda'}\times Y_{\lambda}\ar[l,"p"'] &
\end{tikzcd}
\]
We have $\Delta^* = \Delta^!_{p}$ by \cref{GysinProp:b}, and $i_*\Delta^!_{p} = (p')^!_pi_*$ by \cref{GysinProp:b2}.
As a consequence,
\[
\rmS_{\lambda'}^{\lambda''}\rmS_\lambda^{\lambda'} = q_*(p')^!_pi_*\left([Y_{\lambda''}]\boxtimes[Y_{\lambda'}]\right) = i_*\Delta^*\left([Y_{\lambda''}]\boxtimes[Y_{\lambda'}]\right) = i_*[Y_{\lambda''}] = \rmS_{\lambda}^{\lambda''},
\]
and we may conclude.
\end{proof}

\subsection{Diagrammatic presentation of $\Sc_n$}\label{subs:diag-Schur}
Let us identify compositions of operators defined above with certain cord diagrams.
Our strands are allowed to have multiplicities (i.e. non-negative integer labels), and we always read diagrams from bottom to top.

\paragraph{\textbf{Polynomials}}
We depict the polynomial operators as boxes on strands.
Namely, let
\[
P=P_{(1)}\otimes\ldots\otimes P_{(r)}\in \bf{P}_n^{\fk{S}_\lambda} = \bigotimes_i \bf{P}_{\lambda_i}^{\fk{S}_{\lambda_i}}.
\]
Then we draw $P$ as follows:
\[
\tikz[thick,xscale=.25,yscale=.25]{
	\ntxt{-4}{3}{$P=$}
	
	\draw (0,0) -- (0,2);
	\draw (0,4) -- (0,6);
	\opbox{-1.25}{2}{1.25}{4}{$P_{(1)}$}
	\ntxt{0}{-1.5}{$\lambda_1$}
	\ntxt{0}{7.5}{$\lambda_1$}
	
	\ntxt{4.5}{2}{$\ldots$}
	
	\draw (9,0) -- (9,2);
	\draw (9,4) -- (9,6);
	\opbox{7.75}{2}{10.25}{4}{$P_{(r)}$}
	\ntxt{9}{-1.5}{$\lambda_r$}
	\ntxt{9}{7.5}{$\lambda_r$}
	\ntxt{11}{3}{.}
}
\]

\paragraph{\textbf{Splits and merges}}
Take $\lambda = (\lambda_1,\ldots,\lambda_r)\in\Comp(n)$, and let $\lambda' = (\lambda_1,\ldots,\lambda_{k-1},\lambda_k^{(1)},\lambda_k^{(2)},\lambda_{k+1}\ldots, \lambda_r)$ for some $1\leq k\leq r$, where $\lambda_k^{(1)}+\lambda_k^{(2)}=\lambda_k$.
For such pair of compositions, we draw the corresponding split and merge as follows:
\begin{equation}\label{eq:pic-SM}
\begin{aligned}
	\tikz[thick,xscale=.25,yscale=.25]{
		\ntxt{-3}{2}{$\rmS^{\lambda'}_{\lambda}=$}
		
		\draw (0,0) -- (0,5);
		\ntxt{0}{-1.5}{$\lambda_1$}
		\ntxt{0}{6.5}{$\lambda_1$}
		
		\ntxt{4}{2}{$\ldots$}
		
		\draw (9,0) -- (9,1);
		\ntxt{9}{-1.5}{$\lambda_k$}
		\dsplit{6}{1}{12}{5}
		\ntxt{6}{6.5}{$\lambda_k^{(1)}$}
		\ntxt{12}{6.5}{$\lambda_k^{(2)}$}
		
		\ntxt{14}{2}{$\ldots$}
		
		\draw (18,0) -- (18,5);
		\ntxt{18}{-1.5}{$\lambda_r$}
		\ntxt{18}{6.5}{$\lambda_r$}
		
		\ntxt{19}{2}{,}
	}\qquad\qquad
	\tikz[thick,xscale=.25,yscale=.25]{
		\ntxt{-3}{2}{$\rmM_{\lambda'}^{\lambda}=$}
		
		\draw (0,0) -- (0,5);
		\ntxt{0}{-1.5}{$\lambda_1$}
		\ntxt{0}{6.5}{$\lambda_1$}
		
		\ntxt{4}{2}{$\ldots$}
		
		\draw (9,4) -- (9,5);
		\ntxt{9}{6.5}{$\lambda_k$}
		\dmerge{6}{0}{12}{4}
		\ntxt{6}{-1.5}{$\lambda_k^{(1)}$}
		\ntxt{12}{-1.5}{$\lambda_k^{(2)}$}
		
		\ntxt{14}{2}{$\ldots$}
		
		\draw (18,0) -- (18,5);
		\ntxt{18}{-1.5}{$\lambda_r$}
		\ntxt{18}{6.5}{$\lambda_r$}
		
		\ntxt{19}{2}{.}
	}
\end{aligned}
\end{equation}
We call such splits and merges \textit{elementary}.
\cref{SMAssoc} tells us that splits and merges are associative:
\begin{equation*}
\label{eq:assoc-SM} 
\tikz[thick,xscale=.25,yscale=.25]{
	\dsplit{0}{4}{4}{7}
	\draw (8,4) -- (8,7);
	\dsplit{2}{1}{8}{4}
	\draw (5,0) -- (5,1);
	\ntxt{0}{8}{$a$}
	\ntxt{4}{8}{$b$}
	\ntxt{8}{8}{$c$}
	\ntxt{5}{-1}{$a+b+c$}
	
	\ntxt{10}{3}{$=$}
	
	\dsplit{16}{4}{20}{7}
	\draw (12,4) -- (12,7);
	\dsplit{12}{1}{18}{4}
	\draw (15,0) -- (15,1);
	\ntxt{12}{8}{$a$}
	\ntxt{16}{8}{$b$}
	\ntxt{20}{8}{$c$}
	\ntxt{15}{-1}{$a+b+c$}
	
	\ntxt{21}{3}{,}
}\qquad\qquad
\tikz[thick,xscale=.25,yscale=.25]{
	\dmerge{0}{0}{4}{3}
	\draw (8,0) -- (8,3);
	\dmerge{2}{3}{8}{6}
	\draw (5,6) -- (5,7);
	\ntxt{0}{-1}{$a$}
	\ntxt{4}{-1}{$b$}
	\ntxt{8}{-1}{$c$}
	\ntxt{5}{8}{$a+b+c$}
	
	\ntxt{10}{3}{$=$}
	
	\dmerge{16}{0}{20}{3}
	\draw (12,0) -- (12,3);
	\dmerge{12}{3}{18}{6}
	\draw (15,6) -- (15,7);
	\ntxt{12}{-1}{$a$}
	\ntxt{16}{-1}{$b$}
	\ntxt{20}{-1}{$c$}
	\ntxt{15}{8}{$a+b+c$}
	
	\ntxt{21}{3}{.}
}
\end{equation*}
Moreover, for any $\mu\subset\lambda$ the corresponding split $\rmS^{\mu}_{\lambda}$ and merge $\rmM^{\lambda}_{\mu}$ can be written as a product of elementary ones.

\paragraph{\textbf{Crossings}}
Let $\lambda,\lambda'$ be as above, and let $\lambda'' = (\lambda_1,\ldots,\lambda_{k-1},\lambda_k^{(2)},\lambda_k^{(1)},\lambda_{k+1}\ldots, \lambda_r)$ be obtained from $\lambda'$ by permuting $\lambda_k^{(1)}$ with $\lambda_k^{(2)}$.
Consider the element $\rmR^{\lambda''}_{\lambda'}:=\rmS^{\lambda''}_{\lambda}\cdot \rmM^\lambda_{\lambda'}\in \Sc_{\lambda',\lambda''}$, which we will call an \textit{elementary permutation}.
Diagrammatically, we will depict it as a crossing:
\[
\tikz[thick,xscale=.25,yscale=.25,text height=1.2ex,text depth=.25ex]{
	\crosin{0}{0}{4}{7}
	\ntxt{0}{-1}{$a$}
	\ntxt{4}{-1}{$b$}
	\ntxt{0}{8}{$b$}
	\ntxt{4}{8}{$a$}
	
	\ntxt{6}{3.5}{$:=$}
	
	\dmerge{8}{0}{12}{3}
	\draw (10,3) -- (10,4);
	\dsplit{8}{4}{12}{7}
	\ntxt{8}{-1}{$a$}
	\ntxt{12}{-1}{$b$}
	\ntxt{8}{8}{$b$}
	\ntxt{12}{8}{$a$}
	
	\ntxt{13}{3.5}{.}
}
\]
More generally, let $\lambda,\mu\in\Comp(n)$, and assume that $\mu$ can be obtained from $\lambda$ by a permutation of components.
Let us pick such an element $w\in\fk{S}_r$, where $r$ is the number of components in $\lambda$; note that it is not necessarily unique.
Fix a presentation $w = s_{i_1}\ldots s_{i_l}$, where $s_i\in\fk{S}_r$ are transpositions, and $l$ is the length of $w$.
We then define $\rmR_\lambda^\mu(w)$ as the corresponding product of elementary permutations.

\begin{rmq}
Note that braid relations do not hold for elementary permutations $\rmR^{\lambda''}_{\lambda'}$.
In particular, the general definition of $\rmR_\lambda^\mu(w)$ heavily depends on the choice of presentation of $w$.
One could define these elements in a more canonical way using twisted bialgebra relations, but do not need this for our purposes.
However, this can be easily done for strands of multiplicity 1, see \cref{KLRisowreath}.
There, the elements $\tau_r$ of $\WR_n(H^*(C))$ are the ``canonical crossings'', which differ from the naive split-merge crossings above by a constant.
\end{rmq}

\subsection{Basis of $\Sc_n$}
Let $\lambda, \mu\in \Comp(n)$, and consider $\Sc_{\mu,\lambda} = H^{G_n}_*(Z_{\mu,\lambda})$.
Recall from the proof of \cref{KLRLocInj} that we have a stratification 
\[
Z_{\mu,\lambda} = \bigsqcup_{w\in \doubleS{\mu}{\lambda}}Z_{\mu,\lambda}^w,
\]
which induces a filtration $\{\Sc_{\mu,\lambda}^w\}$ on $\Sc_{\mu,\lambda}$ with associated graded $\bigoplus_w H_*^{G_n}(Z_{\mu,\lambda}^w)$.
We have $H_*^{G_n}(Z_{\mu,\lambda}^w) = \bf{P}_n^{\frakS_{\lambda'}}$, where $\lambda'\in\Comp(n)$ is such that $w\frakS_{\lambda'}w^{-1} = \Gamma_{w}^{\mu\lambda}$.
Since $Z_{\mu,\lambda}^w\simeq G_n\times_{P_{\mu\lambda}^w}\left( \widetilde{Q}_\mu\cap w.\widetilde{Q}_\lambda \right)$, the $T_n$-fixed points $\left( Z_{\mu,\lambda}^w \right)^{T_n}$ are given by $\frakS_n/\frakS_{\mu'}\times C^n$.
Note that 
\[
e(N_{\{1\}\times C^n}Z_{\mu,\lambda}^w) = e(T_{(F_e,F_w)}\Omega_w)e\left(N_{C^n}\!\left( \widetilde{Q}_\mu\cap w.\widetilde{Q}_\lambda \right)\right) = \prod_{(i,j)\in N_\mu\cup gN_\lambda}(x_j-x_i)\prod_{(i,j)\in I_\mu \cap gI_\lambda}(x_i-x_j+\Delta_{ij}).
\]
Let us denote $\beta_w=e(N_{\{1\}\times C^n}Z_{\mu,\lambda}^w)^{-1}\gamma_\lambda^w\gamma_\mu$.
Further, for each $\lambda\in \Comp(n)$ pick a basis $B_\lambda$ of $\bf{P}_n^{\fk{S}_\lambda}$.

\begin{lm}\label{basis-highest-terms}
Let $\{b_{w,P}: w\in \doubleS{\mu}{\lambda}, P\in B_{\lambda'}\}$ be a collection of elements in $\Sc_{\mu,\lambda}$.
Assume that
\[
\Xi_n(b_{w,P}) = \xi^{\mu\lambda}_{(w,\beta_w P)} + \sum_{w'\prec w} \xi_{(w',a_{w'})}
\]
for all $w$, $P$.
Then $\{b_{w,P}\}$ is a $\bbk$-basis of $\Sc_{\mu,\lambda}$.
\end{lm}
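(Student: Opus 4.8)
The plan is to prove \cref{basis-highest-terms} by a triangularity argument with respect to the total order $\prec$ on $\doubleS{\mu}{\lambda}$ fixed in the proof of \cref{KLRLocInj}, using the injectivity of $\Xi_n$ as the essential input.

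First I would set up the ``leading term'' formalism on the localized side. As a $\bbk$-module, $\Sc_{\mu,\lambda}^{\loc}$ decomposes as a direct sum over $w\in\doubleS{\mu}{\lambda}$ of the subspaces spanned by the $\xi_{(w,x)}^{\mu\lambda}$; moreover, for fixed $w$ the assignment $x\mapsto\xi_{(w,x)}^{\mu\lambda}$ is injective, since $\operatorname{char}\bbk=0$ and $\xi_{(w,x)}^{\mu\lambda}$ is the symmetrization of $x$ over $\frakS_n/\Gamma_w^{\mu\lambda}$. Consequently every element of $\Sc_{\mu,\lambda}^{\loc}$ has a unique expansion $\sum_w\xi_{(w,x_w)}^{\mu\lambda}$, and I can speak of its support $\{w:x_w\ne0\}$ and its leading term. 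The hypothesis then reads: $\Xi_n(b_{w,P})$ has support contained in $\{w'\preccurlyeq w\}$ and leading term $\xi_{(w,\beta_w P)}^{\mu\lambda}$; here I record that $\beta_w$ is invertible in the localization, being a product of $\gamma_\mu$, $\gamma_\lambda^w$ and an inverse Euler class.

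Next I would prove linear independence of $\{b_{w,P}\}$. Given a finite relation $\sum_{w,P}c_{w,P}b_{w,P}=0$ with coefficients in $\bbk$, I apply the injective map $\Xi_n$ (injectivity being the content of \cref{KLRLocInj}), pick $w_0$ maximal for $\prec$ among the indices with some nonzero coefficient, and project onto the $w_0$-summand of $\Sc_{\mu,\lambda}^{\loc}$: indices $w\succ w_0$ contribute nothing by maximality, indices $w\prec w_0$ contribute nothing by the support condition, and $\Xi_n(b_{w_0,P})$ contributes its leading term. Since $x\mapsto\xi_{(w_0,x)}^{\mu\lambda}$ is injective, $\beta_{w_0}$ is invertible, and $B_{\lambda'}$ is a basis of $\bf{P}_n^{\frakS_{\lambda'}}$ (which is torsion-free, hence embeds into its localization), I conclude $c_{w_0,P}=0$ for all $P$, contradicting the choice of $w_0$.

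To upgrade linear independence to being a basis, I would invoke the dimension count already available: the proof of \cref{KLRLocInj} equips $\Sc_{\mu,\lambda}$ with a filtration whose associated graded is $\bigoplus_w H_*^{G_n}(Z_{\mu,\lambda}^w)\cong\bigoplus_w\bf{P}_n^{\frakS_{\lambda'}}$, so $\Sc_{\mu,\lambda}$ is finite-dimensional in each degree with graded dimension $\sum_w\dim\bf{P}_n^{\frakS_{\lambda'}}$. Taking the bases $B_{\lambda'}$ homogeneous and using that $\Xi_n$ and $\beta_w$ are homogeneous, the family $\{b_{w,P}\}$ has exactly this graded cardinality, so being linearly independent it must be a basis. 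The step I expect to need the most care is the first one — verifying the clean $\xi$-summand decomposition of $\Sc_{\mu,\lambda}^{\loc}$ and the injectivity of the partial symmetrizations $x\mapsto\xi_{(w,x)}^{\mu\lambda}$ in characteristic zero — since this is exactly what makes ``support'' and ``leading term'' well defined; once it is in place, the remaining steps are routine bookkeeping with the order $\prec$.
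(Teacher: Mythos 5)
Your overall strategy — triangularity with respect to $\prec$ on the localized side, with injectivity of $\Xi_n$ from \cref{KLRLocInj} as the key input — is sound, and the linear independence argument is correct. However, the final step (upgrading linear independence to a basis via a graded dimension count) has a gap. To make the count go through you must verify that the degree of $b_{w,P}$, which is determined by the hypothesis via $\deg(\beta_w)+\deg(P)$ together with the degree shift intrinsic to $\Xi_n$, coincides with the degree of $P$ viewed as an element of the associated graded piece $H_*^{G_n}(Z_{\mu,\lambda}^w)$ of the filtration from \cref{KLRLocInj}. This is not a formality: it is precisely the Euler class comparison that the paper carries out (the identity $\Xi_n^w\circ j^* = \xi^{\mu\lambda}_{(1,\beta_w^{-1})}\cdot h\circ\Xi_n\circ i_*$, which tracks the contributions $e(N_{\{1\}\times C^n}Z_{\mu,\lambda}^w)$ and $\bigl(\gamma_\mu\gamma_\lambda^w\bigr)^{-1}$). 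Writing ``using that $\Xi_n$ and $\beta_w$ are homogeneous'' only ensures $\deg b_{w,P}$ is well-defined; it does not pin down what that degree is, and without the actual computation you cannot compare the graded cardinality of $\{b_{w,P}\}$ with the graded dimension of $\Sc_{\mu,\lambda}$.

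A secondary issue: the hypothesis does not require $b_{w,P}$ to be homogeneous, so some reduction to the homogeneous case is needed before a graded count can be invoked. (One can project to the degree of the leading term $\xi_{(w,\beta_wP)}$; the error has strictly smaller support, so a unitriangular change of basis fixes things — but this must be said.) The paper's route sidesteps both problems by showing directly that $b_{w,P}$ lies in $H^{G_n}_*(\overline{Z^w_{\mu,\lambda}})$ and maps to $P$ in $\gr^w\Sc_{\mu,\lambda}$; this is strictly stronger than your dimension count and contains the degree matching as a consequence. Your argument can be repaired, but the Euler class verification is unavoidable, so there is no real saving over the paper's approach.
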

\begin{proof}
It is enough to show that these elements form a basis after passing to the associated graded $\bigoplus_w H_*^{G_n}(Z_{\mu,\lambda}^w)$.
An argument analogous to \cref{isovialoc} shows that the composition $H^{G_n}_*(Z_{\mu,\lambda}^w)\to H^{T_n}_*((Z_{\mu,\lambda}^w)^{T_n})\subset \Sc_{\mu,\lambda}^\loc$ is given by $P\mapsto \xi_{(w,P)}^{\mu\lambda}$.
The assumption on $b_{w,P}$ then implies that it is contained in $H_*^{G_n}(\overline{Z_{\mu,\lambda}^w})$.
Consider the following diagram:
\[\begin{tikzcd}
H_*^{G_n}(Z_{\mu,\lambda}^w)\ar[d,hook,"\Xi_n^w"] & H_*^{G_n}(\overline{Z_{\mu,\lambda}^w})\ar[l,"j^*"']\ar[r,"i_*"]\ar[d,hook,"\Xi_n^{\preccurlyeq w}"] & H_*^{G_n}(Z_{\mu,\lambda})\ar[d,hook,"\Xi_n"]\\
H_*^{T_n}((Z_{\mu,\lambda}^w)^{T_n})_\loc & H_*^{T_n}((\overline{Z_{\mu,\lambda}^w})^{T_n})_\loc \ar[l,"h"']\ar[r,hook] & \Sc_{\mu,\lambda}^\loc
\end{tikzcd}\]
We have $b_{w,P} = i_*b'$ for some $b'\in H_*^{G_n}(\overline{Z_{\mu,\lambda}^w})$, and the image of $b_{w,P}$ in the associated graded is given by $j^*(b')$.
On the other hand, the left square in the diagram above commutes, and by \cref{LocPullPush} the right square commutes up to Euler class, which equals precisely to $\xi^{\mu\lambda}_{1,\beta_w^{-1}}$.
In effect, the closed embedding $\Xi_n^w$ contributes $e(N_{\{1\}\times C^n}Z_{\mu,\lambda}^w)$, while $\Xi_n$ contributes
\[
e(N_{\{1\}\times C^n}Y_\mu)^{-1}e(N_{\{w\}\times C^n} Y_\lambda)^{-1} = \left(\gamma_\mu\gamma_\lambda^w\right)^{-1}.
\]
Thus
\begin{gather*}
\Xi_n^w\circ j^*(b') = h\circ\Xi_n^{\preccurlyeq w} (b') = \xi^{\mu\lambda}_{1,\beta_w^{-1}} h\circ \Xi_n\circ i_*(b') =  \xi^{\mu\lambda}_{(w,\beta_w^{-1}\beta_w P)} = \xi^{\mu\lambda}_{(w, P)}.
\end{gather*}
Since the localization map $\Xi_n$ is injective, we conclude that the image of $b_{w,P}$ in the associated graded is $P$.
Running over all $w\in \doubleS{\mu}{\lambda}$, $P\in B_{\lambda'}$ we obtain a basis of $\Sc_{\mu,\lambda}$.
\end{proof}

Let $g\in \doubleS{\mu}{\lambda}$.
As before, let $\mu'\in\Comp(n)$ be such that $\Gamma^{\mu\lambda}_g\simeq \fk{S}_{\mu'}$, and let $\lambda'$ be such that $\fk{S}_{\lambda'} = g^{-1}\fk{S}_{\mu'}g = \fk{S}_{\lambda}\cap g\fk{S}_{\mu}g^{-1}$.
Note that $g$ induces a permutation $w$ on the set of components of $\lambda'$, which transforms $\lambda'$ into $\mu'$.
Pick $P\in\bf{P}_n^{\fk{S}_{\lambda'}}$.
For each such pair $(g,P)$, we construct the following elements of $\Sc_{\mu,\lambda}$:
\begin{equation}\label{fla:basisSchur}
\Psi_g^P = \rmM_{\mu'}^\mu\rmR_{\lambda'}^{\mu'}(w)P\rmS_\lambda^{\lambda'},\qquad \Psi_g = \Psi_g^1 = \rmM_{\mu'}^\mu\rmR_{\lambda'}^{\mu'}(w)\rmS_\lambda^{\lambda'}.
\end{equation}

\begin{exe}
	Let $\lambda=(3,1)$, $\mu=(2,2)$, $g=(1,3,4,2)$, and $P=x_1^2x_2x_3$.
	Then $\lambda'=(1,2,1)$, $\mu'=(1,1,2)$, $w=(1,3,2)$, and we have
	\[
	\tikz[thick,xscale=.25,yscale=.25,text height=1.2ex,text depth=.25ex]{
		\ntxt{-5}{5.5}{\Large $\Psi_g^P=$}
		\ntxt{3}{-2}{$3$}
		\ntxt{12}{-2}{$1$}
		\draw (3,-1) -- (3,-0.5);
		\dsplit{0}{-0.5}{6}{2.5}
		\ntxt{15.5}{0}{$\rmS_\lambda^{\lambda'}$}
		
		\draw[very thin,dashed] (-2,2) -- (16,2);
		
		\opbox{-2}{2.5}{2}{5}{$x_1^2$}
		\opbox{4}{2.5}{8}{5}{$x_2x_3$}
		\ntxt{15.5}{3.5}{$P$}
		
		\draw[very thin,dashed] (-2,5.5) -- (16,5.5);
		
		\draw (12,-1) -- (12,5);
		\crosin{6}{5}{12}{9.5}
		\draw (0,5) -- (0,9.5);
		\ntxt{15.5}{7.5}{$\rmR_{\lambda'}^{\mu'}(w)$}
		
		\draw[very thin,dashed] (-2,10) -- (16,10);
		
		\dmerge{0}{9.5}{6}{12.5}
		\draw (12,9.5) -- (12,13);
		\draw (3,12.5) -- (3,13);
		\ntxt{15.5}{11.5}{$\rmM_{\mu'}^\mu$}
		
		\ntxt{3}{14}{$2$}
		\ntxt{12}{14}{$2$}
	}
	\]
\end{exe}

\begin{prop}\label{lm:basis-Schur-zigzag}
The following set is a basis for $\Sc_{\mu,\lambda}$:
\[
\left\{\Psi^P_g:g\in \doubleS{\mu}{\lambda}, P\in B_{\lambda'}\right\}.
\]
\end{prop}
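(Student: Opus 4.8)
The plan is to use the criterion furnished by \cref{basis-highest-terms}: it suffices to show that for each $g\in\doubleS{\mu}{\lambda}$ and $P\in B_{\lambda'}$ the localized image $\Xi_n(\Psi_g^P)$ has the triangular form
\[
\Xi_n(\Psi_g^P) = \xi^{\mu\lambda}_{(g,\beta_g\cdot P')} + \sum_{w'\prec g}\xi_{(w',a_{w'})}
\]
for some rescaling $P'$ of $P$ (and then absorb the scalars by adjusting the chosen basis $B_{\lambda'}$, since multiplication by the invertible class $\beta_g$ is a bijection on $\bf{P}_n^{\frakS_{\lambda'}}$). So the computation is entirely inside $\Sc^{\loc}_n$, where all four types of generators have been given explicitly: polynomials by \cref{TautElts}, splits and merges by \cref{SMLoc}, and hence elementary permutations $\rmR^{\lambda''}_{\lambda'}$ by composing the two via \eqref{SchurLocComp}.

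First I would compute $\Xi_n(\rmR_{\lambda'}^{\mu'}(w))$ by induction on the length of $w$, multiplying out the elementary crossings using \eqref{SchurLocComp}. Each elementary crossing $\rmR^{\lambda''}_{\lambda'}$ localizes, by \cref{SMLoc} and \eqref{SchurLocComp}, to $\xi^{\lambda'\lambda''}_{(s,\ast)}$ where $s$ is the corresponding transposition of blocks, and the coefficient $\ast$ is a product/sum of $\gamma_\bullet$'s and their translates $\gamma_\bullet^{s}$ together with the factor $\gamma_\mu^{-1}$ coming from the twist; crucially, the ``leading'' term (the one indexed by the longest group element appearing) is a single $\xi$, while the $b\in\frakS_\mu$-sum in \eqref{SchurLocComp} produces only terms indexed by shorter elements. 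Iterating, I get $\Xi_n(\rmR_{\lambda'}^{\mu'}(w)) = \xi^{\lambda'\mu'}_{(w,c_w)} + (\text{lower order})$, where $c_w$ is an explicit product of Euler-type classes and $w$ is here viewed as the minimal-length coset representative lifting the block permutation. Then $\Xi_n(\Psi_g^P) = \Xi_n(\rmM^\mu_{\mu'})\ast \Xi_n(\rmR^{\mu'}_{\lambda'}(w))\ast\Xi_n(P)\ast\Xi_n(\rmS^{\lambda'}_\lambda)$ is obtained by four more applications of \eqref{SchurLocComp}; since $\rmM^\mu_{\mu'}$ and $\rmS^{\lambda'}_\lambda$ only change the parabolic (their group index is $1\in\frakS_n$), they do not lengthen the indexing element, and the leading term is $\xi^{\mu\lambda}_{(g, \beta_g P)}$ with $\beta_g$ exactly the class $\beta_w=e(N_{\{1\}\times C^n}Z_{\mu,\lambda}^w)^{-1}\gamma_\lambda^w\gamma_\mu$ appearing in \cref{basis-highest-terms} — this identification of the coefficient is the bookkeeping heart of the argument, and should come out by matching the Euler-class factors from \cref{SMLoc,TautElts} against the defining formula for $\beta_w$.

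The main obstacle I anticipate is precisely controlling the ``lower order'' terms and verifying that no term of order $\succcurlyeq g$ other than the claimed leading one is produced when the various $\frakS_\mu$- and $\frakS_{\mu'}$-sums in \eqref{SchurLocComp} are expanded. This amounts to a statement about the Bruhat order: when one composes $\xi_{(g_1,\ast)}\ast\xi_{(g_2,\ast)}$ with $g_1,g_2$ minimal-length representatives such that $\ell(g_1g_2)=\ell(g_1)+\ell(g_2)$, the summands $\xi_{(g_1 b g_2,\ast)}$ for $b$ in the relevant parabolic all satisfy $g_1bg_2\preccurlyeq g_1g_2$, with equality only for the ``identity'' term. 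This is a standard fact about double cosets and the subword property of Bruhat order, but spelling it out carefully in the block-permutation setting (where $w$ permutes blocks of possibly different sizes) requires a little care; I would isolate it as an auxiliary lemma. Granting it, the triangularity is immediate, \cref{basis-highest-terms} applies, and we conclude that $\{\Psi^P_g\}$ is a basis of $\Sc_{\mu,\lambda}$.
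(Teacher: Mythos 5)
You correctly identify the strategy the paper uses: apply \cref{basis-highest-terms} after computing leading terms of $\Xi_n(\Psi_g^P)$ in $\Sc_n^\loc$ by iterating formula~\eqref{SchurLocComp}, using \cref{TautElts,SMLoc} for the generators. The gaps are precisely in the two points you defer.

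The ``standard Bruhat-order fact'' you propose to use --- that $g_1bg_2\preccurlyeq g_1g_2$ for all $b$ in the relevant parabolic, with equality only at the identity term --- is false and does not match the structure of the computation. Already for a single elementary crossing, \eqref{SchurLocComp} is applied to $\xi^{\lambda''\nu}_{(1,\gamma_\nu)}\ast\xi^{\nu\lambda'}_{(1,\gamma_\nu)}$, so $g_1=g_2=1$ and the sum runs over $b\in\fk{S}_\nu$; the leading contribution comes from the \emph{longest} double coset representative $s\in\fk{S}_{\lambda''}\backslash\fk{S}_\nu/\fk{S}_{\lambda'}$, not the identity, so here $g_1bg_2=s\succ 1=g_1g_2$. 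What is actually needed is a uniqueness-of-factorization statement: if a chain $s'_lb_{l-1}s'_{l-1}\cdots b_1s'_1$, with $s'_i\in\fk{S}_{\lambda^i}\backslash\fk{S}_{\nu^i}/\fk{S}_{\lambda^{i-1}}$ and $b_i\in\fk{S}_{\lambda^i}$, lands in the target double coset $\fk{S}_{\mu'}g\fk{S}_{\lambda'}$, then necessarily $s'_i=s_i$ for every $i$. The paper isolates and proves this as a nested lemma inside the proof by a wiring-diagram argument, tracking a carefully chosen index through the reduced word for $w$ and deriving a contradiction from any deviation; it is not a corollary of the subword property. Without this lemma you cannot rule out that non-leading terms at one step recombine, after later multiplications, to contribute to the $g$-double coset.

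The identification of the accumulated coefficient with $\beta_g$ is likewise not mere bookkeeping: the paper needs a second nested inductive lemma computing the running product $\zeta_i$ of Euler-type classes, matching the index sets $g_iN_\lambda\cup N_{\mu,i}$ and $g_iI_\lambda\cap I_{\mu,i}$ at each step via a case analysis of how the crossing $s_i$ transforms these sets. A further minor confusion: $\beta_g$ is already built into the criterion of \cref{basis-highest-terms}, so there is nothing to ``absorb by invertibility of $\beta_g$''; the actual adjustment the paper makes at the end is the substitution $P\rightsquigarrow P^{g^{-1}}$, which undoes the $g$-twist of the polynomial that $\Psi_g^P$ sandwiches between the permutation and the split.
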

\begin{rmq}
Note that when $\mu=(n)$ is the trivial composition, we have $Z = Y_\lambda$, and this statement follows from the isomorphism~\eqref{FlagCoh}.
\end{rmq}
\begin{proof}
In order to simplify the notation, we will write $Z = Z_{\mu,\lambda}$ throughout the proof.
In light of \cref{basis-highest-terms}, we need to compute highest terms of $\Xi_n(\Psi_g^P)$.
From now on, we will denote the presence of lower terms by ellipsis.

We begin by computing elementary permutations.
Let $\nu,\lambda',\lambda''$ be as in the definition of $\rmR_{\lambda'}^{\lambda''}$; we write $\nu$ instead of $\lambda$ to avoid conflict of notation.
Note that the longest element in $\fk{S}_{\lambda''}\backslash \fk{S}_{\nu}/ \fk{S}_{\lambda'}\subset \doubleS{\lambda''}{\lambda'}$ is the permutation that exchanges the components $\nu_k^{(1)}$ and $\nu_k^{(2)}$; denote it by $s$.
Using formula~\eqref{SchurLocComp} and \cref{SMLoc}, we obtain:
\begin{align*}
\rmR_{\lambda'}^{\lambda''} & = \rmS_{\nu}^{\lambda''} \cdot \rmM^{\nu}_{\lambda'}
 = \xi^{\lambda''\nu}_{(1,\gamma_{\nu})}*\xi^{\nu\lambda'}_{(1,\gamma_{\nu})}
 = \frac{1}{|\fk{S}_{\lambda''}||\fk{S}_{\lambda'}|}\sum_{b\in\fk{S}_{\nu}} |\Gamma_b^{\lambda''\lambda'}| \xi^{\lambda''\lambda'}_{(b,\gamma_{\nu})}
 = \frac{|\Gamma_s^{\lambda''\lambda'}|}{|\fk{S}_{\lambda''}||\fk{S}_{\lambda'}|} \left(\sum_{b\in \frakS_{\lambda'}}\xi^{\lambda''\lambda'}_{(sb,\gamma_{\nu})}\right) +\ldots\\
& = \xi^{\lambda''\lambda'}_{(s,\gamma_{\nu})} +\ldots
\end{align*}

Next, consider general permutations.
Let $w\in \frakS_r$ be the permutation of components of $\lambda'$ defined by $g$, and fix a reduced presentation $w=s_l\ldots s_1$\footnote{here, $s_i$ does not stand for the transposition $(i,i+1)$, but rather for some simple transposition $(j,j+1)$, $1\leqslant j \leqslant r-1$}.
We write $w_i:= s_i\ldots s_1\in \frakS_r$.
Let $\lambda^i = w_i(\lambda')$, and $\nu^i$ the intermediate composition between $\lambda^{i-1}$ and $\lambda^i$, i.e. $\nu^i$ is such that we have $\rmR_{\lambda^{i-1}}^{\lambda^{i}}=\rmS^{\lambda^{i}}_{\nu^i}\cdot\rmM_{\lambda^{i-1}}^{\nu^i}$).
Note that $\lambda^l = \mu'$.
Let us write $M_j^i = [\tilde{\lambda}^{i}_{j-1}+1,\tilde{\lambda}^{i}_{j}]$, and $M^i_{jk} = M^i_j\times M^i_k$.
We also denote by $s_i$ the corresponding longest element in $\fk{S}_{\lambda^i}\backslash \fk{S}_{\nu^i}/ \fk{S}_{\lambda^{i-1}}$ as before.

\begin{lm}
Assume $s'_lb_{l-1}s'_{l-1}\ldots b_1s'_1\in \fk{S}_{\mu'}g\fk{S}_{\lambda'}$, where $s'_i\in \fk{S}_{\lambda^i}\backslash \fk{S}_{\nu^i}/ \fk{S}_{\lambda^{i-1}}$, and $b_i\in \fk{S}_{\lambda^i}$.
Then $s'_i = s_i$ for all $i$.
\end{lm}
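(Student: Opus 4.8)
The key point is that the decomposition $s'_l b_{l-1} s'_{l-1}\cdots b_1 s'_1$, read as a word in $\fk{S}_n$ where each $s'_i$ is a minimal-length double coset representative in $\fk{S}_{\lambda^i}\backslash \fk{S}_{\nu^i}/ \fk{S}_{\lambda^{i-1}}$ and $b_i\in\fk{S}_{\lambda^i}$, is \emph{length-additive}, so that its total length equals $\sum_i \ell(s_i') + \sum_i \ell(b_i)$, and that on the other hand the fixed element $g$ has minimal length in its double coset $\fk{S}_{\mu'}g\fk{S}_{\lambda'}$ with $\ell(g) = \ell(s_l) + \cdots + \ell(s_1) = \ell(w)$ (as read off from the reduced expression $w = s_l\cdots s_1$ in $\fk{S}_r$, noting each $s_i$ is a ``large'' transposition swapping two blocks and so has length $\lambda^i_j\lambda^i_{j+1}$ on that block). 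First I would verify this length-additivity: since each $\nu^i$ is intermediate between $\lambda^{i-1}$ and $\lambda^i$ and $s'_i$ is the longest element of $\fk{S}_{\lambda^i}\backslash\fk{S}_{\nu^i}/\fk{S}_{\lambda^{i-1}}$ precisely when $s_i' = s_i$, one checks that the block structures are nested in the way that makes the concatenation reduced — this is a standard fact about parabolic subgroups of type $A$, following from the subword characterization of the Bruhat order and the fact that $\fk{S}_{\lambda^i}$ and $\fk{S}_{\lambda^{i-1}}$ differ only by merging/splitting adjacent blocks.

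The heart of the argument is then a counting/minimality comparison. I would show that any element of the form $s'_l b_{l-1}\cdots b_1 s'_1$ has length at least $\ell(g)$, with equality forcing all $b_i = e$ and each $s'_i$ to be the longest double-coset element $s_i$; conversely, $g$ itself is realized by such a word (take $b_i = e$, $s'_i = s_i$), and $g$ has minimal length in $\fk{S}_{\mu'}g\fk{S}_{\lambda'}$ by the assumption $g\in\doubleS{\mu'}{\lambda'}$ (inherited from $g\in\doubleS{\mu}{\lambda}$ and the fact that $\fk{S}_{\mu'}\subseteq\fk{S}_\mu$, $\fk{S}_{\lambda'}\subseteq\fk{S}_\lambda$). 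Given $s'_l b_{l-1}\cdots b_1 s'_1\in\fk{S}_{\mu'}g\fk{S}_{\lambda'}$, and since all these elements differ from $g$ only by left/right multiplication inside $\fk{S}_{\mu'}$ and $\fk{S}_{\lambda'}$, I would argue the length of any word in this coset that is built from the prescribed $s'_i, b_i$ pattern is bounded below by $\sum_i \ell(s_i)$, because deleting the $b_i$'s can only shorten, and the remaining $s'_l\cdots s'_1$ already has length $\geq \ell(w) = \ell(g)$ by the above nesting (each $s'_i$ has length $\geq$ the ``large transposition'' contribution, and these add up). Equality in the chain of inequalities then pins down $s'_i = s_i$ and $b_i = e$.

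The main obstacle will be making the length bookkeeping fully rigorous: I need to be careful that the $s'_i$, living in the \emph{double} coset $\fk{S}_{\lambda^i}\backslash\fk{S}_{\nu^i}/\fk{S}_{\lambda^{i-1}}$, can genuinely be arranged so the product is reduced, and that multiplying by the interleaved $b_i\in\fk{S}_{\lambda^i}$ cannot conspire to reduce total length below $\ell(g)$ while still landing in $\fk{S}_{\mu'}g\fk{S}_{\lambda'}$. The cleanest way is probably to work at the level of the partial flag varieties: the product $\rmR^{\lambda^l}_{\lambda^{l-1}}\cdots\rmR^{\lambda^1}_{\lambda^0}$ corresponds geometrically to a composition of correspondences between $\mathscr{F}_{\lambda^i}$'s, and the stratum $O_{w_1,w_2}$ labeled by $g$ appears with the top stratum of each $\rmR$ contributing, by \eqref{SchubertCells} and the compatibility of the orbit-closure order with Bruhat order; any contribution from a non-top stratum $s'_i\prec s_i$ lands in a strictly smaller stratum. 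Alternatively one can phrase everything combinatorially via the ``gallery'' interpretation of reduced words in $\fk{S}_r$ and the standard fact (e.g.\ Geck--Pfeiffer, or Deodhar) that for parabolic subgroups $W_J\subseteq W_I$ the minimal coset representatives multiply length-additively along a reduced word. I would choose whichever of these is already implicitly available from the cited literature, most likely the combinatorial one, and spell out only the step where the interleaved $b_i$ are absorbed.
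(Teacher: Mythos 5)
Your length-counting strategy does not close, and I think the gap is structural rather than a bookkeeping issue. First, you aim to conclude both $s'_i = s_i$ \emph{and} $b_i = e$, but the lemma does not claim the latter, and in fact it is false: with $s'_i = s_i$, the relation $s_i \fk{S}_{\lambda^{i-1}} s_i^{-1} = \fk{S}_{\lambda^i}$ lets you shuffle all the $b_i$'s to the ends of the word, where they land in $\fk{S}_{\mu'}$ and $\fk{S}_{\lambda'}$; nontrivial $b_i$'s certainly occur. More seriously, the chain of inequalities $\ell(g)\leq \ell(s'_l b_{l-1}\cdots b_1 s'_1) \leq \sum_i\ell(s'_i)+\sum_i\ell(b_i)$ (minimality of $g$ in its double coset, then subadditivity) gives no handle on the $s'_i$: a strictly smaller choice $s'_k\prec s_k$ can be numerically compensated by longer $b_j$'s without violating either inequality. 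Your attempt to repair this by ``deleting the $b_i$'s can only shorten'' is not a legitimate move --- after deleting them the product $s'_l\cdots s'_1$ need not lie in $\fk{S}_{\mu'}g\fk{S}_{\lambda'}$ at all, so you lose the only thing anchoring the argument. (Also, $\sum_i\ell(s_i)$ is not $\ell(w)$: each $s_i$ is a block-swap whose length in $\fk{S}_n$ is the product of the two block sizes, whereas $\ell(w)$ counts crossings in $\fk{S}_r$; the correct identity is $\ell(g)=\sum_i\ell(s_i)$.)

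The paper's proof is of a different nature: rather than a global length count, it is a local ``misrouting'' argument. It picks the minimal index $i$ (and minimal $k$) for which $s'_k\neq s_k$ while strand $i$ is involved in the $k$-th crossing, then tracks a specific entry $p_0$ through the chain $p_j=b_j s'_j(p_{j-1})$. Because $s'_k$ is not the maximal double coset element, some entry $m$ of the ``wrong'' block is left behind at step $k$; and because the word $w=s_l\cdots s_1$ is reduced, each pair of strands in $\fk{S}_r$ crosses at most once, so once $p_\bullet$ is off the $i$-th strand it can never return to land in $g(M_i^0)$. This yields the contradiction directly and never needs to control the $b_i$'s beyond the fact that each $b_j$ preserves the blocks of $\lambda^j$. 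If you want to salvage your approach, the missing ingredient is exactly this reducedness/no-double-crossing input; a pure length comparison cannot see it.
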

\begin{proof}
The condition on $s'_i$'s can be rewritten as $g = b_ls'_l\ldots b_1s'_1$.
Suppose the equality $s'_i=s_i$ does not always hold.
Let $i\in [1,r]$ be the minimal index such that for some $k$ we have $w_k(i)\neq w_{k-1}(i)$ and $s_k\neq s'_k$.
Consider the smallest such $k$, and denote $\alpha = w_k(i)$.
There exists an index $m\in M^{k-1}_{w_{k-1}(i)}$ such that $s'_k(m)$ lies in $M^k_{w_{k-1}(i)}$; note that $w_{k-1}(i) = \alpha-1$.
Let $p_0 = (b_{k-1}s_{k-1}'\ldots b_1s_1')^{-1}(m)$, and consider the sequence $p_j = b_js_j'(p_{j-1})$. 
Let $a_j$ be such that $p_j\in M^j_{w_j(a_j)}$.

Pictorially, we draw the presentation $w = s_l\ldots s_1$ as a diagram on $r$ strands, going from bottom to top and numbered $1$ to $r$.
Since this presentation is reduced, each pair of strands intersects at most once.
Then $a_j$ tells us on which strand the image of $p_0$ is located after $j$-th crossing.
Depending of $s'_j$, at each crossing we either swap the strand or not.
The condition $g = b_ls'_l\ldots b_1s'_1$ tells us that as we traverse the diagram from bottom to top, we should end up on the $i$-th strand.
The minimality of $i$ implies that we can only change the strand on intersections with strands $i,\ldots, r$.
However, for $i_0>i$ the $i_0$-th strand has to intersect $i$-th strand first before intersecting $a_k$-th strand.
Therefore even if we change strands, the new strand cannot intersect $i$-th strand again, so that $g(p_0) = p_r\notin g(M^0_{i})$.
We have arrived at a contradiction.
\end{proof}

In particular, the highest term of $\rmR_{\lambda'}^{\mu'}(w)$ must be contained in the product of highest terms of elementary permutations.
By definition, conjugation by $s_i$ sends $\fk{S}_{\lambda^{i-1}}$ to $\fk{S}_{\lambda^i}$.
As a consequence $s_{i+1}bs_{i}$ defines the same class in $\fk{S}_{\lambda^{i+1}}\backslash \fk{S}_{n}/\fk{S}_{\lambda^{i-1}}$ for any $b\in \fk{S}_{\lambda^i}$.
Moreover, we have $|\Gamma^{\lambda^{i+1}\lambda^i}_{s_{i+1}}|=|\Gamma^{\lambda^{i+1}\lambda^{i-1}}_{s_{i+1}bs_{i}}|=|\fk{S}_{\lambda^{i+1}}|$, so that all coefficients in~\eqref{SchurLocComp} cancel out.
We get
\begin{gather*}
\rmR_{\lambda'}^{\mu'}(w)
 = \rmR_{\lambda^{l-1}}^{\mu'}\ldots \rmR_{\lambda'}^{\lambda^1}
 = \xi^{\mu'\lambda^{l-1}}_{(s_l,\gamma_{\nu^l})}*\ldots*\xi^{\lambda^1\lambda'}_{(s_1,\gamma_{\nu^1})} + \ldots
  = \xi^{\mu'\lambda'}_{(g,E)} + \ldots;\\
  E = \gamma_{\nu^l}(\gamma_{\nu^{l-1}}\gamma^{-1}_{\lambda^{l-1}})^{s_l}\ldots (\gamma_{\nu^{1}}\gamma^{-1}_{\lambda^{1}})^{s_l\ldots s_2}.
\end{gather*}

Denote $\zeta_0 = \gamma_{\lambda'}$, and $\zeta_i = (\gamma_{\nu^i}^{-1}\gamma_{\lambda^i})\zeta_{i-1}^{s_i}$ for $i>0$.
Recall that $r$ is the number of components in $\mu'$ and let $A_\mu$ be the subset of $[1,r]^2$ such that we have $N_\mu = \bigsqcup_{(j,k)\in A_\mu}M^l_{jk}$.
By analogy, we define $N_{\mu,i} = \bigsqcup_{(j,k)\in A_\mu}M^i_{jk}$, and $I_{\mu,i} = N_{\mu,i} \cup \left(\bigsqcup_j M^i_{jj}\setminus \{(j,j):1\leq j\leq n\}\right)$.

\begin{lm}
Let $g_i = s_i\ldots s_1\in \fk{S}_n$.
We have 
$$\zeta_i=\prod_{(p,q)\in g_iN_{\lambda}\cup N_{\mu,i}}(x_q-x_p) \prod_{(p,q)\in g_iI_{\lambda}\cap I_{\mu,i}}(x_p-x_q+\Delta_{pq}).
$$
\end{lm}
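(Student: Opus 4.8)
The plan is to prove the formula by induction on $i$, the recursion $\zeta_i=(\gamma_{\nu^i}^{-1}\gamma_{\lambda^i})\zeta_{i-1}^{s_i}$ making this the natural strategy.

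For the base case $i=0$ one has $g_0=1$ and $\lambda^0=\lambda'$, so the assertion reads $\gamma_{\lambda'}=\prod_{(p,q)\in N_\lambda\cup N_{\mu,0}}(x_q-x_p)\prod_{(p,q)\in I_\lambda\cap I_{\mu,0}}(x_p-x_q+\Delta_{pq})$. Comparing with the explicit expression for $\gamma_{\lambda'}$ in \eqref{NorBunFlag}, I would reduce this to the two combinatorial identities $N_{\lambda'}=N_\lambda\cup N_{\mu,0}$ and $I_{\lambda'}=I_\lambda\cap I_{\mu,0}$. Since $\fk{S}_{\lambda'}=g^{-1}\fk{S}_{\mu'}g$ while $\fk{S}_{\mu'}=\fk{S}_\mu\cap g\fk{S}_\lambda g^{-1}$, the blocks of $\lambda'$ refine those of $\lambda$, and $g$ being the minimal $(\mu,\lambda)$-double coset representative guarantees that the resulting intervals occur in the right order and that two $\lambda'$-blocks inside a common $\lambda$-block correspond, under $g$, to $\mu'$-blocks in distinct $\mu$-blocks; this is precisely the condition defining $A_\mu$. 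Unwinding the definitions of $N_\bullet$ and $I_\bullet$ then yields both identities.

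For the inductive step, assume the formula for $\zeta_{i-1}$ and write $s_i=(c,c+1)\in\fk{S}_r$, so that $\nu^i$ is obtained from $\lambda^i$ by merging the blocks in positions $c$ and $c+1$. Since that merger only affects pairs with one index in $M^i_c$ and the other in $M^i_{c+1}$, \eqref{NorBunFlag} gives
\[
\gamma_{\nu^i}^{-1}\gamma_{\lambda^i}=\frac{\prod_{p\in M^i_c,\,q\in M^i_{c+1}}(x_q-x_p)}{\prod_{p\in M^i_{c+1},\,q\in M^i_c}(x_p-x_q+\Delta_{pq})}.
\]
On the other hand, applying $s_i$ to the inductive formula and using $s_iM^{i-1}_{jk}=M^i_{s_i(j),s_i(k)}$ together with $s_ig_{i-1}=g_i$, one obtains
\[
\zeta_{i-1}^{s_i}=\prod_{(p,q)\in g_iN_\lambda\cup s_iN_{\mu,i-1}}(x_q-x_p)\prod_{(p,q)\in g_iI_\lambda\cap s_iI_{\mu,i-1}}(x_p-x_q+\Delta_{pq}).
\]
As the factors $x_q-x_p$ cannot cancel against the factors $x_p-x_q+\Delta_{pq}$, the claim splits into the two set identities
\[
g_iN_\lambda\cup N_{\mu,i}=(M^i_c\times M^i_{c+1})\sqcup(g_iN_\lambda\cup s_iN_{\mu,i-1}),\qquad
g_iI_\lambda\cap I_{\mu,i}=\bigl(g_iI_\lambda\cap s_iI_{\mu,i-1}\bigr)\setminus(M^i_{c+1}\times M^i_c),
\]
where the first union is disjoint and where $M^i_{c+1}\times M^i_c\subseteq g_iI_\lambda\cap s_iI_{\mu,i-1}$. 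To establish these I would compare $A_\mu$ with $s_i(A_\mu)$: they agree away from the indices $c,c+1$, so $N_{\mu,i}$ differs from $s_iN_{\mu,i-1}$ only on the rectangles built from the blocks in positions $c$ and $c+1$ (and similarly for $I$), and the hypothesis that $w=s_l\cdots s_1$ is reduced forces this difference to be exactly $M^i_c\times M^i_{c+1}$, respectively $M^i_{c+1}\times M^i_c$, disjoint from $g_iN_\lambda$; this is essentially the content of the combinatorial lemma just proved. Substituting back completes the induction.

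The hard part will be this last pair of set identities: one must identify precisely which pairs $(p,q)$ appear or disappear when $s_i$ acts on $N_{\mu,i-1}$ and $I_{\mu,i-1}$, and check that the Euler-class correction $\gamma_{\nu^i}^{-1}\gamma_{\lambda^i}$ accounts for exactly this discrepancy; the remaining steps are routine manipulations with \eqref{NorBunFlag}.
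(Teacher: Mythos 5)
Your proof takes essentially the same route as the paper's: you induct on $i$ via $\zeta_i=(\gamma_{\nu^i}^{-1}\gamma_{\lambda^i})\zeta_{i-1}^{s_i}$, compute the Euler class correction from \eqref{NorBunFlag} (your expression for $\gamma_{\nu^i}^{-1}\gamma_{\lambda^i}$ is exactly what follows from $I_{\nu^i}=I_{\lambda^i}\sqcup M^i_{t+1,t}$ and $N_{\lambda^i}=N_{\nu^i}\sqcup M^i_{t,t+1}$), and reduce the inductive step to the same two index-set identities, closing the argument with the same observation that strands emanating from a common thick strand never cross in a reduced word. One small imprecision in your closing sketch: you describe $M^i_c\times M^i_{c+1}$ and $M^i_{c+1}\times M^i_c$ together as ``disjoint from $g_iN_\lambda$,'' but what is actually needed is asymmetric --- $(M^i_c\times M^i_{c+1})\cap g_iN_\lambda=\emptyset$ \emph{and} $M^i_{c+1}\times M^i_c\subset g_iN_\lambda$, the latter containment being what absorbs $s_iN_{\mu,i-1}\setminus N_{\mu,i}$ into $g_iN_\lambda$ so that the $N$-identity holds.
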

\begin{proof}
For $i=0$, the claim follows from the definition of $\gamma_{\lambda'}$.
Let $i>0$ and proceed by induction.
Suppose $s_i = (t, t+1)$; then we have
\begin{equation*}
	I_{\nu^i} = I_{\lambda^i} \sqcup M^i_{t+1,t},\qquad N_{\lambda^i} = N_{\nu^i} \sqcup M^i_{t,t+1}.
\end{equation*}
Looking at the formula for $\zeta_i$, we thus need to prove the following equalities between subsets in $[1,n]\times [1,n]$:
\begin{gather*}
g_iI_{\lambda}\cap s_iI_{\mu,i-1} = (g_iI_{\lambda}\cap I_{\mu,i}) \sqcup M^i_{t+1,t},\\
g_iN_{\lambda}\cup N_{\mu,i} = (g_iN_{\lambda}\cup s_iN_{\mu,i-1}) \sqcup M^i_{t,t+1}.
\end{gather*}
We will only prove the second identity; the first one can be obtained analogously by passing to complementary index sets and substituting $(i,j)\mapsto (j,i)$.

It is easy to check that $N_{\mu,i}\setminus s_iN_{\mu,i-1} = M^i_{t,t+1}$, and conversely $s_iN_{\mu,i-1}\setminus N_{\mu,i} = M^i_{t+1,t}$.
Therefore
\begin{equation}\label{boxesincexc}
g_iN_{\lambda}\cup N_{\mu,i} = g_iN_{\lambda}\cup \left( (s_iN_{\mu,i-1}\setminus M^i_{t+1,t}) \sqcup M^i_{t,t+1} \right).
\end{equation}
Assume that $t = g_{i-1}(k_1)$, $t+1 = g_{i-1}(k_2)$; note that we automatically have $k_1 < k_2$.
We cannot have crossings between the strands which split off the same thick strand.
Thus $k_1$ and $k_2$ lie in different components of $\lambda$, and $N_\lambda \supset M^0_{k_1,k_2}$, $N_\lambda \cap M^0_{k_2,k_1} = \emptyset$ by definition of $N_\lambda$.
Applying $g_i$, we get $M^i_{t+1,t}\subset g_iN_{\lambda}$ and $M^i_{t,t+1}\cap g_iN_{\lambda} = \emptyset$, which together with \eqref{boxesincexc} implies the desired identity.
\end{proof}

An immediate consequence of the lemma above is that
\[
\gamma_{\mu'}^{-1}E(\gamma_{\lambda'}^{-1})^g = \zeta_l^{-1} = \beta_g(\gamma_\lambda \gamma_\mu)^{-1}.
\]
This formula allows us to compute the highest term of $\Psi_g^P$:
\begin{align*}
\Xi_n(\Psi_g^P) & = \Xi_n(\rmM_{\mu'}^\mu\rmR_{\lambda'}^{\mu'}(w)P\rmS_\lambda^{\lambda'}) = \xi^{\mu\mu'}_{(1,\gamma_\mu)}*(\xi^{\mu'\lambda'}_{(g,E)} + \ldots)*\xi^{\lambda'\lambda}_{(1,P\gamma_\lambda)}\\
& = \xi^{\mu\mu'}_{(1,\gamma_\mu)}*\left( \frac{|\Gamma^{\mu'\lambda}_g||\fk{S}_{\lambda'}|}{|\Gamma^{\lambda'\lambda}_1||\Gamma^{\mu'\lambda'}_g|}\xi^{\mu'\lambda}_{(g,E(P\gamma_\lambda\gamma_{\lambda'}^{-1})^g)} +\ldots \right) 
= \frac{|\Gamma^{\mu\lambda}_g||\fk{S}_{\mu'}|}{|\Gamma^{\mu\mu'}_1||\Gamma^{\mu'\lambda}_g|} \xi^{\mu\lambda}_{(g,P^g \gamma_\mu\gamma_{\mu'}^{-1}E(\gamma_\lambda\gamma_{\lambda'}^{-1})^g)} + \ldots\\
& = \xi^{\mu\lambda}_{(g,P^g \beta_g)} +\ldots
\end{align*}
Substituting $P\rightsquigarrow P^{g^{-1}}$, we may conclude by \cref{basis-highest-terms}.
\end{proof}

\begin{corr}
The Schur algebra $\Sc_n$ is generated by polynomials and elementary splits and merges.
\end{corr}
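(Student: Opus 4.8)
The plan is to read the statement off directly from the basis produced in \cref{lm:basis-Schur-zigzag}. Since $\Sc_n = \bigoplus_{\mu,\lambda}\Sc_{\mu,\lambda}$ and the elements $\Psi_g^P = \rmM_{\mu'}^\mu\,\rmR_{\lambda'}^{\mu'}(w)\,P\,\rmS_\lambda^{\lambda'}$, with $g\in\doubleS{\mu}{\lambda}$ and $P\in B_{\lambda'}$, form a $\bbk$-basis of each $\Sc_{\mu,\lambda}$, it suffices to check that every $\Psi_g^P$ lies in the subalgebra generated by polynomial operators together with elementary splits and merges. We do this by inspecting its four factors.

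The factor $P$ is a polynomial operator by construction, being the image of an element of $\bf{P}_n^{\frakS_{\lambda'}}$ under $\delta_{\lambda'}$. The split $\rmS_\lambda^{\lambda'}$ (with $\lambda'\subset\lambda$) and the merge $\rmM_{\mu'}^\mu$ (with $\mu'\subset\mu$) are products of elementary splits and of elementary merges respectively: this was already observed in \cref{subs:diag-Schur}, and follows formally from the associativity relations $\rmS_{\lambda'}^{\lambda''}\rmS_\lambda^{\lambda'} = \rmS_{\lambda}^{\lambda''}$ and $\rmM^{\lambda}_{\lambda'}\rmM^{\lambda'}_{\lambda''} = \rmM^{\lambda}_{\lambda''}$ of \cref{SMAssoc} by choosing a chain of compositions interpolating between the coarse one and the fine one, each step subdividing a single component into two. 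Finally, $\rmR_{\lambda'}^{\mu'}(w)$ is by definition a product of elementary permutations $\rmR^{\lambda''}_{\lambda'}$, and each such elementary permutation is, again by definition, the product $\rmS^{\lambda''}_\nu\cdot\rmM^\nu_{\lambda'}$ of an elementary split with an elementary merge (here $\nu$ denotes the composition coarsening both $\lambda'$ and $\lambda''$).

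Putting these observations together, each $\Psi_g^P$, and hence every element of $\Sc_n$, is a polynomial in polynomial operators, elementary splits and elementary merges. The unit is accounted for as well, since it decomposes as $\sum_\lambda e_\lambda$ with each $e_\lambda = \delta_\lambda(1)\in\Sc_{\lambda,\lambda}$ a polynomial operator, so the subalgebra in question is unital and hence all of $\Sc_n$. I do not expect any genuine obstacle: the only step deserving a line of justification is the reduction of a general split or merge to elementary ones, which is the routine bookkeeping with \cref{SMAssoc} indicated above.
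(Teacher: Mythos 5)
Your proof is correct and matches the paper's intent: the corollary is stated in the paper without proof precisely because it follows immediately from the basis in \cref{lm:basis-Schur-zigzag}, the associativity of splits and merges (\cref{SMAssoc}), and the definitions of $\rmR^{\lambda''}_{\lambda'}$ and $\rmR^{\mu'}_{\lambda'}(w)$, exactly as you observe. The remark about the unit decomposing as $\sum_\lambda \delta_\lambda(1)$ is a nice touch to make the generation claim airtight.
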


\subsection{Polynomial representation}
The localized Schur algebra $\Sc_n^\loc$ admits an action on
\[
P_n^\loc := \bigoplus_{\lambda} \left( H^*(C)^{\otimes n}(x_1,\ldots,x_n) \right)^{\fk{S}_\lambda}
\]
by \cref{ConvModule}.
Similarly to~\eqref{SchurLocComp}, using formula~\eqref{ExAct} one shows that
\begin{equation}\label{SchurLocRep}
\xi_{(g,x)}^{\mu\lambda}.y =\frac{1}{|\Gamma_g^{\mu\lambda}|} \sum_{a\in\fk{S}_\mu} \left( x(y\gamma_\lambda^{-1})^g \right)^a.
\end{equation}

By \cref{ConvLocMod}, we have a commutative square
\begin{equation}\label{SchurPolLoc}
	\begin{tikzcd}
		\Sc_n\ar[d]\ar[r] & \End P_n\ar[d]\\
		\Sc_n^\loc\ar[r] & \End P_n^\loc
	\end{tikzcd}
\end{equation}

\begin{prop}\label{SchurPolRepFaith}
Let $\lambda$, $\lambda'$ be as in the definition of elementary splits and merges.
The algebra $\Sc_n$ has a faithful representation on $P_n$ such that
\begin{itemize}
	\item polynomials $P\in \bf{P}_n^{\fk{S}_\lambda}$ act by multiplication on $H^*_{G_n}(Y_\lambda)\simeq \bf{P}_n^{\fk{S}_\lambda}$, 
	\item the split $\rmS_\lambda^{\lambda'}$ acts by the natural inclusion of rings $\bf{P}_n^{\fk{S}_\lambda}\to \bf{P}_n^{\fk{S}_{\lambda'}}$,
	\item the merge $\rmM^\lambda_{\lambda'}$ acts by the following operator:
	\[
	P\mapsto \sum_{a\in \fk{S}_\lambda/\fk{S}_{\lambda'}} \left(y\prod_{(i,j)\in N_{\lambda'}^\lambda}\left(1+\frac{\Delta_{ij}}{x_j-x_i}\right)\right)^a, \quad N^\lambda_{\lambda'}  = [\tilde{\lambda}_{k-1}+1,\tilde{\lambda}_{k-1}+\lambda_k^{(1)}]\times[\tilde{\lambda}_{k-1}+\lambda_k^{(1)}+1,\tilde{\lambda}_{k}].
	\]
\end{itemize}
\end{prop}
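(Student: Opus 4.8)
The plan is to pin down the action on a generating set of $\Sc_n$ by transporting it to the localized Schur algebra $\Sc_n^{\loc}$, where the relevant elements have already been computed (\cref{TautElts}, \cref{SMLoc}) and the action is described by formula~\eqref{SchurLocRep}, and then to pull everything back along the injection $\Xi_n$ via the commutative square~\eqref{SchurPolLoc}. Since $\Sc_n$ is generated by polynomials and elementary splits and merges (the corollary to \cref{lm:basis-Schur-zigzag}), it suffices to treat these. That $P\in\bf{P}_n^{\fk{S}_\lambda}$ acts by multiplication is immediate from \cref{DiagEmb} applied to $\pi\colon Y_n\to Q_n$: the image of $\delta_\lambda$ acts on $P_n=\bigoplus_\mu\bf{P}_n^{\fk{S}_\mu}$ by left multiplication, i.e.\ by multiplication by $P$ on the $\lambda$-summand and by $0$ on the others. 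For an elementary split, \cref{SMLoc} gives $\Xi_n(\rmS^{\lambda'}_\lambda)=\xi^{\lambda'\lambda}_{(1,\gamma_\lambda)}$; since $\fk{S}_{\lambda'}\subset\fk{S}_\lambda$ we have $\Gamma^{\lambda'\lambda}_1=\fk{S}_{\lambda'}$, and $\gamma_\lambda$ is $\fk{S}_\lambda$-invariant (the sets $N_\lambda,I_\lambda$ being $\fk{S}_\lambda$-stable), so~\eqref{SchurLocRep} yields $\xi^{\lambda'\lambda}_{(1,\gamma_\lambda)}.y=|\fk{S}_{\lambda'}|^{-1}\sum_{a\in\fk{S}_{\lambda'}}y^{a}=y$ for $y\in\bf{P}_n^{\fk{S}_\lambda}$. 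Combined with~\eqref{SchurPolLoc} and the fact that $\Sc_n$ genuinely acts on $P_n$ (by \cref{ConvModule}), this shows $\rmS^{\lambda'}_\lambda$ acts as the natural inclusion $\bf{P}_n^{\fk{S}_\lambda}\hookrightarrow\bf{P}_n^{\fk{S}_{\lambda'}}$.

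The substantive computation is the merge: $\Xi_n(\rmM^\lambda_{\lambda'})=\xi^{\lambda\lambda'}_{(1,\gamma_\lambda)}$ with $\Gamma^{\lambda\lambda'}_1=\fk{S}_{\lambda'}$, and the point is the identity
\[
\frac{\gamma_\lambda}{\gamma_{\lambda'}}=\prod_{(i,j)\in N^\lambda_{\lambda'}}\Bigl(1+\frac{\Delta_{ij}}{x_j-x_i}\Bigr)
\]
in the localization of $\bf{P}_n$, where $\gamma_\lambda$ is a unit because each factor $x_i-x_j+\Delta_{ij}=(x_i-x_j)(1+\Delta_{ij}/(x_i-x_j))$ is invertible ($\Delta_{ij}$ being nilpotent in $H^*(C^n)$). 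This identity follows from~\eqref{NorBunFlag} by comparing index sets: if the $k$-th part of $\lambda$ is subdivided into sub-blocks $B_1,B_2$, then $N_{\lambda'}=N_\lambda\sqcup(B_1\times B_2)$, $I_{\lambda'}=I_\lambda\setminus(B_2\times B_1)$, and $N^\lambda_{\lambda'}=B_1\times B_2$, so the quotient of the two products in~\eqref{NorBunFlag} telescopes (using $\Delta_{ij}=\Delta_{ji}$). Since $\prod_{(i,j)\in N^\lambda_{\lambda'}}(1+\Delta_{ij}/(x_j-x_i))$ is $\fk{S}_{\lambda'}$-invariant, the $\fk{S}_\lambda$-average in~\eqref{SchurLocRep} collapses, up to the factor $|\fk{S}_{\lambda'}|$, to a sum over $\fk{S}_\lambda/\fk{S}_{\lambda'}$, producing exactly the stated operator; as in the split case it preserves $P_n\subset P_n^{\loc}$ because $\rmM^\lambda_{\lambda'}\in\Sc_n$, so in particular the apparent denominators cancel.

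It remains to prove faithfulness. Since $\Xi_n$ is injective (\cref{KLRLocInj} and the discussion after it) and~\eqref{SchurPolLoc} commutes, it is enough to show that $\Sc_n^{\loc}$ acts faithfully on $P_n^{\loc}$. Under the identifications of \cref{sec:TorSh}, $Z_n^{T_n}=\bigsqcup_{\mu,\lambda}\fk{S}_n/\fk{S}_\mu\times\fk{S}_n/\fk{S}_\lambda\times C^n$ sits inside $(Y_n^{T_n})^2$ through the diagonal $C^n\hookrightarrow C^n\times C^n$, and pushforward along this diagonal is split injective on Borel--Moore homology (split by a projection); hence so is $(j_T)_*\colon H^{T_n}_*(Z_n^{T_n})_{\loc}\to H^{T_n}_*((Y_n^{T_n})^2)_{\loc}$. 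By the remark following \cref{ConvLocMod} together with \cref{EmbMatrix} (applicable since $C^n$ is smooth and projective), this realizes $\cal A^{T_n}_{\gamma^{-1}}$ as a subalgebra of the twisted endomorphism algebra $\End_{\deg\gamma}H^{T_n}_*(Y_n^{T_n})_{\loc}$, compatibly with the module structure, and that endomorphism algebra acts faithfully. Passing to $\fk{S}_n$-invariants and using Galois descent along $\Frac(H_{T_n})/\Frac(H_{G_n})$, the $\fk{S}_n$-invariants of the target become a twisted endomorphism algebra of $P_n^{\loc}=(H^{T_n}_*(Y_n^{T_n})_{\loc})^{\fk{S}_n}$, still acting faithfully; therefore $\Sc_n^{\loc}=(\cal A^{T_n}_{\gamma^{-1}})^{\fk{S}_n}$ does too, and we are done.

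I expect the main obstacle to be the index-set bookkeeping in the merge step — tracking how $N_\lambda,I_\lambda$ change under subdivision and checking the telescoping cleanly — together with, on the faithfulness side, correctly matching the abstractly defined module $P_n^{\loc}$ with $(H^{T_n}_*(Y_n^{T_n})_{\loc})^{\fk{S}_n}$ and carrying out the descent to $\fk{S}_n$-invariants. Neither is deep given the machinery of \cref{ConvSetup}, but both require care.
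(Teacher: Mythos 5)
Your computation of the action is essentially the paper's: you apply $\Xi_n$ to the three generating families, identify the localized classes via \cref{TautElts} and \cref{SMLoc}, and simplify using formula~\eqref{SchurLocRep}. Your merge step, phrased as the identity $\gamma_\lambda/\gamma_{\lambda'}=\prod_{(i,j)\in N^\lambda_{\lambda'}}\bigl(1+\Delta_{ij}/(x_j-x_i)\bigr)$ verified by tracking how $N_\lambda, I_\lambda$ change under subdivision, is a slight repackaging of the paper's observation that $N_{\lambda'}\setminus N_\lambda=N^\lambda_{\lambda'}$ and $(i,j)\in I_\lambda\setminus I_{\lambda'}\Leftrightarrow(j,i)\in N^\lambda_{\lambda'}$ — same bookkeeping, same telescoping.

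Where you genuinely diverge is faithfulness. The paper dispatches it in two sentences: the vertical maps in~\eqref{SchurPolLoc} are injective (by \cref{KLRLocInj} together with the Thom isomorphism), and the restriction of $\Sc_n^\loc\to\End P_n^\loc$ to each $\Sc_{\lambda,\mu}^\loc$ is nothing but the pullback $r_{\lambda\mu}^*$ along the finite quotient $\fk{S}_n^2\to\fk{S}_n/\fk{S}_\mu\times\fk{S}_n/\fk{S}_\lambda$, hence injective. You instead route through the matrix-algebra remark following \cref{ConvLocMod}: the composition $\cal A^{T_n}_{\gamma^{-1}}\to\End_{\deg\gamma}H^{T_n}_*(Y_n^{T_n})_\loc$ is $(j_T)_*$, split injective since the diagonal $C^n\hookrightarrow C^n\times C^n$ admits a proper retraction, and you then descend to $\fk{S}_n$-invariants via Galois descent for $\Frac(H_{T_n})/\Frac(H_{G_n})$ together with the Frobenius-reciprocity identification $P_n^\loc\simeq\bigl(H^{T_n}_*(Y_n^{T_n})_\loc\bigr)^{\fk{S}_n}$. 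Both routes are correct. Yours is more geometric and self-contained — it makes the "embed into a matrix algebra, then descend" mechanism explicit rather than leaving the reader to unpack how $r_{\lambda\mu}^*$ recovers the operator — at the cost of more moving parts (properness of $Y_n^{T_n}$, the split-injectivity of $\Delta_*$, descent). The paper's is shorter because the desymmetrization map~\eqref{SchurDesym} is already set up to read off $\Sc_{\lambda,\mu}^\loc$ from the action. Neither buys a stronger conclusion; the choice is a matter of taste.
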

\begin{proof}
The vertical maps in diagram~\eqref{SchurPolLoc} are injective by \cref{KLRLocInj} and Thom isomorphism.
Moreover, the restriction of $\Sc_n^\loc\to \End P_n^\loc$ to $\Sc_{\lambda,\mu}^\loc$ is nothing else than the pullback $r_{\lambda\mu}^*$, and is therefore injective as well.
The faithfulness of the polynomial representation $\Sc_n \to \End P_n$ follows.

Let us compute the action of generators by applying $\Xi_n$, and using formula~\eqref{SchurLocRep}:
\begin{align*}
P.y & = \xi_{(1,P\gamma_\lambda)}^{\lambda\lambda}.y = \frac{1}{|\fk{S}_\lambda|}\sum_{a\in\fk{S}_\lambda} \left( P\gamma_\lambda y\gamma_\lambda^{-1} \right)^a = Py;\\
\rmS_\lambda^{\lambda'}.y & = \xi_{(1,\gamma_\lambda)}^{\lambda'\lambda}.y = \frac{1}{|\fk{S}_{\lambda'}|}\sum_{a\in\fk{S}_{\lambda'}} \left( \gamma_\lambda y\gamma_\lambda^{-1} \right)^a = y; \\
\rmM^\lambda_{\lambda'}.y & = \xi_{(1,\gamma_\lambda)}^{\lambda\lambda'}.y = \frac{1}{|\fk{S}_{\lambda'}|}\sum_{a\in\fk{S}_\lambda} \left( \gamma_\lambda y\gamma_{\lambda'}^{-1} \right)^a = \sum_{a\in \fk{S}_\lambda/\fk{S}_{\lambda'}} \left(\frac{\prod_{(i,j)\in I_\lambda\setminus I_{\lambda'}}(x_i-x_j+\Delta_{ij})}{\prod_{(i,j)\in N_{\lambda'}\setminus N_{\lambda}}(x_j-x_i)}.y\right)^a.
\end{align*}
We conclude by observing that $N_{\lambda'}\setminus N_\lambda = N_{\lambda'}^\lambda$, and $(i,j)\in I_\lambda\setminus I_{\lambda'}$ if and only if $(j,i)\in N_{\lambda'}^\lambda$.
\end{proof}

\begin{rmq}
As in~\cite{Prz_QSAC2019}, this polynomial representation can be realized inside a tensor power of cohomological Hall algebra of torsion sheaves on $C$.
We do not pursue this point of view here.
\end{rmq}

\begin{exe}
Consider the case $n=2$.
Denote the inclusion $\bf{P}_2^{\fk{S}_2}\subset \bf{P}_2$ by $f_2$.
We have only one possible split and merge respectively; we write $\rmS = \rmS_2^{(1,1)}$, $\rmM = \rmM_{(1,1)}^2$, and omit labels on strands.
Using \cref{lm:basis-Schur-zigzag}, one can check that $\Sc_2$ is generated by $\rmS$, $\rmM$ and polynomial subalgebras $\bf{P}_2^{\fk{S}_2}$, $\bf{P}_2$, subject to the following relations:
\begin{equation*}
	\tikz[thick,xscale=.25,yscale=.25]{
		\draw (1.5,0) -- (1.5,0.5);
		\dsplit{0}{0.5}{3}{2.5}
		\opbox{-0.5}{2.5}{3.5}{4.5}{$Q$}
		\dmerge{0}{4.5}{3}{6.5}
		\draw (1.5,6.5) --(1.5,7);
		\ntxt{5}{3.5}{$=$}
		\opbox{6.5}{2}{20.5}{5}{$Q+s_1(Q)-\frac{\Delta_{12}(Q-s_1(Q))}{x_1-x_2}$}
		\draw (13,5) -- (13,6.5);
		\draw (13,2) -- (13,0.5);
		\ntxt{21.5}{3.5}{,}
	}\qquad
	\tikz[thick,xscale=.25,yscale=.25]{
		\draw (2,0) -- (2,1);
		\opbox{1}{1}{3}{3}{$P$}
		\dsplit{0.5}{3}{3.5}{7}
		
		\ntxt{4.75}{3.5}{$=$}
		
		\draw (8,0) -- (8,1);
		\dsplit{6.5}{1}{9.5}{4}
		\opbox{6}{4}{10}{6}{$f_2(P)$}
		\draw (6.5,6) -- (6.5,7);
		\draw (9.5,6) -- (9.5,7);
			
		\ntxt{11}{3.5}{,}
	}\qquad
	\tikz[thick,xscale=.25,yscale=.25]{
		\dmerge{0.5}{0}{3.5}{4}
		\opbox{1}{4}{3}{6}{$P$}
		\draw (2,6) -- (2,7);
		
		\ntxt{4.75}{3.5}{$=$}
		
		\draw (6.5,0) -- (6.5,1);
		\draw (9.5,0) -- (9.5,1);
		\opbox{6}{1}{10}{3}{$f_2(P)$}
		\dmerge{6.5}{3}{9.5}{6}
		\draw (8,6) -- (8,7);
		
		\ntxt{11}{3.5}{,}
		
		\ntxt{18}{3.5}{$P\in \bf{P}_2^{\fk{S}_2}, Q\in \bf{P}_2$.}
	}
\end{equation*}
\end{exe}

\begin{rmq}\label{rmq:SchurP1}
When $C=\bbP^1$, we have $\bf{P}_n=\bfk[x_1,\ldots,x_n,c_1,\ldots,c_n]/(c_1^2,\dots,c_n^2)$ and $\Delta_{ij} = c_i+c_j$.
For instance, $\rmM\in\Sc_2^{\bbP^1}$ acts on the polynomial representation by $(1+s_1)-\frac{(c_1+c_2)(1-s_1)}{x_1-x_2}$.
\end{rmq}
\section{KLR algebra of a smooth curve}\label{sec:KLRcurve}
In this section we study a subalgebra of $\Sc_n$, which admits a simpler description.

\subsection{Demazure operators}
Let us recall the definition and basic properties of Demazure operators.

\begin{defi}For $r\in [1,n-1]$, denote by $\partial_r$ the {\it Demazure operator} 
\[
\partial_r\colon \bfk[x_1,\ldots,x_n]\to \bfk[x_1,\ldots,x_n], \qquad
P\mapsto (P-s_r(P))/(x_r-x_{r+1}).
\]
\end{defi}
Note that we have $\partial_r(P)=0$ if and only if $s_r(P)=P$.
In particular, a polynomial $P$ is symmetric if and only if it is annihilated by all $\partial_r$ for $r\in [1,n-1]$.

The following relations are well-known. 
\begin{lm}
\label{lem:rel-Demaz}
We have 
\[
\begin{array}{rcll}
	\partial_r^2&=&0& \mbox{\rm for } r\in [1,n-1],\\
	\partial_r\partial_t&=&\partial_t\partial_r&  \mbox{\rm for } r,t\in [1,n-1],~|r-t|>1,\\
	\partial_r\partial_{r+1}\partial_r&=&\partial_{r+1}\partial_r\partial_{r+1} &\mbox{\rm for } r\in [1,n-2].\\
\end{array}
\]
\end{lm}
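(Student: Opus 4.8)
The plan is to check each of the three identities directly from the definition, using throughout that $\partial_r$ involves only $x_r$ and $x_{r+1}$ and is linear over $\bfk[x_i:i\neq r,r+1]$. The relation $\partial_r^2=0$ is immediate: $\partial_r(P)=(P-s_r(P))/(x_r-x_{r+1})$ is $s_r$-invariant, since applying $s_r$ changes the sign of both numerator and denominator, and $\partial_r$ annihilates $s_r$-invariant polynomials as noted after the definition. For $|r-t|>1$ the transpositions $s_r$ and $s_t$ commute, $s_t$ fixes $x_r-x_{r+1}$ and $s_r$ fixes $x_t-x_{t+1}$; substituting into the definition and expanding shows $\partial_r\partial_t=\partial_t\partial_r$ termwise.

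The braid relation is the only real point. Set $D=\partial_r\partial_{r+1}\partial_r$ and $D'=\partial_{r+1}\partial_r\partial_{r+1}$. Both are linear over $\bfk[x_i:i\neq r,r+1,r+2]$ and preserve the subring $\bfk[x_r,x_{r+1},x_{r+2}]$, so it suffices to prove $D=D'$ there; rename these three variables $a,b,c$, so that $\partial_r$ (resp.\ $\partial_{r+1}$) is the divided difference in $a,b$ (resp.\ $b,c$). Now $D$ and $D'$ are both linear over the fully symmetric subring $\bfk[a,b,c]^{\frakS_3}$, which is contained in both the $s_r$- and the $s_{r+1}$-invariants, and each of them sends a homogeneous polynomial of degree $d$ either to $0$ or to a homogeneous polynomial of degree $d-3$. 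Since $\bfk[a,b,c]$ is free over $\bfk[a,b,c]^{\frakS_3}$ with basis the staircase monomials $\{a^ib^j:0\le i\le 2,\ 0\le j\le 1\}$, of degrees $0,1,1,2,2,3$, it is enough by $\bfk[a,b,c]^{\frakS_3}$-linearity to compare $D$ and $D'$ on these six monomials. On the five of degree $\le 2$ both vanish for degree reasons, so the only thing to verify is the scalar identity $D(a^2b)=D'(a^2b)$: indeed $\partial_r(a^2b)=ab$, $\partial_{r+1}(ab)=a$, $\partial_r(a)=1$, while $\partial_{r+1}(a^2b)=a^2$, $\partial_r(a^2)=a+b$, $\partial_{r+1}(a+b)=1$, so both sides equal $1$.

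The main obstacle is the braid relation, but the reduction above replaces an identity of rational functions by a single scalar computation; the one external ingredient I would invoke is the classical freeness of a polynomial ring over its ring of symmetric polynomials with the staircase (Artin) basis. As an alternative, one could instead establish the closed formula
\[
\partial_r\partial_{r+1}\partial_r(P)=\frac{\sum_{w\in\frakS_3}(-1)^{\ell(w)}\,w(P)}{\prod_{r\le i<j\le r+2}(x_i-x_j)},
\]
with $\frakS_3$ acting on $x_r,x_{r+1},x_{r+2}$, for both reduced words of the longest element; the right-hand side is manifestly independent of the chosen word, but checking it requires the same kind of computation, and the module-theoretic route above is cleaner.
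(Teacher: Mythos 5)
Your proof is correct. The paper itself offers no argument here: it simply records these relations as "well-known" and moves on, so there is no in-paper proof to compare against. Your verification is complete and sound. The treatment of $\partial_r^2=0$ and of distant commutativity is the standard direct check, and the module-theoretic reduction of the braid relation is a clean way to avoid a messy rational-function identity: since $\partial_r\partial_{r+1}\partial_r$ and $\partial_{r+1}\partial_r\partial_{r+1}$ are both $\bfk[a,b,c]^{\frakS_3}$-linear and homogeneous of degree $-3$, it indeed suffices to compare them on the six-element Artin (staircase) basis $\{a^ib^j:0\le i\le 2,\ 0\le j\le 1\}$, on which only $a^2b$ survives for degree reasons, and you compute both sides equal $1$. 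The one external input you invoke, freeness of $\bfk[x_1,\ldots,x_n]$ over $\bfk[x_1,\ldots,x_n]^{\frakS_n}$ with the staircase basis, is classical and holds over $\bbZ$ as well as over fields, so the argument is not tied to characteristic $0$. Your alternative route via the closed formula for $\partial_{w_0}$ is also fine and is of the same flavor as the explicit formula the paper later proves in \cref{lem:Dem-ab-formula}, so either approach fits the surrounding material.
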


For each $w\in\frakS_n$, fix a reduced expression $w=s_{k_1}\ldots s_{k_r}$ and define $\partial_w=\partial_{k_1}\ldots\partial_{k_r}$.
Since Demazure operators satisfy braid relations, this definition is independent of the choice of a reduced expression.
Moreover, the square-zero relation implies that we have $\partial_{k_1}\ldots\partial_{k_r}=0$ if $s_{k_1}\ldots s_{k_r}$ is not a reduced expression.

Let $w_{0,n}$ be the longest element in $\frakS_n$.

\begin{lm}
\label{lem:Dem-image-in-sym}
For any $P\in \bfk[x_1,\ldots,x_n]$, the polynomial $\partial_{w_{0,n}}(P)$ is symmetric. 
\end{lm}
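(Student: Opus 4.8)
The plan is to reduce the claim to the characterization recorded just above the lemma: a polynomial is symmetric if and only if it is annihilated by $\partial_r$ for every $r\in[1,n-1]$. Hence it suffices to prove that $\partial_r\,\partial_{w_{0,n}} = 0$ as an operator on $\bfk[x_1,\ldots,x_n]$ for each $r$, for then $\partial_r(\partial_{w_{0,n}}(P)) = 0$ for all $P$ and all $r$.

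First I would invoke the standard property of the longest element $w_{0,n}\in\frakS_n$: for every simple transposition $s_r$ one has $\ell(s_r w_{0,n}) = \ell(w_{0,n}) - 1$. Concretely, $w_{0,n}$ is the permutation $i\mapsto n+1-i$, its length $\binom n2$ is maximal, and left-multiplication by $s_r$ cannot increase the number of inversions; equivalently, $w_{0,n}$ sends every positive root to a negative one. Consequently $w_{0,n}$ admits a reduced expression beginning with $s_r$, say $w_{0,n} = s_r s_{k_2}\cdots s_{k_N}$ with $N = \ell(w_{0,n})$.

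Since $\partial_w$ is independent of the chosen reduced expression (\cref{lem:rel-Demaz} and the remark following it), this gives $\partial_{w_{0,n}} = \partial_r\,\partial_{k_2}\cdots\partial_{k_N}$, whence
\[
\partial_r\,\partial_{w_{0,n}} = \partial_r^2\,\partial_{k_2}\cdots\partial_{k_N} = 0,
\]
using the square-zero relation $\partial_r^2 = 0$ from \cref{lem:rel-Demaz}. As $r\in[1,n-1]$ was arbitrary, $\partial_{w_{0,n}}(P)$ is annihilated by every Demazure operator, hence symmetric (the case $n=1$ being vacuous).

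The only input that is not a one-line manipulation with the already-established relations is the Coxeter-theoretic fact $\ell(s_r w_{0,n}) < \ell(w_{0,n})$, which I would treat as standard rather than as a genuine obstacle; once it is in hand the proof is immediate.
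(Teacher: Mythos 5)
Your proof is correct and follows exactly the same route as the paper: reduce to showing $\partial_r\,\partial_{w_{0,n}}=0$ via the length drop $\ell(s_r w_{0,n})<\ell(w_{0,n})$, a reduced expression beginning with $s_r$, and the square-zero relation, then invoke the characterization of symmetric polynomials as those killed by every $\partial_r$. You simply spell out the intermediate steps more explicitly than the paper does.
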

\begin{proof}
Since we have $\ell(s_rw_{0,n})<\ell(w_{0,n})$ for each $r\in [1,n-1]$, we get $\partial_r\partial_{w_{0,n}}=0$.
Then the polynomial $\partial_{w_{0,n}}(P)$ is symmetric because for each $r\in [1,n-1]$ we have $\partial_r(\partial_{w_{0,n}}(P))=0$.
\end{proof}

\begin{rmq}
\label{rem:image-Dem}
The lemma above shows that the image of $\partial_{w_{0,n}}$ is contained in symmetric polynomials.
This inclusion is in fact an equality.
Indeed, take an arbitrary polynomial $Q\in \bfk[x_1,\ldots,x_n]$ such that $\partial_{w_{0,n}}(Q)=1$, for example $Q=x_1^{n-1}x_2^{n-2}\ldots x_{n-2}^2x_{n-1}$.
Since Demazure operators commute with multiplication by symmetric polynomials, we have $\partial_{w_{0,n}}(PQ)=P\partial_{w_{0,n}}(Q)=P$ for any symmetric polynomial $P\in \bfk[x_1,\ldots,x_n]^{\frakS_n}$.
\end{rmq}

\begin{defi}
\label{def:wa,b}
For positive integers $a$, $b$ with $a+b=n$ consider the permutation 
$w_{0,a,b} \in\frakS_n$ given by
\[
w_{0,a,b}(i)=
\begin{cases}
	i+b & \mbox{ if } 1\leqslant i\leqslant a,\\
	i-a & \mbox{ if } a< i\leqslant n.
\end{cases}
\]
\end{defi}

\begin{lm}
\label{lem:Dem-ab-sym}
For any $P\in\bfk[x_1,\ldots,x_n]^{\frakS_a\times\frakS_b}$, we have $\partial_{w_{0,a,b}}(P)\in\bfk[x_1,\ldots,x_n]^{\frakS_n}$.
\end{lm}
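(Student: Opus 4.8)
The plan is to deduce the statement from \cref{lem:Dem-image-in-sym}, which already guarantees that $\partial_{w_{0,n}}$ maps $\bfk[x_1,\dots,x_n]$ into the symmetric polynomials $\bfk[x_1,\dots,x_n]^{\frakS_n}$. Concretely, I would exhibit an auxiliary polynomial $Q$ for which $\partial_{w_{0,a,b}}(P)=\partial_{w_{0,n}}(PQ)$ whenever $P\in\bfk[x_1,\dots,x_n]^{\frakS_a\times\frakS_b}$; the symmetry of the left-hand side is then immediate. The mechanism is a length-additive factorization $w_{0,n}=w_{0,a,b}\cdot u$, where $u$ denotes the longest element of $\frakS_a\times\frakS_b$, i.e. the product $u=w_{0,a}w_{0,b}$ of the longest elements $w_{0,a}\in\frakS_a$ (permuting $\{1,\dots,a\}$) and $w_{0,b}\in\frakS_b$ (permuting $\{a+1,\dots,n\}$). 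First I would verify this factorization: a direct evaluation, using the formula for $w_{0,a,b}$ in \cref{def:wa,b}, shows that $w_{0,a,b}u$ sends $i\mapsto n+1-i$, hence equals $w_{0,n}$; and an inversion count gives $\ell(w_{0,a,b})=ab$, so that $\ell(w_{0,a,b})+\ell(u)=ab+\binom a2+\binom b2=\binom n2=\ell(w_{0,n})$. Concatenating reduced words therefore yields $\partial_{w_{0,n}}=\partial_{w_{0,a,b}}\circ\partial_{u}=\partial_{w_{0,a,b}}\circ\partial_{w_{0,a}}\circ\partial_{w_{0,b}}$, where the last identity holds because $\partial_{w_{0,a}}$ and $\partial_{w_{0,b}}$ use the non-adjacent sets of simple reflections $\{s_1,\dots,s_{a-1}\}$ and $\{s_{a+1},\dots,s_{n-1}\}$ and hence commute by \cref{lem:rel-Demaz}.

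With this in hand, I would set $Q=(x_1^{a-1}x_2^{a-2}\cdots x_{a-1})\cdot(x_{a+1}^{b-1}x_{a+2}^{b-2}\cdots x_{n-1})$, which splits as a product of polynomials in the disjoint variable blocks $x_1,\dots,x_a$ and $x_{a+1},\dots,x_n$. By \cref{rem:image-Dem}, applied in each block separately, we get $\partial_{w_{0,a}}\partial_{w_{0,b}}(Q)=1$. Since $P$ is invariant under every simple transposition appearing in $w_{0,a}$ and $w_{0,b}$, it commutes with the corresponding Demazure operators, so $\partial_{w_{0,a}}\partial_{w_{0,b}}(PQ)=P\cdot\partial_{w_{0,a}}\partial_{w_{0,b}}(Q)=P$. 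Applying $\partial_{w_{0,a,b}}$ and invoking the factorization of $\partial_{w_{0,n}}$ from the first paragraph then gives $\partial_{w_{0,a,b}}(P)=\partial_{w_{0,n}}(PQ)\in\bfk[x_1,\dots,x_n]^{\frakS_n}$ by \cref{lem:Dem-image-in-sym}, as desired.

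I expect the only non-formal point to be the length-additive factorization $w_{0,n}=w_{0,a,b}\cdot u$ of the first step; it is routine but should be written out carefully, since it is what allows the reduction to \cref{lem:Dem-image-in-sym}. The remaining ingredients---commutation of Demazure operators with disjoint, non-adjacent supports, and commutation of an invariant polynomial past a Demazure operator---are entirely standard and already present (at least implicitly) in \cref{lem:rel-Demaz} and \cref{rem:image-Dem}.
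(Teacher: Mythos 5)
Your proof is correct and takes essentially the same route as the paper: both hinge on the length-additive factorization $w_{0,n}=w_{0,a,b}\,w_{0,a}\,w_{0,b}$, hence $\partial_{w_{0,n}}=\partial_{w_{0,a,b}}\partial_{w_{0,a}}\partial_{w_{0,b}}$, and then realize $P$ as $\partial_{w_{0,a}}\partial_{w_{0,b}}$ of something so that \cref{lem:Dem-image-in-sym} applies. The only cosmetic difference is that the paper first reduces to the case $P=QR$ of a product of block-symmetric polynomials and lifts each factor via \cref{rem:image-Dem}, whereas you multiply the general invariant $P$ by a fixed monomial $Q$ with $\partial_{w_{0,a}}\partial_{w_{0,b}}(Q)=1$ and use that $P$ commutes past those Demazure operators, which streamlines the reduction slightly.
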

\begin{proof}
Abusing the notation, let us write $w_{0,a}$ and $w_{0,b}$ for the images of $w_{0,a}\in\frakS_a$ and $w_{0,b}\in \frakS_b$ under the inclusion $\frakS_a\times \frakS_b\subset \frakS_n$.
We have $w_{0,n}=w_{0,a,b}w_{0,a}w_{0,b}$.
	
It is enough to prove the statement for $P$ of the form $P=QR$, where $Q$ is a symmetric polynomial on $x_1,\ldots,x_a$ and $R$ is a symmetric polynomial on $x_{a+1},\ldots,x_{n}$.
Moreover, by \cref{rem:image-Dem} we can find $Q_0\in \bfk[x_1,\ldots,x_a]$ and $R_0\in \bfk[x_{a+1},\ldots,x_n]$ such that $Q=\partial_{w_{0,a}}(Q_0)$ and $R=\partial_{w_{0,b}}(R_0)$.
Then we have 
\[
\partial_{w_{0,a,b}}(P)=\partial_{w_{0,a,b}}[\partial_{w_{0,a}}(Q_0)\partial_{w_{0,b}}(R_0)]=\partial_{w_{0,n}}(Q_0R_0).
\]
This polynomial is symmetric by \cref{lem:Dem-image-in-sym}.
\end{proof}

\begin{lm}
\label{lem:Dem-ab-formula}
For any $P\in \bfk[x_1,\ldots,x_n]^{\frakS_a\times\frakS_b}$, we have 
\[
\partial_{w_{0,a,b}}(P)=\sum_{w\in\frakS_n/(\frakS_a\times \frakS_b)}w\left(\frac{P}{\prod_{1\leqslant i\leqslant a}\prod_{a+1\leqslant j\leqslant n}(x_i-x_j)}\right)
\]
\end{lm}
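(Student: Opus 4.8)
The plan is to deduce the formula from the classical expression for the longest Demazure operator. Recall the well-known identity
\[
\partial_{w_{0,m}}(Q)=\sum_{w\in\frakS_m}w\big(Q/\Delta_m\big),\qquad \Delta_m=\prod_{1\le i<j\le m}(x_i-x_j),
\]
valid for any $Q\in\bfk[x_1,\dots,x_m]$ (it can be proved by a short induction on $m$, or cited). First I would apply this with $\frakS_a$ acting on $x_1,\dots,x_a$ and with $\frakS_b$ acting on $x_{a+1},\dots,x_n$, the remaining variables being treated as scalars. Writing $\Delta_a=\prod_{1\le i<j\le a}(x_i-x_j)$ and $\Delta'_b=\prod_{a+1\le i<j\le n}(x_i-x_j)$, and using that $\partial_{w_{0,a}}$ and $\partial_{w_{0,b}}$ commute because their simple reflections are non-adjacent (\cref{lem:rel-Demaz}) and that $w_{0,a}w_{0,b}$ is a length-additive product, one gets
\[
\partial_{w_{0,a}w_{0,b}}(Q)=\partial_{w_{0,a}}\partial_{w_{0,b}}(Q)=\sum_{u\in\frakS_a\times\frakS_b}u\big(Q/(\Delta_a\Delta'_b)\big).
\]

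Next I would invoke the length-additive factorization $w_{0,n}=w_{0,a,b}w_{0,a}w_{0,b}$ (already used in the proof of \cref{lem:Dem-ab-sym}; length-additivity follows from $ab+\binom a2+\binom b2=\binom n2$ together with $w_{0,n}(i)=n+1-i$) to conclude $\partial_{w_{0,n}}=\partial_{w_{0,a,b}}\circ\partial_{w_{0,a}w_{0,b}}$. Since $\partial_{w_{0,a}w_{0,b}}$ maps $\bfk[x_1,\dots,x_n]$ onto $\bfk[x_1,\dots,x_n]^{\frakS_a\times\frakS_b}$ (by \cref{rem:image-Dem} applied to each factor and the fact that Demazure operators commute with multiplication by invariants; concretely $P=\partial_{w_{0,a}w_{0,b}}(Pm)$ for $m=x_1^{a-1}\cdots x_{a-1}\cdot x_{a+1}^{b-1}\cdots x_{n-1}$), and both sides of the asserted identity are $\bfk$-linear in $P$, it suffices to prove it for $P=\partial_{w_{0,a}w_{0,b}}(Q)$ with $Q\in\bfk[x_1,\dots,x_n]$ arbitrary. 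Note also that the right-hand side is well defined: with $D=\prod_{1\le i\le a}\prod_{a+1\le j\le n}(x_i-x_j)$, the summand $w\mapsto w(P/D)$ descends to $\frakS_n/(\frakS_a\times\frakS_b)$ precisely because $P$ and $D$ are both $\frakS_a\times\frakS_b$-invariant, the latter since every $u\in\frakS_a\times\frakS_b$ merely permutes the factors $x_i-x_j$ of $D$ among themselves.

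For such $P$ the left-hand side of the desired formula equals $\partial_{w_{0,a,b}}\partial_{w_{0,a}w_{0,b}}(Q)=\partial_{w_{0,n}}(Q)=\sum_{w\in\frakS_n}w(Q/\Delta_n)$. For the right-hand side I would use $\Delta_n=\Delta_a\Delta'_b\,D$ and $u(D)=D$ for $u\in\frakS_a\times\frakS_b$, so that $u(Q/\Delta_n)=\tfrac1D\,u\big(Q/(\Delta_a\Delta'_b)\big)$; then, choosing representatives $\bar w$ of the cosets in $\frakS_n/(\frakS_a\times\frakS_b)$,
\[
\sum_{\bar w}\bar w\!\left(\frac{\partial_{w_{0,a}w_{0,b}}(Q)}{D}\right)=\sum_{\bar w}\bar w\!\left(\frac1D\sum_{u\in\frakS_a\times\frakS_b}u\big(Q/(\Delta_a\Delta'_b)\big)\right)=\sum_{\bar w}\sum_{u}(\bar w u)\big(Q/\Delta_n\big)=\sum_{w\in\frakS_n}w(Q/\Delta_n),
\]
the last step using $\frakS_n=\bigsqcup_{\bar w}\bar w(\frakS_a\times\frakS_b)$. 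Comparing the two expressions yields the lemma.

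I do not expect a serious obstacle: the only genuinely external ingredient is the classical formula for $\partial_{w_{0,m}}$, and the rest is careful bookkeeping with cosets. The one point worth isolating — and the reason the argument works cleanly — is that the "cross" Vandermonde factor $D$ is stabilized by $\frakS_a\times\frakS_b$, which is exactly what allows the full symmetrization over $\frakS_n$ to split as a sum over cosets of the partial symmetrization already packaged inside $\partial_{w_{0,a}w_{0,b}}(Q)$.
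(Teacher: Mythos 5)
Your proof is correct, but it takes a route genuinely different from the paper's. The paper starts from the expansion $\partial_{w_{0,a,b}}=\sum_{w\in\frakS_n/(\frakS_a\times\frakS_b)}Q_w\,w$, uses \cref{lem:Dem-ab-sym} to establish $Q_w = w(Q_{\Id})$ so that only one coefficient needs computing, and then extracts $Q_{w_{0,a,b}}$ directly by expanding $\partial_{w_{0,a,b}}$ along a reduced word and showing that the only contribution to the coset of $w_{0,a,b}$ comes from taking the reflection term at every crossing; a careful bookkeeping with the reduced expression then identifies the resulting product of factors $(x_{i_t}-x_{j_t})^{-1}$. You instead invoke the classical formula $\partial_{w_{0,m}}(Q)=\sum_{w\in\frakS_m}w(Q/\Delta_m)$ for the \emph{full} symmetric group, exploit the length-additive factorization $w_{0,n}=w_{0,a,b}w_{0,a}w_{0,b}$ (which the paper also uses, but inside the proof of \cref{lem:Dem-ab-sym}), and reduce the claim to splitting the sum for $\partial_{w_{0,n}}$ over cosets of $\frakS_a\times\frakS_b$, using that the cross-Vandermonde $D$ is literally fixed (no sign) by $\frakS_a\times\frakS_b$. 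Your argument trades the paper's combinatorial analysis of a reduced word for one external input (the longest-element formula, itself a short induction), and it avoids any appeal to \cref{lem:Dem-ab-sym}: you only need the image statement of \cref{rem:image-Dem} to justify the substitution $P=\partial_{w_{0,a}w_{0,b}}(Q)$. The paper's approach, by contrast, yields along the way the explicit value of the coefficient $Q_{w_{0,a,b}}$, which is a slightly finer piece of information. Both proofs are valid; yours is arguably cleaner once the classical formula is granted.
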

\begin{proof}
Let us consider $\partial_{w_{0,a,b}}$ as a linear map
\[
\partial_{w_{0,a,b}}\colon \bfk(x_1,\ldots,x_n)^{\frakS_a\times \frakS_b}\to \bfk(x_1,\ldots,x_n)^{\frakS_n}.
\]
We can write it as a sum
\[
\partial_{w_{0,a,b}}=\sum_{w\in\frakS_n/(\frakS_a\times \frakS_b)}Q_w w,
\]
where $Q_w\in \bfk(x_1,\ldots,x_n)$.
We need to show that for each $w\in\frakS_n/(\frakS_a\times \frakS_b)$, we have 
\[
Q_w = w\left(\frac{1}{\prod_{1\leqslant i\leqslant a}\prod_{a+1\leqslant j\leqslant n}(x_i-x_j)}\right).
\]
	
By \cref{lem:Dem-ab-sym}, we have $Q_{\Id}\in \bbk[x_1,\ldots, x_n]^{\frakS_a\times \frakS_b}$ and $Q_w=w(Q_{\Id})$.
So, to complete the proof it remains to show that
\[
Q_{w_{0,a,b}}=w_{0,a,b}\left(\frac{1}{\prod_{1\leqslant i\leqslant a}\prod_{a+1\leqslant j\leqslant n}(x_i-x_j)}\right)=\frac{1}{\prod_{b+1\leqslant i\leqslant n}\prod_{1\leqslant j\leqslant b}(x_i-x_j)}.
\] 
Take a reduced decomposition $w_{0,a,b}=s_{k_1}\ldots s_{k_{ab}}$, and write 
\[
\partial_{w_{0,a,b}}=\left(\frac{1}{x_{k_1}-x_{k_1+1}}-\frac{s_{k_1}}{x_{k_1}-x_{k_1+1}}\right)\ldots \left(\frac{1}{x_{k_{ab}}-x_{k_{ab}+1}}-\frac{s_{k_{ab}}}{x_{k_{ab}}-x_{k_{ab}+1}}\right).
\]
The only way to get a term with permutation belonging to the class $w_{0,a,b}(\frakS_a\times \frakS_b)$ in this product is to take the second term in each bracket.
More precisely, when we write
\[
\left(\frac{s_{k_1}}{x_{k_1+1}-x_{k_1}}\right)\ldots \left(\frac{s_{k_{ab}}}{x_{k_{ab}+1}-x_{k_{ab}}}\right)
\]
and move all $s_i$'s to the right, we get 
\[
\prod_{t=1}^r\left( \frac{1}{x_{i_t}-x_{j_t}}\right)\cdot w_{0,a,b},
\]
where 
\[
i_t=s_{k_1}s_{k_2}\ldots s_{k_{t-1}}(k_t+1),\qquad j_t=s_{k_1}s_{k_2}\ldots s_{k_{t-1}}(k_t).
\]
Furthermore, for each $(i,j)\in [b+1,n]\times [1,b]$ there exists a unique index $t\in[1,ab]$ such that 
\[
s_{k_{t-1}}\ldots s_{k_2}s_{k_{1}}(i)>s_{k_{t-1}}\ldots s_{k_2}s_{k_{1}}(j),\qquad  s_{k_{t}}\ldots s_{k_2}s_{k_{1}}(i)<s_{k_{t}}\ldots s_{k_2}s_{k_{1}}(j).
\] 
For this $t$ we have $i=i_t$ and $j=j_t$, since the decomposition of $w_{0,a,b}$ is reduced.
Therefore
\[
\prod_{t=1}^r\left( \frac{1}{x_{i_t}-x_{j_t}}\right)=\frac{1}{\prod_{b+1\leqslant i\leqslant n}\prod_{1\leqslant j\leqslant b}(x_i-x_j)},
\]
and we may conclude.
\end{proof}

\begin{rmq}
Applying \cref{lem:Dem-ab-formula}, we can rewrite the action of merge operator $\rmM^\lambda_{\lambda'}\in \Sc_n$ on the polynomial representation (see \cref{SchurPolRepFaith}) as follows:
$$
P\mapsto \partial_{w_{0,a,b}}\left(P \prod_{(i,j)\in N_{\lambda'}^\lambda}\left(x_i-x_j-\Delta_{ij}\right)\right),
$$
where $a=\lambda_k^{(1)}$, $b=\lambda_k^{(2)}$ and the Demazure operator $\partial_{w_{0,a,b}}$ is applied to the variables in positions ${\lambda}_{k-1}+1, {\lambda}_{k-1}+2,\ldots,{\lambda}_{k}$.
\end{rmq}

\subsection{Affinized symmetric algebras}
Let $F = \bigoplus_i F_i$ be a $\bbZ_{\geqslant 0}$-graded unital finite dimensional $\bfk$-algebra.
Further, let $\sigma: F\otimes F\to\bfk$ be a non-degenerate graded pairing, such that $\sigma(fg,h)=\sigma(f,gh)$ and $\sigma(f,g)=\sigma(g,f)$ for any $f,g,h\in F$.
This makes $(F,\sigma)$ into a symmetric Frobenius algebra.
An example of such algebra is given by the cohomology ring $H^*(X,\bfk)$ of any smooth projective variety $X$.

Let $m\colon F\otimes F\to F$ be the product in $F$, and $\Delta:F\to F\otimes F$ be its dual with respect to $\sigma$.
The tensor product $F^{\otimes n}$ has a natural $\frakS_n$-action.
For any $1\leqslant i < j\leqslant n$, consider the $\bfk$-linear map 
$$
\iota_{i,j}\colon F^{\otimes 2}\to F^{\otimes n}, \quad f\otimes g\mapsto 1\otimes \ldots \otimes 1 \otimes f\otimes 1\otimes\ldots  \otimes 1\otimes g \otimes 1 \otimes\ldots \otimes 1,
$$ 
where $f$ and $g$ appears at the $i$-th and $j$-th position respectively.
Set $\Delta_{i,j}:=\iota_{i,j}(\Delta(1))\in F^{\otimes n}$.

Let $s_i = (i,i+1)$, $1\leq i< n$ be elementary transpositions in $\frakS_n$.
We will denote the image of $s_i$ in the group algebra $\bfk\frakS_n$ by $\tau_i$, and more generally for any $w\in\fk{S}_n$ we denote its image by $\tau_w$.
The following algebra is defined in \cite[Definition~3.2]{KM_AZAI2019} (see also \cite[Definition~3.1]{Sav_AWPA2018} for a version with non-symmetric $F$).

\begin{defi}\label{def:wreath}
	The \textit{affinized symmetric algebra} $\WR_n=\WR_n(F)$ of rank $n$ is the quotient of the free product
	\[
	\bfk[x_1,\ldots,x_n]\star F^{\otimes n} \star \bfk\frakS_n
	\]
	by the following relations:
	\begin{gather*}
		x_if = fx_i,\quad\tau_if = s_i(f)\tau_i \quad\text{  for all }f\in F^{\otimes n},\\
		\tau_ix_j = x_{s_i(j)}\tau_i-(\delta_{i,j}-\delta_{i+1,j})\Delta_{i,i+1},
	\end{gather*}
	where $\delta_{i,j}$ is the Kronecker symbol.
\end{defi}

\begin{rmq}
	The algebra in the definition above differs from the algebra in~\cite[Definition~3.2]{KM_AZAI2019} by the sign in the last relation.
	However, we could eliminate this difference if we replace $x_i$ by $-x_i$.
\end{rmq}

For any $f\in F$ and $1\leqslant r\leqslant n$, denote by $f_r$ the image of $1^{\otimes r-1}\otimes f\otimes 1^{\otimes n-r}\in F^{\otimes n}$ in $\WR_n$.
While it is not obvious that the natural map $F^{\otimes n}\to \WR_n$ is injective, this follows from the lemma below.

\begin{lm}[{\cite[Theorem 3.8]{KM_AZAI2019}}]\label{lem:basis-wreath}
	Let $B_F$ be a basis of $F$.
	The affinized symmetric algebra $\WR_n$ has the following basis:
	$$
	\left\{\tau_wx_1^{a_1}x_2^{a_2}\ldots x_n^{a_n}(f^{(1)})_1(f^{(2)})_2\ldots (f^{(n)})_n:~w\in \frakS_n,a_r\in \bbN,f^{(i)}\in B_F\right\}.
	$$
\end{lm}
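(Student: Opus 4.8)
The plan is to prove spanning and linear independence of the proposed basis separately, by quite different arguments.

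For \emph{spanning}, I would run a straightening algorithm. Filter $\WR_n$ by setting $\deg x_i=1$ and $\deg\tau_i=\deg f=0$ for $f\in F^{\otimes n}$; the relation $\tau_ix_j=x_{s_i(j)}\tau_i-(\delta_{i,j}-\delta_{i+1,j})\Delta_{i,i+1}$ is then filtered, its correction term $\Delta_{i,i+1}\in F^{\otimes n}$ involving strictly fewer occurrences of $\tau$ and of $x$ than the other two terms. Starting from an arbitrary word in the generators and inducting first on the number of $\tau$-letters and then on the $x$-degree, one moves every $\tau_i$ to the left of every $x_j$ at the cost of strictly smaller correction terms. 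The relations $x_if=fx_i$ then push all $x$'s to the left of $F^{\otimes n}$; the leftmost block of $\tau$'s, being a product in the group algebra factor $\bfk\frakS_n$, rewrites as a combination of the $\tau_w$; and $F^{\otimes n}$ is spanned by the $(f^{(1)})_1\cdots(f^{(n)})_n$ with $f^{(j)}\in B_F$. This produces the claimed spanning set.

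For \emph{linear independence}, I would construct a polynomial representation $\rho\colon\WR_n\to\End_\bfk(V)$ on $V:=\bfk[x_1,\ldots,x_n]\otimes F^{\otimes n}$: let $\bfk[x_1,\ldots,x_n]$ and $F^{\otimes n}$ act by multiplication on the respective tensor factor, let $\hat{s}_i$ act as $s_i$ simultaneously on the variables and on the tensor slots $i,i+1$, and, writing $\Delta(1)=\sum_\alpha u_\alpha\otimes v_\alpha\in F\otimes F$ for the Frobenius comultiplication of $1$, set
\[
\rho(\tau_i)\;=\;\hat{s}_i\;-\;\Bigl(\sum_\alpha(u_\alpha)_i(v_\alpha)_{i+1}\Bigr)\circ\partial_i,
\]
where $\partial_i$ is the Demazure operator on $\bfk[x_1,\ldots,x_n]$ and $(u)_i$ denotes multiplication of the $i$-th slot of $F^{\otimes n}$ by $u$. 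One checks that $\rho$ respects all relations of \cref{def:wreath}: the commutation of $x_i$ with $F^{\otimes n}$ is trivial; $\tau_if=s_i(f)\tau_i$ follows from the symmetry and (co)centrality of the Casimir element $\sum_\alpha u_\alpha\otimes v_\alpha$; the relation $\rho(\tau_i)x_j=x_{s_i(j)}\rho(\tau_i)-(\delta_{i,j}-\delta_{i+1,j})\Delta_{i,i+1}$ reduces to the polynomial identity $(x_i-x_{i+1})\partial_i+s_i=\operatorname{id}$; and the relations $\rho(\tau_i)^2=\operatorname{id}$, $\rho(\tau_i)\rho(\tau_{i+1})\rho(\tau_i)=\rho(\tau_{i+1})\rho(\tau_i)\rho(\tau_{i+1})$ follow from \cref{lem:rel-Demaz} together with the coassociativity and symmetry of $\Delta$. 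Granting well-definedness of $\rho$, consider the map $\Theta\colon\WR_n\to V$, $z\mapsto\rho(z)(1\otimes 1^{\otimes n})$. A short computation shows that $\Theta$ sends $\tau_wx_1^{a_1}\cdots x_n^{a_n}(f^{(1)})_1\cdots(f^{(n)})_n$ to $w(x_1^{a_1}\cdots x_n^{a_n})\otimes w(f^{(1)}\otimes\cdots\otimes f^{(n)})$ plus terms of strictly smaller $x$-degree (the operators $\partial_i$ inside $\rho(\tau_i)$ lowering degree). Since the monomials $x_1^{b_1}\cdots x_n^{b_n}\otimes(b^{(1)}\otimes\cdots\otimes b^{(n)})$, $b^{(j)}\in B_F$, form a $\bfk$-basis of $V$, a triangularity argument with respect to $x$-degree — with ties broken by the $F^{\otimes n}$-component — shows $\Theta$ is injective on the span of the proposed basis. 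Together with spanning, this proves the lemma.

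The \textbf{main obstacle} is verifying that $\rho$ is well defined, specifically the quadratic and braid relations for the operators $\rho(\tau_i)$: this is where the symmetric Frobenius structure on $F$ is used in an essential way, and it amounts to a somewhat lengthy term-by-term comparison using the Demazure relations and the (co)associativity and symmetry of $\Delta$. (The statement itself is \cite[Theorem~3.8]{KM_AZAI2019}; the route above is self-contained.)
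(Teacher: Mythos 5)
This lemma is cited from \cite[Theorem~3.8]{KM_AZAI2019} and not proved in the paper, so I'll compare your argument against what the cited reference does (which the paper summarizes in the proof of \cref{lm:polrep-wreath}: ``it is shown there that the basis of $\WR_n$ in \cref{lem:basis-wreath} act on ${\bf P}_n(F)$ by linearly independent \emph{operators}'').

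Your spanning argument is a correct standard straightening, and your overall strategy for linear independence — construct the polynomial representation $\rho$ and use it to separate the proposed basis — is exactly the Kleshchev--Muth route. However, the specific implementation via the map $\Theta\colon z\mapsto\rho(z)(1\otimes 1^{\otimes n})$ has a genuine gap: $\Theta$ is \emph{not} injective, so the triangularity argument you describe cannot go through. Take $n=2$, $F=\bfk$ with the trivial pairing, so $\Delta(1)=1\otimes 1$, $\Delta_{1,2}=1$, and $\rho(\tau_1)=s_1-\partial_1$ on $V=\bfk[x_1,x_2]$. Then for any $a$,
\[
\Theta(x_1^ax_2^a)=x_1^ax_2^a
\quad\text{and}\quad
\Theta(\tau_1x_1^ax_2^a)=s_1(x_1^ax_2^a)-\partial_1(x_1^ax_2^a)=x_1^ax_2^a-0=x_1^ax_2^a,
\]
so two distinct proposed basis elements have the same image. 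The failure is precisely in the step where you claim the leading terms $w(x_1^{a_1}\cdots x_n^{a_n})\otimes w(f^{(1)}\otimes\cdots\otimes f^{(n)})$ separate the basis elements: when the $x$-exponents and $F$-slots repeat, different $w$'s give the same leading monomial, and the lower-order corrections coming from $\partial_i$ vanish on constants, so they don't save you. What \cite{KM_AZAI2019} actually proves is the stronger statement that the basis elements give linearly independent elements of $\End_\bfk(V)$ — i.e., one must test the operators on arbitrary inputs $P\otimes g$ with $P$ of unbounded degree, not just on the cyclic vector $1\otimes 1^{\otimes n}$. Concretely, one fixes $w$ maximal in Bruhat order among those appearing with nonzero coefficient, tests on polynomials $P$ that are ``generic'' enough so that $\rho(\tau_w)$ acts with leading term $w$ (this works because the $\partial_i$-corrections strictly lower the degree of $P$, and the degree of $P$ can be taken large), and peels off the resulting coefficient. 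Your proof needs to be repaired along these lines; as written, the linear-independence half is incorrect.
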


Let us introduce a grading on $\WR_n$ by setting
\[
\deg \tau = 0,\qquad \deg x_i = 2,\qquad \deg f = \deg_F f.
\]
This makes $\WR_n$ into a graded algebra.
We write $\WR_n = \bigoplus_i \WR_n[i]$, where $\WR_n[i]$ is the subspace of degree $i$.

\begin{corr}\label{WreathDim}
	We have the following formula for the graded dimension of $\WR_n$:
	\[
	\sum_i t^i\dim\WR_n[i] = n!\left(\frac{P_t(F)}{1-t^2}\right)^n,
	\]
	where $P_t(F) = \sum_i t^i\dim F_i$ is the graded dimension of $F$.
\end{corr}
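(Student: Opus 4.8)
The plan is to read off the graded dimension of $\WR_n$ directly from the explicit basis in \cref{lem:basis-wreath}; essentially no computation is needed beyond keeping track of degrees. First I would fix a \emph{homogeneous} basis $B_F$ of $F$, which is possible since $F$ is graded; with this choice $\sum_{f\in B_F}t^{\deg_F f}=P_t(F)$ by definition of $P_t(F)$. Next I would observe that every basis element
\[
\tau_w\,x_1^{a_1}\cdots x_n^{a_n}\,(f^{(1)})_1\cdots (f^{(n)})_n
\]
is homogeneous. Indeed, the grading on $\WR_n$ introduced just above the statement has $\deg x_r=2$, $\deg f=\deg_F f$, and $\deg\tau_i=0$; since $\tau_w$ is the image in $\WR_n$ of $w\in\frakS_n$, it is a product of the degree-zero generators $\tau_i$ and hence $\deg\tau_w=0$. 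Therefore the monomial above is homogeneous of degree $2\sum_{r}a_r+\sum_{i}\deg_F f^{(i)}$.

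It follows that $\dim\WR_n[i]$ is exactly the number of triples $(w,(a_r)_{r=1}^n,(f^{(i)})_{i=1}^n)\in\frakS_n\times\bbN^n\times B_F^n$ whose associated degree equals $i$. The generating function then factors over these three independent choices:
\[
\sum_i t^i\dim\WR_n[i]=\Bigl(\sum_{w\in\frakS_n}1\Bigr)\Bigl(\sum_{a\geq 0}t^{2a}\Bigr)^{\!n}\Bigl(\sum_{f\in B_F}t^{\deg_F f}\Bigr)^{\!n}=n!\left(\frac{1}{1-t^2}\right)^{\!n}P_t(F)^n,
\]
which is the claimed formula.

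I do not anticipate a genuine obstacle here: the whole content sits in \cref{lem:basis-wreath}, and the remaining step is only the remark that the three types of factors in a basis monomial contribute additively to its degree. The two minor points that deserve an explicit word are that one should work with a homogeneous basis of $F$, and that $\deg\tau_w=0$ for \emph{all} $w\in\frakS_n$, not merely for simple transpositions.
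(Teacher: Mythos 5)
Your argument is correct and is precisely the intended reading of \cref{WreathDim} as an immediate consequence of \cref{lem:basis-wreath}: the paper leaves the proof implicit because it amounts to exactly this bookkeeping of degrees over the basis. The two remarks you flag (choosing a homogeneous basis $B_F$, and $\deg\tau_w=0$ for all $w$) are the right points to make explicit.
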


Let us describe a faithful representation of $\WR_n$.
The vector space ${\bf P}_n(F):=\bfk[x_1,\ldots,x_n]\otimes F^{\otimes n}$ admits several natural $\frakS_n$-actions.
First, there is an action permuting $x_i$'s and leaving $F^{\otimes n}$ intact; denote the operators on ${\bf P}_n(F)$ induced by the elementary transpositions by $s^X_1,\ldots, s^X_{n-1}$ .
Conversely, there is an action permuting components of $F^{\otimes n}$ without touching $x$'s; denote the operators on ${\bf P}_n(F)$ induced by the elementary transpositions by $s^f_1,\ldots, s^f_{n-1}$.
Set also $s_k=s^X_ks^f_k$, the operators $s_1,\cdots,s_{n-1}$ correspond to the diagonal $\frakS_n$-action, exchanging simultaneously $x_i$'s and the components of $F^{\otimes n}$.

\begin{lm}
	\label{lm:polrep-wreath}
	The algebra $\WR_n$ has a faithful representation in ${\bf P}_n(F)$ such that
	\begin{itemize}
		\item $x_i$ acts by multiplication by $x_i\in {\bf P}_n(F)$;
		\item $f\in F^{\otimes n}$ acts by multiplication by $f\in {\bf P}_n(F)$;
		\item $\tau_i$ acts by $s_i-\Delta_{i,i+1}\partial^X_i$, where $\partial^X_i:=\frac{1-s^X_i}{x_{i}-x_{i+1}}$ is the Demazure operator on $\bfk[x_1,\ldots,x_n]$.
	\end{itemize}
\end{lm}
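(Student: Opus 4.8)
The plan is to first verify that the three prescribed operators satisfy the defining relations of $\WR_n$ from \cref{def:wreath}, which produces an algebra homomorphism $\rho\colon\WR_n\to\End_\bfk(\bf P_n(F))$, and then to deduce that $\rho$ is injective from the basis of \cref{lem:basis-wreath} by passing to an associated graded. Throughout, for $h\in\bfk[x_1,\ldots,x_n]\otimes F^{\otimes n}$ I write $L_h$ for the operator of multiplication by $h$ on $\bf P_n(F)$.

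\textbf{Well-definedness.} The relations $x_ix_j=x_jx_i$ and $x_if=fx_i$ hold because the operators $L_h$ commute, and the braid and quadratic relations among the $\tau_i$ hold because the $s_i$ generate a genuine $\frakS_n$-action on $\bf P_n(F)$. For $f\in F^{\otimes n}$, using $s_i\circ L_f=L_{s_i(f)}\circ s_i$ and the fact that $\partial^X_i$ commutes with $L_f$ (it affects only the polynomial tensor factor), the relation $\tau_if=s_i(f)\tau_i$ becomes the identity $\Delta_{i,i+1}\cdot f=s_i(f)\cdot\Delta_{i,i+1}$ in $F^{\otimes n}$, which in the two transposed factors reads $\Delta(1)(a\otimes b)=(b\otimes a)\Delta(1)$ in $F\otimes F$ — a standard balancing/cyclicity property of the Casimir element of a symmetric Frobenius algebra. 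Finally $\tau_ix_j=x_{s_i(j)}\tau_i-(\delta_{ij}-\delta_{i+1,j})\Delta_{i,i+1}$ is a short case check ($j=i$, $j=i+1$, $j\notin\{i,i+1\}$) combining $s_i\circ L_{x_j}=L_{x_{s_i(j)}}\circ s_i$ with the twisted Leibniz identities $\partial^X_i\circ L_{x_i}=\operatorname{id}+L_{x_{i+1}}\circ\partial^X_i$ and $L_{x_i-x_{i+1}}\circ\partial^X_i=\operatorname{id}-s^X_i$.

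\textbf{Faithfulness.} I would filter $\WR_n$ by total degree in the $x$-variables, declaring $\deg_x x_i=1$ and $\deg_x\tau_i=\deg_x f=0$. Since the correction term $\Delta_{i,i+1}$ has $\deg_x=0$, it is discarded in $\gr\WR_n$, so $\gr\WR_n$ is a quotient of the smash product $A:=(\bfk[x_1,\ldots,x_n]\otimes F^{\otimes n})\rtimes\bfk\frakS_n$ with $\frakS_n$ acting diagonally. By \cref{lem:basis-wreath} the monomials $\tau_wx_1^{a_1}\cdots x_n^{a_n}(f^{(1)})_1\cdots(f^{(n)})_n$ form a basis of $\WR_n$, each homogeneous for $\deg_x$; hence their images form a basis of $\gr\WR_n$ matching the standard monomial basis of $A$, so the surjection $A\twoheadrightarrow\gr\WR_n$ is an isomorphism. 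On $\bf P_n(F)$ I use the $x$-degree filtration; then $\rho$ is filtered, with $\rho(x_i)=L_{x_i}$ raising $\deg_x$ by $1$ and $\rho(f)=L_f$, $\rho(\tau_i)=s_i-L_{\Delta_{i,i+1}}\partial^X_i$ not raising it, the latter having $s_i$ as its leading part since $L_{\Delta_{i,i+1}}\partial^X_i$ strictly lowers $\deg_x$. Therefore $\gr\rho$ is the tautological action of $A$ on $\bfk[x_1,\ldots,x_n]\otimes F^{\otimes n}$, which is faithful: if $\sum_{w\in\frakS_n}L_{h_w}\circ s_w=0$ with $h_w\in\bfk[x]\otimes F^{\otimes n}$, then evaluating on $x_1^{c_1}\cdots x_n^{c_n}\otimes 1$ with consecutive gaps $c_i-c_{i+1}$ all exceeding $\max_w\deg_x h_w$ produces summands supported in pairwise disjoint ranges of $x$-multidegree (indexed by $w(c_1,\ldots,c_n)$), forcing each $h_w=0$. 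Since $\gr\rho$ is injective and the filtration on $\WR_n$ is exhaustive and bounded below, $\rho$ is injective, hence faithful.

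\textbf{Main difficulty.} Nothing here is deep; the care needed is bookkeeping — keeping the three symmetric-group actions ($s^X$ on $\bfk[x]$ only, $s^f$ on $F^{\otimes n}$ only, and the diagonal $s$) carefully apart in the well-definedness step, and invoking the correct balancing identity $\Delta(1)(a\otimes b)=(b\otimes a)\Delta(1)$ for the relation $\tau_if=s_i(f)\tau_i$ (the superficially tempting $(a\otimes 1)\Delta(1)=(1\otimes a)\Delta(1)$ would need $F$ commutative, and is not what is used). The rest is the standard "pass to the associated graded and recognize a smash product" argument, powered by \cref{lem:basis-wreath}.
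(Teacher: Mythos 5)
Your approach — verify the defining relations directly to construct $\rho$, then pass to the $x$-degree filtration and identify $\gr\WR_n$ with a smash product to prove faithfulness via \cref{lem:basis-wreath} — is a self-contained alternative to the paper's proof, which simply cites \cite[Lemma~3.7, Theorem~3.8]{KM_AZAI2019} for both halves. Your faithfulness argument is correct, and your verifications of the relations $\tau_i f = s_i(f)\tau_i$ and $\tau_i x_j = x_{s_i(j)}\tau_i - (\delta_{ij}-\delta_{i+1,j})\Delta_{i,i+1}$ are also right, including the subtlety about which form of the Casimir identity for a symmetric Frobenius algebra is actually needed.

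However, there is a genuine gap in the well-definedness step. You dismiss the braid and quadratic relations for the $\tau_i$ as ``holding because the $s_i$ generate a genuine $\frakS_n$-action.'' But $\rho(\tau_i) = s_i - \Delta_{i,i+1}\partial^X_i$ is not $s_i$, so the Coxeter relations for the plain $s_i$ tell you nothing about whether the corrected operators $\rho(\tau_i)$ satisfy them. The quadratic relation $\rho(\tau_i)^2 = \operatorname{id}$ does hold, but requires a short computation: the cross terms cancel because $s_i(\Delta_{i,i+1}) = \Delta_{i,i+1}$, $s^X_i\partial^X_i = \partial^X_i$ and $\partial^X_i s^X_i = -\partial^X_i$, while $(\partial^X_i)^2 = 0$ kills the last summand. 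The braid relation $\rho(\tau_i)\rho(\tau_{i+1})\rho(\tau_i) = \rho(\tau_{i+1})\rho(\tau_i)\rho(\tau_{i+1})$ is a longer, genuinely nontrivial verification; this is precisely the content of \cite[Lemma~3.7]{KM_AZAI2019}. Without carrying out that computation (or citing it), you have not established that $\rho$ is a well-defined homomorphism, and the filtration argument for faithfulness has nothing to apply to.
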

\begin{proof}
	The formulas above yield a representation of $\WR_n$ by~\cite[Lemma~3.7]{KM_AZAI2019}.
	The faithfulness follows from the proof of~\cite[Theorem 3.8]{KM_AZAI2019}.
	Indeed, it is shown there that the basis of $\WR_n$ in \cref{lem:basis-wreath} act on ${\bf P}_n(F)$ by linearly independent operators. 
\end{proof}

\subsection{KLR algebras of curves}
Let $n\in \bbN_+$, and let $1^n$ be the partition of $n$ into $1$'s.
Then $\Sc_{1^n,1^n}\subset \Sc_n$ is a subalgebra; we denote it by $\cal R_n=\cal R_n^C$ and refer to it as the \textit{(torsion) KLR algebra of $C$} of degree $n$.
More explicitly, $\cal R_n$ is the convolution algebra $\cal A(\pi')=H_{G_n}^*(Z_n)$, where $\pi':Y_{1^n}\to Q_n$ is the restriction of $\pi$ (see \cref{sec:schur}) to $Y_{1^n}$.

\begin{nota}
In order to unclutter the notation, we will write $Z_n$ instead of $Z_{1^n,1^n}$ for the correspondence $Y_{1^n}\times_{Q_n}Y_{1^n}$ throughout this section.
\end{nota}

For any $1\leq i\leq n-1$, let us consider the ordered partition $\sigma_i$ of $n$ with $i$-th term equal to $2$, and other terms equal to $1$:
\[
\sigma_i = (1,\ldots,\underbrace{2}_{\text{$i$-th}},\ldots,1).
\]
We have a natural map $Y_{1^n}\to Y_{\sigma_i}$, induced by the $B_n$-equivariant embedding $\widetilde{Q}_{1^n}\subset \widetilde{Q}_{\sigma_i}$.
Consider the following correspondences:
\[
Z^i_n := Y_{1^n}\times_{Y_{\sigma_i}} Y_{1^n}, \qquad Z^{\sigma_i}_n:=\overline{Z^i_n\setminus Y_{1^n}}\subset Z_n.
\]

Let us denote $\tau_i:=[Z_n^{\sigma_i}]\in \cal R_n^C$.
Unlike for the full Schur algebra, we can use the notations from \cref{ConvEx} without any adjustments for $\cal R_n$.
We set $X = C^n$, $\Gamma = \fk{S}_n$, and $\gamma = \gamma_{1^n}$.
\cref{TautElts} then implies that $\Xi_n(P) = \widetilde{\xi}_{(1,P)}$ for any $P\in\bf{P}_n\simeq H^*_{G_n}(Y_{1^n})$.

\begin{prop}\label{CrossElts}
We have 
\[
\Xi_n(\tau_i)=\widetilde{\xi}_{1,\frac{\Delta_{i,i+1}}{x_{i+1}-x_i}}+\widetilde{\xi}_{s_i,1+\frac{\Delta_{i,i+1}}{x_{i+1}-x_i}}.
\]
\end{prop}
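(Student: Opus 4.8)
The plan is to identify $\Xi_n(\tau_i) = i_{Z_n}^!([Z_n^{\sigma_i}])$ by equivariant localization. Since $\Xi_n$ is injective by \cref{KLRLocInj}, it suffices to compute this class, and by $\frakS_n$-equivariance of all varieties and maps involved it is enough to determine the components of $i_{Z_n}^!([Z_n^{\sigma_i}])$ supported on the fixed loci $\{(F_e,F_g)\}\times C^n\subset(Y_{1^n}^2)^{T_n}$ for $g\in\frakS_n$. As $Z_n^{\sigma_i}\subseteq Z_n^i=Y_{1^n}\times_{Y_{\sigma_i}}Y_{1^n}$, a $T_n$-fixed point of $Z_n^{\sigma_i}$ is a pair of $T_n$-fixed complete flags agreeing in every step except the $i$-th; such a pair lies over $(F_e,F_g)$ only when $g\in\{e,s_i\}$. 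So two contributions remain, and by \cref{TautElts} (in the form $\Xi_n(P)=\widetilde{\xi}_{(1,P)}$ recorded just before the statement) their assembly will be of the form $\widetilde{\xi}_{1,\,a} + \widetilde{\xi}_{s_i,\,b}$.

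For $g=s_i$: in a $T_n$-invariant neighbourhood of $\{(F_e,F_{s_i})\}\times C^n$ the projection $Z_n^i\to Y_{\sigma_i}$ is an isomorphism onto an open subset of $Y_{1^n}$ (near $F_{s_i}$ the intermediate step of a flag refining the common $\sigma_i$-flag is uniquely determined), so $Z_n^{\sigma_i}$ is smooth there. Hence the localization contribution is the ratio of equivariant Euler classes $e(N_{\{(F_e,F_{s_i})\}\times C^n}(Y_{1^n}^2))/e(N_{\{(F_e,F_{s_i})\}\times C^n}Z_n^{\sigma_i})$, corrected by the twist $e(Y_{1^n})^{-1}$ built into $\Xi_n$ via \cref{ConvLocAlg}. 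Reading off the tangent weights from the flag-variety formulas of \cref{subs:Flag} and the normal bundle computation \eqref{FlagQuotVBun}, the only weight of $Y_{1^n}^2$ missing from $Z_n^{\sigma_i}$ at this fixed locus is the $(i,i+1)$-direction along $\widetilde{Q}_{1^n}$, contributing $x_i-x_{i+1}+\Delta_{i,i+1}$, balanced against the transverse flag direction $x_{i+1}-x_i$; after the twist this evaluates to $1+\tfrac{\Delta_{i,i+1}}{x_{i+1}-x_i}$, giving the summand $\widetilde{\xi}_{s_i,\,1+\Delta_{i,i+1}/(x_{i+1}-x_i)}$. Setting $\Delta_{i,i+1}=0$ recovers the expected ``classical'' crossing $\widetilde{\xi}_{s_i,1}$, a useful consistency check on signs and normalizations.

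For $g=e$ — the diagonal contribution — the subtlety is that $Z_n^{\sigma_i}$ is singular along $\Delta Y_{1^n}$, so its fundamental class does not restrict to the fixed locus $\{(F_e,F_e)\}\times C^n$ by a naive Euler-class ratio. I would compute $i_{Z_n}^!([Z_n^{\sigma_i}])$ there on a smooth model. By \cref{lm:flags-smooth} the variety $Z_n^i$, and the auxiliary fibre products from which it is built, are vector bundles over explicit bases, and inside $Z_n$ one has a cycle identity relating $[Z_n^i]$, $[\Delta Y_{1^n}]$ and $[Z_n^{\sigma_i}]$. Pushing forward from the smooth model and applying the self-intersection formula \cref{GysinProp:d} to $\Delta Y_{1^n}\hookrightarrow Y_{1^n}^2$ (normal Euler class $\gamma_{1^n}$ at the fixed point, by \eqref{QuotVBun}) and to $\Delta Y_{1^n}\hookrightarrow Z_n^i$ (which contributes the single factor $x_{i+1}-x_i+\Delta_{i,i+1}$), and using $\Xi_n(1)=\widetilde{\xi}_{(1,1)}$ to absorb the $[\Delta Y_{1^n}]$ term, the diagonal coefficient comes out to be exactly $\tfrac{\Delta_{i,i+1}}{x_{i+1}-x_i}$. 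It suffices to run this local computation for $n=2$, $i=1$, where the ambient is $Q_2$, and to deduce the general case by pullback along the natural embeddings $\widetilde{Q}_{1^n}\hookrightarrow\widetilde{Q}_{\sigma_i}$.

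Adding the two contributions yields
\[
\Xi_n(\tau_i)=\widetilde{\xi}_{1,\,\Delta_{i,i+1}/(x_{i+1}-x_i)}+\widetilde{\xi}_{s_i,\,1+\Delta_{i,i+1}/(x_{i+1}-x_i)},
\]
which by injectivity of $\Xi_n$ proves the proposition. The main obstacle is the $g=e$ step: pinning down the local structure of $Z_n^{\sigma_i}$ along the diagonal precisely enough that the excess-intersection bookkeeping produces the coefficient $\Delta_{i,i+1}/(x_{i+1}-x_i)$ with no higher-order corrections — everything else is a routine Euler-class computation.
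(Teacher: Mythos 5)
Your fixed-locus analysis (only $g\in\{e,s_i\}$ contribute, and the off-diagonal contribution is an Euler-class ratio) is reasonable, but two geometric claims underpinning the treatment of the diagonal term are backwards. First, $Z_n^{\sigma_i}$ is in fact \emph{smooth}: the remark directly following this proposition records that it is isomorphic to a blowup of $Y_{1^n}$. So the singularity along $\Delta Y_{1^n}$ that you worry about does not exist. The real subtlety at the diagonal fixed locus is different: the $T_n$-fixed locus of $Z_n^{\sigma_i}$ lying over $(F_e,F_e)$ is a \emph{proper subvariety} of $\{(F_e,F_e)\}\times C^n$, so the localized class there comes from a genuine pushforward (\cref{LocPullPush}), not a pointwise Euler-class ratio along the whole fixed component. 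You detected that the $g=e$ term is delicate but misattributed the cause. Second, and fatally for your proposed workaround, $Z_n^i=Y_{1^n}\times_{Y_{\sigma_i}}Y_{1^n}$ is \emph{not} smooth: \cref{lm:flags-smooth} gives smoothness of the subvarieties $\widetilde{Q}_S\subset Q_n$, hence of the individual strata $Z_{\mu,\lambda}^w$ — not of $Z_n^i$, which is the set-theoretic union $Y_{1^n}\cup Z_n^{\sigma_i}$ of two irreducible components of the same dimension $n^2$ meeting in codimension one, and is therefore singular along the intersection. In particular $\Delta Y_{1^n}\hookrightarrow Z_n^i$ is an embedding of codimension $0$ (one irreducible component into the whole), so it has no normal bundle, and the self-intersection step you propose via \cref{GysinProp:d} cannot be run.

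The paper's own proof starts from the very same cycle identity $[Z_n^i]=[Y_{1^n}]+[Z_n^{\sigma_i}]$ but then avoids all local analysis: it recognizes $[Z_n^i]$ as the split-merge convolution $\rmS_{\sigma_i}^{1^n}\rmM^{\sigma_i}_{1^n}$, evaluates $\Xi_n$ on it by \cref{SMLoc} together with formula~\eqref{SchurLocComp}, and subtracts $\Xi_n([Y_{1^n}])=\widetilde{\xi}_{(1,1)}$. A direct localization argument in the spirit you attempted is possible (this is exactly what the paper's remark suggests), but it has to be built on the smoothness of $Z_n^{\sigma_i}$ — not of $Z_n^i$ — together with an explicit description of its $T_n$-fixed locus and normal bundle at both fixed components, all of which requires unwinding the blowup structure. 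That is doable, but it is genuinely more work than the split-merge route, and neither of your two smoothness claims would survive it.
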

\begin{proof}
By definition of $Z^{\sigma_i}_n$, we have $Z^i_n = Y_{1^n}\cup Z^{\sigma_i}_n$.
In particular,
\[
\Xi_n(\tau_i) = \Xi_n([Z^i_n]) - \Xi_n([Y_{1^n}]) = \Xi_n([Z^i_n]) - \widetilde{\xi}_{(1,1)}.
\]
Since $Z^i_n = Y_{1^n}\times_{Y_{\sigma_i}} Y_{1^n}$, we have $[Z^i_n] = \rmS_{\sigma_i}^{1^n}\rmM^{\sigma_i}_{1^n}$ inside $\Sc_n$.
Using \cref{SMLoc} and formula~\eqref{SchurLocComp}, we obtain
\[
\Xi_n([Z_n^i]) = \xi^{1^n,\sigma_i}_{(1,\gamma_{\sigma_i})}*\xi^{\sigma_i,1^n}_{(1,\gamma_{\sigma_i})} = \xi_{(1,\gamma_{\sigma_i})} + \xi_{(s_i,\gamma_{\sigma_i})}.
\]
Since $\gamma_{\sigma_i}/\gamma_{1^n} = (x_{i+1}-x_i+\Delta_{i,i+1})/(x_{i+1}-x_i)$, we conclude that
\[
\Xi_n(\tau_i) = \widetilde{\xi}_{(1,\gamma_{\sigma_i}/\gamma_{1^n}-1)} + \widetilde{\xi}_{(s_i,\gamma_{\sigma_i}/\gamma_{1^n})} = \widetilde{\xi}_{1,\frac{\Delta_{i,i+1}}{x_{i+1}-x_i}}+\widetilde{\xi}_{s_i,1+\frac{\Delta_{i,i+1}}{x_{i+1}-x_i}}.
\]
\end{proof}
\begin{rmq}
It is possible to make this computation directly, without appealing to the results of \cref{sec:schur}.
For this, one can first show that $Z_n^{\sigma_i}$ is smooth (in effect, it is isomorphic to a certain blowup of $Y_{1^n}$), and then use \cref{LocPullPush}.
\end{rmq}

The algebra $\cal R_n$ acts on $\bf{P}_n = H^G_*(Y_{1^n})$ by \cref{ConvModule}.
This is a subrepresentation of $\Sc_n \curvearrowright P_n$, restricted to $\cal R_n\subset\Sc_n$.
$\bf{P}_n$ can be identified with a subspace in $(\bf{P}_n)_\loc$, on which $\cal R_n^\loc$ acts as in \cref{ConvEx}.
Under this identification, we have
\begin{align*}
\Xi_n(P).h &= \widetilde{\xi}_{(1,P)} . h = Ph\gamma/\gamma = Ph;\\
\Xi_n(\tau_i).h &= \left( \widetilde{\xi}_{1,\frac{\Delta_{i,i+1}}{x_{i+1}-x_i}}+\widetilde{\xi}_{s_i,1+\frac{\Delta_{i,i+1}}{x_{i+1}-x_i}} \right).h
=\frac{\Delta_{i,i+1}}{x_{i+1}-x_i}h+\left(1+\frac{\Delta_{i,i+1}}{x_{i+1}-x_i}\right)h^{s_i}\left(\frac{x_i-x_{i+1}+\Delta_{i,i+1}}{x_i-x_{i+1}-\Delta_{i,i+1}}\right)\\
&=\frac{\Delta_{i,i+1}}{x_{i+1}-x_i}h+\left(1-\frac{\Delta_{i,i+1}}{x_{i+1}-x_i}\right)h^{s_i}
= h^{s_i} + \frac{\Delta_{i,i+1}}{x_{i+1}-x_i}(h-h^{s_i})\\
& = (s_i-\Delta_{i,i+1}\partial_i)h,
\end{align*}
where we have used the fact that
\[
\frac{\gamma_{1^n}}{\gamma_{1^n}^{s_i}} = \frac{\gamma_{1^n}/\gamma_{\sigma_i}}{(\gamma_{1^n}/\gamma_{\sigma_i})^{s_i}} = -\frac{x_i-x_{i+1}+\Delta_{i,i+1}}{x_{i+1}-x_i+\Delta_{i,i+1}}.
\]

\begin{prop}\label{KLRisowreath}
We have an isomorphism of algebras $\cal{R}_n^C\simeq \WR_n(H^*(C))$.
\end{prop}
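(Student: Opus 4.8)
The plan is to construct mutually inverse maps between $\WR_n(H^*(C))$ and $\cal R_n^C$ by matching generators, and then use the explicit bases on both sides to check bijectivity. Concretely, I would first define a ring homomorphism $\Theta\colon \WR_n(H^*(C))\to \cal R_n^C$ by sending $x_i\mapsto x_i$ (the tautological class), $f\in H^*(C)^{\otimes n}\mapsto f$ (the image of $H^*(C^n)$ inside $\bf P_n\simeq H^*_{G_n}(Y_{1^n})$, realized via $\delta_{1^n}$ as in \cref{DiagEmb}), and $\tau_i\mapsto \tau_i=[Z_n^{\sigma_i}]$. To see that $\Theta$ is well defined, I must verify the defining relations of $\WR_n$ from \cref{def:wreath}: the commutation $x_if=fx_i$ is clear since both live in the commutative subalgebra $\bf P_n\subset\cal R_n^C$; the relations $\tau_i f = s_i(f)\tau_i$ and $\tau_i x_j = x_{s_i(j)}\tau_i - (\delta_{i,j}-\delta_{i+1,j})\Delta_{i,i+1}$ can be checked after applying the injective localization map $\Xi_n$, using the explicit formula for $\Xi_n(\tau_i)$ from \cref{CrossElts}, $\Xi_n(P)=\widetilde\xi_{(1,P)}$, and the multiplication rule~\eqref{eq:mult-xi} for the $\widetilde\xi$'s. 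This reduces each relation to a short rational-function identity in $(\bf P_n)_\loc$.

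Next I would show $\Theta$ is surjective. By the corollary to \cref{lm:basis-Schur-zigzag} (or more directly by the $\mu=\lambda=1^n$ case of that proposition), $\cal R_n^C$ is generated by the polynomials $\bf P_n$ and the elementary crossings; the elementary crossings for $\cal R_n^C$ are exactly the $\tau_i$ (one sees from the proof of \cref{CrossElts} that $[Z_n^i]=\rmS^{1^n}_{\sigma_i}\rmM^{\sigma_i}_{1^n}$, and $\tau_i=[Z_n^i]-[Y_{1^n}]$, so polynomials and $\tau_i$'s together generate all splits and merges between tuples of $1$'s). Since $\bf P_n$ is generated as a ring by $x_1,\dots,x_n$ together with $H^*(C)^{\otimes n}$, all generators of $\cal R_n^C$ lie in the image of $\Theta$, hence $\Theta$ is onto.

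Finally, injectivity: I would compare bases. On the $\WR_n$ side we have the monomial basis of \cref{lem:basis-wreath}, indexed by $(w,(a_i),(f^{(i)}))$. I claim $\Theta$ sends this basis to a generating set whose image in the associated graded of the stratification filtration on $\cal R_n^C$ (as in the proof of \cref{KLRLocInj,basis-highest-terms}) is triangular with respect to the basis $\{\Psi_g^P\}$ of \cref{lm:basis-Schur-zigzag} for $\mu=\lambda=1^n$: indeed $\Theta(\tau_w x^a f) = \tau_{i_1}\cdots\tau_{i_\ell}\cdot(\text{polynomial})$, and by \cref{CrossElts} the highest term of $\Xi_n(\tau_i)$ is $\widetilde\xi_{s_i,*}$, so the highest term of $\Xi_n(\Theta(\tau_w x^a f))$ lies in the stratum labelled by $w$ and equals $\widetilde\xi_{(w,\ast)}$ with a polynomial part that, as $x^a f$ ranges over the monomial basis of $\bf P_n$, ranges over a basis of the stratum's homology $\bf P_n$ up to lower-order corrections. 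Invoking \cref{basis-highest-terms} (with the appropriate normalization $\beta_w$ absorbed into the choice of monomial basis), this shows $\{\Theta(\tau_w x^a f)\}$ is a $\bbk$-basis of $\cal R_n^C$; since a generating map sending a basis to a basis is an isomorphism, $\Theta$ is bijective. I expect the main obstacle to be the bookkeeping in this last step—tracking how the product $\tau_{i_1}\cdots\tau_{i_\ell}$ of crossings interacts with the polynomial factor under $\Xi_n$, and confirming that the resulting family is genuinely triangular (not merely spanning) so that \cref{basis-highest-terms} applies cleanly; an alternative, perhaps cleaner, route is to bypass triangularity by a dimension count, comparing the graded dimension of $\cal R_n^C$ computed from \cref{lm:basis-Schur-zigzag} with the formula $n!\,(P_t(H^*(C))/(1-t^2))^n$ of \cref{WreathDim}, and concluding that the surjection $\Theta$ between spaces of equal (finite in each degree) graded dimension must be an isomorphism.
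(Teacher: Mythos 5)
Your proposal is correct, but it takes a genuinely different—and more laborious—route than the paper. The paper's proof is a one-paragraph argument: since both $\cal R_n^C$ and $\WR_n(H^*(C))$ admit faithful actions on the \emph{same} space $\bf P_n(F)$ (by \cref{lm:polrep-wreath} and \cref{SchurPolRepFaith}), and since the polynomial operators and $\tau_i$ act by identical formulas while generating both algebras (the latter via \cref{lm:basis-Schur-zigzag}, the former via \cref{lem:basis-wreath}), both algebras are literally the same subalgebra of $\End(\bf P_n(F))$. This sidesteps your entire relation-checking and bijectivity apparatus: the defining relations of $\WR_n$ need not be verified in $\cal R_n^C$ (they hold automatically in $\End(\bf P_n(F))$), and injectivity is for free once faithfulness and generation are established. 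Your approach, by contrast, constructs an explicit homomorphism $\Theta$ and then must (i) verify the $\WR_n$ relations—including $\tau_i^2=1$ and braid relations, which you don't spell out—via the localization formulas, (ii) argue surjectivity (same ingredient as the paper), and (iii) argue injectivity via a triangularity computation relating $\tau_w$ to the basis $\Psi_g^P$. The triangularity step, as you anticipate, is the finicky part: one needs to relate $\tau_w = \prod_j(\rmR_{1^n}^{1^n}(s_{i_j})-1)$ to $\Psi_w$ modulo lower Bruhat terms, which works but requires care. Your dimension-count alternative is cleaner and does go through—the graded dimension of $\cal R_n^C$ read off from \cref{lm:basis-Schur-zigzag} with $\mu=\lambda=1^n$ is $n!\,\bigl(P_t(H^*(C))/(1-t^2)\bigr)^n$, matching \cref{WreathDim}, and a surjective degree-$0$ map between graded spaces with equal finite dimensions in each degree is an isomorphism—but it still leaves you needing to verify the $\WR_n$ relations for well-definedness. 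In short, both routes are valid, but the paper's trades your "map + relations + bijectivity" workflow for a single observation about coinciding images in $\End(\bf P_n(F))$, which is both shorter and more transparent about why the isomorphism exists.
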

\begin{proof}
It follows from the proof of \cref{CrossElts} that $\tau_i = R_{1^n}^{1^n}(s_i)-1$ in notations of \cref{subs:diag-Schur}.
In particular, \cref{lm:basis-Schur-zigzag} implies that the algebra $\cal R_n$ is generated by polynomial operators together with $\tau_i$, $1\leq i\leq n-1$.
Both $\cal{R}_n^C$ and $\WR_n(H^*(C))$ act on ${\bf P}_n(F)$, and the action of polynomial operators and $\tau_i$'s is given by the same formulas.
Since the polynomial representation ${\bf P}_n(F)$ is faithful for both algebras by \cref{lm:polrep-wreath} and \cref{SchurPolRepFaith}, we deduce the desired isomorphism.
\end{proof}	

\subsection{Affine zigzag algebra}
Let us write out $\cal R^C_n$ for $C=\bbP^1$.
In this case $H^*(C)\simeq\bfk[c]/c^2$, and $\Delta_{i,j}=c_i+c_j$.

\begin{defi}
The \emph{affine zigzag algebra} $\Z_n$ is the $\bfk$-algebra generated by elements $x_r$, $c_r$, $1\leq r\leq n$ and $\tau_k$, $1\leq k < n$ modulo the following relations:
\begin{gather*}
	x_rx_t=x_tx_r,\qquad x_rc_t=c_tx_r,\qquad c_rc_t=c_tc_r,\qquad c_r^2=0;\\
	\tau_k^2=1,\qquad \tau_k\tau_{k+1}\tau_k=\tau_{k+1}\tau_k\tau_{k+1},\qquad \tau_k\tau_l=\tau_l\tau_k \mbox{ if }|l-k|>1;\\
	\tau_kc_k=c_{k+1}\tau_k,\qquad\tau_kc_r=c_r\tau_k \mbox{ if }r\ne k,k+1;\\
	\tau_kx_k-x_{k+1}\tau_k=-c_k-c_{k+1}=x_{k}\tau_k-\tau_kx_{k+1},\qquad \tau_kx_r=x_r\tau_k \mbox{ if }r\ne k,k+1.
\end{gather*}
\end{defi}

\begin{corr}\label{corr:KLRP1=zigzag}
	We have an isomorphism of algebras $\cal{R}_n^{\bbP^1}\simeq \Z_n$.
\end{corr}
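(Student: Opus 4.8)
The plan is to combine \cref{KLRisowreath} with an explicit identification of the affinized symmetric algebra $\WR_n(F)$ for $F = H^*(\bbP^1,\bfk)\simeq \bfk[c]/(c^2)$. Indeed, \cref{KLRisowreath} gives $\cal R_n^{\bbP^1}\simeq \WR_n(H^*(\bbP^1))$, so it suffices to check that $\WR_n(\bfk[c]/(c^2))$ is presented exactly by the relations defining $\Z_n$; since both algebras are given by presentations, this is a direct unwinding.

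First I would pin down the Frobenius structure on $F = \bfk[c]/(c^2)$, with $\deg c = 2$: the pairing is $\sigma(1,c) = \sigma(c,1) = 1$ and $\sigma(1,1) = \sigma(c,c) = 0$, which is the Poincaré pairing of $\bbP^1$. The comultiplication $\Delta$, dual to the product $m$ with respect to $\sigma$, then sends $1\mapsto \Delta(1) = 1\otimes c + c\otimes 1$ (the basis $\{1,c\}$ being self-dual up to the flip $1\leftrightarrow c$). Hence, in the notation of \cref{def:wreath}, $\Delta_{i,i+1} = \iota_{i,i+1}(\Delta(1)) = c_i + c_{i+1}$, where $c_r$ denotes the image of $1^{\otimes r-1}\otimes c\otimes 1^{\otimes n-r}\in F^{\otimes n}$. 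This is consistent with the class of the diagonal used throughout \cref{sec:schur} (compare \cref{rmq:SchurP1}).

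It then remains to unwind \cref{def:wreath} with this $F$. The free product $\bfk[x_1,\dots,x_n]\star F^{\otimes n}\star \bfk\frakS_n$ already contributes: the commuting relations among the $x_r$; the relations $c_rc_t = c_tc_r$ and $c_r^2 = 0$ presenting $F^{\otimes n} = (\bfk[c]/(c^2))^{\otimes n}$; and the Coxeter relations $\tau_k^2 = 1$, $\tau_k\tau_{k+1}\tau_k = \tau_{k+1}\tau_k\tau_{k+1}$, $\tau_k\tau_l = \tau_l\tau_k$ for $|k-l|>1$ presenting $\bfk\frakS_n$. The relation $x_if = fx_i$ specializes to $x_rc_t = c_tx_r$; the relation $\tau_if = s_i(f)\tau_i$ specializes to $\tau_kc_k = c_{k+1}\tau_k$ and $\tau_kc_r = c_r\tau_k$ for $r\neq k,k+1$ (the case $\tau_kc_{k+1} = c_k\tau_k$ following from these together with $\tau_k^2 = 1$); and the relation $\tau_ix_j = x_{s_i(j)}\tau_i - (\delta_{i,j}-\delta_{i+1,j})\Delta_{i,i+1}$, after substituting $\Delta_{i,i+1} = c_i + c_{i+1}$, becomes precisely $\tau_kx_k - x_{k+1}\tau_k = -c_k-c_{k+1} = x_k\tau_k - \tau_kx_{k+1}$ together with $\tau_kx_r = x_r\tau_k$ for $r\neq k,k+1$. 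This is exactly the presentation of $\Z_n$, so $\WR_n(\bfk[c]/(c^2))\simeq \Z_n$, and composing with \cref{KLRisowreath} yields the claim.

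There is no genuine obstacle here; the only points requiring a little care are the bookkeeping of the Frobenius structure — namely obtaining $\Delta(1) = 1\otimes c + c\otimes 1$ rather than, say, $1\otimes 1$ — and noticing that the quadratic and braid relations for the $\tau_k$, which do not appear explicitly in \cref{def:wreath}, are already encoded in the factor $\bfk\frakS_n$ of the free product.
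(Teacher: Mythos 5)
Your proposal is correct and takes the same approach as the paper: $\cal R_n^{\bbP^1}\simeq\WR_n(H^*(\bbP^1))$ by \cref{KLRisowreath}, and since $H^*(\bbP^1)\simeq\bfk[c]/c^2$ with $\Delta(1)=1\otimes c+c\otimes 1$ (so $\Delta_{i,i+1}=c_i+c_{i+1}$), the presentation in \cref{def:wreath} written out for this $F$ is exactly the presentation of $\Z_n$. The paper treats this as immediate once it records $\Delta_{i,j}=c_i+c_j$; your unwinding of the Frobenius structure and of the implicit symmetric-group relations inside the free product simply makes that identification explicit.
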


Consider the following truncated polynomial ring:
\[
{\bf P}_n=\bfk[x_1,\ldots,x_n,c_1,\ldots,c_n]/(c_1^2,\ldots,c_n^2).
\]
The following lemma is a special case of \cref{lm:polrep-wreath,lem:basis-wreath}.
\begin{lm}
\label{lm:polrep-zigzag}
The affine zigzag algebra $\Z_n$ has the following basis:
\[
\left\{\tau_wx_1^{a_1}x_2^{a_2}\ldots x_n^{a_n}c_1^{b_1}c_2^{b_2}\ldots c_n^{b_n};~w\in \frakS_n,a_r\in \bbN,b_r\in\{0,1\}\right\}.
\]
Furthermore, the algebra $\Z_n$ has a faithful representation in ${\bf P}_n$ such that
\begin{itemize}
	\item $x_r\in \Z_n$ acts by multiplication by $x_r\in {\bf P}_n$, 
	\item $c_r\in \Z_n$ acts by multiplication by $c_r\in {\bf P}_n$,
	\item $\tau_k$ acts by $s_k-(c_k+c_{k+1})\partial_k$, where $\partial_k=\frac{1-s_k}{x_{k}-x_{k+1}}$ is the Demazure operator. 
\end{itemize}
\end{lm}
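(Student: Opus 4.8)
The plan is to recognize \cref{lm:polrep-zigzag} as a direct specialization of the general theory of affinized symmetric algebras to $F=H^*(\bbP^1,\bfk)$. First I would spell out this Frobenius algebra explicitly: $H^*(\bbP^1,\bfk)\cong\bfk[c]/(c^2)$ with $\deg c=2$, equipped with its Poincar\'e-duality pairing $\sigma(1,c)=\sigma(c,1)=1$, $\sigma(1,1)=\sigma(c,c)=0$; the associated comultiplication, being the class of the diagonal in $H^*(\bbP^1\times\bbP^1)$, is $\Delta(1)=1\otimes c+c\otimes 1$, so that in $F^{\otimes n}=\bfk[c_1,\ldots,c_n]/(c_1^2,\ldots,c_n^2)$ one has $\Delta_{i,i+1}=c_i+c_{i+1}$. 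Plugging this into \cref{def:wreath} and comparing relation by relation with the presentation of $\Z_n$ shows that $\Z_n$ \emph{is} the affinized symmetric algebra $\WR_n(F)$; alternatively, this identification is \cref{corr:KLRP1=zigzag} combined with \cref{KLRisowreath}, since $F=H^*(\bbP^1,\bfk)$.

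Granting $\Z_n=\WR_n(F)$, the basis assertion is immediate from \cref{lem:basis-wreath} applied to the basis $B_F=\{1,c\}$ of $F$: a generic element $\tau_wx_1^{a_1}\cdots x_n^{a_n}(f^{(1)})_1\cdots(f^{(n)})_n$ with $f^{(i)}\in\{1,c\}$ becomes $\tau_wx_1^{a_1}\cdots x_n^{a_n}c_1^{b_1}\cdots c_n^{b_n}$, $b_r\in\{0,1\}$, once one notes that $(c)_i=c_i$ and that the $c_r$ commute with one another and with the $x_r$. For the faithful representation I would invoke \cref{lm:polrep-wreath}: the space ${\bf P}_n(F)=\bfk[x_1,\ldots,x_n]\otimes F^{\otimes n}$ is exactly ${\bf P}_n=\bfk[x_1,\ldots,x_n,c_1,\ldots,c_n]/(c_1^2,\ldots,c_n^2)$, and on it $\WR_n(F)=\Z_n$ acts faithfully with $x_i$ and elements of $F^{\otimes n}$ (in particular $c_r$) acting by multiplication, and with $\tau_k$ acting by $s_k-\Delta_{k,k+1}\partial_k^X=s_k-(c_k+c_{k+1})\partial_k$, where $\partial_k=\frac{1-s_k}{x_k-x_{k+1}}$ is the Demazure operator acting on the $x$-variables only and $s_k$ is the diagonal transposition; this is word for word the formula in the statement.

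I do not expect a genuine obstacle: the whole lemma is a specialization, and the only point that demands care is the bookkeeping in the first step, namely confirming that the sign conventions in \cref{def:wreath} together with the comultiplication $\Delta(1)=c\otimes 1+1\otimes c$ are exactly those that turn the last relations of $\WR_n(F)$, i.e. $\tau_ix_i=x_{i+1}\tau_i-\Delta_{i,i+1}$ and $\tau_ix_{i+1}=x_i\tau_i+\Delta_{i,i+1}$, into the relations $\tau_kx_k-x_{k+1}\tau_k=-c_k-c_{k+1}=x_k\tau_k-\tau_kx_{k+1}$ of $\Z_n$. Should one prefer to avoid citing \cref{corr:KLRP1=zigzag}, the isomorphism $\Z_n\cong\WR_n(F)$ can be produced directly: both algebras are generated by the $x$'s, $c$'s and $\tau$'s, the defining relations of each hold in the other by the computation just indicated, so the evident assignments on generators are mutually inverse homomorphisms.
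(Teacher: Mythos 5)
Your proposal is correct and matches the paper's own route exactly: the paper simply observes (in the sentence preceding the lemma) that this is a special case of \cref{lm:polrep-wreath} and \cref{lem:basis-wreath} applied to $F=H^*(\bbP^1,\bfk)\cong\bfk[c]/(c^2)$, which is precisely the specialization you carry out, including the identification $\Delta_{i,i+1}=c_i+c_{i+1}$ and the sign check on the cross relations. Your remark that $\tau_i^2=1$ and the braid relations are automatic because $\bfk\frakS_n$ enters \cref{def:wreath} as a group algebra is the only point one might worry about, and you handle it correctly.
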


\begin{rmq}
	While the operator $\partial_k$ is not well-defined on ${\bf P}_n$, the operator $(c_k+c_{k+1})\partial_k$ is.
	We could also write $\partial^X_k$ instead of $\partial_k$ as in \cref{lm:polrep-wreath}.
\end{rmq}

\subsection{Other examples}
For $C=\bbC$ or $\bbC^*$, we have $Q_n\simeq \fk{gl}_n$ and $Q_n\simeq GL_n$ respectively, equipped with the adjoint action of $GL_n$.
We therefore recover Grothendieck-Springer resolution and its multiplicative version.
Furthermore, let $C=E$ be an elliptic curve, and $\ona{Bun}_{GL_n}^{0,ss}$ the stack of semistable $GL_n$-bundles of degree $0$ on $E$.
We have an equivalence of stacks $\T_n\simeq \ona{Bun}_{GL_n}^{0,ss}$ essentially due to Atiyah~\cite{Ati_VBEC1957,FMW_PBEC1998}.
Since it is compatible with embeddings of sheaves\footnote{that is, it is an equivalence of stacks, and not just of associated (algebraic) stacks in groupoids}, our map $\FT_n\to \T_n$ produces the same Steinberg variety that appears in the context of elliptic Springer theory~\cite{BN_EST2015} (for $G=GL_n$).
\section{Integral version for $\bbP^1$}\label{sec:P1int}
In this section we adapt some of the considerations above to homology with integral coefficients, when $C = \bbP^1$ is the projective line.

\subsection{Equivariant homology and localization}
The following analogue of \cref{GvsTW} holds for cohomology with integer coefficients.
\begin{prop}[{\cite[Theorem 2.10]{HS_TAEC2009}}]
Let $G=GL_n$, $T\subset G$ a maximal torus and $X$ a $G$-variety.
If $H^*_T(X,\bbZ)$ is torsion-free as an $H_T$-module, then we have an isomorphism $H_G^*(X,\bbZ)\simeq H_T^*(X,\bbZ)^{\fk{S}_n}$.
\end{prop}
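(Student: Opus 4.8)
This statement is quoted from \cite[Theorem 2.10]{HS_TAEC2009}; I sketch the line of argument one would follow. I would work with the fibre bundle $\pi\colon X_T\to X_G$ between the Borel constructions $X_T=X\times_T EG$ and $X_G=X\times_G EG$, whose fibre is $G/T\simeq\mathscr F_n$. The first point is that, because $G=GL_n$, the integral cohomology $H^*(G/T,\bbZ)$ is free of rank $n!$, concentrated in even degrees, and the composite $G/T\hookrightarrow X_T\to BT$ induces a surjection $H_T=H^*(BT,\bbZ)\twoheadrightarrow H^*(G/T,\bbZ)$ (Borel's presentation of the type-$A$ flag variety, valid integrally). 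Consequently the classes $x_1,\dots,x_n\in H^*_T(X,\bbZ)$ pulled back from $BT$ restrict onto ring generators of every fibre, and the Leray--Hirsch theorem yields that $H^*_T(X,\bbZ)$ is a free $H^*_G(X,\bbZ)$-module with basis $\{\sigma_w\}_{w\in\fk S_n}$, where $\sigma_w$ restricts to the Schubert basis of $H^*(G/T,\bbZ)$ and $\sigma_e=1$. In particular $\pi^*\colon H^*_G(X,\bbZ)\hookrightarrow H^*_T(X,\bbZ)$ is a split injection onto the $\sigma_e$-summand, so $H^*_G(X,\bbZ)$ is torsion-free and lands inside $H^*_T(X,\bbZ)^{\fk S_n}$ (the $\fk S_n$-action being induced by $N(T)/T$ acting over $X_G$).

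The remaining task would be the reverse inclusion $H^*_T(X,\bbZ)^{\fk S_n}\subseteq H^*_G(X,\bbZ)$, and here I would compare with rational coefficients. By the torsion-freeness hypothesis the coefficient map $H^*_T(X,\bbZ)\hookrightarrow H^*_T(X,\bbQ)$ is injective and $\fk S_n$-equivariant, and the same classes $\sigma_w$ give a Leray--Hirsch basis over $\bbQ$, so $H^*_T(X,\bbQ)=\bigoplus_w H^*_G(X,\bbQ)\,\sigma_w$ with $H^*_G(X,\bbQ)=H^*_G(X,\bbQ)\,\sigma_e$. Invoking the rational version of the statement (a transfer argument over $\fk S_n$, cf.\ \cref{GvsTW}), one has $H^*_T(X,\bbQ)^{\fk S_n}=H^*_G(X,\bbQ)$. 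Then for $\alpha\in H^*_T(X,\bbZ)^{\fk S_n}$, writing $\alpha=\sum_w a_w\sigma_w$ with $a_w\in H^*_G(X,\bbZ)$ and reading this decomposition inside $H^*_T(X,\bbQ)$, invariance forces each $a_w$ with $w\neq e$ to vanish rationally, hence integrally (since $H^*_G(X,\bbZ)\hookrightarrow H^*_G(X,\bbQ)$), so that $\alpha=a_e\in H^*_G(X,\bbZ)$.

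The main obstacle --- and the reason the hypothesis is needed --- is that the usual averaging over $\fk S_n$ that produces the rational isomorphism is unavailable integrally. The remedy is the integral Leray--Hirsch decomposition, which is special to type $A$ (where $G/T$ has torsion-free even cohomology generated by Chern classes that extend to global classes on $X_T$), together with the assumed torsion-freeness of $H^*_T(X,\bbZ)$, which is exactly what allows the integral invariants to be detected inside the rational computation.
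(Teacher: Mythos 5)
Your argument is correct. Since the paper simply cites \cite{HS_TAEC2009} without reproducing a proof, there is no in-text argument to compare against, but the route you take---Leray--Hirsch for the flag bundle $G/T\hookrightarrow X_T\to X_G$ followed by a comparison with rational coefficients---is a standard and sound way to establish this statement in type~$A$. Two remarks on where the hypothesis does and does not enter. First, the Leray--Hirsch step, which exhibits $H^*_T(X,\bbZ)$ as a free $H^*_G(X,\bbZ)$-module on lifts $\{\sigma_w\}$ of the Schubert classes, holds unconditionally for $G=GL_n$: one only needs that $H^*(G/T,\bbZ)$ is a finitely generated free $\bbZ$-module whose ring generators are pulled back from $BT$ (this is special to type~$A$, where $G/T$ has no integral torsion), so the split injectivity of $\pi^*$ and the containment $\pi^*H^*_G(X,\bbZ)\subseteq H^*_T(X,\bbZ)^{\fk{S}_n}$ require no hypothesis on $X$. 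Second, the torsion-freeness hypothesis enters only through the injection $H^*_T(X,\bbZ)\hookrightarrow H^*_T(X,\bbQ)$, for which $\bbZ$-torsion-freeness already suffices; since a nonzero integer is a nonzero degree-zero element of $H_T$, the stated $H_T$-torsion-freeness implies this and the argument is valid as written. It is worth being explicit that the rational and integral Leray--Hirsch decompositions are compatible (the same $\sigma_w$ give a basis after $\otimes\,\bbQ$, because $H^*(G/T,\bbZ)$ is $\bbZ$-free), as this is what lets you read off the vanishing of the coefficients $a_w$, $w\neq e$, integrally from the rational computation.
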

In particular, we have $H_G^*(pt) = \bbZ[x_1,\ldots,x_n]^{\fk{S}_n}$.
As for localization, \cref{LocThm} holds over any coefficient ring $\bfk$ without $\bbZ$-torsion, in particular for $\bfk = \bbZ$.

\subsection{Integral homology of $\T_n(\bbP^1)$}
The proof of \cref{HofTor} uses the decomposition theorem for perverse sheaves in an essential way, therefore only works for cohomology with coefficients in $\bbQ$.
We expect that it remains true if we replace $\bbQ$ by $\bbZ$.
Here, we prove an analogous claim for the projective line.
Let us denote the base curve of $\T_n$ by superscript; thus, here we study $\T_n^{\bbP^1}$.
We also denote by $\T_n^x\subset \T_n^C$ the substack of sheaves supported on $x\in C$.
Note that we have 
\[
\T_n^\bbC \simeq [\fk{gl}_n/GL_n], \quad \T_n^x\simeq [\cal N_n/GL_n],
\]
where $\cal N_n\subset GL_n$ is the nilpotent cone.
\begin{prop}\label{prop:CohP1Z}
Let $C=\bbP^1$.
We have 
\[
H^*(\T_n,\bbZ)\simeq \ona{Sym}^n\left( H^*(\bbP^1,\bbZ)[x] \right) = \left( \bbZ[x_1,\ldots,x_n,c_1,\ldots,c_n]/(c_1^2,\ldots,c_n^2) \right)^{\fk{S}_n},
\]
where $\deg x_i=\deg c_i=2$.
\end{prop}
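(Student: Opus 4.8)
The plan is to compute $H^*(\T_n^{\bbP^1},\bbZ)$ by resolving the stack $\T_n = [Q_n/G_n]$ and exploiting the $\bbG_m$-action on $\bbP^1$. First I would reduce to showing that $H^*_{T_n}(Q_n,\bbZ)$ is a free $H_{T_n}$-module with the expected graded rank; then the integral analogue of \cref{GvsTW} quoted just above (i.e.\ \cite[Theorem 2.10]{HS_TAEC2009}) gives $H^*_{G_n}(Q_n,\bbZ) = H^*_{T_n}(Q_n,\bbZ)^{\frakS_n}$, and \cref{quot} identifies this with $H^*(\T_n^{\bbP^1},\bbZ)$. The target ring is $\big(\bbZ[x_1,\dots,x_n,c_1,\dots,c_n]/(c_i^2)\big)^{\frakS_n}$, which is exactly $\ona{Sym}^n$ of the free $H_{T_1}$-module $H^*(\bbP^1,\bbZ)[x] = \bbZ[x,c]/(c^2)$, so it suffices to prove freeness of $H^*_{T_n}(Q_n,\bbZ)$ over $H_{T_n} = \bbZ[x_1,\dots,x_n]$ together with the rank count $(P_t(\bbP^1)/(1-t^2))^n = ((1+t^2)/(1-t^2))^n$, and that the $\frakS_n$-invariants come out right.

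The key geometric input is the torus action used in \cref{lm:flags-smooth}: scaling the summands $\bbC e_i$ of $\bbC^n$ one at a time gives an action of a big torus $\mathbf{T} = (\bbG_m)^n$ on $Q_n$ (commuting with $T_n$) whose fixed locus is $Q_n^{\mathbf{T}} = Q_1^n = C^n = (\bbP^1)^n$ and whose Bia{\l}ynicki-Birula decomposition has affine-space cells (this is the content of the proof of \cref{lm:flags-smooth}, applied to the full family of subsets $S$, and it refines the cell structure of $\mathscr{F}_{1^n}$). Pulling back along the scaling action also lets one pull in the $\bbG_m$-action on each $\bbP^1$-factor (rotating $\bbP^1$, with two fixed points $0,\infty$), so that altogether $(\bbP^1)^n$ itself admits a cell decomposition into $2^n$ affine cells, each of complex dimension recorded by the BB filtration. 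The upshot is that $Q_n^{\bbP^1}$ admits an algebraic cell decomposition (paving by affines) equivariant for $T_n$; hence $H^*_{T_n}(Q_n,\bbZ)$ is a free $H_{T_n}$-module whose Poincaré polynomial is $\sum_{\text{cells}} t^{2\,\ona{codim}}$. Counting: the BB cells over a fixed point of $(\bbP^1)^n$ are indexed by $\frakS_n$-cosets as in \cref{subs:Flag} (contributing $[n]_{t^2}!$, i.e.\ the factor $1/(1-t^2)^n$ after accounting for the polynomial part $H_{T_n}$), and the $\bbP^1$-factors contribute $(1+t^2)^n$; this matches $n!\,((1+t^2)/(1-t^2))^n$ after dividing by $|H_{T_n}|$, i.e.\ \cref{WreathDim} with $F = H^*(\bbP^1)$, consistent with \cref{KLRisowreath}. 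Taking $\frakS_n$-invariants of the free module $\ona{Sym}$-structure gives precisely $\ona{Sym}^n(\bbZ[x,c]/(c^2))$.

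The main obstacle will be making the cell decomposition and its equivariance genuinely precise over $\bbZ$: one must check that the attracting sets of the combined $\mathbf{T}$-action (on $Q_n$) and the rotation action (on the $\bbP^1$-factors) really do assemble into a single $T_n$-equivariant affine paving, that the boundary maps in the resulting spectral sequence (or the long exact sequences of the stratification, as used in \cref{KLRLocInj}) vanish integrally — which follows from purity/cell structure but should be spelled out — and that the identification of $H^*_{T_n}(Q_n^{\bbP^1},\bbZ)^{\frakS_n}$ with the stated symmetric algebra is compatible with the ring structure, not just the module structure. For the latter, the cleanest route is probably to produce the ring map $\ona{Sym}^n(\bbZ[x,c]/(c^2)) \to H^*(\T_n^{\bbP^1},\bbZ)$ directly via tautological classes (Chern classes of the universal sheaf restricted to the $n$ sections, in the spirit of \cref{isovialoc}), observe it is an isomorphism after $\otimes\bbQ$ by \cref{HofTor}, and conclude it is an integral isomorphism from the freeness and rank computation above, since both sides are free $\bbZ$-modules of the same finite rank in each degree and the map is injective after tensoring with $\bbQ$. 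A surjectivity check in each degree (e.g.\ by the localization embedding into $H^*_{T_n}((\bbP^1)^n,\bbZ)_{\loc}$ as in \cref{isovialoc}) then finishes the argument.
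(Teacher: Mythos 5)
The central geometric step of your plan---that $Q_n$ admits a $T_n$-equivariant affine paving produced by the gauge torus together with a $\bb G_m$-rotation of $\bbP^1$---fails, and the paper flags this obstruction explicitly in the remark immediately following the proposition: any $\bb G_m$-action on $\bbP^1$ has one attracting and one repelling fixed point, so the retraction needed near the repelling point cannot come from a curve automorphism. The underlying issue is that $Q_n$ is not proper, so the Bia{\l}ynicki--Birula attracting sets of a generic 1-parameter subgroup of $T_n\times\bb G_m$ need not cover $Q_n$. Concretely, the nilpotent cone $\cal N_n\subset Q_n$ sitting over the repelling point of $\bbP^1$ has no limit in $Q_n$ under the flow; its points escape to the boundary of the compactifying $\cal Quot$-scheme. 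Already the $T_n$-action alone has this defect: its attracting set is $\widetilde{Q}_{1^n}$, a proper closed subvariety of $Q_n$ (cf.\ \cref{lm:flags-smooth}). Hence neither the claimed paving nor the resulting rank count goes through. (A side confusion: the torus you call $\mathbf T$, scaling the summands $\bbC e_i$, \emph{is} $T_n$; there is no second commuting torus there.) Your fallback via tautological classes is also more delicate than suggested: Künneth--Chern classes generate only the tautological subring $TH^*(\T_n,\bbZ)$, which \cref{ex:d2nonsurj} and \cref{prop:phi=TH} show is a \emph{proper} subring of $H^*(\T_n,\bbZ)$ (for $n=2$, $c_1c_2$ is not tautological), and rational bijectivity plus abstract freeness of equal rank does not force integral surjectivity---proving surjectivity of the pullback along $\bigoplus:\T_1^n\to\T_n$ is precisely the hard part.

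The paper's argument is different in kind. It stratifies $\T_n$ by the multiplicity $i$ of $\infty$ in the support, identifies each open stratum with $\T_i^\infty\times\T_{n-i}^\bbC\simeq[(\cal N_i\times\fk{gl}_{n-i})/G_n^{(i)}]$, and contracts this $T_n$-equivariantly onto $\{\infty\}^i\times\bbC^{n-i}$ by linearly rescaling matrix entries---a fibrewise deformation retraction, not an automorphism of the curve. This shows each stratum has even equivariant cohomology, so the open--closed long exact sequences split. An auxiliary filtration of $\bf P_n$ by the degree in the $c_i$'s, equivariant localization to $(\bbP^1)^n$, and the 5-lemma then identify $H^*_{T_n}(Q_n,\bbZ)$ with $\bf P_n$ compatibly with the ring structure, and taking $\frakS_n$-invariants via the integral $G$-vs-$T^W$ statement finishes.
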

\begin{proof}
We will drop the coefficient ring from notations, and write $H^*(-) = H^*(-,\bbZ)$ throughout the proof for brevity.
Let us decompose $\bbP^1 = \bbC \sqcup \{\infty\}$, where $\infty\in\bbP^1$ is the point at infinity (or any other point).
This induces a stratification $S^n\bbP^1 = \bigsqcup S_i$, where $S_i \simeq S^{n-i} \bbC$ is the locally closed subvariety, consisting of tuples with $i$ occurrences of $\infty$.
Taking preimages under the support map, we get
\begin{equation}\label{eq:stratP1}
\T_n = \bigsqcup_i \T_n^{i\infty},\qquad \T_n^{i\infty} = \ona{supp}^{-1}(S_i).
\end{equation}
Note that we have isomorphisms of moduli spaces $\T_n^{i\infty}\simeq \T_i^\infty\times \T_{n-i}^\bbC$.
In particular, $\T_n^{i\infty} \simeq [(\cal N_i\times \fk{gl}_{n-i})/G_n^{(i)}] \simeq [Q_n^{(i)}/G_n]$, where $G_n^{(i)} = GL_i\times GL_{n-i}\subset G_n$, and $Q_n^{(i)}$ can be seen either as $G_n\times_{G_n^{(i)}}(\cal N_i\times \fk{gl}_{n-i})$, or a locally closed subvariety of $Q_n$ with prescribed supports.

Recall that $Q_n^{T_n} = (\bbP^1)^n$.
We have $Q_n^{T_n}\cap Q_n^{(i)} = \fk{S}_n\times_{\fk{S}_{(i,n-i)}}(\{\infty\}^i\times \bbC^{n-i})$, where $\fk{S}_n$ acts on $(\bbP^1)^n$ by permuting the factors.
Note that $\cal N_i\times \fk{gl}_{n-i}$ retracts to $\{\infty\}^i\times \bbC^{n-i}$.
Since equivariant cohomology is homotopy invariant, the pullback map $H^*_{T_n}(\cal N_i\times \fk{gl}_{n-i})\to H^*_{T_n}(\{\infty\}^i\times \bbC^{n-i})$ is an isomorphism.
In particular, we see that $H^*(\T_n^{i\infty}) = H^*_{G_n}(Q_n^{(i)}) = H^*_{G_n^{(i)}}(\cal N_i\times \fk{gl}_{n-i})$ is even.
Therefore the stratification~\eqref{eq:stratP1} defines a filtration on $H^*_{G_n}(Q_n)$ with associated graded $\bigoplus_i H^*_{G_n}(Q_n^{(i)})$; the same holds if we replace $G_n$ by $T_n$.

Let us introduce an auxillary grading $\bf{P}_n = \bigoplus_k \bf{P}_n^{(k)}$ by the degree of polynomials in $c_i$'s.
Since $c_i^2=0$ for any $i$, we see that $\bf{P}_n^{(k)} = 0$ for $k>n$.
Moreover, it is clear that
\[
\bf{P}_n^{(k)} = \bigoplus_{1\leq i_1<\ldots < i_k\leq n} \bbZ[x_1,\ldots, x_n]c_{i_1}\ldots c_{i_k}.
\]
We write $\bf{P}_n^{\leq k} = \bigoplus_{i=1}^k \bf{P}_n^{(i)}$; in particular, $\bf{P}_n^{\leq n} = \bf{P}_n$.

Consider the intersection $X_i = Q_n^{T_n}\cap Q_n^{(i)}$.
By the computations above, we have $X_i = \fk{S}_n\times_{\fk{S}_{(k,n-k)}}(\{\infty\}^k\times \bbC^{n-k})$ and $\overline{X}_i = \bigcup_{\sigma\in\fk{S}_n}\sigma(\{\infty\}^k\times (\bbP^1)^{n-k})$.
Therefore $H^*_{T_n}(\overline{X}_k) = \bf{P}_n^{\leq k}$ for any $k$, and an easy induction argument identifies the short exact sequence $H^*_{T_n}(\overline{X}_k)\to H^*_{T_n}(\overline{X}_{k-1})\to H^*_{T_n}(X_k)$ with $\bf{P}_n^{\leq k}\to \bf{P}_n^{\leq k-1}\to \bf{P}_n^{(k)}$.
With this substitution, localization to $T_n$-fixed points gives us the following commutative diagrams:
\[
\begin{tikzcd}
H_{T_n}^*(\overline{Q_n^{(k)}})\ar[r,hook]\ar[d] & H_{T_n}^*(\overline{Q_n^{(k-1)}})\ar[r,two heads]\ar[d] & H_{T_n}^*(Q_n^{(k)})\ar[d,equal]\\
\bf{P}_n^{\leq k}\ar[r,hook] & \bf{P}_n^{\leq k-1}\ar[r,two heads] & \bf{P}_n^{(k)}
\end{tikzcd}
\]
By an iterated application of 5-lemma for $k$ descending from $n$ to $1$, pullback along the inclusion $(\bbP^1)^n\hookrightarrow Q_n$ induces an identification
\[
H^*_{T_n}(Q_n) = \bf{P}_n = \bbZ[x_1,\ldots,x_n,c_1,\ldots,c_n]/(c_1^2,\ldots,c_n^2).
\]
Applying \cref{GvsTW}, we get $H^*(\T_n) = H^*_{T_n}(Q_n)^{\fk{S}_n}$, and so we may conclude.
\end{proof}

\begin{rmq}
Note that the retractions in the proof above do not come from the global $\bb G_m$-action on $\bbP^1$, since for any such action either $0$ or $\infty\in \bbP^1$ is a repellent point.
\end{rmq}

Applying universal coefficients, we obtain the following corollary.
\begin{corr}
For any ring $\bbk$, we have
\[
H^*(\T_n,\bbk) \simeq \left( \bbk[x_1,\ldots,x_n,c_1,\ldots,c_n] \right)^{\frakS_n}.
\]
\end{corr}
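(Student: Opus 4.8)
The plan is to deduce the statement from the integral computation \cref{prop:CohP1Z} by a universal coefficients argument, the point being that the ring appearing there is a permutation module for $\frakS_n$, so that taking invariants commutes with change of coefficients.

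First I would record the relevant structure of $\bf{P}_n=\bbZ[x_1,\ldots,x_n,c_1,\ldots,c_n]/(c_1^2,\ldots,c_n^2)$: the monomials $x_1^{a_1}\cdots x_n^{a_n}c_1^{b_1}\cdots c_n^{b_n}$ with $a_i\in\bbN$ and $b_i\in\{0,1\}$ form a $\bbZ$-basis which $\frakS_n$ permutes (with trivial coefficients), so $\bf{P}_n$ is a permutation $\bbZ[\frakS_n]$-module. From this I get for free that $\bf{P}_n^{\frakS_n}$ is a free $\bbZ$-module, spanned by the orbit sums and finitely generated in each cohomological degree, and that for every ring $\bbk$ the natural map $\bf{P}_n^{\frakS_n}\otimes_\bbZ\bbk\to(\bf{P}_n\otimes_\bbZ\bbk)^{\frakS_n}$ is an isomorphism, since orbit sums remain a $\bbk$-basis of the invariants of a permutation module.

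By \cref{prop:CohP1Z} we have $H^*(\T_n,\bbZ)\simeq\bf{P}_n^{\frakS_n}$, which by the previous step is $\bbZ$-free and finitely generated in each degree; hence so is $H_*(\T_n,\bbZ)$. Since $\T_n=[Q_n/G_n]$ with $Q_n$ smooth, $H^*(\T_n,-)=H^*_{G_n}(Q_n,-)$ is ordinary cohomology of the associated Borel construction, so the universal coefficient theorem applies and gives $H^*(\T_n,\bbk)\simeq H^*(\T_n,\bbZ)\otimes_\bbZ\bbk$ for any ring $\bbk$. Combining the two steps,
\[
H^*(\T_n,\bbk)\simeq\bf{P}_n^{\frakS_n}\otimes_\bbZ\bbk\simeq(\bf{P}_n\otimes_\bbZ\bbk)^{\frakS_n}=\bigl(\bbk[x_1,\ldots,x_n,c_1,\ldots,c_n]/(c_1^2,\ldots,c_n^2)\bigr)^{\frakS_n}.
\]

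The main point to handle with care — and the reason I prefer this route to imitating the proof of \cref{prop:CohP1Z} directly over $\bbk$, which would require an integral-coefficient analogue of \cref{GvsTW} with its torsion-freeness hypotheses that become delicate over a general ring — is legitimizing the universal coefficient theorem for the stack $\T_n$: in practice one applies it to a finite-dimensional algebraic approximation of $Q_n\times_{G_n}EG_n$ in each range of degrees, where the integral cohomology is free and finitely generated as inherited from $\bf{P}_n^{\frakS_n}$. The rest is routine permutation-module bookkeeping.
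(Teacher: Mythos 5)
Your proof is correct and follows the paper's one-line proof ("applying universal coefficients") in spirit, filling out exactly the detail that makes it go through: since $\mathbf{P}_n$ is a permutation $\mathbbm{Z}[\mathfrak{S}_n]$-module, taking $\mathfrak{S}_n$-invariants commutes with $\otimes_\mathbbm{Z}\mathbbm{k}$, and the integral invariants are free of finite rank in each degree, so the $\mathrm{Tor}$/$\mathrm{Ext}$ terms in UCT vanish. This is the intended argument, just made explicit.
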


Let $\lambda\in \Comp(n)$.
Since $\FT_\lambda\to \T_\lambda$ is a stack vector bundle, formula \eqref{FlagCoh} shows that
\[
H^*(\FT_\lambda,\bbZ)\simeq \left( \bbZ[x_1,\ldots,x_n,c_1,\ldots,c_n]/(c_1^2,\ldots,c_n^2) \right)^{\fk{S}_\lambda}.
\]

Consider the composition
\[
[(\bbP^1)^n/T_n] \to [Q_n/T_n] \to [Q_n/G_n],
\]
where the first map is obtained from closed embedding $(\bbP^1)^n\hookrightarrow Q_n$, and the second map is given by restricting $G_n$-action on $Q_n$ to $T_n$.
This composition can be identified with the direct sum map
\[
\ona{\bigoplus} : (\T_1)^n\to \T_n ,\qquad (\cal F_1,\ldots,\cal F_n)\mapsto \cal F_1\oplus\ldots\oplus\cal F_n.
\]
Therefore, the isomorphism in \cref{prop:CohP1Z} can be regarded as being induced from pullback along $\bigoplus$.

\subsection{Tautological subring}
Recall that we have the universal sheaf $\cal E$ over $\T_n\times \bbP^1$.
Applying Künneth decomposition to the Chern classes of $\cal E$, we write
\[
c_i(\cal E) = c_{i,0}\otimes 1 + c_{i,1}\otimes p,
\]
where $c_{i,j}\in H^{2(i-j)}(\T_n,\bbZ)$, and $p\in H^2(\bbP^1,\bbZ)$ is the class of a point.

\begin{exe}\label{ex:n=1}
For $n=1$, we have $H^*(\T_1,\bbZ) = \bbZ[x,c]/c^2$ and $\cal E = x\Ocal_\Delta$.
Note that under our identifications, $\Delta = c+p$.
The total Chern class of $\cal E$ is given by
\begin{equation}\label{eq:totChern1pt}
c(\cal E) = c(x)/c(x\Ocal(-\Delta)) = \frac{1+x}{1+x-\Delta} = 1+(c+p)+\sum_{i\geq 1} (-x)^{i-1}((2ic-x)p-xc).
\end{equation}
This allows us to express the Künneth components as follows:
\[
c_{i,0} = (-x)^{i-1}c,\qquad c_{i,1} = (-x)^{i-2}(2(i-1)c-x).
\]
In particular, $c = c_{1,0}$ and $x = 2c_{1,0}- c_{2,1}$.
\end{exe}

\begin{defi}
The tautological ring $TH^*(\T_n,\bbZ)$ is the subring of $H^*(\T_n,\bbZ)$ generated by classes $c_{i,0}$, $c_{i,1}$, $i\in \bbZ_{>0}$.
\end{defi}

If we work with $\bbQ$-coefficients, this definition is superfluous, since Künneth-Chern classes generate $H^*(\T_n,\bbQ)$ as a ring; see~\cite{Hei_CMSC2012}.
The following example shows that this is not the case over $\bbZ$.

\begin{exe}\label{ex:d2nonsurj}
Let $n=2$.
By the universal property of $\cal E$, its pullback under $\bigoplus$ is isomorphic to the direct sum of universal sheaves $\cal E_1\oplus\cal E_2$.
Since the total Chern class is functorial under pullbacks, we have $c(\cal E) = c(\cal E_1)c(\cal E_2)$.
Using the formula \eqref{eq:totChern1pt}, we obtain
\begin{align*}
c(\cal E)	& =  \left( 1+ (c_1+p) + (2c_1p-x_1(c_1+p)) + ((c_1+p)x_1^2-4c_1px_1) +\cdots \right)\\
			&\hspace{1cm}\times \left( 1+ (c_2+p) + (2c_2p-x_2(c_2+p)) + ((c_2+p)x_2^2-4c_2px_2) +\cdots \right)\\
			&= \left(1 + (c_1+c_2) + (c_1c_2-c_1x_1-c_2x_2) + \ldots\right)  \\
			&\hspace{1cm}+ p\left(2+(3c_1+3c_2-x_1-x_2)+(x_1^2+x_2^2+4(c_1c_2-c_1x_1-c_2x_2)-(c_1+c_2)(x_1+x_2))+\ldots\right).
\end{align*}
As a consequence, we get the following expressions for the first few Künneth-Chern classes:
\begin{gather*}
c_{1,0} = c_1+c_2, \qquad c_{2,0} = c_1c_2-c_1x_1-c_2x_2,\\
c_{2,1} = 3c_{1,0} - (x_1+x_2), \qquad c_{3,1} = x_1^2+x_2^2 -(x_1+x_2)c_{1,0} + 4c_{2,0}.
\end{gather*}
By definition, $TH^4(\T_2,\bbZ)$ is spanned as a $\bbZ$-module by $c_{1,0}^2$, $c_{2,1}^2$, $c_{1,0}c_{2,1}$, $c_{2,0}$, $c_{3,1}$.
Using the formulae above, it is easy to check that this sublattice does not contain either $c_1c_2$ or $x_1x_2$; however, we have
\[
2c_1c_2 = c_{1,0}^2,\qquad 2x_1x_2 = 6c_{1,0}^2 - 5c_{1,0}c_{2,1} + c_{2,1}^2 - c_{3,1} + 4c_{2,0}.
\]
\end{exe} 

A similar computation shows that Künneth-Chern classes fail to generate $H^*(\T_n^C,\bbZ)$ for any smooth projective curve $C$, assuming that an analogue of \cref{prop:CohP1Z} holds.

\subsection{Homology of Steinberg is torsion-free}\label{subs:P1anyring}
The proof of \cref{KLRLocInj} works verbatim for $C=\bbP^1$ over any ring $\bbk$, except we need to replace purity considerations for splitting long exact sequences by parity of homology groups.
The localization theorem can still be applied by \cref{rmq:locthmcoef}, since classes $\gamma_\lambda$ are not zero divisors over any $\bbk$ by formula~\eqref{NorBunFlag}.
In particular, the localization map $\Xi_n$ remains injective.

The rest of \cref{sec:schur,sec:KLRcurve}, notably the proof of \cref{lm:basis-Schur-zigzag}, goes through for any $\bbk$ without changes.
Note that even though some denominators appear in intermediate computations (essentially because of coefficients in~\eqref{SchurLocComp}), all modules under consideration are free, so one can perform computations over $\bbZ\subset\bbQ$, obtain a result valid over $\bbZ$, and then change the base ring.
One could get rid of denominators altogether, but it would render the notations even more cumbersome.

In particular, we obtain that \cref{corr:KLRP1=zigzag} holds over any $\bfk$.
\section{KLR algebras of quivers}\label{sec:KLRquivers}

\subsection{KLR algebra}
Let $\Gamma=(I,H)$ be a quiver without loops, where $I$ stands for the set of vertices, and $H$ for the set of arrows.
It comes equipped with source and target maps $s,t:H\to I$.
For any $i,j\in I$, let $h_{ij}$ be the number of arrows from $i$ to $j$, and define
\[
Q_{ij}(u,v)=
\left\{\begin{array}{ll}
0& \mbox{ if }i=j,\\
(u-v)^{h_{ij}}(v-u)^{h_{ji}}& \mbox{ otherwise}.
\end{array}\right.
\]

Let $\alpha=(n_i)_{i\in I}\in \bbZ_{\geqslant 0}^I$ be a dimension vector.
It can be alternatively written as a sum $\alpha=\sum_{i\in I}n_i\alpha_i$, where $\alpha_i = (\delta_{ij})_{j\in I}$, $i\in I$.
For a dimension vector $\alpha$, we define $|\alpha|:=\sum_{i\in I}n_i$ and
\[
I^\alpha=\left\{\ui=(i_1,i_2,\ldots,i_{|\alpha|})\in I^{|\alpha|} : \sum_{r=1}^{|\alpha|}\alpha_{i_r}=\alpha\right\}.
\]
We will also make use of the following refinement of $I^\alpha$:
\[
I^{(\alpha)} = \left\{\ui=(i_1^{(a_1)},i_2^{(a_2)},\ldots, i_k^{(a_k)}) : a_r\in\bbZ_{>0},\sum_{r=1}^{|\alpha|}a_r\alpha_{i_r}=\alpha\right\},
\]
where we treat $i^{(a)}$ as a formal symbol corresponding to divided powers (see \cref{subs:divided-power}).
To any $\ui\in I^{(a)}$ we can associate $\overline{\ui} = (i_1^{a_1},i_2^{a_2},\ldots, i_k^{a_k}) \in I^{\alpha}$, obtained by replacing each divided power $i_r^{(a_r)}$ with $a_r$ consecutive copies of $i_r$.
For each $\ui=(i_1^{(a_1)},i_2^{(a_2)},\ldots, i_k^{(a_k)})\in I^{(\alpha)}$, let $\frakS_\ui$ be the subgroup of $\frakS_{|\alpha|}$ associated to $(a_1,\ldots,a_k)\in\Comp(|\alpha|)$:
\[
\frakS_\ui=\frakS_{a_1}\times \frakS_{a_2}\times\ldots\times\frakS_{a_k}\subset \frakS_{|\alpha|}.
\]

\begin{defi}
A \emph{KLR diagram} of weight $\alpha$ is a planar diagram containing $|\alpha|$ strands such that:
\begin{itemize}
\item the strands connect $|\alpha|$ points on one horizontal line with $|\alpha|$ points on another horizontal line, each strand goes from bottom to top;
\item each strand is labeled by an element of $I$;
\item for each $i\in I$, there are $n_i$ strands with label $i$;
\item two strands are allowed to cross, and there are no triple-crossings;
\item a piece of a strand is allowed to carry a dot, a dot cannot collide with a crossing.
\end{itemize}
\end{defi}
We consider KLR diagrams modulo isotopies.
In particular, a dot is allowed to move freely along the strand, as long as it doesn't slide past a crossing. 

\begin{defi}
The \textit{KLR algebra} $R(\alpha)$ is the $\bfk$-algebra generated by KLR diagrams of weight $\alpha$ modulo the local relations below:
\begin{equation}\label{rel:KLR1}
\begin{aligned}
\begin{tikzpicture}[scale=0.6]
	\draw[thick](-4,0) +(-1,-1) -- +(1,1) node[below,at start]
	{$i$}; \draw[thick](-4,0) +(1,-1) -- +(-1,1) node[below,at
	start] {$j$}; \fill (-4.5,.5) circle (5pt);
	\node at (-2,0){=}; \draw[thick](0,0) +(-1,-1) -- +(1,1)
	node[below,at start] {$i$}; \draw[thick](0,0) +(1,-1) --
	+(-1,1) node[below,at start] {$j$}; \fill (.5,-.5) circle (5pt);
	\node at (4,0){if $i\neq j$,};
\end{tikzpicture}
\end{aligned}
\end{equation}

\begin{equation}\label{rel:KLR2}
\begin{aligned}
\begin{tikzpicture}[scale=0.6,thick]
	\draw[thick](-4,0) +(-1,-1) -- +(1,1) node[below,at start]
	{$i$}; \draw[thick](-4,0) +(1,-1) -- +(-1,1) node[below,at
	start] {$i$}; \fill (-4.5,.5) circle (5pt);
	\node at (-2,0){$=$}; \draw[thick](0,0) +(-1,-1) -- +(1,1)
	node[below,at start] {$i$}; \draw[thick](0,0) +(1,-1) --
	+(-1,1) node[below,at start] {$i$}; \fill (.5,-.5) circle (5pt);
	\node at (2,0){\color{red}$-$}; \draw[thick](4,0) +(-1,-1) -- +(-1,1)
	node[below,at start] {$i$}; \draw[thick](4,0) +(0,-1) --
	+(0,1) node[below,at start] {$i$}; \node at (4.5,-0.25){,};
\end{tikzpicture}
\qquad
\begin{tikzpicture}[scale=0.6,thick]
	\draw[thick](-4,0) +(-1,-1) -- +(1,1) node[below,at start]
	{$i$}; \draw[thick](-4,0) +(1,-1) -- +(-1,1) node[below,at
	start] {$i$}; \fill (-4.5,-.5) circle (5pt);
	\node at (-2,0){$=$}; \draw[thick](0,0) +(-1,-1) -- +(1,1)
	node[below,at start] {$i$}; \draw[thick](0,0) +(1,-1) --
	+(-1,1) node[below,at start] {$i$}; \fill (.5,.5) circle (5pt);
	\node at (2,0){\color{red}$-$}; \draw[thick](4,0) +(-1,-1) -- +(-1,1)
	node[below,at start] {$i$}; \draw[thick](4,0) +(0,-1) --
	+(0,1) node[below,at start] {$i$}; \node at (4.5,-0.25){,};
\end{tikzpicture}
\end{aligned}
\end{equation}

\begin{equation}\label{rel:KLR3}
\begin{aligned}
\begin{tikzpicture}[scale=0.6, thick]  
	\draw (2,0)  +(0,-1) .. controls (3.6,0) ..  +(0,1)  node[below,at start]{$i$};
	\draw (3.6,0)  +(0,-1) .. controls (2,0) ..  +(0,1)  node[below,at start]{$j$} ; 
	\node at (4.5,0){$=$}; 
	\draw (7.8,0) +(0,-1) -- +(0,1) node[below,at start]{$j$};
	\draw (7,0) +(0,-1) -- +(0,1) node[below,at start]{$i$};
	\node[inner xsep=10pt,fill=white,draw,inner ysep=7pt] at (7.4,0) {$Q_{ij}(y_1,y_2)$};
	\node at (11,0) {if $i\neq j$,};
\end{tikzpicture}
\qquad
\begin{tikzpicture}[scale=0.6, thick]  
	\draw (-4,0)  +(0,-1) .. controls (-2.4,0) ..  +(0,1) node[below,at start]{$i$}; 
	\draw (-2.4,0)  +(0,-1) .. controls (-4,0) ..  +(0,1)  node[below,at start]{$i$}; 
	\node at (-1.5,0){$=0,$};
\end{tikzpicture} 	
\end{aligned}
\end{equation}

\begin{equation}\label{rel:KLR4}
\begin{aligned}
\begin{tikzpicture}[thick,scale=0.6]
	\draw (-3,0) +(1,-1) -- +(-1,1) node[below,at start]{$k$}; 
	\draw (-3,0) +(-1,-1) -- +(1,1) node[below,at start]{$i$};
	\draw (-3,0) +(0,-1) .. controls (-4,0) ..  +(0,1) node[below,at
	start]{$j$}; \node at (-1,0) {=}; 
	\draw (1,0) +(1,-1) -- +(-1,1) node[below,at start]{$k$}; 
	\draw (1,0) +(-1,-1) -- +(1,1) node[below,at start]{$i$}; 
	\draw (1,0) +(0,-1) .. controls (2,0) ..  +(0,1) node[below,at start]{$j$}; \node at (5,0)
	{unless $i=k\neq j$,};
\end{tikzpicture}	
\end{aligned}
\end{equation}

\begin{equation}\label{rel:KLR5}
\begin{aligned}
\begin{tikzpicture}[thick,scale=0.6]
	\draw (-3,0) +(1,-1) -- +(-1,1) node[below,at start]{$i$}; 
	\draw (-3,0) +(-1,-1) -- +(1,1) node[below,at start]{$i$}; 
	\draw (-3,0) +(0,-1) .. controls (-4,0) ..  +(0,1) node[below,at start]{$j$}; \node at (-1,0) {=}; 
	\draw (1,0) +(1,-1) -- +(-1,1) node[below,at start]{$i$}; 
	\draw (1,0) +(-1,-1) -- +(1,1) node[below,at start]{$i$}; 
	\draw (1,0) +(0,-1) .. controls (2,0) ..  +(0,1) node[below,at start]{$j$}; \node at (3,0){\color{red}$-$};        
	\draw (6.2,0)+(1,-1) -- +(1,1) node[below,at start]{$i$}; 
	\draw (6.2,0)+(-1,-1) -- +(-1,1) node[below,at start]{$i$}; 
	\draw (6.2,0)+(0,-1) -- +(0,1) node[below,at start]{$j$};
	\node[inner ysep=8pt,inner xsep=5pt,fill=white,draw,scale=.6] at (6.2,0){$\displaystyle \frac{Q_{ij}(y_3,y_2)-Q_{ij}(y_1,y_2)}{y_3-y_1}$};
	\node at (10.5,0) {if $i\neq j$.};
\end{tikzpicture}		
\end{aligned}
\end{equation}
The multiplication is given by vertical concatenation; we impose the concatenation of strands with different labels to be zero.
\end{defi}

\begin{rmq}
	We will be chiefly interested in KLR algebras for the Kronecker quiver $\Gamma = (1\rightrightarrows 0)$. 
	In this case we have
	\[
	Q_{01}(u,v)=Q_{10}(u,v)=(u-v)^2,\qquad \frac{Q_{01}(y_3,y_2)-Q_{01}(y_1,y_2)}{y_3-y_1}=y_1-2y_2+y_3.
	\]
\end{rmq}

For each $\ui\in I^\alpha$ we have an idempotent $1_\ui$, given by a diagram consisting of $|\alpha|$ vertical strands, with $r$-th strand labeled by $i_r$ for any $r$.
The algebra $R(\alpha)$ is clearly generated by these idempotents, together with single crossings and dots.
In what follows, we will denote the crossing between $r$-th and $(r+1)$-th strand by $\psi_r$, and the diagram with a single dot on $r$-th strand by $y_r$.
More precisely, we have 
\begin{equation*}
\begin{aligned}
	\tikz[thick,xscale=.25,yscale=.25]{
	\ntxt{-3}{1.5}{$y_r1_\ui=$}
	\draw (0,0) -- (0,4);
	\ntxt{0}{-1.5}{$i_1$}
	\ntxt{4}{1.5}{$\ldots$}
	\draw (8,0) -- (8,4);
	\strdot{8}{2}
	\ntxt{8}{-1.5}{$i_r$}
	\ntxt{12}{1.5}{$\ldots$}
	\draw (16,0) -- (16,4);
	\ntxt{16}{-1.5}{$i_{|\alpha|}$}	
	\ntxt{17.5}{1.5}{,}
	}
\qquad\quad
	\tikz[thick,xscale=.25,yscale=.25]{
	\ntxt{-3}{1.5}{$\psi_r1_\ui=$}
	\draw (0,0) -- (0,4);
	\ntxt{0}{-1.5}{$i_1$}
	\ntxt{4}{1.5}{$\ldots$}
	\draw (8,0) -- (12,4);
	\draw (8,4) -- (12,0);
	\ntxt{8}{-1.5}{$i_r$}
	\ntxt{12}{-1.5}{$i_{r+1}$}
	\ntxt{16}{1.5}{$\ldots$}
	\draw (20,0) -- (20,4);
	\ntxt{20}{-1.5}{$i_{|\alpha|}$}	
	\ntxt{21.5}{1.5}{,}
	}
\end{aligned}
\end{equation*}	
and
\[
y_r=\sum_{\ui\in I^\alpha}y_r1_\ui,\qquad\qquad \psi_r=\sum_{\ui\in I^\alpha}\psi_r1_\ui.
\]

For example, relations~(\ref{rel:KLR1}-\ref{rel:KLR2}) take the following form:
\[
y_r\psi_r = \psi_r y_{r+1} - \sum_{\ui\in I^\alpha,\text{ } i_r=i_{r+1}} 1_\ui,\qquad \psi_r y_r = y_{r+1} \psi_r - \sum_{\ui\in I^\alpha,\text{ } i_r=i_{r+1}} 1_\ui.
\]

\subsection{Polynomial representation of $R(\alpha)$}
Let $m=|\alpha|$, and define $\Pol_m=\bfk[y_1,\ldots,y_m]$.
Let further $\Pol_\alpha$ be the direct sum of $I^\alpha$-worth copies of $\Pol_m$.
We write $\Pol_\alpha=\bigoplus_{\ui\in I^\alpha}\Pol_m 1_\ui$, where $1_\ui$ is the idempotent projecting to the $\ui$-th copy.

\begin{lm}[{\cite[\S 3.2.2]{Rou_2A2008}}]
\label{lem:polrep-KLR}
The algebra $R(\alpha)$ has a faithful representation on the vector space $\Pol_\alpha$, such that $1_\ui\in R(\alpha)$ acts by the projector $1_\ui$, $y_r\in R(\alpha)$ acts by multiplication by $y_r$, and for any $f\in \Pol_m$ we have
\begin{equation}
\label{eq:polrep-KLR}
\psi_r\cdot f1_\ui=
\begin{cases}
-\partial_r(f)1_\ui &\mbox{ if }i_r=i_{r+1},\\
P_{i_r,i_{r+1}}(y_r,y_{r+1})s_r(f)1_{s_r(\ui)}& \mbox{ else.}\\
\end{cases}
\end{equation}
Here $P_{ij}(u,v)=(u-v)^{h_{ij}}$, and $\partial_r=\frac{1-s_r}{y_r-y_{r+1}}$ is the Demazure operator.
\end{lm}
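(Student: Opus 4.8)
The plan is to follow the standard argument for polynomial representations of KLR algebras: first verify that the assignment in the statement respects all defining relations, then deduce faithfulness from a basis theorem for $R(\alpha)$.

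First I would check that~\eqref{eq:polrep-KLR} together with the action of $1_\ui$ and $y_r$ defines an $R(\alpha)$-module structure on $\Pol_\alpha$, i.e.\ that the local relations~\eqref{rel:KLR1}--\eqref{rel:KLR5} hold as identities of operators. This is a finite case check with no conceptual content: relations~\eqref{rel:KLR1}--\eqref{rel:KLR2} reduce to the identities $\partial_r y_{r+1}-y_r\partial_r=1=y_{r+1}\partial_r-\partial_r y_r$ together with the fact that $\partial_r$ is $\bfk[y_1,\ldots,y_m]^{s_r}$-linear; the quadratic relation~\eqref{rel:KLR3} reduces to $\partial_r^2=0$ (\cref{lem:rel-Demaz}) in the equal-label case and to the elementary identity $P_{ij}(u,v)\,P_{ji}(v,u)=Q_{ij}(u,v)$ in the unequal-label case; and the two mixed braid relations~\eqref{rel:KLR4}--\eqref{rel:KLR5} follow from the braid relation $\partial_r\partial_{r+1}\partial_r=\partial_{r+1}\partial_r\partial_{r+1}$ of \cref{lem:rel-Demaz} after unwinding the polynomial coefficients.

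Next I would establish that $R(\alpha)$ is spanned over $\bfk$ by the elements
\[
\psi_w\,y_1^{a_1}\cdots y_m^{a_m}\,1_\ui,
\]
where $m=|\alpha|$, $w$ ranges over $\frakS_m$ (with a fixed choice of reduced word used to define $\psi_w$), $a_r\in\bbZ_{\geqslant 0}$, and $\ui\in I^\alpha$. This is obtained by bringing an arbitrary KLR diagram to a normal form: relation~\eqref{rel:KLR3} eliminates any two strands crossing twice, relations~\eqref{rel:KLR4}--\eqref{rel:KLR5} move strands past crossings modulo diagrams with fewer crossings, and relations~\eqref{rel:KLR1}--\eqref{rel:KLR2} slide all dots to the top modulo lower-order terms. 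Faithfulness then follows once one shows that the images of these spanning elements in $\End(\Pol_\alpha)$ are $\bfk$-linearly independent, since this forces the spanning set to be a basis and the representation to be injective. To see the independence, fix $\ui$; the operator $\psi_w\,y^{\mathbf a}\,1_\ui$ maps the summand $\Pol_m 1_\ui$ to $\Pol_m 1_{w\ui}$, so one may group the elements by their target labeling, and on each block an induction on $\ell(w)$ via~\eqref{eq:polrep-KLR} shows that $\psi_w$ acts, modulo operators factoring through permutations of strictly smaller length, as $f\mapsto (\text{nonzero polynomial})\cdot w(f)$. Expanding operators in terms of the $\frakS_m$-action on $\Pol_m$, the family is therefore triangular with respect to the length filtration, hence linearly independent.

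The main obstacle is this triangularity step: one must verify that the leading term of $\psi_w$ depends only on $w$ and not on the reduced word chosen to define it, so that grouping by length is legitimate, and one must confirm that the leading polynomial coefficient --- a product of factors $P_{i_r,i_{r+1}}(y_r,y_{r+1})$ along the crossings --- never vanishes identically, which is where the absence of loops and the precise form of $P_{ij}$ are used. All remaining verifications are routine computations.
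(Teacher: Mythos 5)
The paper does not prove this lemma; it simply cites \cite[\S 3.2.2]{Rou_2A2008}. Your proposal is essentially a correct reconstruction of the argument given there (and in Khovanov--Lauda): one first checks by hand that the proposed operators satisfy relations~\eqref{rel:KLR1}--\eqref{rel:KLR5}, then proves that the monomials $\psi_w y^{\mathbf a} 1_\ui$ span $R(\alpha)$ by reducing diagrams to a normal form, and finally deduces faithfulness (and the basis theorem) from a triangularity argument in the localized algebra $\bfk(y_1,\ldots,y_m)\rtimes\frakS_m$. Two small points worth tightening: the claim that \emph{the leading term of $\psi_w$ depends only on $w$} should be phrased as a statement about the leading \emph{coefficient} of $w$ in the expansion $\psi_w=\sum_{w'\le w}c_{w'}w'$ over $\bfk(y_1,\ldots,y_m)$ --- different reduced words do give genuinely different elements $\psi_w$ for non-Dynkin quivers, but their leading coefficients agree; and the assertion that absence of loops is what guarantees the leading coefficient is nonzero is not quite accurate --- the leading coefficient is a product of $\pm1$'s (from equal-label crossings, via $\partial_r$) and polynomials $P_{ij}(y_a,y_b)$ (from unequal-label crossings), all manifestly nonzero in the polynomial ring regardless of loops, so the argument is even simpler than you suggest.
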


\subsection{Geometric construction of KLR algebras}\label{subs:geom-KLR}
Fix a dimension vector $\alpha=\sum_{i\in I}n_i\alpha_i$ with $|\alpha|=m$.
Let $V$ be an $I$-graded complex vector space of dimension $\alpha$, that is a complex vector space with decomposition $V=\bigoplus_{i\in I}V_i$, such that $\dim V_i=n_i$.
Consider the variety $E_\alpha=\bigoplus_{h\in H}{\rm Hom}(V_{s(h)},V_{t(h)})$, on which we have a natural action of $G_\alpha=\prod_{i\in I}GL(V_i)$.
For $\ui=(i_1^{(a_1)},i_2^{(a_2)},\ldots, i_k^{(a_k)})\in I^{(\alpha)}$, let $\bfF_\ui$ be the variety of flags in $V$
$$
\phi=(\{0\}=V^0\subset V^1\subset\cdots\subset V^k=V),
$$
which are homogeneous with respect to the decomposition $V=\bigoplus_{i\in I}V_i$, and for each $1\leq r\leq k$ the graded dimension of $V^r/V^{r-1}$ is equal to $a_r\alpha_{i_r}$.
Further, let $\widetilde{\bfF}_{\ui}$ be the following variety of pairs:
\[
\widetilde{\bfF}_{\ui} = \left\{ (x,\phi)\in E_\alpha\times \bfF_\ui : x(V^r)\subset V^r, 0\leq r\leq k \right\}.
\]
Analogously to~\cref{subs:Flag}, we have an isomorphism $H^*_{G_\alpha}(\bfF_\ui)\simeq\Pol_m^{\frakS_\ui}$, where for each $r\in[1;k]$ the elements $y_{a_1+\ldots+a_{r-1}+1},y_{a_1+\ldots+a_{r-1}+2},\ldots,y_{a_1+\ldots+a_{r}}$ are the Chern roots of the vector bundle $V^r/V^{r-1}$.
Since $\widetilde{\bfF}_{\ui}$ is a vector bundle over ${\bfF}_{\ui}$, we also have $H^*_{G_\alpha}(\widetilde\bfF_\ui)\simeq\Pol_m^{\frakS_\ui}$.

We also denote $\bfF_\alpha=\coprod_{\ui\in I^\alpha}\bfF_\ui$, $\widetilde \bfF_\alpha=\coprod_{\ui\in I^\alpha}\widetilde \bfF_\ui$.
Let $\pi_\alpha:\widetilde{\bfF}_{\alpha}\to E_\alpha$ be the natural projection, that is $\pi_\alpha(x,\phi) = x$, and consider the corresponding fiber product $\bfZ_\alpha = \widetilde{\bfF}_{\alpha}\times_{E_\alpha} \widetilde{\bfF}_{\alpha}$.
We have
\begin{equation*}
\bfZ_{\alpha}=\coprod_{\ui,\uj\in I^\alpha}\bfZ_{\ui,\uj} = \coprod_{\ui,\uj\in I^\alpha} \widetilde \bfF_\ui\times_{E_\alpha} \widetilde \bfF_\uj.
\end{equation*}
In other words, $\bfZ_{\ui,\uj}$ is the variety of triples $(x,\phi_1,\phi_2)\in E_\alpha\times \bfF_{\ui}\times \bfF_{\uj}$, such that $x$ preserves both $\phi_1$ and $\phi_2$.

\begin{rmq}
For now, we only consider $\ui\in I^\alpha$; however, the definition of $\bfZ_{\ui,\uj}$ makes sense for $\ui,\uj\in I^{(\alpha)}$ as well.
We will make use of these more general varieties in \cref{subs:geom-div-powers}.
\end{rmq}

By \cref{ConvProduct,ConvModule}, we have an algebra structure on $\cal A(\pi_\alpha) = H^*_{G_\alpha}(\bfZ_\alpha,\bfk)$, and an action of it on $H^*_{G_\alpha}(\widetilde \bfF_\alpha,\bfk)$.
The following statement is proved in~\cite{VV_CBK2011,Rou_2A2008} for $\bfk$ a field of characteristic zero, and in~\cite{Mak_CBKA2015} for an arbitrary ring $\bfk$ of finite global dimension.
\begin{prop}
\label{prop:geom-KLR}
The KLR algebra $R(\alpha)$ is isomorphic to the convolution algebra $H^*_{G_\alpha}(\bfZ_\alpha)$.
Moreover, the representation $\Pol_\alpha$ of $R(\alpha)$ is isomorphic to the representation $H^*_{G_\alpha}(\widetilde \bfF_\alpha)$ of $H^*_{G_\alpha}(\bfZ_\alpha)$.
\end{prop}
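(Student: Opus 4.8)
The plan is to construct an explicit homomorphism $\Psi\colon R(\alpha)\to \cal A(\pi_\alpha)=H^*_{G_\alpha}(\bfZ_\alpha)$ on the KLR generators, check the defining relations geometrically, and then prove bijectivity by transporting everything to the polynomial representation and doing a basis count. Concretely, mimicking the recipes of \cref{DiagEmb,ConvEx} in the quiver setting, I would send the idempotent $1_\ui$ ($\ui\in I^\alpha$) to the fundamental class of the component $\widetilde\bfF_\ui$ of the diagonal $\widetilde\bfF_\alpha\hookrightarrow\bfZ_\alpha$; the dot $y_r1_\ui$ to the operator ``multiply by $c_1(V^r/V^{r-1})$'' acting on $[\widetilde\bfF_\ui]$ through the module structure of \cref{convalg}; and the crossing $\psi_r1_\ui$ to the fundamental class of the correspondence $\widetilde\bfF_\ui\times_{\bfF_{\sigma_r(\ui)}}\widetilde\bfF_{s_r\ui}\subset\bfZ_{\ui,s_r\ui}$, where $\sigma_r(\ui)$ is the flag type obtained by merging the $r$-th and $(r+1)$-st steps, renormalized by a polynomial term when $i_r=i_{r+1}$ exactly as the class $\tau_i$ was renormalized in \cref{CrossElts}.

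Next I would verify that the images of the generators satisfy \eqref{rel:KLR1}--\eqref{rel:KLR5}. Since $\Gamma$ has no loops, the restriction of $\pi_\alpha$ to $T$-fixed points is a submersion, so \cref{GvsTW,LocThm} apply; over a positive-characteristic ring one first checks, by the same stratification-and-parity argument as in \cref{KLRLocInj}, that $H^*_{G_\alpha}(\bfZ_\alpha)$ is torsion-free over $H_{G_\alpha}$, so that the analogue of the localization map $\Xi_n$ is injective. All the relations then become identities in the localized convolution algebra, which by \eqref{eq:mult-xi} and \eqref{ExAct} reduce to computations of rank at most $3$: \eqref{rel:KLR1}--\eqref{rel:KLR2} are rank-one Demazure identities, \eqref{rel:KLR3} is a rank-two computation in which the polynomial $Q_{ij}$ appears as the equivariant Euler class of the relevant normal bundle (a product of one factor $(y-y')$ per arrow between $i$ and $j$), and \eqref{rel:KLR4}--\eqref{rel:KLR5} are rank-three identities.

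For bijectivity I would use the $\cal A(\pi_\alpha)$-module $H^*_{G_\alpha}(\widetilde\bfF_\alpha)$ of \cref{ConvModule}. Under the identification $H^*_{G_\alpha}(\widetilde\bfF_\ui)\simeq\Pol_m^{\frakS_\ui}=\Pol_m$ (the group $\frakS_\ui$ being trivial for $\ui\in I^\alpha$), pulling this action back along $\Psi$ gives an action of $R(\alpha)$ on $\Pol_\alpha$, and one checks on generators, using \cref{ConvLocMod} and the formulas above, that it coincides with the polynomial representation of \cref{lem:polrep-KLR}. Since that representation is faithful, $\Psi$ is injective. For surjectivity I would stratify each $\bfZ_{\ui,\uj}$ by $G_\alpha$-orbits on $\bfF_\ui\times\bfF_\uj$: exactly as in \cref{subs:Flag} these orbits are indexed by minimal double-coset representatives, each stratum is a vector bundle over a partial flag variety, hence its equivariant Borel--Moore homology is a free polynomial module, and the open--closed long exact sequences split by parity; this yields a free $H_{G_\alpha}$-basis of $H^*_{G_\alpha}(\bfZ_{\ui,\uj})$ of the form $\{\Psi(\psi_w)\,\Psi(y^{\mathbf a})\,\Psi(1_\ui)\}$, which matches the standard monomial basis of $R(\alpha)$ from \cite{KL_DACQ2009,Rou_2A2008}. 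Comparing graded ranks (or observing that $\Psi$ surjects onto a full-rank sublattice) then shows $\Psi$ is an isomorphism, and the module statement follows from the commutativity just checked.

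The main obstacle is relation \eqref{rel:KLR5} (and, to a lesser extent, \eqref{rel:KLR3}) in arbitrary characteristic: one must match the geometric side, where $Q_{ij}$ and its divided difference $(Q_{ij}(y_3,y_2)-Q_{ij}(y_1,y_2))/(y_3-y_1)$ arise from Euler classes of excess-intersection bundles in the triple fibre product $\widetilde\bfF_\ui\times_{E_\alpha}\widetilde\bfF_\uj\times_{E_\alpha}\widetilde\bfF_{\mathbf k}$, with the combinatorial normalization of the crossing; and one must ensure the localization reduction remains valid, which is precisely where torsion-freeness of $H^*_{G_\alpha}(\bfZ_\alpha)$ over $H_{G_\alpha}$ is indispensable (cf.\ \cite{Mak_CBKA2015}).
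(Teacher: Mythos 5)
The paper does not prove this proposition --- it is recorded as known, with a pointer to Varagnolo--Vasserot and Rouquier in characteristic zero and to~\cite{Mak_CBKA2015} over a general coefficient ring --- so your sketch is being measured against those references rather than against an in-text argument. The overall shape you propose (build $\Psi$ on generators, check relations by localization, prove bijectivity via the polynomial representation and a Schubert-type stratification) is indeed the strategy used there, and it parallels what the paper does for $\Sc_n$ in \cref{sec:schur}.

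Two places need repair before the sketch stands on its own. First, the geometric model of $\psi_r1_\ui$ when $i_r\neq i_{r+1}$: merging the $r$-th and $(r+1)$-st steps gives a step of graded dimension $\alpha_{i_r}+\alpha_{i_{r+1}}$, which is not of the form $a\alpha_i$, so $\sigma_r(\ui)\notin I^{(\alpha)}$ and $\widetilde\bfF_{\sigma_r(\ui)}$ has not been introduced; one must either admit partial flag types with mixed-colour steps (as~\cite{Mak_CBKA2015,VV_CBK2011} do) or describe the correspondence as the closure of a graph, and the fibre product must be over the variety of pairs (representation, partial flag), not over the bare flag variety $\bfF_{\sigma_r(\ui)}$ --- otherwise the two representations in the pair are unrelated and the product does not land in $\bfZ_{\ui,s_r\ui}$. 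Second, the concluding bijectivity step is exactly where the delicacy of the integral and positive-characteristic case lives, and you slide past it: ``comparing graded ranks'' or ``surjecting onto a full-rank sublattice'' does not give surjectivity over $\bbZ$, since an injection of free abelian groups of equal rank can fail to be onto. What closes the argument is that the Schubert stratification of $\bfZ_{\ui,\uj}$ yields a free $H_{G_\alpha}$-basis of fundamental classes of stratum closures (parity splits the long exact sequences, exactly as in \cref{KLRLocInj}), and that $\Psi(\psi_w1_\ui)$ agrees with the $w$-th such class modulo strictly smaller Bruhat terms, so the change of basis is unitriangular over $H_{G_\alpha}$; this gives injectivity and surjectivity simultaneously, with no rank counting. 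You assert this matching (``which matches the standard monomial basis'') but should derive it --- the needed argument is the one the paper runs for $\Sc_n$ in \cref{basis-highest-terms,lm:basis-Schur-zigzag}.
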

\begin{rmq}
\label{rem:geom-KLR-idemp}
The idempotents $1_\ui\in R(\alpha)$ correspond to different connected components.
Namely, we have
\[
H^*_{G_\alpha}(\bfZ_{\ui,\uj}) \simeq 1_\ui R(\alpha) 1_\uj,\qquad H^*_{G_\alpha}(\widetilde \bfF_\ui)\simeq \Pol_m1_\ui.
\]
\end{rmq}

\subsection{Divided powers}\label{subs:divided-power}
The algebra $R(n\alpha_i)$ is known as the \emph{nil-Hecke algebra} of rank $n$.
Let $w=s_{i_1}\ldots s_{i_r}$ be a reduced decomposition of $w\in\fk{S}_n$.
Relation~\eqref{rel:KLR4} implies that the product $\psi_{i_1}\ldots\psi_{i_r}$ is independent of the decomposition above.
We denote this element of $R(n\alpha_i)$ by $\psi_w$.

Let $w_{0,n}$ be the longest element in $\fk{S}_n$.
Define $y_{0,n}:=y_n^{n-1}y_{n-1}^{n-2}\ldots y_3^2y^{\vphantom{1}}_2$, and $1_{i^{(n)}}:=\psi_{w_{0,n}}y_{0,n}\in R(n\alpha_i)$.
It is easy to check that $\psi_{w_{0,n}}y_{0,n}\psi_{w_{0,n}} = \psi_{w_{0,n}}$, which implies that the element $1_{i^{(n)}}$ is an idempotent.
We call $1_{i^{(n)}}$ the \textit{divided difference idempotent}.
Note that it acts on $\Pol_{n\alpha_i}\simeq \Pol_n$ as the projector to symmetric polynomials.

Let $\alpha\in \bbZ_{\geqslant 0}^I$ be as in~\cref{subs:geom-KLR}. 
For each $\ui=(i_1^{(a_1)},i_2^{(a_2)},\ldots, i_k^{(a_k)})\in I^{(\alpha)}$, let $w_{0,\ui}$ be the maximal length element in $\frakS_\ui$, and define the following elements in $R(a_1\alpha_{i_1})\otimes \ldots \otimes R(a_k\alpha_{i_k})\subset R(\alpha)$:
\[
1_\ui:=1_{i_1^{(a_1)}}\otimes 1_{i_2^{(a_2)}}\otimes\ldots \otimes 1_{i_r^{(a_k)}},\qquad y_\ui:=y_{0,a_1}\otimes y_{0,a_2}\otimes\ldots \otimes y_{0,a_k}.
\]
The definitions in nil-Hecke algebra imply that $y_\ui=y_\ui 1_{\overline\ui}$ and $1_\ui=\psi_{w_{0,\ui}} y_\ui$.

In graphical calculus we draw divided power idempotents as boxes.
For example, the idempotent $1_\ui$ with $\ui=(i_1^{(a_1)},i_2^{(a_2)},\ldots, i_k^{(a_k)})$ is depicted as follows (we have $a_r$ strands with label $i_r$):
\[
\tikz[very thick,baseline={([yshift=+.6ex]current bounding box.center)}]{

\draw (0.2,0) -- (0.2,1);
\draw (0.6,0) -- (0.6,1);
\node at (1.2,0.5){$\ldots$};
\draw (1.8,0) -- (1.8,1);
    
\draw (0.2,2) -- (0.2,3) node[above]{$i_1$};
\draw (0.6,2) -- (0.6,3) node[above]{$i_1$};
\node at (1.2,2.5){$\ldots$};
\draw (1.8,2) -- (1.8,3) node[above]{$i_1$};    
    
\draw (0,1) rectangle (2,2);
\node at (1,1.5) {$i_1^{(a_1)}$};

\draw (3.2,0) -- (3.2,1);
\draw (3.6,0) -- (3.6,1);
\node at (4.2,0.5){$\ldots$};
\draw (4.8,0) -- (4.8,1);
    
\draw (3.2,2) -- (3.2,3) node[above]{$i_2$};
\draw (3.6,2) -- (3.6,3) node[above]{$i_2$};
\node at (4.2,2.5){$\ldots$};
\draw (4.8,2) -- (4.8,3) node[above]{$i_2$};    
    
\draw (3,1) rectangle (5,2);
\node at (4,1.5) {$i_2^{(a_2)}$};

\node at (6,1.5){$\ldots$};

\draw (7.2,0) -- (7.2,1);
\draw (7.6,0) -- (7.6,1);
\node at (8.2,0.5){$\ldots$};
\draw (8.8,0) -- (8.8,1);
    
\draw (7.2,2) -- (7.2,3) node[above]{$i_k$};
\draw (7.6,2) -- (7.6,3) node[above]{$i_k$};
\node at (8.2,2.5){$\ldots$};
\draw (8.8,2) -- (8.8,3) node[above]{$i_k$};    
    
\draw (7,1) rectangle (9,2);
\node at (8,1.5) {$i_k^{(a_k)}$};       
}   
\]

\subsection{Basis}
\label{subs:basis-KLR}
For each $w\in\frakS_{m}$ fix a reduced decomposition $w=s_{r_1}\ldots s_{r_k}$, and let
\[
\psi_w1_\ui=\psi_{r_1}\ldots \psi_{r_k}1_\ui,\qquad \psi_w=\sum_{\ui\in I^{\alpha}}\psi_w1_\ui.
\]
Unlike the case of nil-Hecke algebra, this definition of $\psi_w 1_\ui$ does depend on the choice of the decomposition.
Note that we allow to choose different reduced decompositions of the same $w$ for different $\ui$.

For any $\ui=(i_1,i_2,\ldots,i_m)\in I^m$ and $w\in\frakS_m$, set $w(\ui)=(i_{w^{-1}(1)},i_{w^{-1}(2)},\ldots,i_{w^{-1}(m)})$.
For any $\ui,\uj\in I^{\alpha}$, let $\doubleS{\uj}{\ui}=\{w\in \frakS_{m}: w(\ui)=\uj\}$.
More generally if $\ui,\uj\in I^{(\alpha)}$, define $\doubleS{\uj}{\ui}$ to be the set of shortest representatives of cosets in $\frakS_\uj\backslash\doubleS{\overline\uj}{\overline\ui}/\frakS_\ui$.
The following lemma is proved in~\cite[Theorem~3.7]{Rou_2A2008}, see also~\cite[Theorem~2.5]{KL_DACQ2009}.
\begin{lm}
\label{lem:basis-KLR}
For any $\ui,\uj\in I^{\alpha}$, each of the following two sets forms a basis of $1_\uj R(\alpha) 1_\ui$:
\[
\{y_1^{a_1}y_2^{a_2}\ldots y_n^{a_n}\psi_w1_\ui;~w\in \doubleS{\uj}{\ui},a_r\in \bbZ_{\geqslant 0}\},\qquad \{\psi_w y_1^{a_1}y_2^{a_2}\ldots y_n^{a_n}1_\ui;~w\in \doubleS{\uj}{\ui},a_r\in \bbZ_{\geqslant 0}\}.
\]
\end{lm}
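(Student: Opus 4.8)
The plan is to run the classical two-step argument — \emph{spanning} by diagrammatic straightening, then \emph{linear independence} via the polynomial representation of \cref{lem:polrep-KLR} — which has the virtue of working over any $\bfk$. For spanning: every element of $1_\uj R(\alpha)1_\ui$ is a $\bfk$-linear combination of KLR diagrams, and I would rewrite each diagram into the claimed form. Relations \eqref{rel:KLR1}--\eqref{rel:KLR2} let one drag every dot to the bottom across any crossing, producing only correction terms with strictly fewer crossings, so one reduces to a monomial $y_1^{a_1}\cdots y_n^{a_n}$ followed by a dotless diagram. A dotless diagram whose underlying permutation is \emph{not} represented reducedly contains two strands crossing twice in a row; by \eqref{rel:KLR3} such a bigon is either $0$ (equal labels) or multiplication by a polynomial in the $y$'s (distinct labels), in both cases lowering the crossing number, while \eqref{rel:KLR4}--\eqref{rel:KLR5} permit reordering the crossings, again at the cost of terms with fewer crossings. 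Induction on the number of crossings (with total dot degree as a tie-breaker) then expresses any diagram as a $\bfk$-combination of $y^a\psi_w 1_\ui$ with $w\in\doubleS{\uj}{\ui}$; dragging dots upward instead gives the second spanning statement.

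For linear independence I would use that $\Pol_\alpha$ carries an action of $R(\alpha)$ (\cref{lem:polrep-KLR}), without even needing its faithfulness, and pass to the localization $\bfk(y_1,\dots,y_m)\otimes_{\Pol_m}\Pol_\alpha$. Write $\sigma_r$ for the operator $f1_{\ui'}\mapsto s_r(f)1_{s_r(\ui')}$. By \eqref{eq:polrep-KLR} each $\psi_r$ acts on $\Pol_\alpha$ as $g_r\,\sigma_r + (\text{an operator not involving }\sigma_r)$, where on each summand the coefficient $g_r$ is a nonzero element of $\bfk(y_1,\dots,y_m)$ — namely $(y_r-y_{r+1})^{-1}$ up to sign in the equal-label case, and $(y_r-y_{r+1})^{h_{i'_r,i'_{r+1}}}$ otherwise. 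Composing along a fixed reduced word $w=s_{r_1}\cdots s_{r_k}$ and expanding, the term in which every crossing is resolved as a transposition equals $c_w\,\sigma_w$ with $c_w$ a product of nonzero rational functions, hence $c_w\ne 0$, while every other resolution replaces some $\sigma_{r_j}$ by a multiplication operator and so yields $\sigma_{w'}$ with $w'$ a proper subword, in particular $\ell(w')<\ell(w)$. By linear independence of distinct field automorphisms (Dedekind), the operators $\{\sigma_w : w\in\doubleS{\uj}{\ui}\}$ on $\Pol_m1_\ui$ are $\bfk(y)$-linearly independent, so the above "leading term" analysis makes the family $\{\psi_w1_\ui\}_w$ upper-triangular with invertible diagonal over $\bfk(y)$; hence these operators are $\bfk(y)$-independent, and consequently the elements $y^a\psi_w1_\ui\in R(\alpha)$ are $\bfk$-linearly independent. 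Applying the diagram-reversing anti-automorphism of $R(\alpha)$ transports everything to the second set, and combining with spanning gives the lemma.

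I expect the only real difficulty to be the bookkeeping in the spanning step: one has to fix a well-founded complexity statistic (crossing number, refined by total dot degree, or the explicit ``height'' function used by Rouquier) and verify that each application of \eqref{rel:KLR2}, \eqref{rel:KLR4}, \eqref{rel:KLR5} strictly decreases it while creating only genuinely simpler terms, so that the rewriting terminates and is confluent enough to land in the claimed spanning set; the linear-independence step is essentially mechanical once one localizes. As an alternative over a field of characteristic zero (or more generally over a ring of finite global dimension), one could instead deduce the basis geometrically from the isomorphism $R(\alpha)\simeq H^*_{G_\alpha}(\bfZ_\alpha)$ of \cref{prop:geom-KLR}, stratifying $\bfZ_{\ui,\uj}$ by the $G_\alpha$-orbits labelled by $\doubleS{\uj}{\ui}$ and arguing with the associated long exact sequences exactly as in the proof of \cref{lm:basis-Schur-zigzag}; but the diagrammatic proof is preferable here since it is valid for an arbitrary $\bfk$.
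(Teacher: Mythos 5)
The paper does not reprove this lemma but cites it from \cite{Rou_2A2008} and \cite{KL_DACQ2009}, and your two-step argument — diagrammatic straightening for the spanning set, then Bruhat-triangularity of the localized polynomial action for linear independence, transported to the second set by the diagram-flip anti-automorphism — is exactly the proof given in those references. It is correct, with only the routine caveats you already flag (well-foundedness of the rewriting statistic, and using the localization at $\prod_{i<j}(y_i-y_j)$ rather than a literal fraction field when $\bfk$ is not a domain, though the paper's standing hypothesis $\bfk$ a field or $\bbZ$ makes this moot).
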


\begin{rmq}\label{rmq:basis-KLR}
As we explained above, the definition of $\psi_w$ depends on some choices.
However, the following vector subspaces of $R(\alpha)$ always remain the same:
\[
R(\alpha)^{\leqslant w}=\bigoplus_{w'\leqslant w}\Pol_\alpha \psi_{w'},\qquad R(\alpha)^{<w}=\bigoplus_{w'< w}\Pol_\alpha \psi_{w'}.
\]
Moreover, they are stable by multiplication by elements of $\Pol_\alpha$ on the right and on the left.
The image of $\psi_w$ in $R(\alpha)/R(\alpha)^{<w}$ is also independent of our choices. 
\end{rmq}

Above we described some bases in $1_\uj R(\alpha)1_\ui$ for $\ui,\uj\in I^\alpha$.
However, we will need a version of \cref{lem:basis-KLR} which allows $\ui$, $\uj$ to lie in $I^{(\alpha)}$.
Let us begin with some preparations.
For each $\ui=(i_1^{(a_1)},i_2^{(a_2)},\ldots, i_k^{(a_k)})\in I^{(\alpha)}$ and $x\in\frakS_k$, let
\[
x^{-1}(\ui)=\left(i_{x(1)}^{\left(a_{x(1)}\right)},i_{x(2)}^{\left(a_{x(2)}\right)},\ldots, i_{x(k)}^{\left(a_{x(k)}\right)}\right).
\]
\begin{defi}
\label{def:adapted-perm}
Let $\ui,\uj\in I^{(\alpha)}$, $\ui=(i_1^{(a_1)},i_2^{(a_2)},\ldots, i_k^{(a_k)})$, $\uj=(j_1^{(b_1)},j_2^{(b_2)},\ldots, j_k^{(b_k)})$.
We say that $\uj$ is a \textit{permutation} of $\ui$ if there exists $x\in \frakS_k$ such that $x(\ui)=\uj$.
Each such $x$ induces a permutation $w\in \doubleS{\uj}{\ui}$.

Note that each reduced decomposition $x=s_{r_1}s_{r_2}\ldots s_{r_t}$ of $x$ induces a decomposition $w=\hat s_{r_1}\hat s_{r_2}\ldots \hat s_{r_t}$ of $w$.
We say that a reduced decomposition of $w$ is \textit{adapted} to $\ui,\uj$ and $x$ if it is a refinement of the decomposition $w=\hat s_{r_1}\hat s_{r_2}\ldots \hat s_{r_t}$ for some reduced decomposition $x=s_{r_1}s_{r_2}\ldots s_{r_t}$.
\end{defi}

\begin{lm}\label{lem:divided-some-eq}\leavevmode
\begin{lmlist}
	\item For $x\in R(\alpha)$ and $\ui\in I^{(\alpha)}$, we have $1_\ui x=x$ if and only if the image of the action of $x$ on $\Pol_\alpha$ is contained in $\Pol_m^{\frakS_\ui} 1_{\overline\ui}$;\label{lem:divided-some-eq:a}
	\item let $\ui,\uj\in I^{(\alpha)}$ be such that $\uj$ is a permutation of $\ui$ in the sense of \cref{def:adapted-perm}.
	For $w\in \doubleS{\uj}{\ui}$ induced by some $x\in \frakS_k$ with $x(\ui)=\uj$, define the operator $\psi_w$ using a reduced decomposition adapted to $\ui,\uj$ and $x$.
	Then we have $\psi_w 1_\ui=1_\uj \psi_w 1_\ui$;\label{lem:divided-some-eq:b}
	\item if $s_r\in \frakS_\ui$, then $\psi_r Q 1_\ui=-\partial_r(Q)1_\ui$ as elements in $R(\alpha)$ for any $Q\in \Pol_m$.\label{lem:divided-some-eq:c}
\end{lmlist}
\end{lm}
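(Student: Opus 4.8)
The plan is to deduce all three parts from the faithfulness of the polynomial representation $\Pol_\alpha$ (\cref{lem:polrep-KLR}) together with the explicit description of the action of the divided difference idempotents. Recall from \cref{subs:divided-power} that $1_\ui=1_{\overline\ui}1_\ui=\psi_{w_{0,\ui}}y_\ui$ acts on $\Pol_\alpha$ by zero on every component $\Pol_m1_{\vec\jmath}$ with $\vec\jmath\neq\overline\ui$, and on $\Pol_m1_{\overline\ui}$ by the block-wise symmetrization operator; hence, as an operator on $\Pol_\alpha$, the element $1_\ui$ is the projector with image $\Pol_m^{\frakS_\ui}1_{\overline\ui}$.

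For \cref{lem:divided-some-eq:a}: since $\Pol_\alpha$ is faithful, $1_\ui x=x$ in $R(\alpha)$ if and only if $1_\ui\circ x=x$ as operators, i.e. if and only if $1_\ui$ fixes the image of $x$ pointwise. Because $1_\ui$ is a projector with image $\Pol_m^{\frakS_\ui}1_{\overline\ui}$, this happens precisely when $\operatorname{Im}(x)\subseteq\Pol_m^{\frakS_\ui}1_{\overline\ui}$, which is the assertion. For \cref{lem:divided-some-eq:c}: if $s_r\in\frakS_\ui$ then $i_r=i_{r+1}$ in $\overline\ui$, so by \eqref{eq:polrep-KLR} the operator $\psi_r$ sends $f1_{\overline\ui}$ to $-\partial_r(f)1_{\overline\ui}$. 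For any $p\in\Pol_\alpha$ the element $1_\ui p$ lies in $\Pol_m^{\frakS_\ui}1_{\overline\ui}$ and is in particular $s_r$-invariant, whence $\partial_r(Q\cdot 1_\ui p)=\partial_r(Q)\cdot 1_\ui p$. Thus $\psi_r Q1_\ui$ and $-\partial_r(Q)1_\ui$ induce the same operator on $\Pol_\alpha$ (note $\partial_r(Q)\in\Pol_m$, so the right-hand side makes sense in $R(\alpha)$), and we conclude by faithfulness.

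For \cref{lem:divided-some-eq:b}: by part \cref{lem:divided-some-eq:a} applied to $\uj$, it suffices to show that the operator $\psi_w1_\ui$ on $\Pol_\alpha$ has image contained in $\Pol_m^{\frakS_\uj}1_{\overline\uj}$. Fix a reduced decomposition $x=s_{r_1}\cdots s_{r_t}$ in $\frakS_k$ and the corresponding adapted reduced decomposition of $w$ from \cref{def:adapted-perm}, so that $\psi_w$ factors as the product of the $\psi$-operators attached to the successive adjacent block-transpositions $\ui=\ui^{(0)}\to\ui^{(1)}\to\cdots\to\ui^{(t)}=\uj$. Since $\operatorname{Im}(1_\ui)=\Pol_m^{\frakS_\ui}1_{\overline\ui}$, an induction on $t$ reduces the claim to a single adjacent block-transposition exchanging a block of colour $i$ and size $a$ with a block of colour $j$ and size $b$: one must check that the associated operator carries $(\frakS_a\times\frakS_b)$-symmetric polynomials (in the two blocks, the other blocks untouched) to polynomials symmetric in the swapped $(\frakS_b\times\frakS_a)$ blocks. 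If $i\neq j$, this is immediate from \eqref{eq:polrep-KLR}: the composite of the elementary crossings acts by $f\mapsto c\cdot\sigma(f)$, where $\sigma$ is the permutation of variables realising the block swap and $c$ is a product over the exchanged pairs of strands of powers of differences of the corresponding variables; both $c$ and $\sigma(f)$ are visibly symmetric within each of the two (swapped) blocks. If $i=j$, the block-transposition operator equals $\pm\partial_{w_{0,a,b}}$ by the first case of \eqref{eq:polrep-KLR}, and \cref{lem:Dem-ab-sym} shows that $\partial_{w_{0,a,b}}$ sends $(\frakS_a\times\frakS_b)$-symmetric polynomials to fully symmetric ones, in particular into $\Pol_m^{\frakS_\uj}1_{\overline\uj}$.

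The main obstacle is the bookkeeping in \cref{lem:divided-some-eq:b}: one needs that the adapted reduced decomposition genuinely breaks $\psi_w1_\ui$ into a product of block-transposition operators with additive lengths (so that no relation of type \eqref{rel:KLR3}–\eqref{rel:KLR5} is needed to rewrite it), and that in the $i\neq j$ case the accumulated prefactor $c$ is indeed invariant under permuting positions within each of the two blocks. Both points are routine consequences of \cref{def:adapted-perm} and the polynomial representation formula \eqref{eq:polrep-KLR}, but they are where the care is required.
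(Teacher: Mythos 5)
Your proof is correct and follows essentially the same route as the paper: part~(a) by recognizing $1_\ui$ as the projector onto $\Pol_m^{\frakS_\ui}1_{\overline\ui}$, part~(c) by the same commutation of $\partial_r$ past $s_r$-invariant factors, and part~(b) by reducing via the adapted decomposition to a single adjacent block swap and invoking \cref{lem:Dem-ab-sym} for the same-colour case. The one small deviation is in the distinct-colour case of~(b): the paper observes directly from relations~\eqref{rel:KLR1} and~\eqref{rel:KLR4} that $1_\uj\psi_w=\psi_w 1_\ui$ holds in $R(\alpha)$, whereas you verify the (weaker but sufficient) one-sided statement on the polynomial representation; both are fine, and your version has the mild advantage of treating the two cases uniformly via part~(a).
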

\begin{proof}
Part (a) follows from the fact that $1_\ui$ acts on $\Pol_\alpha$ as a projector to $\Pol_m^{\frakS_\ui}1_{\overline\ui}$.

For (b), it is enough to prove the statement for $\ui=(i_1^{(a_1)},i_2^{(a_2)})$, $\uj=(i_2^{(a_2)},i_1^{(a_1)})$, and $w$ the unique non-trivial permutation in $\doubleS{\uj}{\ui}$.
If $i_1\ne i_2$, then we clearly have $1_\uj\psi_w=\psi_w 1_\ui$ by relations~\eqref{rel:KLR1} and~\eqref{rel:KLR4}.
If $i_1=i_2$, then it suffices to show that the Demazure operator $\partial_w$ sends $\Pol_{a_1+a_2}^{\frakS_{a_1}\times \frakS_{a_2}}$ to $\Pol_{a_1+a_2}^{\frakS_{a_2}\times \frakS_{a_1}}$.
This follows from \cref{lem:Dem-ab-sym}, which says that $\partial_w$ always sends $\Pol_{a_1+a_2}^{\frakS_{a_1}\times \frakS_{a_2}}$ to $\Pol_{a_1+a_2}^{\frakS_{a_1+a_2}}$.

In order to check (c), we act by $\psi_r Q1_\ui$ on some $P\in \Pol_\alpha$.
First of all, $1_\ui\cdot P$ is of the form $R1_{\overline\ui}$ for some $R\in \Pol^{\frakS_\ui}$.
Since $s_r\in \frakS_\ui$, we have $s_r(R)=R$.
In particular, the operator $\partial_r$ commutes with multiplication by $R$.
Therefore
\[
\psi_r Q1_\ui\cdot P=\psi_r\cdot Q R1_{\overline\ui}=-\partial_r(QR)1_{\overline\ui}=-\partial_r(Q)R1_{\overline\ui}=-\partial_r(Q)1_\ui\cdot P.
\]
We conclude by faithfulness of the polynomial representation.
\end{proof}

For $\uj,\uj'\in I^{(\alpha)}$, we say that $\uj'$ is a \textit{split} of $\uj$ if we have $\overline\uj=\overline\uj'$ and $\frakS_{\uj'}\subset \frakS_{\uj}$.
In this case, let $w_{0,\uj,\uj'}=w_{0,\uj}w_{0,\uj'}^{-1}$ be the longest element in $\frakS_\uj\cap \doubleS{\overline\uj}{\uj'}$.
For any $\ui,\uj\in I^{(\alpha)}$ and $w\in \doubleS{\uj}{\ui}$, there exist unique $\ui',\uj'\in I^{(\alpha)}$ such that $\ui'$ is a split of $\ui$, $\uj'$ is a split of $\uj$, $\uj'$ is a permutation of $\ui'$ and $\frakS_{\uj'}=w\frakS_{\ui}w^{-1}\cap \frakS_\uj$, $\frakS_{\ui'}=w^{-1}\frakS_{\uj}w\cap \frakS_\ui$ (compare this to the notation in~\eqref{fla:basisSchur}).
Fix a basis $B_{\ui'}$ of $\Pol_m^{\frakS_{\ui'}}$; note that $B_{\uj'}=w(B_{\ui'})$ is a basis of $\Pol_m^{\frakS_{\uj'}}$.

\begin{lm}
\label{lem:basis-KLR-divided}
For each $\ui,\uj\in I^{(\alpha)}$, each of the following two sets forms a basis of $1_\uj R(n\delta) 1_\ui$:
\begin{equation}\label{eq:KLR-divided-bases}
\{\psi_{w_{0,\uj,\uj'}}P\psi_w  1_\ui;~w\in {^\uj}\frakS^\ui, P\in B_{\uj'}\},\qquad \{\psi_{w_{0,\uj,\uj'}}\psi_w P  1_\ui;~w\in {^\uj}\frakS^\ui, P\in B_{\ui'}\}.
\end{equation}
\end{lm}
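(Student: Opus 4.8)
This lemma is the $R(\alpha)$-analogue of \cref{lm:basis-Schur-zigzag}, the divided-power idempotents $1_\ui$, $1_\uj$ playing the role of splits, merges and their compositions, and the plan is to prove it by compressing the known basis of \cref{lem:basis-KLR} with these idempotents. The two sets in~\eqref{eq:KLR-divided-bases} are related triangularly with respect to the Bruhat order on $\doubleS{\uj}{\ui}$ — commuting $P\in\Pol_m^{\frakS_{\ui'}}$ past $\psi_w$ replaces it by $w(P)\in\Pol_m^{\frakS_{\uj'}}$ modulo terms supported on shorter permutations — so it is enough to establish one of them; I would take the second, which is the direct transcription of the basis $\{\Psi_g^P\}$ of $\Sc_{\mu,\lambda}$.

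First I would set up the compression. Since $1_\ui=1_\ui 1_{\overline\ui}$ and $1_\uj=1_{\overline\uj}1_\uj$, we have $1_\uj R(n\delta)1_\ui=1_\uj\bigl(1_{\overline\uj}R(n\delta)1_{\overline\ui}\bigr)1_\ui$, and \cref{lem:basis-KLR} gives a basis $\{\psi_v\,y_1^{a_1}\cdots y_m^{a_m}1_{\overline\ui}:v\in\doubleS{\overline\uj}{\overline\ui},\ a_r\ge 0\}$ of the middle factor. By \cref{lem:divided-some-eq:a}, $1_\ui$ and $1_\uj$ act on $\Pol_\alpha$ as the projectors onto $\Pol_m^{\frakS_\ui}1_{\overline\ui}$ and $\Pol_m^{\frakS_\uj}1_{\overline\uj}$, so — through the faithful polynomial representation of \cref{lem:polrep-KLR} — $1_\uj R(n\delta)1_\ui$ embeds into the $\Pol_m^{\frakS_m}$-linear maps $\Pol_m^{\frakS_\ui}1_{\overline\ui}\to\Pol_m^{\frakS_\uj}1_{\overline\uj}$. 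This is exactly the setting of \cref{subs:diag-Schur}: $\psi_{w_{0,\uj,\uj'}}$ plays the role of the merge $\rmM^\mu_{\mu'}$ (by \cref{lem:Dem-ab-sym,lem:Dem-ab-formula} it realises the symmetrisation $\Pol_m^{\frakS_{\uj'}}\to\Pol_m^{\frakS_\uj}$), $\psi_w$ the crossing $\rmR$, multiplication by $P\in B_{\ui'}$ the polynomial box, and the inclusion $\Pol_m^{\frakS_\ui}\subset\Pol_m^{\frakS_{\ui'}}$ the split $\rmS_\lambda^{\lambda'}$.

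For spanning, given $v\in\doubleS{\overline\uj}{\overline\ui}$ and a monomial $y^{\mathbf a}$ I would rewrite $1_\uj\psi_v y^{\mathbf a}1_\ui$ in normal form. Let $w\in\doubleS{\uj}{\ui}$ be the shortest representative of $\frakS_\uj v\frakS_\ui$ and let $\ui'\subset\ui$, $\uj'\subset\uj$ be attached to $w$ as in the lemma. Writing $1_\uj=\psi_{w_{0,\uj}}y_\uj$, choosing reduced decompositions adapted in the sense of \cref{def:adapted-perm} and applying \cref{lem:divided-some-eq:b}, one peels off $\psi_{w_{0,\uj,\uj'}}\psi_w$ from the left, modulo elements of the same shape indexed by $w'\prec w$; the residual factor $y^{\mathbf a}$ is then pushed to the right through $\psi_w$ and $1_\ui$, using \cref{lem:divided-some-eq:c} to turn the $y_r$ with $s_r\in\frakS_\ui$ into Demazure (again Bruhat-lower) contributions, after which what remains is multiplication by a $\frakS_{\ui'}$-symmetric polynomial that I would expand in $B_{\ui'}$. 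A downward induction on the Bruhat order on $\doubleS{\uj}{\ui}$ then shows that $\{\psi_{w_{0,\uj,\uj'}}\psi_w P\,1_\ui\}$ spans $1_\uj R(n\delta)1_\ui$.

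For linear independence I would argue exactly as in the proof of \cref{lm:basis-Schur-zigzag}, by leading terms: with respect to the filtration of $1_\uj R(n\delta)1_\ui$ indexed by $\doubleS{\uj}{\ui}$, the operator $\psi_{w_{0,\uj,\uj'}}\psi_w P\,1_\ui$ has leading term $(\text{symmetrisation }\Pol_m^{\frakS_{\uj'}}\to\Pol_m^{\frakS_\uj})\circ w\circ(\text{multiplication by }P)$; distinct $w$ contribute to distinct steps of the filtration, and for fixed $w$ the elements $P\in B_{\ui'}$ form a basis of $\Pol_m^{\frakS_{\ui'}}$, so the whole family is independent. I expect the main obstacle to be the bookkeeping in the spanning step — controlling the Bruhat-lower error terms that appear both when $\psi_{w_{0,\uj}}$ is absorbed into $\psi_v$ and when $y^{\mathbf a}$ is symmetrised through the two idempotents, and ensuring the adapted decompositions of \cref{def:adapted-perm} can be chosen compatibly so that \cref{lem:divided-some-eq:b} keeps applying. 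One can sidestep this by a graded-dimension count (the graded dimension of $1_\uj R(n\delta)1_\ui$ read off from \cref{lem:basis-KLR} must match $\sum_{w\in\doubleS{\uj}{\ui}}q^{\deg\psi_{w_{0,\uj,\uj'}}+\deg\psi_w}\operatorname{gdim}\Pol_m^{\frakS_{\ui'}}$, so that independence alone suffices) or, over a characteristic-zero field, by transporting the localisation argument of \cref{lm:basis-Schur-zigzag} through the identification $1_\uj R(n\delta)1_\ui\simeq H^*_{G_\alpha}(\bfZ_{\ui,\uj})$, which follows from \cref{prop:geom-KLR} and the projection formula.
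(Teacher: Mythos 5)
Your proposal follows the same route as the paper: compress the known basis of $1_{\overline\uj}R(\alpha)1_{\overline\ui}$ from \cref{lem:basis-KLR} by the divided-power idempotents $1_\ui,1_\uj$, reduce to a normal form using $1_\uj=\psi_{w_{0,\uj}}y_\uj$ together with \cref{lem:divided-some-eq}, and conclude linear independence by comparing leading terms in the filtration $R(\alpha)^{\leqslant w}$ of \cref{rmq:basis-KLR}. The only structural difference is which of the two sets you take as primary: you start from the second (polynomial on the right), the paper from the first, and the triangular relation you quote between them is exactly how the paper passes from one to the other — so this is cosmetic.

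Where you gloss over bookkeeping that the paper handles more cleanly: the paper first fixes three explicit compatibility conditions on the reduced decompositions (that $\psi_w 1_\ui=\psi_{w'}\psi_x 1_\ui$ when $w=w'x$ with $x\in\frakS_\ui$, dually for $\frakS_\uj$, and adaptedness as in \cref{def:adapted-perm}), under which the normal-form reduction is an \emph{exact} chain of equalities — the key point being that $\psi_{w_{0,\uj}}\psi_r=0$ for $s_r\in\frakS_\uj$ kills the correction terms from relation~\eqref{rel:KLR2} when $y_\uj P\psi_x$ is absorbed into a single polynomial $Q$, so no Bruhat-lower errors arise at all. Your version, pushing $y^{\mathbf a}$ to the right through $\psi_w$ and $1_\ui$, will generate lower terms at each step and requires a downward induction on Bruhat length to close; this works but is clunkier, and one must still check that the lower terms land among elements of the same shape (which ultimately needs the same adaptedness choices). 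The Bruhat-lower error terms you anticipate appear in the paper only in the separate, final step of removing the fixed choices, where two different reduced decompositions of the same $w$ are compared. One further point worth making explicit: $(\ui',\uj')$ and hence $B_{\ui'}$ depend on $w$, and the injectivity of $w\mapsto w_{0,\uj,\uj'}w$ (needed so that distinct $w$ contribute to distinct filtration steps) holds precisely because $\uj'$ is determined by $w$. Your suggested graded-dimension sidestep is plausible but not a free lunch: the graded dimension of $\Pol_m^{\frakS_{\ui'}}$ varies with $w$, so matching it against \cref{lem:basis-KLR} reduces to essentially the same combinatorics. The localisation alternative works only in characteristic zero, whereas the lemma is needed over $\bbZ$ for the arguments of \cref{sec:semicusp}.
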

\begin{proof}
We concentrate on the first set for now.
Let us first prove that it forms a basis under the following additional assumptions on the choice of reduced decompositions:
\begin{itemize}
	\item each element $w\in \doubleS{\overline{\uj}}{\overline{\ui}}$ can be written in a unique way as $w=w'x$, where $w'\in \doubleS{\overline{\uj}}{\ui}$ and $x\in \frakS_\ui$.
	We assume that the reduced expressions are chosen in such a way that $\psi_w1_\ui=\psi_{w'}\psi_x1_\ui$;
	\item each element $w\in \doubleS{\overline{\uj}}{\ui}$ can be written in a unique was as $w=xw'$, where $w'\in \doubleS{\uj}{\ui}$ and $x\in \frakS_\uj$.
	We assume that the reduced expressions are chosen in such a way that $\psi_w1_\ui=\psi_x\psi_{w'}1_\ui$;
	\item we assume additionally that the reduced representation of each $w\in \doubleS{\uj}{\ui}$ is adapted to $\ui',\uj'$ and $x$ in the sense of \cref{def:adapted-perm}, where $x$ is the permutation with $x(\ui')=\uj'$ that induces $w$.
\end{itemize}

It is clear from \cref{lem:basis-KLR} that the set
\begin{equation}\label{eq:span-proof}
\{1_\uj P\psi_w 1_\ui:w\in \doubleS{\overline{\uj}}{\overline{\ui}}, P\in \Pol_m\}
\end{equation}
spans $1_\uj R(\alpha) 1_\ui$.
We are going to reduce the number of generators in \eqref{eq:span-proof}.
First, we can assume that each $w$ lies in $\doubleS{\overline{\uj}}{\ui}$, because if $w$ is not minimal in $w\frakS_\ui$, then we have $\psi_w 1_\ui=0$ by the first assumption. 
Let us write $w\in \doubleS{\overline{\uj}}{\ui}$ as $w=xw'$, where $w'\in \doubleS{\uj}{\ui}$ and $x\in \frakS_\uj$ as in the second assumption.  
There exists a polynomial $Q\in \Pol_m$ such that we have the following chain of equalities
\begin{align*}
1_\uj P\psi_w 1_\ui&=\psi_{w_{0,\uj}}y_\uj P\psi_x\psi_{w'} 1_\ui\\
&=\psi_{w_{0,\uj}}Q\psi_{w'}1_\ui\\
&=\psi_{w_{0,\uj,\uj'}}\psi_{w_{0,\uj'}}Q\psi_{w'}1_\ui\\
&=\psi_{w_{0,\uj,\uj'}}(-1)^{\ell(w_{0,\uj'})}\partial_{w_{0,\uj'}}(Q)\psi_{w'}1_\ui.
\end{align*}
The first and the third equalities follow from the definitions of $1_\uj$ and $\psi_{w_{0,\uj,\uj'}}$ respectively.
For the second equality, we use relations~\eqref{rel:KLR1} and~\eqref{rel:KLR2} in order to move polynomial in the expression $y_\uj P\psi_x$ past $\psi$.
Since $\psi_{w_{0,\uj}}\psi_r=0$ for each $r$ with $s_r\in \frakS_\uj$, the additional terms coming from~\eqref{rel:KLR2} will disappear, and therefore $\psi_{w_{0,\uj}}y_\uj P\psi_x=\psi_{w_{0,\uj}}Q$ for some $Q\in \Pol_m$. 
Finally, let us justify the fourth equality.
First, \cref{lem:divided-some-eq:b} implies that
\[
\psi_{w'}1_\ui=\psi_{w'} 1_{\ui'} 1_\ui=1_{\uj'}\psi_{w'} 1_{\ui'} 1_\ui=1_{\uj'}\psi_{w'} 1_\ui.
\]
Second, by \cref{lem:divided-some-eq:c} we have 
\[
(\psi_{w_{0,\uj'}}Q1_{\uj'})\psi_{w'}1_\ui = (-1)^{\ell(w_{0,\uj'})}(\partial_{w_{0,\uj'}}(Q)1_{\uj'}) \psi_{w'}1_\ui.
\]

All in all, this shows that the first set in~\eqref{eq:KLR-divided-bases} spans $1_\uj R(n\delta) 1_\ui$.
It remains to check linear independence.
Consider an element $\psi_{w_{0,\uj,\uj'}}P\psi_w 1_\ui$ from this set.
Applying relations~(\ref{rel:KLR1}-\ref{rel:KLR2}), we get
$$
\psi_{w_{0,\uj,\uj'}}P\psi_w 1_\ui\in Q\psi_{w_{0,\uj,\uj'}}\psi_w 1_\ui+R(\alpha)^{<{w_{0,\uj,\uj'}w}},
$$
where we use notations of \cref{rmq:basis-KLR}, and $Q=w_{0,\uj,\uj'}(P)$.
Linear independence therefore follows from \cref{lem:basis-KLR}.

Now, let us prove the claim without additional assumptions on the reduced decompositions.
Consider a partial order on the basis obtained above, defined as follows:
\[
\psi_{w_{0,\uj,\uj'}}P_1\psi_{w_1}  1_\ui < \psi_{w_{0,\uj,\uj'}}P_2\psi_{w_2} 1_\ui \quad  \Leftrightarrow \quad l(w_1)<l(w_2).
\]

First, note that  $1_{\overline\uj}\psi_{w_{0,\uj,\uj'}}$ is independent of the choice of the reduced decomposition of $w_{0,\uj,\uj'}$ because its diagram contains only crossings of strands with the same label.
Assume that we have made some other choice of reduced decompositions.
Let us write $\psi_w$ for the operator defined with respect to the previous choice of decompositions, and $\psi'_w$ with respect to the new one.
We have
$$
\psi_{w_{0,\uj,\uj'}}P\psi'_w  1_\ui=\psi_{w_{0,\uj,\uj'}}P\psi_w  1_\ui+\ldots,
$$
where ellipses stand for lower terms with respect to the order introduced above.
We have thus deduced that the first set in~\eqref{eq:KLR-divided-bases} forms a basis for an arbitrary choice of reduced decompositions.
Finally, in a similar fashion we have
\[
\psi_{w_{0,\uj,\uj'}}\psi_w P  1_\ui=\psi_{w_{0,\uj,\uj'}}w(P)\psi_w  1_\ui + \ldots,
\]
so that the second set in~\eqref{eq:KLR-divided-bases} is a basis as well.
\end{proof}

\subsection{Geometric construction of divided powers}\label{subs:geom-div-powers}
Let us consider the following divided power versions of the KLR algebra $R(\alpha)$ and related geometric objects:
\[
\widehat R(\alpha)=\bigoplus_{\ui,\uj\in I^{(\alpha)}}1_\ui R(\alpha) 1_\uj,\qquad \widetilde{\bfF}_{(\alpha)} = \coprod_{\ui\in I^{(\alpha)}} \widetilde{\bfF}_{\ui},\qquad \bfZ_{(\alpha)}=\coprod_{\ui,\uj\in I^{(\alpha)}}\bfZ_{\ui,\uj}.
\]
Similarly to $\Pol_\alpha$, let us also consider the vector space $\Pol_{(\alpha)}=\bigoplus_{\ui\in I^{(\alpha)}}\Pol_m^{\frakS_\ui}1_\ui$, where $1_\ui$ is the projector to the direct summand labeled by $\ui$.
For any $\ui,\uj\in I^{(\alpha)}$, each element of $1_{\overline\ui}R(\alpha)1_{\overline\uj}$ yields a linear map $\Pol_m1_{\overline\uj}\to \Pol_m1_{\overline\ui}$ by \cref{lem:polrep-KLR}.
In particular, each element of $1_{\ui}R(\alpha)1_{\uj}\subset 1_{\overline\ui}R(\alpha)1_{\overline\uj}$ yields a linear map $\Pol_m^{\frakS_\uj}1_{\overline\uj}\to \Pol_m^{\frakS_\ui}1_{\overline\ui}$ by \cref{lem:divided-some-eq:a}.
This defines an action of $\widehat R(\alpha)$ on $\Pol_{(\alpha)}$.
Moreover, since $R(\alpha)$ acts faithfully on $\Pol_\alpha$, the representation $\Pol_{(\alpha)}$ of $\widehat R(\alpha)$ is faithful as well.

On the other hand, $H^*_{G_\alpha}(\bfZ_{(\alpha)})$ is a convolution algebra, which acts on $H^*_{G_\alpha}(\widetilde \bfF_{(\alpha)})$.
We have an identification of vector spaces
\[
H^*_{G_\alpha}(\widetilde \bfF_{(\alpha)})=\bigoplus_{\ui\in I^{(\alpha)}}H^*_{G_\alpha}(\widetilde \bfF_{\ui})=\bigoplus_{\ui \in I^{(\alpha)}}\Pol_m^{\frakS_\ui}1_\ui=\Pol_{(\alpha)}.
\]
We will upgrade this to an isomorphism of $\widehat R(\alpha)$-modules in \cref{prop:geom-KLR-div}.

Note that for any $\ui\in I^{(\alpha)}$, we have a closed embedding
\[
\widetilde{\bfF}_{\overline\ui} = \widetilde{\bfF}_{\ui}\times_{\widetilde{\bfF}_{\ui}}\widetilde{\bfF}_{\overline\ui} \hookrightarrow \widetilde{\bfF}_{\ui}\times_{E_\alpha}\widetilde{\bfF}_{\overline\ui} = \bfZ_{\ui,\overline\ui}.
\]
Consider the corresponding classes in the algebra $H^*_{G_\alpha}(\bfZ_{(\alpha)})$: 
$$
z_{\ui,\overline\ui}=[\widetilde{\bfF}_{\ui}\times_{\widetilde{\bfF}_{\ui}}\widetilde{\bfF}_{\overline\ui}]\in H^*_{G_\alpha}(\bfZ_{\ui,\overline\ui})\subset H^*_{G_\alpha}(\bfZ_{(\alpha)}),\qquad z_{\overline\ui,\ui}=[\widetilde{\bfF}_{\overline\ui}\times_{\widetilde{\bfF}_{\ui}}\widetilde{\bfF}_{\ui}]\in H^*_{G_\alpha}(\bfZ_{\overline\ui,\ui})\subset H^*_{G_\alpha}(\bfZ_{(\alpha)}).
$$

\begin{lm}
The map
$$
H^*_{G_\alpha}(\bfZ_{\ui,\uj})\to H^*_{G_\alpha}(\bfZ_{\overline\ui,\overline\uj}),\qquad x\mapsto z_{\overline\ui,\ui}x z_{\uj,\overline\uj} 
$$
is injective.
\end{lm}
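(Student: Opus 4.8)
The plan is to realise the map $x\mapsto z_{\overline{\ui},\ui}\,x\,z_{\uj,\overline{\uj}}$ as a pullback along a flag bundle and then apply the projection formula. The starting observation is that for every $\ui=(i_1^{(a_1)},\ldots,i_k^{(a_k)})\in I^{(\alpha)}$ there is a natural morphism $p_\ui\colon\widetilde{\bfF}_{\overline{\ui}}\to\widetilde{\bfF}_\ui$ which coarsens a flag of type $\overline{\ui}$ to its underlying flag of type $\ui$, and that $p_\ui$ is a Zariski-locally trivial fibre bundle whose fibre is the product of full flag varieties $F_\ui:=\prod_{r}\mathrm{GL}_{a_r}/B_{a_r}$. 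To see this, take $(x,\phi)\in\widetilde{\bfF}_\ui$ with $\phi=(0=V^0\subset\cdots\subset V^k=V)$: since $V^r/V^{r-1}$ is concentrated at the single vertex $i_r$ and $\Gamma$ has no loops, the endomorphism induced by $x$ on $V^r/V^{r-1}$ vanishes, so $x(V^r)\subseteq V^{r-1}$; hence $x$ preserves \emph{any} refinement of $\phi$, and the fibre of $p_\ui$ over $(x,\phi)$ is the full space of such refinements, i.e.\ $F_\ui$.

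Unwinding the definitions, the subvariety $\widetilde{\bfF}_{\overline{\ui}}\times_{\widetilde{\bfF}_\ui}\widetilde{\bfF}_\ui\cong\widetilde{\bfF}_{\overline{\ui}}$ is embedded in $\bfZ_{\overline{\ui},\ui}$ as the graph $\{(\psi,p_\ui(\psi))\}$ of $p_\ui$, while $z_{\ui,\overline{\ui}}$ is the class of the transposed graph $\{(p_\ui(\psi),\psi)\}\subseteq\bfZ_{\ui,\overline{\ui}}$; likewise for $p_\uj$. Now I would invoke the standard fact that in a convolution algebra the product with the fundamental class of the graph of a morphism equals the refined pullback along it — in our setting this is immediate from the definition of the convolution product in \cref{convalg} together with the base-change formulas of \cref{GysinProp}, because the projection $p_{13}$ restricts to an isomorphism on the relevant graph loci. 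Convolving with $z_{\overline{\ui},\ui}$ on the left and $z_{\uj,\overline{\uj}}$ on the right therefore yields, with no extra twist,
\[
z_{\overline{\ui},\ui}\,x\,z_{\uj,\overline{\uj}}=q^*(x),\qquad x\in H^*_{G_\alpha}(\bfZ_{\ui,\uj}),
\]
where $q\colon\bfZ_{\overline{\ui},\overline{\uj}}\to\bfZ_{\ui,\uj}$, $(\psi_1,\psi_2)\mapsto(p_\ui(\psi_1),p_\uj(\psi_2))$, is again a Zariski-locally trivial fibre bundle with fibre $F_\ui\times F_\uj$, factoring as a composition $\bfZ_{\overline{\ui},\overline{\uj}}\to\bfZ_{\overline{\ui},\uj}\to\bfZ_{\ui,\uj}$ of two flag bundles.

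It then remains to show that $q^*$ is injective. Since $q$ is proper, it suffices to produce a class $c\in H^*_{G_\alpha}(\bfZ_{\overline{\ui},\overline{\uj}})$ with $q_*(c)=1$: for then $q_*\!\big(c\cdot q^*(x)\big)=q_*(c)\cdot x=x$ by the projection formula. Such a $c$ exists by \cref{rem:image-Dem}: applied block by block to each factor $\mathrm{GL}_{a_r}/B_{a_r}$, it provides a polynomial in the tautological Chern roots $y_r$ of the $\overline{\ui}$- and $\overline{\uj}$-flags (a product of classes of the shape $y_{0,a_r}$) whose pushforward along $q$ — computed in two steps along the factorisation above, using the projection formula and the fact that pushforward along a full flag bundle is the corresponding Demazure operator — equals $1$. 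This proves injectivity of $q^*$, and hence of the map in the statement. The only genuinely geometric input is the first step, namely that the coarsening maps $p_\ui$ are honest flag bundles over all of $\widetilde{\bfF}_\ui$; this is where the absence of loops in $\Gamma$ (forcing $x(V^r)\subseteq V^{r-1}$) enters, and everything afterwards is a formal manipulation with convolution products and the projection formula.
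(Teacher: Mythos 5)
Your proof is correct and follows essentially the same route as the paper: both identify the map $x\mapsto z_{\overline\ui,\ui}x z_{\uj,\overline\uj}$ with pullback along the natural projection $\bfZ_{\overline\ui,\overline\uj}\to\bfZ_{\ui,\uj}$ (the paper phrases this as $q_*p^!$ with $q$ an isomorphism, and notes $p^!=p^*$), then observe this projection is a locally trivial fibration by (products of) flag varieties and conclude injectivity. Where the paper simply cites ``iterated application of projective bundle theorem,'' you make this explicit by producing a class $c$ with $q_*(c)=1$ via Demazure-type pushforwards and applying the projection formula, and you also spell out in more detail than the paper why the coarsening map $\widetilde\bfF_{\overline\ui}\to\widetilde\bfF_\ui$ is surjective with full flag fibers (the no-loops argument forcing $x(V^r)\subseteq V^{r-1}$) --- both are welcome clarifications, but the underlying argument is the same.
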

\begin{proof}
By the definition of convolution product, this map is given by the following correspondence:
\[
\bfZ_{\ui,\uj}\xleftarrow{p}\widetilde{\bfF}_{\overline\ui} \times_{\widetilde{\bfF}_{\ui}} \bfZ_{\ui,\uj} \times_{\widetilde{\bfF}_{\uj}} \widetilde{\bfF}_{\overline\uj}\xrightarrow{q}\bfZ_{\overline\ui,\overline\uj},\qquad z_{\overline\ui,\ui}x z_{\uj,\overline\uj} = q_*p^!(x).
\]
Since $\bfZ_{\ui,\uj} = \widetilde{\bfF}_{\ui}\times_{E_\alpha} \widetilde{\bfF}_{\uj}$, it is clear that the map $q$ is an isomorphism.
Note that $\widetilde{\bfF}_{\overline\ui}\to \widetilde{\bfF}_{\ui}$ is a locally trivial fibration in partial flag varieties for any $\ui$.
In particular, $p$ is a locally trivial fibration in products of partial flag varieties, and it is straightforward to verify that $p^! = p^*$.
Moreover, $p^*$ is injective by an iterated application of projective bundle theorem.

Putting everything together, the map $x\mapsto z_{\overline\ui,\ui}x z_{\uj,\overline\uj}$ is identified with an injective map $p^*$.
\end{proof}

\begin{corr}
The representation $H^*_{G_\alpha}(\widetilde \bfF_{(\alpha)})$ of $H^*_{G_\alpha}(\bfZ_{(\alpha)})$ is faithful. 
\end{corr}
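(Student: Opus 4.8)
The plan is to bootstrap faithfulness of $H^*_{G_\alpha}(\widetilde{\bfF}_{(\alpha)})$ from the non-divided case (\cref{prop:geom-KLR}) together with the injectivity statement of the previous lemma. First I would pick an element $a\in H^*_{G_\alpha}(\bfZ_{(\alpha)})$ acting by zero on $H^*_{G_\alpha}(\widetilde{\bfF}_{(\alpha)})$ and decompose it as $a=\sum_{\ui,\uj\in I^{(\alpha)}}a_{\ui,\uj}$ with $a_{\ui,\uj}\in H^*_{G_\alpha}(\bfZ_{\ui,\uj})$. Since $a_{\ui,\uj}$ sends the summand $H^*_{G_\alpha}(\widetilde{\bfF}_\uj)$ into $H^*_{G_\alpha}(\widetilde{\bfF}_\ui)$ and distinct summands of $H^*_{G_\alpha}(\widetilde{\bfF}_{(\alpha)})$ are orthogonal, the hypothesis forces each $a_{\ui,\uj}$ to act as the zero map $H^*_{G_\alpha}(\widetilde{\bfF}_\uj)\to H^*_{G_\alpha}(\widetilde{\bfF}_\ui)$. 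It therefore suffices to show $a_{\ui,\uj}=0$ for each fixed pair $\ui,\uj$.

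Next I would invoke the previous lemma: the map $x\mapsto z_{\overline\ui,\ui}\,x\,z_{\uj,\overline\uj}$ on $H^*_{G_\alpha}(\bfZ_{\ui,\uj})$ is injective, so it is enough to prove that $b:=z_{\overline\ui,\ui}\,a_{\ui,\uj}\,z_{\uj,\overline\uj}\in H^*_{G_\alpha}(\bfZ_{\overline\ui,\overline\uj})=1_{\overline\ui}R(\alpha)1_{\overline\uj}$ vanishes. By \cref{prop:geom-KLR} together with \cref{rem:geom-KLR-idemp}, $\Pol_\alpha\simeq H^*_{G_\alpha}(\widetilde{\bfF}_\alpha)$ is a faithful $R(\alpha)$-module, so it suffices to check that $b$ acts by zero on $H^*_{G_\alpha}(\widetilde{\bfF}_\alpha)$. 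Because $b$ lies in $1_{\overline\ui}R(\alpha)1_{\overline\uj}$, it annihilates all summands of $H^*_{G_\alpha}(\widetilde{\bfF}_\alpha)$ other than $H^*_{G_\alpha}(\widetilde{\bfF}_{\overline\uj})$, while on $H^*_{G_\alpha}(\widetilde{\bfF}_{\overline\uj})$ its action is, by the definition of the convolution product, the composite $H^*_{G_\alpha}(\widetilde{\bfF}_{\overline\uj})\xra{z_{\uj,\overline\uj}}H^*_{G_\alpha}(\widetilde{\bfF}_\uj)\xra{a_{\ui,\uj}}H^*_{G_\alpha}(\widetilde{\bfF}_\ui)\xra{z_{\overline\ui,\ui}}H^*_{G_\alpha}(\widetilde{\bfF}_{\overline\ui})$, whose middle arrow is zero by the reduction of the previous paragraph. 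Hence $b$ acts by zero, so $b=0$, so $a_{\ui,\uj}=0$, and therefore $a=0$.

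This argument is essentially pure assembly, so I do not expect a serious obstacle; the only thing requiring care is the bookkeeping — tracking the direction in which each convolution class acts, so that the composite describing the action of $b$ genuinely factors through the action of $a_{\ui,\uj}$ on the correct piece $H^*_{G_\alpha}(\widetilde{\bfF}_\uj)$, and keeping the orthogonality of the idempotents $1_\ui$ straight. All the real work has already been done in the previous lemma (injectivity of $x\mapsto z_{\overline\ui,\ui}\,x\,z_{\uj,\overline\uj}$, via the projective bundle theorem) and in \cref{prop:geom-KLR}.
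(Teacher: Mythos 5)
Your proof is correct and follows essentially the same route as the paper: reduce to a single component $a_{\ui,\uj}$, push it into $H^*_{G_\alpha}(\bfZ_{\overline\ui,\overline\uj})=1_{\overline\ui}R(\alpha)1_{\overline\uj}$ via $x\mapsto z_{\overline\ui,\ui}\,x\,z_{\uj,\overline\uj}$, note that the resulting element acts by zero on $H^*_{G_\alpha}(\widetilde\bfF_\alpha)$ because its action factors through that of $a_{\ui,\uj}$, and conclude from the faithfulness of $\Pol_\alpha$ over $R(\alpha)$ plus the injectivity from the preceding lemma. The paper states this more tersely but makes precisely the same deductions; your explicit bookkeeping about orthogonality of summands and the factorization of the composite is exactly the content the paper leaves implicit.
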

\begin{proof}
Suppose that $x\in H^*_{G_\alpha}(\bfZ_{\ui,\uj})$ acts on $H^*_{G_\alpha}(\widetilde \bfF_{(\alpha)})$ by zero.
Then the element $z_{\overline\ui,\ui}x z_{\uj,\overline\uj}\in \bfZ_{\overline\ui,\overline\uj}$ acts on $H^*_{G_\alpha}(\widetilde \bfF_{\alpha})$ by zero.
Since the action of $R(\alpha)$ on $H^*_{G_\alpha}(\widetilde \bfF_{\alpha})$ is faithful, we have $z_{\overline\ui,\ui}x z_{\uj,\overline\uj}=0$, and the lemma above implies that $x=0$.
\end{proof}

Recall that $w_{0,\ui}$ is the maximal length element in $\frakS_\ui$.
We view the polynomial $y_\ui$, defined in \cref{subs:divided-power}, as an element of $H^*_{G_\alpha}(\bfZ_{\overline\ui,\overline\ui})$.
\begin{lm}\label{lem:action-z}
\begin{lmlist}
	\item \label{lem:action-z:a} The element $z_{\ui,\overline\ui}$ acts on $\Pol_{(\alpha)}$ by 
	\[
	z_{\ui,\overline\ui}: \Pol_m1_{\overline\ui}\to \Pol_m^{\frakS_\ui}1_{\ui},\quad P1_{\overline\ui}\mapsto (-1)^{\ell(w_{0,\ui})}\partial_{w_{0,\ui}}(P)1_{\overline\ui};
	\]
	\item \label{lem:action-z:b} the element $z_{\overline\ui,\ui}$ acts on $\Pol_{(\alpha)}$ by 
	\[
	z_{\overline\ui,\ui}:\Pol_m^{\frakS_\ui}1_{\ui}\to \Pol_m1_{\overline\ui},\quad P1_{\overline\ui}\mapsto P1_{\ui}.
	\]
\end{lmlist}
\end{lm}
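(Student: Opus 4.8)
The plan is to compute both actions geometrically. By construction, $z_{\ui,\overline\ui}$ and $z_{\overline\ui,\ui}$ are the fundamental classes of copies of $\widetilde{\bfF}_{\overline\ui}$ embedded into $\bfZ_{\ui,\overline\ui}$ and $\bfZ_{\overline\ui,\ui}$ as the graph of the ``coarsening'' morphism
\[
c\colon \widetilde{\bfF}_{\overline\ui}\to\widetilde{\bfF}_\ui,\qquad (x,\psi)\mapsto (x,c(\psi)),
\]
where $c(\psi)$ is the flag of type $\ui$ obtained from $\psi$ by forgetting the intermediate subspaces inside each block. First I would unwind the formula for the convolution action of \cref{ConvModule} (equation~\eqref{Aaction}): since $\widetilde{\bfF}_{\overline\ui}$ is smooth, the relevant refined pull-backs are ordinary lci pull-backs, and one finds that $z_{\ui,\overline\ui}$ sends the summand $H^*_{G_\alpha}(\widetilde{\bfF}_{\overline\ui})$ of $\Pol_{(\alpha)}$ to $H^*_{G_\alpha}(\widetilde{\bfF}_\ui)$ by the proper push-forward $c_*$ (in the degree convention $H^*_G(X):=H^G_{2\dim X-*}(X)$), while $z_{\overline\ui,\ui}$ sends $H^*_{G_\alpha}(\widetilde{\bfF}_\ui)$ to $H^*_{G_\alpha}(\widetilde{\bfF}_{\overline\ui})$ by the pull-back $c^!=c^*$.

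The key geometric input is that $c$ carries no ``extra'' normal bundle, i.e.\ that the square
\[
\begin{tikzcd}
\widetilde{\bfF}_{\overline\ui}\ar[r,"c"]\ar[d] & \widetilde{\bfF}_\ui\ar[d]\\
\bfF_{\overline\ui}\ar[r,"\bar c"] & \bfF_\ui
\end{tikzcd}
\]
is cartesian, where $\bar c$ is the usual projection of partial-flag varieties refining within each block. Concretely this says: if $x\in E_\alpha$ preserves $\bar c(\psi)$, then it preserves $\psi$. Here the absence of loops is used: $\bar c$ only inserts new steps inside a single block, which has a constant vertex label, so for any arrow $h$ the source of a new step lies strictly between two consecutive block boundaries, and the constraint coming from the coarse flag already sends it into the lower of the two corresponding steps on the target side, which is contained in every new step. (Alternatively one checks, on $T$-fixed points, that the normal bundle of $\widetilde{\bfF}_{\overline\ui}$ inside $\widetilde{\bfF}_\ui\times_{\bfF_\ui}\bfF_{\overline\ui}$ has vanishing $T$-fixed fibres, hence is zero.) Granting this, part (b) is immediate: under the tautological identifications $H^*_{G_\alpha}(\widetilde{\bfF}_\ui)\cong\Pol_m^{\frakS_\ui}$, $H^*_{G_\alpha}(\widetilde{\bfF}_{\overline\ui})\cong\Pol_m$, the pull-back $c^*=\bar c^*$ is the tautological inclusion $\Pol_m^{\frakS_\ui}\hookrightarrow\Pol_m$, since the Chern roots of the coarse successive quotients are the within-block symmetric functions of the fine ones.

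For part (a), the cartesian square identifies $c_*$ with the Gysin push-forward of the flag bundle $\bar c$ (by base change for proper push-forwards, using \cref{GysinProp}). As $\bar c$ is a product, over the blocks of $\ui$, of full-flag bundles attached to the successive quotients, its push-forward is the composite of the push-forwards on each block, each of which is classically $(-1)^{\ell(w_{0,a_r})}\partial_{w_{0,a_r}}$ in the corresponding group of variables, so altogether $c_*=(-1)^{\ell(w_{0,\ui})}\partial_{w_{0,\ui}}$. To pin down the sign safely I would argue by equivariant localisation: \cref{LocPullPush} expresses $c_*$ in localised $T$-equivariant cohomology as a sum over $\frakS_\ui$ of terms $w\bigl(-\big/\prod(x_i-x_j)\bigr)$, the products running over the within-block pairs of indices (the fibre tangent weights), which is precisely the expression for $(-1)^{\ell(w_{0,\ui})}\partial_{w_{0,\ui}}$ from \cref{lem:Dem-ab-formula}; the sign is forced by our convention $\partial_r(P)=(P-s_r(P))/(x_r-x_{r+1})$. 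Since $H^*_{G_\alpha}(\widetilde{\bfF}_{\overline\ui})$ is a free module, the computation is valid over any coefficient ring.

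The one delicate point is the cartesianness of the square above (equivalently, the vanishing of that normal bundle): without it the push-forward in (a) would pick up a spurious Euler-class factor and fail to be a pure divided-difference operator. Everything else is routine bookkeeping with Demazure operators and with the conventions of \cref{ConvSetup}.
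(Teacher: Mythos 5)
Your argument is correct, and it supplies a self-contained geometric proof where the paper itself just cites \cite[Theorem~4.7]{Prz_QSAC2019} (with the remark that the proof there is a localization computation in the style of \cref{SchurPolRepFaith}). The key observation you make explicit — that the square
\[
\begin{tikzcd}
\widetilde{\bfF}_{\overline\ui}\ar[r,"c"]\ar[d] & \widetilde{\bfF}_\ui\ar[d]\\
\bfF_{\overline\ui}\ar[r,"\bar c"] & \bfF_\ui
\end{tikzcd}
\]
is cartesian, which reduces to checking that any $x\in E_\alpha$ preserving a coarse flag automatically preserves every refinement inside a block of constant vertex label (this is exactly where loop-freeness is used: no arrow can join two steps sitting inside the same block) — is indeed the cleanest route, since it turns $z_{\ui,\overline\ui}$ into the Gysin push-forward along the partial flag bundle $\bar c$, and $z_{\overline\ui,\ui}$ into its pull-back, where the divided-difference and inclusion-of-invariants formulas are classical. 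What you gain over a pure localization computation is that no identity like \cref{lem:Dem-ab-formula} is strictly needed to identify the operator; it only serves you to double-check the sign, which is the one genuinely fiddly point (and you handle it correctly: with the paper's Demazure convention $\partial_r(P)=(P-s_r(P))/(x_r-x_{r+1})$, the tangent weights at fixed points of $\bar c$ contribute denominators of the opposite sign, producing the factor $(-1)^{\ell(w_{0,\ui})}$). One minor remark: in place of your parenthetical appeal to ``vanishing of $T$-fixed fibres of the normal bundle'', it is cleaner to just observe that both $\widetilde{\bfF}_{\overline\ui}\to\bfF_{\overline\ui}$ and $(\widetilde{\bfF}_\ui\times_{\bfF_\ui}\bfF_{\overline\ui})\to\bfF_{\overline\ui}$ are vector bundles, and your pointwise argument shows their fibres agree, hence so do the total spaces. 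Everything else you write is faithful to the conventions of \cref{ConvSetup}.
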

\begin{proof}
See~\cite[Theorem~4.7]{Prz_QSAC2019}; the proof there is similar to our \cref{SchurPolRepFaith}.
\end{proof}

\begin{corr}\label{cor:eq-z}
\begin{corrlist}
	\item We have $z_{\ui,\overline\ui}y_\ui z_{\overline\ui,\ui}=1$ in $H^*_{G_\alpha}(\bfZ_{\ui,\ui})$;\label{cor:eq-z:a}
	\item we have $z_{\overline\ui,\ui}z_{\ui,\overline\ui}y_\ui=1_\ui$ in $1_{\overline\ui}R(\alpha)1_{\overline\ui}=H^*_{G_\alpha}(\bfZ_{\overline\ui,\overline\ui})$. \label{cor:eq-z:b}
\end{corrlist}	
\end{corr}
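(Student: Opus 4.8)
The plan is to deduce both equalities by letting all elements act on the relevant faithful polynomial representations and invoking the explicit formulas of \cref{lem:action-z}. I would start with \cref{cor:eq-z:b}. Both $z_{\overline\ui,\ui}z_{\ui,\overline\ui}y_\ui$ and $1_\ui=\psi_{w_{0,\ui}}y_\ui$ lie in $1_{\overline\ui}R(\alpha)1_{\overline\ui}=H^*_{G_\alpha}(\bfZ_{\overline\ui,\overline\ui})$, which acts faithfully on $\Pol_m 1_{\overline\ui}\subset\Pol_\alpha$ by \cref{lem:polrep-KLR}, so it is enough to check that they act in the same way. For $P\in\Pol_m$, multiplication by $y_\ui$ sends $P1_{\overline\ui}$ to $y_\ui P1_{\overline\ui}$; then \cref{lem:action-z:a} sends this to $(-1)^{\ell(w_{0,\ui})}\partial_{w_{0,\ui}}(y_\ui P)1_\ui$, and \cref{lem:action-z:b} sends it further to $(-1)^{\ell(w_{0,\ui})}\partial_{w_{0,\ui}}(y_\ui P)1_{\overline\ui}$. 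On the other hand, since every crossing in the diagram of $1_{\overline\ui}\psi_{w_{0,\ui}}$ joins two strands carrying the same label, the first case of \cref{lem:polrep-KLR} gives $\psi_{w_{0,\ui}}\cdot f1_{\overline\ui}=(-1)^{\ell(w_{0,\ui})}\partial_{w_{0,\ui}}(f)1_{\overline\ui}$, so $1_\ui=\psi_{w_{0,\ui}}y_\ui$ acts by $P1_{\overline\ui}\mapsto (-1)^{\ell(w_{0,\ui})}\partial_{w_{0,\ui}}(y_\ui P)1_{\overline\ui}$ as well, and faithfulness gives the identity.

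For \cref{cor:eq-z:a}, both $z_{\ui,\overline\ui}y_\ui z_{\overline\ui,\ui}$ and the unit of $H^*_{G_\alpha}(\bfZ_{\ui,\ui})$ lie in the corner $H^*_{G_\alpha}(\bfZ_{\ui,\ui})$ of $H^*_{G_\alpha}(\bfZ_{(\alpha)})$, which acts faithfully on $\Pol_m^{\frakS_\ui}1_\ui\subset H^*_{G_\alpha}(\widetilde\bfF_{(\alpha)})=\Pol_{(\alpha)}$ by the faithfulness of $H^*_{G_\alpha}(\widetilde\bfF_{(\alpha)})$ established just above. Running the same computation as in part (b) on an element $P1_\ui$ with $P\in\Pol_m^{\frakS_\ui}$, one finds that $z_{\ui,\overline\ui}y_\ui z_{\overline\ui,\ui}$ acts by $P1_\ui\mapsto (-1)^{\ell(w_{0,\ui})}\partial_{w_{0,\ui}}(y_\ui P)1_\ui=(-1)^{\ell(w_{0,\ui})}\partial_{w_{0,\ui}}(y_\ui)\cdot P1_\ui$, the last step using that Demazure operators commute with multiplication by $\frakS_\ui$-invariant polynomials.

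What remains — and this is the only point requiring genuine care, i.e. the main obstacle — is the normalization $(-1)^{\ell(w_{0,\ui})}\partial_{w_{0,\ui}}(y_\ui)=1$, which turns the operator above into the identity of $\Pol_m^{\frakS_\ui}1_\ui$ and thereby finishes part (a). I would obtain it block by block: in a single block of size $n$ it reads $\partial_{w_{0,n}}(y_{0,n})=(-1)^{\ell(w_{0,n})}$, which follows either from \cref{rem:image-Dem} together with the identity $\partial_{w_{0,n}}(w_{0,n}(f))=(-1)^{\ell(w_{0,n})}\partial_{w_{0,n}}(f)$ (noting that $y_{0,n}=w_{0,n}(x_1^{n-1}x_2^{n-2}\cdots x_{n-1})$), or directly from the fact recorded in \cref{subs:divided-power} that $1_\ui=\psi_{w_{0,\ui}}y_\ui$ is an idempotent acting on $\Pol_\alpha$ as the projector onto $\frakS_\ui$-invariants. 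Beyond this normalization, the proof is just a matter of carefully matching the composition order and the signs in the formulas of \cref{lem:polrep-KLR} and \cref{lem:action-z}.
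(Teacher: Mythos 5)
Your proof is correct and follows exactly the paper's approach: the paper's own proof is the one-liner ``It suffices to check these equalities on polynomial representations, where they follow from \cref{lem:action-z},'' and you have simply unpacked that sentence. The one detail that the paper leaves completely implicit — the normalization $(-1)^{\ell(w_{0,\ui})}\partial_{w_{0,\ui}}(y_\ui)=1$ needed to conclude part (a) — you identify and justify correctly (cleanest via the observation that $1_\ui=\psi_{w_{0,\ui}}y_\ui$ is idempotent acting as the projector onto $\frakS_\ui$-invariants and hence fixes $1$).
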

\begin{proof}
It suffices to check these equalities on polynomial representations, where they follow from \cref{lem:action-z}. 
\end{proof}

The following statement is the divided power version of \cref{prop:geom-KLR}.
\begin{prop}\label{prop:geom-KLR-div}
\begin{proplist}
	\item There exists an isomorphism of algebras $H^*_{G_\alpha}(\bfZ_{(\alpha)})\simeq \widehat R(\alpha)$;\label{prop:geom-KLR-div:a}
	\item this isomorphism restricts to $H^*_{G_\alpha}(\bfZ_{\ui,\uj})\simeq 1_\ui R(\alpha) 1_\uj$ for each $\ui,\uj\in I^{(\alpha)}$;\label{prop:geom-KLR-div:b}
	\item the $H^*_{G_\alpha}(\bfZ_{(\alpha)})$-action on $H^*_{G_\alpha}(\widetilde \bfF_{(\alpha)})$ gets identified with the $\widehat R(\alpha)$-action on $\Pol_{(\alpha)}$.\label{prop:geom-KLR-div:c}
\end{proplist}		
\end{prop}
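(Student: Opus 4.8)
The plan is to recognise both $\widehat R(\alpha)$ and $H^*_{G_\alpha}(\bfZ_{(\alpha)})$ as the same ``idempotent thickening'' of the identification $R(\alpha)\simeq H^*_{G_\alpha}(\bfZ_\alpha)$ supplied by \cref{prop:geom-KLR}, with the classes $z_{\ui,\overline\ui}$, $z_{\overline\ui,\ui}$ together with \cref{cor:eq-z} serving as a system of matrix units linking the divided-power block indexed by $\ui$ to the plain block indexed by $\overline\ui$. Concretely, I would first identify $R(\alpha)$ with $H^*_{G_\alpha}(\bfZ_\alpha)$ via \cref{prop:geom-KLR}; since $\bfZ_\alpha$ is a union of connected components of $\bfZ_{(\alpha)}$, this identifies $1_{\overline\ui}R(\alpha)1_{\overline\uj}$ with the summand $H^*_{G_\alpha}(\bfZ_{\overline\ui,\overline\uj})$ of $H^*_{G_\alpha}(\bfZ_{(\alpha)})$ for all $\ui,\uj\in I^{(\alpha)}$. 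For each $\ui$ set $a_\ui:=z_{\overline\ui,\ui}$ and $b_\ui:=z_{\ui,\overline\ui}\,y_\ui$, all products being convolution products in $H^*_{G_\alpha}(\bfZ_{(\alpha)})$. By \cref{cor:eq-z:a} one has $b_\ui a_\ui=1$ in $H^*_{G_\alpha}(\bfZ_{\ui,\ui})$, and by \cref{cor:eq-z:b} one has $a_\ui b_\ui=1_\ui$ inside $1_{\overline\ui}R(\alpha)1_{\overline\ui}$; this is exactly the data needed to run the standard ``corner isomorphism'' argument.

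Next I would define, for all $\ui,\uj\in I^{(\alpha)}$, the maps $\Theta_{\ui,\uj}(v)=b_\ui\,v\,a_\uj$ on $1_\ui R(\alpha)1_\uj$ and $\Xi_{\ui,\uj}(w)=a_\ui\,w\,b_\uj$ on $H^*_{G_\alpha}(\bfZ_{\ui,\uj})$. The relations $b_\ui a_\ui=1$ and $a_\ui b_\ui=1_\ui$ give at once $\Xi_{\ui,\uj}(w)=1_\ui\Xi_{\ui,\uj}(w)1_\uj$ (so $\Xi_{\ui,\uj}$ lands in $1_\ui R(\alpha)1_\uj$) and that $\Theta_{\ui,\uj}$, $\Xi_{\ui,\uj}$ are mutually inverse. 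Multiplicativity of $\Theta:=\bigoplus_{\ui,\uj}\Theta_{\ui,\uj}$ is the one-line cancellation $(b_\ui v a_\uj)(b_\uj v' a_\uk)=b_\ui v(a_\uj b_\uj)v'a_\uk=b_\ui (v\,1_\uj\,v')a_\uk=b_\ui(vv')a_\uk$, and $\Theta$ carries the unit $\sum_\ui 1_\ui$ of $\widehat R(\alpha)$ to $\sum_\ui b_\ui a_\ui$, the unit of $H^*_{G_\alpha}(\bfZ_{(\alpha)})$. This yields \cref{prop:geom-KLR-div:a}, while \cref{prop:geom-KLR-div:b} is built in since $\Theta$ is defined block by block. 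There is no circularity: $\Theta$ is expressed purely through \cref{prop:geom-KLR} and the tautological classes $z_{\ui,\overline\ui}$, $z_{\overline\ui,\ui}$, $y_\ui$.

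For \cref{prop:geom-KLR-div:c} I would use the vector space identification $H^*_{G_\alpha}(\widetilde\bfF_{(\alpha)})=\bigoplus_\ui H^*_{G_\alpha}(\widetilde\bfF_\ui)=\bigoplus_\ui\Pol_m^{\frakS_\ui}1_\ui=\Pol_{(\alpha)}$ and observe, via \cref{lem:action-z} and \cref{cor:eq-z:a}, that under it $a_\ui$ acts as the natural inclusion $H^*_{G_\alpha}(\widetilde\bfF_\ui)=\Pol_m^{\frakS_\ui}1_\ui\hookrightarrow\Pol_m1_{\overline\ui}=H^*_{G_\alpha}(\widetilde\bfF_{\overline\ui})$ while $b_\ui$ restricted to its image is the inverse relabelling (here one uses $\partial_{w_{0,\ui}}(y_\ui P)=\pm P$ for $\frakS_\ui$-invariant $P$). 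Then for $v\in 1_\ui R(\alpha)1_\uj$ and $\xi\in H^*_{G_\alpha}(\widetilde\bfF_\uj)$, associativity of the convolution action together with \cref{prop:geom-KLR} gives $\Theta_{\ui,\uj}(v)\cdot\xi=b_\ui\big(v\cdot(a_\uj\cdot\xi)\big)=v\cdot\xi$, which is precisely the asserted compatibility. I expect the only genuinely delicate part to be the bookkeeping that keeps the two families of idempotents $1_\ui$ and $1_{\overline\ui}$ separated — and, in \cref{prop:geom-KLR-div:c}, the careful matching of the module identifications — rather than any new idea; once \cref{cor:eq-z}, \cref{lem:action-z} and \cref{prop:geom-KLR} are granted, the argument is formal.
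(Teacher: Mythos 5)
Your proposal is correct and follows essentially the same route as the paper's proof. Your $\Xi_{\ui,\uj}$ is literally the paper's map $p_{\ui,\uj}$, and your $\Theta_{\ui,\uj}$ is the paper's $q_{\ui,\uj}$ restricted to its image $1_\ui R(\alpha)1_\uj$; you simply present the isomorphism in the direction $\widehat R(\alpha)\to H^*_{G_\alpha}(\bfZ_{(\alpha)})$ rather than the other way around, and you make the ``matrix unit'' / corner-isomorphism structure of the pair $(a_\ui,b_\ui)$ a little more explicit. For part (c) your use of associativity of the convolution action is a touch cleaner than the paper's explicit comparison of formulas, but it rests on exactly the same ingredients (\cref{lem:action-z}, \cref{cor:eq-z}, \cref{prop:geom-KLR}).
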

\begin{proof}
For each $\ui,\uj\in I^{(\alpha)}$, consider the maps
\begin{gather*}
p_{\ui,\uj}\colon H^*_{G_\alpha}(\bfZ_{\ui,\uj})\to H^*_{G_\alpha}(\bfZ_{\overline\ui,\overline\uj}), \qquad x\mapsto  z_{\overline\ui,\ui}x z_{\uj,\overline\uj} y_\uj,\\
q_{\ui,\uj}\colon H^*_{G_\alpha}(\bfZ_{\overline\ui,\overline\uj})\to H^*_{G_\alpha}(\bfZ_{\ui,\uj}), \qquad x\mapsto  z_{\ui,\overline\ui}y_\ui xz_{\overline\uj,\uj}.
\end{gather*}
It follows from \cref{cor:eq-z:a} that $q_{\ui,\uj}\circ p_{\ui,\uj}$ is the identity of $H^*_{G_\alpha}(\bfZ_{\ui,\uj})$.
This allows to identify $H^*_{G_\alpha}(\bfZ_{\ui,\uj})$ with a vector subspace of $H^*_{G_\alpha}(\bfZ_{\overline\ui,\overline\uj})$.
Moreover, $p_{\ui,\uj}\circ q_{\ui,\uj}$ is the projector to the image of $H^*_{G_\alpha}(\bfZ_{\ui,\uj})$ in $H^*_{G_\alpha}(\bfZ_{\overline\ui,\overline\uj})$.
Let us describe this image. 

Let us identify $H^*_{G_\alpha}(\bfZ_{\overline\ui,\overline\uj})$ with $1_{\overline\ui}R(\alpha)1_{\overline\uj}$.
By \cref{cor:eq-z:b}, the map $p_{\ui,\uj}\circ q_{\ui,\uj}$ becomes
\begin{equation}
\label{eq:pq}
1_{\overline\ui}R(\alpha)1_{\overline\uj}\to 1_{\overline\ui}R(\alpha)1_{\overline\uj},\qquad x\mapsto 1_\ui x 1_\uj.
\end{equation}
This shows that under the identification above $H^*_{G_\alpha}(\bfZ_{\ui,\uj})$ coincides with $1_{\ui}R(\alpha)1_{\uj}$ as vector subspaces in $1_{\overline\ui}R(\alpha)1_{\overline\uj}$. 
Summing over $\ui,\uj\in I^{(\alpha)}$ yields an isomorphism of vector spaces $H^*_{G_\alpha}(\bfZ_{(\alpha)})\simeq \widehat R(\alpha)$.

Using \cref{cor:eq-z:a}, we see that for any $\uh,\ui,\uj\in I^{(\alpha)}$ and $x\in H^*_{G_\alpha}(\bfZ_{\uh,\ui})$, $y\in H^*_{G_\alpha}(\bfZ_{\ui,\uj})$ we have $p_{\uh,\ui}(x)\cdot p_{\ui,\uj}(y)=p_{\uh,\uj}(x\cdot y)$.
Therefore $H^*_{G_\alpha}(\bfZ_{(\alpha)})\simeq \widehat R(\alpha)$ is an isomorphism of algebras, which proves (a) and (b).

Let us verify (c).
Suppose $x\in H^*_{G_\alpha}(\bfZ_{\ui,\uj})$ acts on $H^*_{G_\alpha}(\widetilde \bfF_{(\alpha)})\simeq\Pol_{(\alpha)}$ by 
$$
\Pol_m^{\frakS_\uj}1_\uj\to \Pol_m^{\frakS_\ui}1_\ui,\qquad P 1_\uj\mapsto L(P)1_\ui,
$$
where $L:\Pol_m^{\frakS_\uj}\to \Pol_m^{\frakS_\ui}$ is a linear map.
Then by \cref{lem:action-z}, the element $p_{\ui,\uj}(x)$ acts on $\Pol_\alpha$ by
$$
\Pol_m1_{\overline\uj}\to \Pol_m1_{\overline\ui}, \qquad P1_{\overline\ui}\mapsto L(\partial_{w_{0,\uj}}(y_\uj P))1_{\overline\ui}.
$$ 
We see that this action agrees with the action of $\widehat R(\alpha)$ on $\Pol_{(\alpha)}$.
\end{proof}
\section{Semi-cuspidal category of the Kronecker quiver}\label{sec:semicusp}
In this section, we establish a link between KLR algebras for the Kronecker quiver and Schur algebras for $\bbP^1$.
We will assume that either $\bbk$ is a field or $\bbk = \bbZ$, unless otherwise stated.

\subsection{Kronecker quiver}
From now on, let $\Gamma=(1\rightrightarrows 0)$ be the Kronecker quiver, and denote by $\delta$ the dimension vector $\alpha_0+\alpha_1$.

Take $\alpha=n_0\alpha_0+n_1\alpha_1$, and $m = |\alpha| = n_0+n_1$.
Let us introduce a more convenient notation for polynomial variables in the KLR algebra $R(\alpha)$.
Let $\ui\in I^\alpha$; it can be thought of as a sequence consisting of $n_0$ zeroes and $n_1$ ones.
For each $1\leq r\leq m$, let $k_r$ be the number of $r'\in[1,r]$ such that $i_{r'} = i_r$.
Inside $1_\ui R(\alpha) 1_\ui$, we then write $y_r = u_{k_r}$ if $i_r = 0$, and $y_r = v_{k_r}$ if $i_r = 1$.

\begin{exe}
Let $\alpha=2\alpha_0+3\alpha_1$, $\ui=(0,1,1,0,1)$. Then we have
\begin{gather*}
u_11_\ui=y_11_\ui, \quad u_21_\ui=y_41_\ui,\\
v_11_\ui=y_21_\ui,\quad v_21_\ui=y_31_\ui,\quad v_31_\ui=y_51_\ui.
\end{gather*}
\end{exe}
In particular, for any non-negative integer $n$ denote
\[
\Poll_n=\bfk[u_1,\ldots,u_n,v_1,\dots,v_n].
\]
Using the new notation, we can write $\Pol_{n\delta}=\bigoplus_{\ui\in I^{n\delta}}\Poll_n1_\ui$.

For each composition $\lambda=(\lambda_1,\ldots, \lambda_k)\in\Comp(n)$, consider the elements
\[
\ui_\lambda=(0^{(\lambda_1)},1^{(\lambda_1)},\ldots, 0^{(\lambda_k)},1^{(\lambda_k)})\in I^{(n\delta)},\qquad \uj_\lambda=\overline\ui_\lambda=(0^{\lambda_1}1^{\lambda_1}\ldots 0^{\lambda_k}1^{\lambda_k})\in I^\alpha.
\]
Let $\ui_0 = \uj_0 = \uj_{1^n} = (0101\ldots01)\in I^{n\delta}$.
In order to unburden the notation, we will write $e_\lambda = 1_{\ui_\lambda}$, $e_0 = 1_{\ui_0}$, and $e = \sum_{\lambda\in \Comp(n)}e_\lambda$.
By \cref{lem:polrep-KLR} the algebra $R(n\delta)$ acts faithfully on $\Pol_{n\delta}$.
This implies that we have a faithful representation of $e R(n\delta) e$ on $e \Pol_{n\delta}$.
Since $e_\lambda \Pol_{n\delta}\simeq \Poll_n^{(\frakS_\lambda)^2}$, we obtain that $e R(n\delta)e$ has a faithful representation on $\bigoplus_{\lambda\in\Comp(n)} \Poll_n^{(\frakS_\lambda)^2}$. 

\subsection{Semi-cuspidal modules}
\begin{defi}
We say that a sequence $\ui=(i_1,i_2,\ldots,i_{2n})\in I^{n\delta}$ is \emph{non-cuspidal} if there exists an index $r\in [1;2n]$ such that $(i_1,i_2,\ldots,i_r)$ contains more $1$'s than $0$'s.
Denote by $I^{n\delta}_{\nc}$ the set of all non-cuspidal sequences in $I^{n\delta}$, and write $1_{\nc}=\sum_{\ui\in I^{n\delta}_{\nc}}1_\ui$.
\end{defi}

Given a $\bfk$-algebra $A$, let $\mod(A)$ denote the category of finitely generated $A$-modules.

\begin{defi}[{\cite[\S 2.6]{KM_SKAA2017}}]
We say that an $R(n\delta)$-module $M$ is \emph{semi-cuspidal} if $1_{\ui}M=0$ for each $\ui\in I^{n\delta}_{\nc}$.
In other words, $M$ is semi-cuspidal if it is annihilated by $1_{\nc}$.
\end{defi}

Denote by $\cusp(R(n\delta))\subset \mod(R(n\delta))$ the full subcategory of semi-cuspidal $R(n\delta)$-modules.
We clearly have $\cusp(R(n\delta))=\mod(C(n\delta))$, where $C(n\delta)$ is the quotient algebra $R(n\delta)/R(n\delta)1_{\nc}R(n\delta)$.

\begin{lm}
Let $\bfk$ be a field.
The $C(n\delta)$-module $C(n\delta)e$ is a projective generator in the category $\mod(C(n\delta))$.
If $\bfk$ has characteristic zero, then $C(n\delta)e_0$ is a projective generator of $\mod(C(n\delta))$ as well.
\end{lm}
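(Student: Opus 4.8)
Since $e$ and $e_0$ are idempotents in $R(n\delta)$, their images in the quotient algebra $C(n\delta)$ are again idempotents, so $C(n\delta)e$ and $C(n\delta)e_0$ are direct summands of the left regular module ${}_{C(n\delta)}C(n\delta)$ and are in particular projective. All the content lies in the claim that they \emph{generate} $\mod(C(n\delta))$. To handle this I would use the elementary fact that, for a unital ring $A$ and an idempotent $f\in A$, the projective module $Af$ generates $\mod(A)$ if and only if $AfA=A$, and — since a nonzero ring has a simple module by Zorn's lemma — this in turn holds if and only if $fS\neq 0$ for every simple $A$-module $S$. Taking $A=C(n\delta)$ and recalling that $\cusp(R(n\delta))=\mod(C(n\delta))$, so that the simple $C(n\delta)$-modules are exactly the simple semi-cuspidal $R(n\delta)$-modules, the task becomes: show that $eS\neq 0$ for every simple $S\in\cusp(R(n\delta))$, and that $e_0S\neq 0$ as well when $\operatorname{char}\bfk=0$. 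Since $\Hom_{C(n\delta)}(C(n\delta)e,S)\cong eS$, and likewise for $e_0$, these conditions say precisely that the words $\ui_\lambda$ ($\lambda\in\Comp(n)$), resp.\ the alternating word $\ui_0$, collectively occur in $S$.

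For the characteristic-zero statement about $e_0$, I would invoke \cite{KM_AZAI2019}: the equivalence established there between $\cusp(R(n\delta))$ and $\mod\bigl(\WR_n(\bfk[c]/(c^2))\bigr)$ is realized through the idempotent $e_0$, which is exactly the assertion that $C(n\delta)e_0$ is a projective generator, i.e.\ $e_0S\neq 0$ for every simple semi-cuspidal $S$. Now $e_0=e_{(1^n)}$ is one of the pairwise orthogonal idempotents $e_\lambda$ whose sum is $e$ (the underlying words $\uj_\lambda$ being distinct for distinct $\lambda\in\Comp(n)$), so $e_0S\neq 0$ immediately gives $eS\neq 0$; this also settles the first assertion in characteristic zero. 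For an arbitrary field $\bfk$ the only remaining point is that the enlarged idempotent $e=\sum_{\lambda}e_\lambda$ is still a full idempotent of $C(n\delta)$, equivalently that the divided-power idempotents $e_\lambda$ jointly detect every simple semi-cuspidal module. Here I would appeal to the analysis of imaginary strata of KLR algebras of affine type in \cite{McN_RKAI2017,KM_SKAA2017}, where $C(n\delta)$ is shown to be properly stratified and to behave like an imaginary Schur algebra, whose simple modules — as for a classical Schur algebra — are all visible in their divided-power weight spaces, and these are exactly the spaces $e_\lambda S=1_{\ui_\lambda}S$.

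The main obstacle is this last step. In positive characteristic $e_0$ alone no longer detects every simple semi-cuspidal module — that is the very reason one replaces $e_0$ by $e$ — and proving that the divided-power idempotents $e_\lambda$ together do suffice rests on the imaginary Schur--Weyl machinery of Kleshchev--Muth rather than on a direct computation. A self-contained alternative would be to classify the simple objects of $\cusp(R(n\delta))$ and compute their word supports, checking in each case that some pattern $\uj_\lambda$ appears and that the symmetrizing idempotent $e_\lambda$ acts nontrivially on the corresponding weight space; but this essentially amounts to reproving the results being quoted, so I would not pursue it here.
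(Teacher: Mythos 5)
Your reduction is the same one the paper uses: both proofs come down to showing that $e_\nu L\neq 0$ for every simple $L\in\mod(C(n\delta))$ and some $\nu\in\Comp(n)$, and your observation that this is equivalent to $C(n\delta)eC(n\delta)=C(n\delta)$ (hence to $C(n\delta)e$ being a projective generator) is correct and standard. Your treatment of the characteristic-zero case, via the $\WR_n$-equivalence in \cite{KM_AZAI2019}, is a valid alternative to the paper's citation of \cite[Lemma 6.22]{KM_SKAA2017}, and your remark that $e_0 S\neq 0$ trivially implies $eS\neq 0$ is fine.

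The gap is exactly where you flag it, and it is not harmless hand-waving that can be filled without a specific input: for an arbitrary field the claim is genuinely the content of the lemma, and your appeal to ``imaginary Schur--Weyl machinery'' does not identify a theorem that closes it. The paper's argument is more concrete. It introduces the further quotient $\mathscr{S}_n$ of $C(n\delta)$ from \cite{KM_ISD2017} and uses three facts: (a) \cite[Theorem~5.5.4(iii)]{KM_ISD2017} exhibits an explicit projective generator $Z=\bigoplus_\nu Z^\nu$ of $\mod(\mathscr{S}_n)$; (b) the proof of that theorem produces a surjection $R(n\delta)e_\nu\twoheadrightarrow Z^\nu$, so each simple $\mathscr{S}_n$-module sees some $e_\nu$; and (c) $\mod(C(n\delta))$ and $\mod(\mathscr{S}_n)$ have the same number of simples (\cite[Theorem~6]{KM_ISD2017} and \cite[Theorem~2]{KM_SKAA2017}), so in fact every simple $C(n\delta)$-module factors through $\mathscr{S}_n$ and is therefore seen by some $e_\nu$. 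Your proposal does not mention $\mathscr{S}_n$ nor the counting step (c), and without (c) the argument only detects simples of the potentially smaller quotient $\mathscr{S}_n$; ``divided-power weight spaces are visible'' is the desired conclusion, not a lemma one can cite. So the proposal is structurally right but has a real missing step precisely on the case the lemma is about.
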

\begin{proof}
The second claim follows from \cite[Lemma~6.22]{KM_SKAA2017}.

Let us prove the first claim.
It is equivalent to the fact that for each simple module $L\in\mod(C(n\delta))$ we can find $\nu\in\Comp(n)$ and a surjection $C(n\delta)e_\nu\to L$.
In other words, we need to show that for each simple module $L\in C(n\delta)$ we can find $\nu\in \Comp(n)$ such that $e_\nu L\ne 0$.
However, this follows from~\cite[Theorem~5.5.4]{KM_ISD2017}. 

Let us provide some additional explanation about the given reference.
The algebra $\mathscr S_n$ is defined in \cite[\S 4.3]{KM_ISD2017} as a quotient of $R(n\delta)$ by the annihilator of some semi-cuspidal $R(n\delta)$-module.
In particular, we a get a chain of surjections $R(n\delta)\to C(n\delta)\to \mathscr S_n$, and inclusions of categories $\mod(\mathscr S_n)\subset \mod(C(n\delta))\subset \mod(R(n\delta))$.
For each $\nu\in \Comp(n)$, \cite[\S 5.3]{KM_ISD2017} constructs an $\mathscr S_n$-module $Z^\nu$ and \cite[Theorem~5.5.4~(iii)]{KM_ISD2017} shows that $Z=\bigoplus_{\nu\in \Comp(n)}Z^\nu$ is a projective generator in $\mod(\mathscr S_n)$.
On the other hand, the proof of~\cite[Theorem~5.5.4]{KM_ISD2017} shows that there is a surjection of $R(n\delta)$-modules from $R(n\delta)e_\nu$ (denoted by $I_\nu^n \Gamma_\nu$ in \cite{KM_ISD2017}) to $Z^\nu$.
This proves that for each simple module $L\in \mod(\mathscr S_n)$ there exists $\nu\in\Comp(n)$ such that the $R(n\delta)$-module $R(n\delta)e_\nu$ surjects to $L$, in particular we have $1_\nu L\ne 0$. 

In order to complete the proof, we have to show that each simple module $L\in \mod(C(n\delta))$ factors through the quotient $C(n\delta)\to \mathscr S_n$.
In other words, we have to show that the categories $\mod(C(n\delta))$ and $\mod(\mathscr S_n)$ have the same number of simple modules.
By~\cite[Theorem~6]{KM_ISD2017}, the number of simple modules in $\mod(\mathscr S_n)$ is equal to the number of partitions of $n$.
On the other hand, by~\cite[Theorem~2]{KM_SKAA2017} the number of simple modules in $\mod(C(n\delta))$ is the same.
\end{proof}

\begin{corr}
For $\bfk$ a field, the algebra $C(n\delta)$ is Morita equivalent to $e C(n\delta)e$.
Moreover, if $\bfk$ has characteristic zero, $C(n\delta)$ is Morita equivalent to $e_0C(n\delta)e_0$.
\end{corr}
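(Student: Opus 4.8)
The plan is to derive this purely formally from the preceding lemma via classical Morita theory, so there is essentially no new work to do. Recall the standard fact that I would invoke: for any ring $A$ and any finitely generated projective generator $P$ of $\mod(A)$, the functor $\Hom_A(P,-)$ induces an equivalence between $\mod(A)$ and the category of modules over $\End_A(P)^{\mathrm{op}}$; moreover, if $P = Ae$ for an idempotent $e\in A$, then evaluation at $e$ yields a canonical isomorphism $\End_A(Ae)^{\mathrm{op}}\simeq eAe$.

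First I would apply this with $A = C(n\delta)$ and $P = C(n\delta)e$. The module $P$ is finitely generated, being a direct summand of the cyclic module $C(n\delta)$, and projective, being a direct summand of a free module; and it is a generator of $\mod(C(n\delta))$ by the lemma above. Hence $\Hom_{C(n\delta)}(C(n\delta)e,-)$ is an equivalence $\mod(C(n\delta))\xra{\sim}\mod(eC(n\delta)e)$, which is the asserted Morita equivalence. When $\bfk$ has characteristic zero, the same lemma shows that $C(n\delta)e_0$ is a projective generator, and the identical argument gives a Morita equivalence between $C(n\delta)$ and $e_0C(n\delta)e_0$.

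The only point worth flagging is that $C(n\delta)$ need not be finite-dimensional, so one should use the version of Morita's theorem valid for arbitrary rings (projective generators understood as finitely generated projective generators) rather than the finite-dimensional formulation; this is harmless. In short, the main obstacle was already surmounted in the preceding lemma — exhibiting $e$ (resp.\ $e_0$) such that $C(n\delta)e$ (resp.\ $C(n\delta)e_0$) is a projective generator — and the corollary itself is a one-line consequence.
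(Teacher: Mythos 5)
Your proof is correct and is exactly the standard Morita-theory argument that the paper implicitly relies on (the paper offers no proof, treating the corollary as an immediate formal consequence of the preceding lemma). Spelling out that $C(n\delta)e$ is a progenerator and that $\End_{C(n\delta)}(C(n\delta)e)^{\mathrm{op}}\simeq eC(n\delta)e$ via evaluation at $e$ is precisely the right justification, and the same reasoning applies verbatim to $e_0$ in characteristic zero.
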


\subsection{Thick calculus in $e R(n\delta)e$}
\label{subs:thickcals-KLR}
In this section we construct some special element in the algebra $e R(n\delta)e$.

Let us introduce some diagrammatic abbreviations.
First, we write
\[
\tikz[thick,xscale=.3,yscale=.3]{
\draw (0,-1) --(0,7);
\ntxt{0}{8}{$a$}

\ntxt{2}{3}{$=$}

\draw (4.5,-1) -- (4.5,1.5);
\draw (6,-1) -- (6,1.5);
\draw (9.5,-1) -- (9.5,1.5);
\ntxt{7.75}{0}{$\ldots$}
\opbox{4}{1.5}{10}{4.5}{$0^{(a)}$}
\draw (4.5,4.5) -- (4.5,7);
\draw (6,4.5) -- (6,7);
\draw (9.5,4.5) -- (9.5,7);
\ntxt{7.75}{6.5}{$\ldots$}
\ntxt{4.5}{8}{$0$}
\ntxt{6}{8}{$0$}
\ntxt{9.5}{8}{$0$}

\draw (11.5,-1) -- (11.5,1.5);
\draw (13,-1) -- (13,1.5);
\draw (16.5,-1) -- (16.5,1.5);
\ntxt{14.75}{0}{$\ldots$}
\opbox{11}{1.5}{17}{4.5}{$1^{(a)}$}
\draw (11.5,4.5) -- (11.5,7);
\draw (13,4.5) -- (13,7);
\draw (16.5,4.5) -- (16.5,7);
\ntxt{14.75}{6.5}{$\ldots$}
\ntxt{11.5}{8}{$1$}
\ntxt{13}{8}{$1$}
\ntxt{16.5}{8}{$1$}

\ntxt{18}{3}{.}
}
\]
In particular, for $\lambda=(\lambda_1,\ldots,\lambda_k)\in \Comp(n)$, we draw the idempotent $e_\lambda$ as $k$ parallel vertical lines with labels $\lambda_1,\ldots,\lambda_k$.
Moreover, a strand with label $a$ is allowed to carry a polynomial $P\in \Poll_a^{(\frakS_a)^2}$.
In fact, it would make sense to allow polynomials from $\Poll_a$; however, the presence of an idempotent allows us to replace any polynomial by a symmetric one.

Next, we write
\begin{equation*}
\tikz[thick,xscale=.25,yscale=.25]{
	\draw (3,0) -- (3,1);
	\ntxt{3}{-1.5}{$a+b$}
	\dsplit{0}{1}{6}{5}
	\ntxt{0}{6.5}{$a$}
	\ntxt{6}{6.5}{$b$}
	
	\ntxt{8}{2}{$:=$}
	
	\opbox{10}{-2}{19}{0}{$0^{(a+b)}$}
	\opbox{20}{-2}{29}{0}{$1^{(a+b)}$}
	
	\opbox{10}{5.5}{13}{7.5}{$0^{(a)}$}
	\opbox{14}{5.5}{17}{7.5}{$1^{(a)}$}
	\opbox{18}{5.5}{23}{7.5}{$0^{(b)}$}
	\opbox{24}{5.5}{29}{7.5}{$1^{(b)}$}
	
	\draw (10.5,0) -- (10.5,5.5);
	\draw (12.5,0) -- (12.5,5.5);
	\ntxt{11.5}{3}{$\ldots$}
	
	\draw (14.5,0) -- (18.5,5.5);
	\draw (18.5,0) -- (22.5,5.5);
	\ntxt{17}{1}{$\ldots$}
	\ntxt{20}{5}{$\ldots$}
	
	\draw (20.5,0) -- (14.5,5.5);
	\draw (22.5,0) -- (16.5,5.5);
	\ntxt{20.75}{1}{$\ldots$}
	\ntxt{16.25}{5}{$\ldots$}
	
	\draw (24.5,0) -- (24.5,5.5);
	\draw (28.5,0) -- (28.5,5.5);
	\ntxt{26.5}{3}{$\ldots$}
	
	\ntxt{30}{2}{,}
}	\qquad
\tikz[thick,xscale=.25,yscale=.25]{
	\draw (3,4) -- (3,5);
	\ntxt{3}{6.5}{$a+b$}
	\dmerge{0}{0}{6}{4}
	\ntxt{0}{-1.5}{$a$}
	\ntxt{6}{-1.5}{$b$}
	
	\ntxt{8}{2}{$:=$}
	
	\opbox{10}{5.5}{19}{7.5}{$0^{(a+b)}$}
	\opbox{20}{5.5}{29}{7.5}{$1^{(a+b)}$}
	
	\opbox{10}{-2}{13}{0}{$0^{(a)}$}
	\opbox{14}{-2}{17}{0}{$1^{(a)}$}
	\opbox{18}{-2}{23}{0}{$0^{(b)}$}
	\opbox{24}{-2}{29}{0}{$1^{(b)}$}
	
	\draw (10.5,0) -- (16.5,5.5);
	\draw (12.5,0) -- (18.5,5.5);
	\ntxt{12}{0.75}{$\ldots$}
	\ntxt{17}{5.25}{$\ldots$}
	
	\draw (14.5,0) -- (26.5,5.5);
	\draw (16.5,0) -- (28.5,5.5);
	\ntxt{16.75}{0.75}{$\ldots$}
	\ntxt{26.5}{5.25}{$\ldots$}
	
	\draw (18.5,0) -- (10.5,5.5);
	\draw (22.5,0) -- (14.5,5.5);
	\ntxt{20.25}{0.75}{$\ldots$}
	\ntxt{13.25}{5.25}{$\ldots$}
	
	\draw (24.5,0) -- (20.5,5.5);
	\draw (28.5,0) -- (24.5,5.5);
	\ntxt{26.25}{0.75}{$\ldots$}
	\ntxt{23}{5.25}{$\ldots$}
	
	\ntxt{30}{2}{.}
}
\end{equation*}
Assume that $\lambda,\mu\in \Comp(n)$ are such that $\mu$ is a split of $\lambda$ at $k$-th place.
Then, similarly to \cref{subs:diag-Schur}, we define elements $\rmS_\lambda^\mu\in e_\mu R(n\delta)e_\lambda$ and $\rmM_\mu^\lambda\in e_\lambda R(n\delta)e_\mu$ by \eqref{eq:pic-SM}, but using the diagrammatic calculus defined above for $e R(n\delta)e$ instead of the analogous calculus for curve Schur algebra.
It is easy to check that the elementary splits and merges above are associative as in~\eqref{eq:assoc-SM}.
This allows us to extend the definitions of $\rmS_\lambda^\mu$ and $\rmM_\mu^\lambda$ to any $\lambda,\mu$ with $\frakS_\mu\subset \frakS_\lambda$.

\begin{rmq}
It is not the case that we can write any element of $e R(n\delta)e$ as a linear combination of diagrams containing splits, merges and symmetric polynomials.
However, we will see in \cref{rem:span-gCg-thick} that this holds for $e C(n\delta)e$.
Moreover, it can be shown that $e R(n\delta)e$ is an idempotent truncation of the quiver Schur algebra and the diagrams introduced above are nothing else than the diagrams in quiver Schur algebra (replacing the label $a$ by $a\delta$).
However, here we allow only labels of the form $a\delta$, while quiver Schur algebras allow more general labels of the form $a_0\alpha_0+a_1\alpha_1$.
This is the reason why our thick calculus does not have enough diagrams to represent every element in $e R(n\delta)e$. 
\end{rmq}

Let $\lambda,\mu\in \Comp(n)$ be such that $\frakS_\mu\subset \frakS_\lambda$. 
Let us give a geometric description of the operators $\rmS_\lambda^\mu$, $\rmM_\mu^\lambda$.
We have an obvious projection $\widetilde{\bfF}_{\ui_\mu}\to \widetilde{\bfF}_{\ui_\lambda}$, obtained by forgetting some components of the flag.
This allows us to define the following correspondences:
\[
\bfZ^{\rm S}_{\ui_\mu,\ui_\lambda} := \widetilde{\bfF}_{\ui_\mu}\times_{\widetilde{\bfF}_{\ui_\lambda}} \widetilde{\bfF}_{\ui_\lambda} \subset \widetilde{\bfF}_{\ui_\mu}\times_{E_\alpha} \widetilde{\bfF}_{\ui_\lambda} = \bfZ_{\ui_\mu,\ui_\lambda},\qquad \bfZ^{\rm M}_{\ui_\lambda,\ui_\mu} := \widetilde{\bfF}_{\ui_\lambda}\times_{\widetilde{\bfF}_{\ui_\lambda}} \widetilde{\bfF}_{\ui_\mu} \subset \widetilde{\bfF}_{\ui_\lambda}\times_{E_\alpha} \widetilde{\bfF}_{\ui_\mu} = \bfZ_{\ui_\lambda,\ui_\mu}.
\]

\begin{lm}\label{lm:geomSMcusp}
Under the identification in \cref{prop:geom-KLR-div}, we have $\rmS_\lambda^\mu=[\bfZ^{\rm S}_{\ui_\mu,\ui_\lambda}]$ and $\rmM_\mu^\lambda=[\bfZ^{\rm M}_{\ui_\lambda,\ui_\mu}]$.
\end{lm}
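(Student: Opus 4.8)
The plan is to verify the identity after passing to the faithful polynomial representation. By \cref{prop:geom-KLR-div} the isomorphism $H^*_{G_{n\delta}}(\bfZ_{(n\delta)})\simeq\widehat R(n\delta)$ intertwines the geometric convolution action on $H^*_{G_{n\delta}}(\widetilde\bfF_{(n\delta)})$ with the $\widehat R(n\delta)$-action on $\Pol_{(n\delta)}$, and this representation is faithful (\cref{subs:geom-div-powers}). Hence it suffices to show that $[\bfZ^{\rm S}_{\ui_\mu,\ui_\lambda}]$ and $\rmS_\lambda^\mu$ induce the same operator $\Poll_n^{(\frakS_\lambda)^2}\to\Poll_n^{(\frakS_\mu)^2}$ on $\Pol_{(n\delta)}$, and likewise for $[\bfZ^{\rm M}_{\ui_\lambda,\ui_\mu}]$ and $\rmM_\mu^\lambda$.

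For the geometric side, write $p=p_{\mu\lambda}\colon\widetilde\bfF_{\ui_\mu}\to\widetilde\bfF_{\ui_\lambda}$ for the proper morphism obtained by forgetting the flag components distinguishing $\ui_\mu$ from $\ui_\lambda$ in the $k$-th block. By construction $\bfZ^{\rm S}_{\ui_\mu,\ui_\lambda}$ is the graph of $p$ inside $\bfZ_{\ui_\mu,\ui_\lambda}=\widetilde\bfF_{\ui_\mu}\times_{E_{n\delta}}\widetilde\bfF_{\ui_\lambda}$, and $\bfZ^{\rm M}_{\ui_\lambda,\ui_\mu}$ is its transpose. Unwinding the module action~\eqref{Aaction} with trivial twist: the first projection of the graph onto $\widetilde\bfF_{\ui_\mu}$ is an isomorphism and the second is $p$, so $[\bfZ^{\rm S}_{\ui_\mu,\ui_\lambda}]$ acts by $p^*$ and, dually, $[\bfZ^{\rm M}_{\ui_\lambda,\ui_\mu}]$ acts by $p_*$ (with the grading shift forced by our $H^*$-convention). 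It then remains to identify $p^*$ and $p_*$ explicitly: a direct computation by restriction to $T_{n\delta}$-fixed points, entirely analogous to the proofs of \cref{SchurPolRepFaith} and \cref{lem:action-z}, shows that $p^*$ is the inclusion of invariant subrings $\Poll_n^{(\frakS_\lambda)^2}\hookrightarrow\Poll_n^{(\frakS_\mu)^2}$, and that $p_*$ is the corresponding Demazure-type operator (the composite of the operators $\partial_{w_{0,\lambda_k^{(1)},\lambda_k^{(2)}}}$ acting on the $u$- and on the $v$-variables of the $k$-th block, up to sign and linear factors coming from the crossings of strands of different colours).

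For the algebraic side, recall that $\rmS_\lambda^\mu$ and $\rmM_\mu^\lambda$ were defined in \cref{subs:thickcals-KLR} via~\eqref{eq:pic-SM} as composites of divided-power box splittings and of thin crossings permuting the sub-box $0^{(\lambda_k^{(2)})}$ past $1^{(\lambda_k^{(1)})}$. Using the faithful polynomial representation, each box splitting acts by an inclusion of invariants composed with a Demazure factor (by \cref{lem:action-z} and \cref{lem:divided-some-eq:c}), while each crossing of a colour-$0$ strand past a colour-$1$ strand acts by $(u_i-v_j)\,s$, since $P_{01}(u,v)=u-v$ for the Kronecker quiver (\cref{lem:polrep-KLR}). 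Multiplying these operators out and simplifying — a bookkeeping of Demazure operators and linear factors parallel to the leading-term computation in the proof of \cref{lm:basis-Schur-zigzag}, in which all spurious factors cancel — one finds that $\rmS_\lambda^\mu$ acts by the inclusion $\Poll_n^{(\frakS_\lambda)^2}\hookrightarrow\Poll_n^{(\frakS_\mu)^2}$ and $\rmM_\mu^\lambda$ acts by the same Demazure-type operator as $p_*$. Comparing with the previous paragraph finishes the proof.

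The main obstacle is precisely this last bookkeeping: pinning down the exact form of the forgetful projection $p$ together with its pushforward, and matching it against the product of box-splitting operators and colour-$01$ crossing operators. An alternative route, which bypasses the polynomial computation, is to argue directly in $H^*_{G_{n\delta}}(\bfZ_{(n\delta)})$ that the convolution of the graph classes $z_{\bullet,\bullet}$ of \cref{subs:geom-div-powers} and the thin-crossing correspondences making up the thick diagram telescopes to $[\bfZ^{\rm S}_{\ui_\mu,\ui_\lambda}]$, using the base-change identities of \cref{GysinProp}; even there, the only non-routine input is the behaviour of a single colour-$01$ crossing against the forgetful maps.
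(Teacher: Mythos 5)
Your proposal follows essentially the same strategy as the paper: reduce everything to the faithful polynomial representation $\Pol_{(n\delta)}$ (justified by \cref{prop:geom-KLR-div}), compute the action of the geometric classes $[\bfZ^{\rm S}]$, $[\bfZ^{\rm M}]$, compute the action of the thick diagrams $\rmS_\lambda^\mu$, $\rmM_\mu^\lambda$, and compare. The one presentational difference is that the paper outsources the geometric computation to Przezdziecki's Theorem~4.7 (and the algebraic side to \cref{lem:polrep-KLR} plus the Demazure identity \cref{lem:Dem-ab-formula}), whereas you sketch a direct argument by restriction to $T$-fixed points. That is a legitimate substitute, and it makes the convolution content (split $=p^*=$ inclusion of invariants, merge $=p_*=$ a Gysin/Demazure operator carrying the Euler class of the normal bundle as a correction factor) more visible. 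Your identification of the correspondences as graphs of the forgetful map, with $p_1$ restricting to the identity on $\bfZ^{\rm S}$ and to the forgetful map on $\bfZ^{\rm M}$, is correct and matches the abstract convolution formalism of \eqref{Aaction}.

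There is one concrete slip to fix. For the Kronecker quiver $\Gamma=(1\rightrightarrows 0)$ we have $h_{10}=2$ and $h_{01}=0$, so $P_{01}(u,v)=(u-v)^{h_{01}}=1$ while $P_{10}(u,v)=(u-v)^2$; the factor $u-v$ you quote does not occur. In the split diagram \eqref{eq:pic-SM} the crossings are of a colour-$0$ strand moving up-right past a colour-$1$ strand moving up-left, i.e.\ $P_{01}$-type crossings, which contribute the \emph{trivial} factor; this is precisely why $\rmS_\lambda^\mu$ ends up acting by the bare inclusion of invariant subrings. The factors $(v_i-u_j)^2$ enter only through the \emph{merge} diagram, where the crossings are $P_{10}$-type, and these are exactly the Euler-class factors your Gysin pushforward $p_*$ carries. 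The slip doesn't change your conclusions (you still land on inclusion for the split and the quadratic-factor Demazure operator for the merge, matching the geometric side), but as written the sentence about "$(u_i-v_j)\,s$" would make a careful reader doubt the argument, so it is worth correcting.

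The rest is bookkeeping, as you say, and I agree the only genuinely non-routine part is handling a single colour-$01$ crossing against the box splittings; both the paper's route (via \cite{Prz_QSAC2019}) and your localization sketch address exactly this point.
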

\begin{proof}
It suffices to check these equalities on the faithful representation $\Pol_{(n\delta)}$ of $\widehat R(n\delta)$.
The actions of $\rmS_\lambda^\mu$ and $\rmM_\mu^\lambda$ can be easily obtained from \cref{lem:polrep-KLR}.
On the other hand, the actions of $[\bfZ^{\rm S}_{\ui_\mu,\ui_\lambda}]$ and $[\bfZ^{\rm M}_{\ui_\lambda,\ui_\mu}]$ were computed in larger generality (for quiver Schur algebras) in~\cite[Theorem~4.7]{Prz_QSAC2019}; see also~\cite[Proposition~3.4]{SW_QSAF2014}. 

By \cite[Theorem~4.7(b)]{Prz_QSAC2019}, the element $[\bfZ^{\rm M}_{\ui_\lambda,\ui_\mu}]$ acts by
\[
\Pol_{2n}^{\frakS_{\ui_\mu}}1_{\ui_\mu}\to \Pol_{2n}^{\frakS_{\ui_\lambda}}1_{\ui_\lambda}, \qquad P1_{\ui_\mu}\mapsto P1_{\ui_\lambda},
\]
which coincides with the action of $\rmM_\mu^\lambda$.

For $\rmS_\lambda^\mu$, assume $\lambda=(a+b)$ and $\mu=(a,b)$; the general case is proved in the same way, but requires more complicated notation.
By \cite[Theorem~4.7(a)]{Prz_QSAC2019}, the element $[\bfZ^{\rm S}_{\ui_\mu,\ui_\lambda}]$ acts on $\Pol_{(n\delta)}$ by
\[
\Pol_{2n}^{\frakS_{\ui_\lambda}}1_{\ui_\lambda}\to \Pol_{2n}^{\frakS_{\ui_\mu}}1_{\ui_\mu}, \qquad P1_{\ui_\mu}\mapsto \sum_{w_u,w_v\in \fk{S}_n/(\fk{S}_a\times \fk{S}_b)}w_uw_v\left( P\prod_{i=1}^a\prod_{j=a+1}^{a+b}\frac{(v_i-u_j)^2}{(u_i-u_j)(v_i-v_j)} \right),
\]
where $w_u$ permutes $u_i$'s, and $w_v$ permutes $v_i$'s.
Let $\partial^u_{w_{0,a,b}}$ be the composition of Demazure operators as in \cref{lem:Dem-ab-sym} acting on variables $u_1,\ldots,u_n$, and define $\partial^v_{w_{0,a,b}}$ analogously.
Applying \cref{lem:Dem-ab-formula}, we have
\[
\sum_{w_u,w_v\in \fk{S}_n/(\fk{S}_a\times \fk{S}_b)}w_uw_v\left( P\prod_{i=1}^a\prod_{j=a+1}^{a+b}\frac{(v_i-u_j)^2}{(u_i-u_j)(v_i-v_j)} \right)
= \partial^u_{w_{0,a,b}}\partial^v_{w_{0,a,b}}\left( P\prod_{i=1}^a\prod_{j=a+1}^b (v_i-u_j)^2 \right),
\]
and the right-hand side coincides with the action of $\rmS_\lambda^\mu$ given by \cref{lem:polrep-KLR}.
\end{proof}

As in \cref{subs:diag-Schur}, we will use the following abbreviation:
\[
\tikz[thick,xscale=.25,yscale=.25]{
	\crosin{0}{0}{4}{7}
	\ntxt{0}{-1}{$a$}
	\ntxt{4}{-1}{$b$}
	\ntxt{0}{8}{$b$}
	\ntxt{4}{8}{$a$}
	
	\ntxt{6}{3.5}{$:=$}
	
	\dmerge{8}{0}{12}{3}
	\draw (10,3) -- (10,4);
	\dsplit{8}{4}{12}{7}
	\ntxt{8}{-1}{$a$}
	\ntxt{12}{-1}{$b$}
	\ntxt{8}{8}{$b$}
	\ntxt{12}{8}{$a$}
	
	\ntxt{14}{3.5}{$=$}
	
	\opbox{16}{-1.5}{19}{0.5}{$0^{(a)}$}
	\opbox{20}{-1.5}{23}{0.5}{$1^{(a)}$}
	\opbox{24}{-1.5}{29}{0.5}{$0^{(b)}$}
	\opbox{30}{-1.5}{35}{0.5}{$1^{(b)}$}
	
	\draw (16.5,0.5) -- (28.5,7.5);
	\draw (18.5,0.5) -- (30.5,7.5);
	\ntxt{18.5}{1.25}{$\ldots$}
	\ntxt{28.6}{7.25}{$\ldots$}
	\draw (20.5,0.5) -- (32.5,7.5);
	\draw (22.5,0.5) -- (34.5,7.5);
	\ntxt{22.5}{1.25}{$\ldots$}
	\ntxt{32.6}{7.25}{$\ldots$}
	
	\draw (24.5,0.5) -- (16.5,7.5);
	\draw (28.5,0.5) -- (20.5,7.5);
	\ntxt{26}{1.25}{$\ldots$}
	\ntxt{19}{7.25}{$\ldots$}
	\draw (30.5,0.5) -- (22.5,7.5);
	\draw (34.5,0.5) -- (26.5,7.5);
	\ntxt{32}{1.25}{$\ldots$}
	\ntxt{25}{7.25}{$\ldots$}
	
	\opbox{16}{7.5}{21}{9.5}{$0^{(b)}$}
	\opbox{22}{7.5}{27}{9.5}{$1^{(b)}$}
	\opbox{28}{7.5}{31}{9.5}{$0^{(a)}$}
	\opbox{32}{7.5}{35}{9.5}{$1^{(a)}$}
	
	\ntxt{36}{3.5}{.}
}
\]
Let $\lambda,\mu\in \Comp(n)$ be such that $\mu$ is obtained by permuting components in $\lambda=(\lambda_1,\ldots,\lambda_r)$, and $w\in\fk{S}_r$ the corresponding permutation.
We can define the permutation element $\rmR_\lambda^\mu(w)\in e_\lambda R(n\delta) e_\mu$ as in \cref{subs:diag-Schur}; recall that it depends not only on $w$, but also on the choice of a reduced decomposition of $w$.

\subsection{Basis in $e R(n\delta) e$}
Let $\lambda,\mu\in\Comp(n)$.
To each element $w\in \doubleS{\ui_\mu}{\ui_\lambda}$, we can associate a pair $(x,y)\in\doubleS{\mu}{\lambda}\times \doubleS{\mu}{\lambda}$, where $x$ is the restriction of $w$ to positions colored by $0$, and $y$ to positions colored by $1$.
This induces a bijection $\doubleS{\ui_\mu}{\ui_\lambda}\xra{\sim} \doubleS{\mu}{\lambda}\times \doubleS{\mu}{\lambda}$.
We will use this bijection implicitly from now on, and write $\psi_{(x,y)}$ instead of $\psi_w$.
In the case $x=y$, we may also write $\psi_{(x)}$ instead of $\psi_{(x,x)}$ by abuse of notation.

As in \cref{subs:basis-KLR}, consider the elements $\ui'_\lambda,\ui'_\mu\in I^{(n\delta)}$ (both depending on $\lambda$, $\mu$, $w$) characterized by
\[
\overline{\ui'_\lambda}=\overline{\ui_\lambda},\qquad \overline{\ui'_\mu}=\overline{\ui_\mu},\qquad \frakS_{\ui'_\lambda}= w^{-1} \frakS_{\ui'_\mu} w = \frakS_{\ui_\lambda}\cap w^{-1}\frakS_{\ui_\mu} w.
\] 
The elements $\ui_\lambda$, $\ui_\mu$, $\ui'_\lambda$ and $\ui'_\mu$ here play the roles of $\ui$, $\uj$, $\ui'$ and $\uj'$ respectively in \cref{subs:basis-KLR}.
Note that in general $\ui'_\lambda$ cannot be expressed as $\ui_{\lambda'}$ for $\lambda'\in\Comp(n)$; however, this works if $x=y$.

The following is a restatement of the second part of \cref{lem:basis-KLR-divided} for the Kronecker quiver.
\begin{lm}
\label{lem:basis-gRg}
For each $\lambda,\mu\in \Comp(n)$, the following set is a basis of the $\bfk$-module $e_\mu R(n\delta) e_\lambda$:
$$
\left\{\psi_{w_{0,\ui_\mu,\ui_\mu'}}\psi_{(x,y)} Pe_\lambda: x,y\in \doubleS{\mu}{\lambda}, P\in B_{\ui'_\lambda}\right\}.
$$
\end{lm}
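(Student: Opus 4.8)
The statement is a specialization of the divided‑power basis \cref{lem:basis-KLR-divided} to the quiver $\Gamma$ and the dimension vector $\alpha=n\delta$, together with the colour‑splitting bijection recalled at the beginning of this subsection. So the plan is, first, to unwind the notation so that the two statements line up, and then to translate the indexing set.

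First I would record the dictionary. By definition $e_\lambda=1_{\ui_\lambda}$ and $e_\mu=1_{\ui_\mu}$, so $e_\mu R(n\delta)e_\lambda=1_{\ui_\mu}R(n\delta)1_{\ui_\lambda}$; moreover the parabolic $\frakS_{\ui_\lambda}\subset\frakS_{2n}$ is the product $\frakS_\lambda\times\frakS_\lambda$, the first factor permuting the $n$ strands coloured $0$ and the second the $n$ strands coloured $1$ (and similarly for $\mu$). Applying \cref{lem:basis-KLR-divided} (second set) with $\ui\rightsquigarrow\ui_\lambda$, $\uj\rightsquigarrow\ui_\mu$ then gives that
$$
\bigl\{\psi_{w_{0,\ui_\mu,\ui_\mu'}}\psi_w P\,1_{\ui_\lambda}\ :\ w\in\doubleS{\ui_\mu}{\ui_\lambda},\ P\in B_{\ui'_\lambda}\bigr\}
$$
is a $\bfk$‑basis of $1_{\ui_\mu}R(n\delta)1_{\ui_\lambda}$, where $\ui'_\lambda,\ui'_\mu\in I^{(n\delta)}$ are the tuples attached to $w$ exactly as in the subsection (so that $B_{\ui'_\lambda}$ is a basis of $\Pol_{2n}^{\frakS_{\ui'_\lambda}}\simeq e_{\ui'_\lambda}$‑invariants).

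Next I would translate the index set. A word $w$ with $w(\overline{\ui_\lambda})=\overline{\ui_\mu}$ is colour‑preserving; taken of minimal length in its $\frakS_{\ui_\mu}$–$\frakS_{\ui_\lambda}$ double coset it never exchanges a $0$‑strand with a $1$‑strand, hence factors as the product of its restriction $x$ to the $0$‑strands and its restriction $y$ to the $1$‑strands. Since both $\frakS_{\ui_\lambda}$ and $\frakS_{\ui_\mu}$ split along colours, minimality of $w$ is equivalent to minimality of $x$ and of $y$ as representatives of double cosets in $\frakS_\mu\backslash\frakS_n/\frakS_\lambda$; this is precisely the bijection $\doubleS{\ui_\mu}{\ui_\lambda}\xrightarrow{\ \sim\ }\doubleS{\mu}{\lambda}\times\doubleS{\mu}{\lambda}$, $w\leftrightarrow(x,y)$, fixed above, under which $\psi_w$ is renamed $\psi_{(x,y)}$. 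Substituting this into the displayed basis (and writing $e_\lambda$ for $1_{\ui_\lambda}$) produces exactly the asserted set.

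The only point needing care — and where I expect to spend the bulk of the verification — is checking that this colour decomposition is compatible with the normalisation data $w_{0,\ui_\mu,\ui'_\mu}$, $\ui'_\lambda$ and $B_{\ui'_\lambda}$ coming out of \cref{lem:basis-KLR-divided}: one must confirm that $\frakS_{\ui'_\lambda}=\frakS_{\ui_\lambda}\cap w^{-1}\frakS_{\ui_\mu}w$ (and its $\mu$‑analogue) respects the splitting into the $0$‑block and $1$‑block, so that $\frakS_{\ui'_\lambda}$ is again a product of two parabolic subgroups and $w_{0,\ui_\mu,\ui'_\mu}$ decomposes as the product of the two colour‑block longest elements. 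All of this is immediate from the fact that conjugation by the colour‑preserving permutation $w$ permutes the $0$‑block and $1$‑block parabolic subgroups separately; once this bookkeeping is settled the lemma follows with no further computation.
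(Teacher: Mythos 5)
Your proposal is correct and takes exactly the same route as the paper, which gives no separate proof beyond the sentence ``The following is a restatement of the second part of \cref{lem:basis-KLR-divided} for the Kronecker quiver'' preceded by the colour-splitting bijection $\doubleS{\ui_\mu}{\ui_\lambda}\xrightarrow{\sim}\doubleS{\mu}{\lambda}\times\doubleS{\mu}{\lambda}$. You have simply spelled out the dictionary (identifying $\frakS_{\ui_\lambda}$ with the product of the $0$-block and $1$-block copies of $\frakS_\lambda$, and checking that the normalisation data $\ui'_\lambda$, $\ui'_\mu$, $w_{0,\ui_\mu,\ui'_\mu}$ respect this colour splitting) that the paper leaves implicit.
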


\begin{rmq}
\label{rem:polyn-in-KLR}\leavevmode
\begin{rmqlist}
	\item \label{rem:polyn-in-KLR:a} The lemma above implies that for any $\lambda\in\Comp(n)$ we have an isomorphism of $\bfk$-modules
	\[
	\Poll_n^{(\frakS_\lambda)^2}\to e_\lambda R(n\delta)e_{(n)},\qquad P\mapsto P\cdot \rmS_{(n)}^\lambda;
	\]
	\item \label{rem:polyn-in-KLR:b} suppose that we have $\lambda_r=1$ for some index $r$, and set $k=\lambda_1+\ldots+\lambda_{r-1}+1$.
	The quadratic relation $\psi_{2k-1}^2e_\lambda=(u_k-v_k)^21_{\uj_\lambda}$ and the fact that the idempotent $s_{2k-1}(\uj_\lambda)$ is non-cuspidal implies that the polynomial $(u_k-v_k)^2$ is in the kernel of the map
	$$
	\Poll_n^{(\frakS_\lambda)^2}\to e_\lambda C(n\delta)e_{(n)},\qquad P\mapsto P\cdot \rmS_{(n)}^\lambda.
	$$
\end{rmqlist}
\end{rmq}

\begin{exe}
Let $n=4$, $\lambda=(3,1)$, $\mu=(1,3)$, $w=(1,3,4,6,7,8,5,2)$.
In this case $x=(1,2,3,4)$, $y=(2,3,4,1)$, and the set $B_{\ui'_\lambda}$ is a basis in the vector space of polynomials in $\bfk[u_1,u_2,u_3,u_4,v_1,v_2,v_3,v_4]^{\fk{S}_2\times\fk{S}_2}$, where $\fk{S}_2\times\fk{S}_2$ acts by transpositions $u_1\leftrightarrow u_2$ and $v_1\leftrightarrow v_2$.
For example, take $P=u_1u_2v_4$.
Then the basis element $\psi_{w_{0,\ui_\mu,\ui_\mu'}}\psi_w Pe_\lambda$ is given by the following diagram:
\[
\tikz[thick,xscale=.25,yscale=.25]{
\draw (0,0) -- (0,17);
\strdot{0}{0.9}
\strdot{4}{0.9}
\strdot{28}{0.9}
\draw (4,0) -- (4,9);
\draw (8,0) -- (8,9);
\draw (12,0) -- (12,5);
\draw (16,0) -- (16,5);
\draw (20,0) -- (20,1);
\draw (24,0) -- (24,1);
\draw (28,0) -- (28,1);
\crosin{20}{1}{28}{5}
\draw (24,1) .. controls (24,2.2) and (26,1.8) .. (26,3);
\draw (26,3) .. controls (26,4.2) and (24,3.8) .. (24,5);
\crosin{12}{5}{20}{9}
\crosin{16}{5}{24}{9}
\draw (28,5) -- (28,13);
\draw (16,9) -- (16,13);
\draw (20,9) -- (20,13);
\draw (24,9) -- (24,13);
\draw (4,9) .. controls (4,11.4) and (8,10.6) .. (8,13);
\draw (8,9) .. controls (8,11.4) and (12,10.6) .. (12,13);
\draw (12,9) .. controls (12,11.4) and (4,10.6) .. (4,13);
\draw (4,13) -- (4,17);
\crosin{20}{13}{28}{17}
\draw (24,13) .. controls (24,14.2) and (26,13.8) .. (26,15);
\draw (26,15) .. controls (26,16.2) and (24,15.8) .. (24,17);
\crosin{8}{13}{16}{17}
\draw (12,13) .. controls (12,14.2) and (14,13.8) .. (14,15);
\draw (14,15) .. controls (14,16.2) and (12,15.8) .. (12,17);
\ntxt{0}{18}{$0$}
\ntxt{4}{18}{$1$}
\ntxt{8}{18}{$0$}
\ntxt{12}{18}{$0$}
\ntxt{16}{18}{$0$}
\ntxt{20}{18}{$1$}
\ntxt{24}{18}{$1$}
\ntxt{28}{18}{$1$}
\opbox{-0.5}{-2}{8.5}{0}{$0^{(3)}$}
\opbox{11.5}{-2}{20.5}{0}{$1^{(3)}$}
\ntxt{24}{-1.25}{$0$}
\ntxt{28}{-1.25}{$1$}
}
\]
\end{exe}

\begin{lm}
\label{lem:nondiag-zero}
Let $w$ be such that $x\ne y$.
Then the reduced decomposition of $w$ can be chosen in such a way that every basis element $\psi_{w_{0,\ui_\mu,\ui_\mu'}}\psi_w Pe_\lambda$ goes to zero under the quotient map $e R(n\delta)e\to e C(n\delta)e$.
\end{lm}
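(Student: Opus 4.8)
The plan is to show that, for a suitably chosen reduced decomposition of $w$, the diagram representing the basis element $\psi_{w_{0,\ui_\mu,\ui'_\mu}}\psi_{(x,y)}Pe_\lambda$ of \cref{lem:basis-gRg} contains a horizontal slice along which the strands carry a \emph{non-cuspidal} colouring $\ui\in I^{n\delta}_{\nc}$. Cutting the diagram along that slice exhibits the element as a product lying in $R(n\delta)1_{\ui}R(n\delta)\subseteq R(n\delta)1_{\nc}R(n\delta)$, so it maps to $0$ in $C(n\delta)$. Now $e_\lambda=1_{\ui_\lambda}$, the box $P$ and the idempotent $\ui'_\mu$ are built from $\delta$-blocks, and $w_{0,\ui_\mu,\ui'_\mu}$ only permutes equally coloured strands inside the divided-power boxes of $\ui_\mu$; hence the colouring is constantly $\uj_\lambda$ along the $P$- and $e_\lambda$-parts of the diagram and constantly $\uj_\mu$ along the $w_{0,\ui_\mu,\ui'_\mu}$-part, and both $\uj_\lambda$ and $\uj_\mu$ are cuspidal. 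So a non-cuspidal slice can only occur inside the $\psi_{(x,y)}$-part, and the whole statement reduces to a combinatorial claim about reduced words of $w$.

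\textbf{Reformulation via ballot paths.} To a colouring $\ui=(i_1,\dots,i_{2n})\in I^{n\delta}$ attach the lattice path taking an up-step for each $0$ and a down-step for each $1$; then $\ui$ is cuspidal iff this path never drops strictly below its starting level. The colourings $\uj_\lambda$ and $\uj_\mu$ give ballot paths touching $0$ between consecutive $\delta$-blocks. Along a reduced word for $w$, a same-colour crossing leaves the path unchanged, while crossing an adjacent pair $(0,1)$ into $(1,0)$ replaces a local maximum by a local minimum two units lower (and the opposite crossing does the reverse). Thus it suffices to produce a reduced word for $w$ which at some step crosses a $0$-strand immediately followed by a $1$-strand at a moment when the prefix strictly to the left of that $0$-strand is balanced (path at height $0$): the resulting slice then has a prefix containing more $1$'s than $0$'s, i.e.\ is non-cuspidal.

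\textbf{Exploiting $x\ne y$.} Since $x\ne y$, there is a pair of indices $a<b$ on which $x$ and $y$ disagree in order. As $x$ and $y$ are minimal $(\mu,\lambda)$-double coset representatives, neither can invert two strands lying in the same $\lambda$-block; hence the two $1$-strands (resp.\ $0$-strands) with those indices sit in distinct $\lambda$-blocks of $\uj_\lambda$, which leaves room between them. I would then realise $w$ by a reduced word that first performs exactly those crossings of the inversion set of $w$ needed to bring a carefully chosen $0$-strand $o$ adjacent to a $1$-strand $\iota$ overtaking it, in an order keeping the prefix to the left of $o$ balanced, and then performs the $o$--$\iota$ crossing, producing the non-cuspidal slice. (A degenerate instance of precisely this mechanism is already visible in \cref{rem:polyn-in-KLR}: the relation $\psi_{2k-1}^2e_\lambda=(u_k-v_k)^2 1_{\uj_\lambda}$ factors through the non-cuspidal idempotent obtained from $s_{2k-1}(\uj_\lambda)$.) It then remains to check that the word so constructed is reduced — each pair of strands crosses at most once, since only inversions of $w$ are used — that it can be taken \emph{adapted} to $\ui_\lambda,\ui'_\mu$ in the sense of \cref{def:adapted-perm}, so as to coincide with one of the basis elements of \cref{lem:basis-gRg}, and that the height bookkeeping indeed gives height $0$ at the decisive crossing.

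\textbf{Main obstacle.} I expect the hard part to be exactly this last bookkeeping: converting the abstract "relative twist" encoded by $x\ne y$ into an explicit non-cuspidal colouring forces a careful choice of the pair $(o,\iota)$ and of the order in which the remaining crossings are performed, and one must verify that shuffling those strands out of the way never accidentally raises the relevant prefix's height above $0$ before the crossing is carried out. The divided-power/adapted-word compatibility is a secondary technical point that should be harmless once the underlying ``thin'' bad word has been produced.
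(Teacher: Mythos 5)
Your strategy is the same as the paper's: choose the reduced word so that the diagram for $\psi_w 1_{\uj_\lambda}$ has a slice carrying a non-cuspidal colouring, and observe that the whole basis element then lies in the two-sided ideal generated by $1_{\nc}$, hence dies in $eC(n\delta)e$. The reduction to a claim about reduced words of $w$ (polynomials and $w_{0,\ui_\mu,\ui'_\mu}$ never change the colouring, so the bad slice must occur inside $\psi_{(x,y)}$) is correct, and your ballot-path reformulation is exactly the cuspidality criterion. The concern about adapted decompositions is a non-issue: \cref{lem:basis-KLR-divided} produces a basis for an arbitrary choice of reduced decompositions, so the lemma only needs to exhibit one choice that works.

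However, there is a genuine gap, and it sits exactly where you flag the ``main obstacle'': you never actually produce the non-cuspidal slice. The step ``I would then realise $w$ by a reduced word that first performs \dots\ in an order keeping the prefix to the left of $o$ balanced'' is aspirational; nothing in the proposal identifies which $0$-strand $o$ and $1$-strand $\iota$ to use, nor verifies that the required crossings can be scheduled first without disturbing the balance. Moreover, your pivot — a pair $a<b$ on which $x$ and $y$ ``disagree in order'', with the consequent remark that they lie in distinct $\lambda$-blocks — is not the structure the argument wants: in the paper one finds an index $r$ with $L_\mu(x(r))>L_\mu(y(r))$ (this exists because $\sum_r L_\mu(x(r))=\sum_r L_\mu(y(r))$ and the minimal representative $x$ is determined by $L_\mu\circ x$), and then looks at $a$ = position of the $r$-th $0$ and $b$ = position of the $r$-th $1$ in $\uj_\lambda$, which lie in the \emph{same} $\lambda$-block, separated by the break point $c$. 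The inequality $L_\mu(x(r))>L_\mu(y(r))$ forces $w(p)>w(q)$ for every $p\in[a,c]$, $q\in[c+1,b]$, so all those pairs are inversions of $w$ and one can cross the whole $0$-interval $[a,c]$ with the whole $1$-interval $[c+1,b]$ at the bottom of the diagram. A direct count then shows the resulting prefix of length $a+(b-c)-1$ has $r$ ones and $r-1$ zeros, which is the non-cuspidal slice. Filling in this concrete choice of $r$, $a$, $b$, $c$ and the monotonicity argument is precisely the content you would still need to supply.
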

\begin{proof}
For a composition $\mu=(\mu_1,\ldots, \mu_k)$ and $r\in[1,n]$, let $L(r)\in [1,k]$ be the unique index for which $\mu_1+\ldots+\mu_{L(r)-1}<r\leq \mu_1+\ldots+\mu_{L(r)}$. 
If $x\ne y$, then we can find an index $r\in [1,n]$ such that $L(x(r))>L(y(r))$.
In effect, assume the contrary, that is that for every $r\in [1,n]$ we have $L(x(r))\leqslant L(y(r))$.
Since $\sum_{1\leq r\leq n}L(x(r))=\sum_{1\leq r\leq n}L(y(r))$, this implies $L(x(r))= L(y(r))$ for every $r$.
On the other hand, since $x$ is the shortest element in $\frakS_\mu x$, the values of $L(x(r))$ for each $r$ determine $x$ uniquely, so we must have $x=y$.

Let $r$ be as above.
Further, let $a$ be the position of the $r$-th appearance of ``$0$'' in $\uj_\lambda$ (counting from the left), and $b$ is the position of the $r$-th appearance of ``$1$'' in $\uj_\lambda$.
We have $1\leq a<b\leq 2n$ and $w(a)>w(b)$.
Let $c\in [a,b-1]$ be the unique index such that $(\uj_\lambda)_c=0$ and $(\uj_\lambda)_{c+1}=1$.
We have 
\[
w(c)\geqslant w(c-1)\geqslant\ldots\geqslant w(a+1) \geqslant w(a)\quad>\quad w(b)\geqslant w(b-1)\geqslant\ldots\geqslant w(c+1).
\]
Then we can pick such reduced decomposition of $w$ that on the bottom of the diagram for $\psi_w1_{\uj_\lambda}$ we cross all strands at positions $[a,c]$ with all strands at positions $[c+1,b]$.
This implies that $\psi_w1_{\uj_\lambda}$ is zero in $C(n\delta)$ because it factors through a non-cuspidal idempotent, and thus $\psi_{w_{0,\ui_\mu,\ui_\mu'}}\psi_w Pe_\lambda$ is zero in $e C(n\delta)e$.
\end{proof}

The following example illustrates the proof.
\begin{exe}
Let $\lambda=(3,1)$, $\mu=(2,2)$, and $w=(1,5,6,3,4,7,2,8)$.
Then we have $x=(1,3,4,2)$ and $y=(1,2,3,4)$.
In this case we can take $r=2$, because $L(x(2))=2$ and $L(y(2))=1$.
Then $a=2$, $b=5$, $c=3$, and we should fix a reduced decomposition of $w$ such that on the bottom of the diagram of $\psi_w1_{\uj_\lambda}$ the second and the third strands cross the fourth and the fifth.
The bottom of this diagram will look as follows:
\[ 
\tikz[very thick,baseline={([yshift=+.6ex]current bounding box.center)}]{
    \draw (1,-1) node[below] {\small $0$} -- (1,1) node[above] {\small $0$}; 
    \draw (2,-1) node[below] {\small $0$} -- (4,1) node[above] {\small $0$};
    \draw (3,-1) node[below] {\small $0$} -- (5,1) node[above] {\small $0$};
    \draw (4,-1) node[below] {\small $1$} -- (2,1) node[above] {\small $1$};
    \draw (5,-1) node[below] {\small $1$} -- (3,1)  node[above] {\small $1$};
    \draw (6,-1) node[below] {\small $1$} -- (6,1) node[above] {\small $1$}; 
    \draw (7,-1) node[below] {\small $0$} -- (7,1) node[above] {\small $0$};
    \draw (8,-1) node[below] {\small $1$} -- (8,1) node[above] {\small $1$}; 
}   
\]
On the top of this diagram we have a non-cuspidal sequence $0 1 1 0 0 1 0 1$.
Therefore the element $\psi_w 1_{\uj_\lambda}$ is zero in $C(n\delta)$, because it factors through a non-cuspidal idempotent.
\end{exe}

From now on, we will always assume that the reduced decompositions are chosen as in \cref{lem:nondiag-zero}.
The following statement is an immediate corollary of \cref{lem:basis-gRg,lem:nondiag-zero}.

\begin{corr}
\label{corr:span-gCg}
The algebra $e C(n\delta)e$ is spanned by the set 
$$
\left\{\psi_{w_{0,\ui_\mu,\ui_\mu'}}\psi_{(x)} Pe_\lambda:x\in \frakS_{\mu}\backslash\frakS_n/\frakS_{\lambda}, P\in B_{\ui'_\lambda}\right\}.
$$
\end{corr}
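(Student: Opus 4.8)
The plan is to read this off directly from the basis statement \cref{lem:basis-gRg} and the vanishing statement \cref{lem:nondiag-zero}, so the argument is essentially bookkeeping. First I would note that, by definition, $C(n\delta)=R(n\delta)/R(n\delta)1_{\nc}R(n\delta)$, so the quotient map $q\colon R(n\delta)\to C(n\delta)$ is surjective and restricts to a surjection $e_\mu R(n\delta)e_\lambda\twoheadrightarrow e_\mu C(n\delta)e_\lambda$ for every $\lambda,\mu\in\Comp(n)$. Consequently $e_\mu C(n\delta)e_\lambda$ is spanned by the $q$-images of any spanning set of $e_\mu R(n\delta)e_\lambda$.

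Next I would apply \cref{lem:basis-gRg}, choosing for each $w$ with $x\ne y$ the particular reduced decomposition supplied by \cref{lem:nondiag-zero}: the elements $\psi_{w_{0,\ui_\mu,\ui_\mu'}}\psi_{(x,y)}Pe_\lambda$ with $x,y\in\doubleS{\mu}{\lambda}$, $P\in B_{\ui'_\lambda}$ then form a $\bfk$-basis of $e_\mu R(n\delta)e_\lambda$, hence their images under $q$ span $e_\mu C(n\delta)e_\lambda$. By \cref{lem:nondiag-zero} every element with $x\ne y$ maps to $0$ in $e C(n\delta)e$, since its diagram factors through a non-cuspidal idempotent. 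Therefore $e_\mu C(n\delta)e_\lambda$ is already spanned by the images of the elements with $x=y$, that is by $\{\psi_{w_{0,\ui_\mu,\ui_\mu'}}\psi_{(x)}Pe_\lambda : x\in\frakS_\mu\backslash\frakS_n/\frakS_\lambda,\ P\in B_{\ui'_\lambda}\}$. Summing over all $\lambda,\mu\in\Comp(n)$ and using $eC(n\delta)e=\bigoplus_{\lambda,\mu}e_\mu C(n\delta)e_\lambda$ gives the asserted spanning set.

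The one place needing a word of justification — and it is minor — is that \cref{lem:nondiag-zero} prescribes a specific reduced word for each $w$ with $x\ne y$, whereas \cref{lem:basis-gRg} must still hold for this choice. I would handle this exactly as in the final paragraph of the proof of \cref{lem:basis-KLR-divided}: changing the reduced decomposition of $w$ only modifies each basis vector by lower-order terms in the length filtration of \cref{rmq:basis-KLR}, so the basis property is insensitive to these choices and the two lemmas may be combined freely. Beyond this there is no real obstacle; the corollary is a formal consequence of the two preceding results.
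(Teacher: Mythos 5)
Your argument is correct and coincides with what the paper intends by calling this an "immediate corollary" of \cref{lem:basis-gRg,lem:nondiag-zero}: the quotient map is surjective, so images of the basis from \cref{lem:basis-gRg} span; those with $x\ne y$ die by \cref{lem:nondiag-zero}; and the remaining $x=y$ elements are the claimed spanning set. Your final caveat about reduced decompositions is exactly why the paper inserts the sentence "From now on, we will always assume that the reduced decompositions are chosen as in \cref{lem:nondiag-zero}," and it is already resolved because \cref{lem:basis-KLR-divided} (hence \cref{lem:basis-gRg}) was proved for an arbitrary choice of reduced decompositions.
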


\begin{rmq}
\label{rem:span-gCg-thick}
Consider a basis element $\psi_{w_{0,\ui_\mu,\ui_\mu'}}\psi_{(x)} Pe_\lambda$ as above.
Let $\lambda',\mu'\in\Comp(n)$ be such that $\frakS_{\lambda'}=\frakS_\lambda\cap x^{-1}\frakS_\mu x$ and $\frakS_{\mu'}=\frakS_\mu\cap x\frakS_\lambda x^{-1}$.
Then, for an appropriate choice of a reduced decomposition of $x$, the element $\psi_{(x)}$ can be written as $\psi_{(x)}=\psi_1\psi_2\psi_3$, where $\psi_3=\rmS_\lambda^{\lambda'}$, $\psi_2=\rmR_{\lambda'}^{\mu'}$ and $\psi_{w_{0,\ui_\mu,\ui_\mu'}}\psi_1=\rmM_{\mu'}^\mu$.
In particular, we see that each element of the algebra $e C(n\delta)e$ is a linear combination of diagrams containing splits, merges and polynomials.
\end{rmq}

\subsection{Comparison with sheaves on $\bbP^1$}
In this section we will establish a relation between $eC(n\delta)e$ and the Schur algebra of projective line $\Sc_n=\Sc_n^{\bbP^1}$.

We say that a representation $M\in E_{n\delta}$ is \emph{regular} if there exists an invertible linear combination of two arrows in $\Gamma$.
Regular representations form an open subvariety $E_{n\delta}^{\rm reg}\subset E_{n\delta}$. 
Similarly, we define $\widetilde \bfF_{n\delta}^{\rm reg}\subset \widetilde \bfF_{n\delta}$, $\bfZ_{n\delta}^{\rm reg}\subset \bfZ_{n\delta}$ as inverse images of $E_{n\delta}^{\rm reg}$ under the natural maps $\widetilde \bfF_{n\delta}\to E_{n\delta}$ and $\bfZ_{n\delta}\to E_{n\delta}$ respectively.
We also set $\widetilde \bfF_{\ui}^{\rm reg}=\widetilde \bfF_{\ui}\cap\widetilde \bfF_{n\delta}^{\rm reg}$, $\bfZ_{\ui,\uj}^{\rm reg}=\bfZ_{\ui,\uj}\cap \bfZ_{n\delta}^{\rm reg}$.
Let us make the following standard observation:
\begin{lm}
If a sequence $\ui\in I^{n\delta}$ is non-cuspidal, then $\widetilde \bfF_\ui^{\rm reg}=\emptyset$.
\end{lm}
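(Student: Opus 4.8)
The plan is a short argument by contradiction, comparing dimensions of the graded pieces of the flag. First I would unwind the definitions from \cref{subs:geom-KLR}: a point of $\widetilde{\bfF}_\ui$ is a pair $(x,\phi)$, where $x=(x_h)_{h\in H}\in E_{n\delta}$ is a representation of the Kronecker quiver on $V=V_0\oplus V_1$ with $\dim V_0=\dim V_1=n$ --- so each $x_h$ is a linear map $V_1\to V_0$, since both arrows of $\Gamma$ go from vertex $1$ to vertex $0$ --- and $\phi=(0=V^0\subset V^1\subset\cdots\subset V^{2n}=V)$ is a homogeneous flag preserved by $x$ with $V^r/V^{r-1}$ of graded dimension $\alpha_{i_r}$. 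Writing $V^r_i:=V^r\cap V_i$ for $i\in\{0,1\}$, the dimension condition on the subquotients forces
\[
\dim V^r_0=\#\{\,s\le r:i_s=0\,\},\qquad \dim V^r_1=\#\{\,s\le r:i_s=1\,\}
\]
for every $r$.

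Next I would use the hypothesis that $\ui$ is non-cuspidal to pick an index $r$ for which $(i_1,\ldots,i_r)$ contains strictly more $1$'s than $0$'s; by the formula above this means $\dim V^r_1>\dim V^r_0$. Now suppose for contradiction that $(x,\phi)\in\widetilde{\bfF}_\ui^{\rm reg}$. Since $x$ preserves $\phi$, each structure map $x_h$ sends $V^r_1$ into $V^r_0$, and hence so does every linear combination $y=\sum_{h\in H}\lambda_h x_h\colon V_1\to V_0$. But then the restriction $y|_{V^r_1}\colon V^r_1\to V^r_0$ has target of strictly smaller dimension than its source, so it has a nontrivial kernel; therefore $y$ itself fails to be injective, let alone invertible. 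This contradicts regularity of $x$, so no such $(x,\phi)$ exists and $\widetilde{\bfF}_\ui^{\rm reg}=\emptyset$.

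There is no serious obstacle here --- the statement amounts to bookkeeping. The only point requiring a little care is the compatibility of conventions: one must check that ``non-cuspidal'' (a prefix of $\ui$ with an excess of $1$'s over $0$'s) translates, under the chosen orientation of the Kronecker quiver, into the geometric statement that the degree-$1$ part $V^r_1$ of some step of the flag is strictly larger than the degree-$0$ part $V^r_0$, which is exactly the obstruction to injectivity of any linear combination of the arrows.
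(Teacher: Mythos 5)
Your proof is correct and is essentially the paper's argument, just unwound into an explicit contradiction. The paper compresses the same idea into one line — every subrepresentation $M'$ of a regular representation satisfies $\dim M'_0\geq\dim M'_1$ (precisely because the invertible linear combination of arrows restricts to an injection $M'_1\hookrightarrow M'_0$), which is incompatible with a flag of non-cuspidal type — and you have simply made the injectivity/dimension count explicit at the offending step $r$.
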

\begin{proof}
Each sub-representation of a regular representation has dimension vector of the form $m_0\alpha_0+m_1\alpha_1$ such that $m_0\geqslant m_1$.
In particular, a regular representation cannot stabilize a flag of non-cuspidal type.
\end{proof}

\begin{corr}
If $\ui\in I^{n\delta}$ is non-cuspidal, then the idempotent $1_\ui$ lies in the kernel of the pullback map
$$
R(n\delta)\simeq H^*_{G_{n\delta}}(\bfZ_{n\delta})\to H^*_{G_{n\delta}}(\bfZ^{\rm reg}_{n\delta}).
$$ 
In particular, the pullback yields a map $C(n\delta)\to H_*^{G_{n\delta}}(\bfZ^{\rm reg}_{n\delta})$.
\end{corr}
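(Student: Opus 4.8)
The plan is to work on the geometric side, via the isomorphism $R(n\delta)\simeq H^*_{G_{n\delta}}(\bfZ_{n\delta})$ of \cref{prop:geom-KLR}. First I would recall, using \cref{rem:geom-KLR-idemp} together with \cref{DiagEmb}, that under this isomorphism the idempotent $1_\ui$ is the unit of the block $H^*_{G_{n\delta}}(\bfZ_{\ui,\ui})$, that is the fundamental class of the diagonally embedded $\widetilde\bfF_\ui\hookrightarrow\bfZ_{\ui,\ui}=\widetilde\bfF_\ui\times_{E_{n\delta}}\widetilde\bfF_\ui$. So it suffices to show that restriction to the regular locus annihilates this class for every $\ui\in I^{n\delta}_{\nc}$, and then to upgrade this to the vanishing of the whole two-sided ideal $R(n\delta)1_{\nc}R(n\delta)$.

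Second, I would put a convolution algebra structure on the target. The map $\widetilde\bfF^{\rm reg}_{n\delta}\to E^{\rm reg}_{n\delta}$ is the base change of the proper map $\pi_{n\delta}$ along the open immersion $E^{\rm reg}_{n\delta}\hookrightarrow E_{n\delta}$, hence proper, so \cref{ConvProduct} equips $H^*_{G_{n\delta}}(\bfZ^{\rm reg}_{n\delta})$ with an algebra structure, where $\bfZ^{\rm reg}_{n\delta}=\widetilde\bfF^{\rm reg}_{n\delta}\times_{E^{\rm reg}_{n\delta}}\widetilde\bfF^{\rm reg}_{n\delta}$ is exactly the regular locus of $\bfZ_{n\delta}$. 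The open immersion $j\colon\bfZ^{\rm reg}_{n\delta}\hookrightarrow\bfZ_{n\delta}$ and its analogue on $\widetilde\bfF_{n\delta}\times_{E_{n\delta}}\widetilde\bfF_{n\delta}\times_{E_{n\delta}}\widetilde\bfF_{n\delta}$ fit into cartesian squares with all the structure maps $p_{13},(p_{12},p_{23}),\dots$ used to define the convolution products on either side. Since open restriction is a flat pullback, it commutes with proper pushforward and with the refined pullbacks by the base change compatibilities in \cref{GysinProp}; hence $j^*\colon H^*_{G_{n\delta}}(\bfZ_{n\delta})\to H^*_{G_{n\delta}}(\bfZ^{\rm reg}_{n\delta})$ is a homomorphism of algebras, and it clearly respects the decompositions $\bfZ_{n\delta}=\coprod_{\ui,\uj}\bfZ_{\ui,\uj}$ and $\bfZ^{\rm reg}_{n\delta}=\coprod_{\ui,\uj}\bfZ^{\rm reg}_{\ui,\uj}$. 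The hard part here is precisely this compatibility of $j^*$ with the two convolution products; everything else is formal, and this step is routine but should be spelled out.

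Finally, I would conclude as follows. For $\ui\in I^{n\delta}_{\nc}$, the lemma above gives $\widetilde\bfF^{\rm reg}_\ui=\emptyset$, hence $\bfZ^{\rm reg}_{\ui,\ui}=\widetilde\bfF^{\rm reg}_\ui\times_{E^{\rm reg}_{n\delta}}\widetilde\bfF^{\rm reg}_\ui=\emptyset$, so $j^*(1_\ui)\in H^*_{G_{n\delta}}(\bfZ^{\rm reg}_{\ui,\ui})=0$. In particular $j^*(1_{\nc})=0$, and since $j^*$ is a ring homomorphism,
\[
j^*\bigl(R(n\delta)1_{\nc}R(n\delta)\bigr)=j^*\bigl(R(n\delta)\bigr)\cdot j^*(1_{\nc})\cdot j^*\bigl(R(n\delta)\bigr)=0 .
\]
Thus $R(n\delta)1_{\nc}R(n\delta)\subseteq\ker j^*$, and $j^*$ factors through the quotient $C(n\delta)=R(n\delta)/R(n\delta)1_{\nc}R(n\delta)$, producing the desired homomorphism $C(n\delta)\to H^*_{G_{n\delta}}(\bfZ^{\rm reg}_{n\delta})$.
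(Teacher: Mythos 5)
Your proof is correct and is exactly the argument the paper intends (the paper itself gives no written proof, treating the corollary as immediate from the preceding lemma). The two observations doing the work are precisely the ones you isolate: first, that $1_\ui$ is the fundamental class of the diagonal $\widetilde\bfF_\ui\hookrightarrow\bfZ_{\ui,\ui}$ (via \cref{rem:geom-KLR-idemp} and \cref{DiagEmb}), so it is killed by restriction to $\bfZ^{\rm reg}_{\ui,\ui}=\emptyset$; and second, that the open restriction $j^*$ is a ring homomorphism, which is what lets you pass from vanishing of $j^*(1_{\nc})$ to vanishing of the whole two-sided ideal. You are right that the multiplicativity of $j^*$ is the one step that needs to be spelled out; it follows from flat base change for proper pushforward (\cref{GysinProp:b2}) together with the compatibility of refined pullbacks under base change (\cref{GysinProp:c}), applied to the cartesian diagrams obtained by restricting the convolution diagram to the regular locus of $E_{n\delta}$. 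One could alternatively observe, slightly more directly, that any element of $R(n\delta)1_\ui R(n\delta)$ is a pushforward from the triple Steinberg variety $\widetilde\bfF_{\uj}\times_{E_{n\delta}}\widetilde\bfF_{\ui}\times_{E_{n\delta}}\widetilde\bfF_{\uk}$, whose regular locus is already empty since $\widetilde\bfF^{\rm reg}_\ui=\emptyset$; that version avoids asserting the full ring-homomorphism property, but your formulation is cleaner and equally valid.
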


\begin{rmq}\label{rem:map-cusp-Schur-geom}
It will be more important for us to have a truncated version by the idempotents $e_\lambda$.
Let us write $\bfZ_{n\delta,e} = \bigsqcup_{\lambda,\mu \in \Comp(n)} \bfZ_{\ui_\lambda,\ui_\mu}$.
Then the pullback map $e R(n\delta)e\simeq H^*_{G_{n\delta}}(\bfZ_{n\delta,e})\to H^*_{G_{n\delta}}(\bfZ^{\rm reg}_{n\delta,e})$ factors through $e C(n\delta)e \to H^*_{G_{n\delta}}(\bfZ^{\rm reg}_{n\delta,e})$.
\end{rmq}

Recall~\cite{Bei_CSPP1978} that we have an equivalence of bounded derived categories
\begin{equation}\label{DerCatEq}
RHom(\Ocal(-1)\oplus\Ocal,-):D^b(\ona{Coh}\bbP^1)\to D^b(\ona{Rep}\Gamma).
\end{equation}
Restricting this map to torsion sheaves of length $n$ and representations with dimension vector $n\delta$ respectively, we obtain an open embedding of algebraic stacks
\[
\varepsilon:\T_n\hookrightarrow \ona{Rep}_{n\delta}\Gamma,\qquad \cal F\mapsto \left(\begin{tikzcd}\Gamma(\cal F)\ar[r,shift left=0.5ex,"\Gamma(\Ocal(1))"]\ar[r,shift right=0.5ex] & \Gamma(\cal F(1))\end{tikzcd}\right).
\]
Moreover, the image of $\varepsilon$ is precisely the substack of regular representations.
Let $\phi_n:H^*(\ona{Rep}_{n\delta}\Gamma,\bbk)\to H^*(\T_n,\bbk)$ be the corresponding pullback map.

\begin{lm}\label{lm:phiFormula}
The ring homomorphism
\[
\phi_n: \Poll_n^{\fk{S}_n\times \fk{S}_n}\to \bf{P}_n^{\fk{S}_n}
\]
is obtained as a restriction of the following map to invariants: 
\begin{equation}\label{eq:polyRestr}
	\Poll_n\to \bf{P}_n;\qquad u_i\mapsto x_i,\quad v_i\mapsto x_i+c_i.
\end{equation}
\end{lm}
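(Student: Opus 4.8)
The plan is to pull everything back to $\T_n$ along $\varepsilon$ and reduce to the case $n=1$, which is essentially computed in \cref{ex:n=1}.

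First, since $\varepsilon$ is an open immersion, $\phi_n$ is a ring homomorphism. The group $G_{n\delta}$ acts on the affine space $E_{n\delta}$, so $H^*(\ona{Rep}_{n\delta}\Gamma)=H^*_{G_{n\delta}}(pt)=\Poll_n^{\fk S_n\times\fk S_n}$, and this ring is generated by the Chern classes of the two tautological bundles $\cal V_0,\cal V_1$ with fibres $V_0,V_1$; the $u_i$ (resp.\ $v_i$) are the Chern roots of $\cal V_0$ (resp.\ $\cal V_1$). Under the equivalence~\eqref{DerCatEq} the bundles $\varepsilon^*\cal V_0,\varepsilon^*\cal V_1$ are, in the order fixed by the conventions of \cref{sec:semicusp}, the vector bundles on $\T_n$ with fibres $H^0(\bbP^1,\cal F)$ and $H^0(\bbP^1,\cal F(1))$ over $\cal F$; in families these are $p_*\cal E$ and $p_*(\cal E\otimes q^*\Ocal(1))$, where $\cal E$ is the universal torsion sheaf on $\T_n\times\bbP^1$, $p,q$ are the projections to $\T_n$ and $\bbP^1$, and the higher direct images vanish because $\cal E$ is torsion. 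Moreover $p_*\cal E$ is, via $H^0$ of the universal quotient $\bbC^n\otimes\Ocal\twoheadrightarrow\cal E$, identified with the tautological rank-$n$ bundle on $\T_n$, whose Chern roots are $x_1,\dots,x_n$. So (writing $e_k$ for the $k$-th elementary symmetric polynomial) it remains to check that $\phi_n(e_k(v))=e_k(x_1+c_1,\dots,x_n+c_n)$, i.e.\ that the Chern roots of $p_*(\cal E(1))$ are $x_1+c_1,\dots,x_n+c_n$.

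To see this I would use the direct sum map $\bigoplus\colon\T_1^n\to\T_n$. By \cref{prop:CohP1Z} and \cref{isovialoc} the pullback $\bigoplus^*\colon H^*(\T_n)=\bf{P}_n^{\fk S_n}\hookrightarrow\bf{P}_n=H^*(\T_1^n)$ is the tautological inclusion, in particular injective, and by \cref{ex:d2nonsurj} one has $\bigoplus^*\cal E\simeq\cal E_1\oplus\cdots\oplus\cal E_n$ with $\cal E_i$ the rank-one universal sheaf pulled back from the $i$-th factor. Hence $\bigoplus^*p_*(\cal E(1))$ splits as the external sum of the line bundles $p_{i*}(\cal E_i(1))$, and it suffices to treat $n=1$. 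For $n=1$ we have $\T_1\simeq\bbP^1\times\mathrm{B}\bb G_m$ with $x_1$ the equivariant parameter and $c_1$ the point class, and $\cal E=x_1\Ocal_\Delta$ as in \cref{ex:n=1}; since $p$ restricts to an isomorphism $\Delta\xra{\sim}\T_1$, the bundle $p_*(\cal E(1))=p_*(\Ocal_\Delta\otimes q^*\Ocal(1))$ is the line bundle $\Ocal_{\bbP^1}(1)$ carrying $\bb G_m$-weight $1$, so its first Chern class is $x_1+c_1$. Reassembling over the $n$ factors gives $c_k(p_*(\cal E(1)))=e_k(x_1+c_1,\dots,x_n+c_n)$, and together with the previous paragraph this shows that $\phi_n$ sends $e_k(u)\mapsto e_k(x)$ and $e_k(v)\mapsto e_k(x+c)$. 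As these classes generate $\Poll_n^{\fk S_n\times\fk S_n}$, and as the substitution $u_i\mapsto x_i$, $v_i\mapsto x_i+c_i$ visibly preserves invariance under the diagonal $\fk S_n\subset\fk S_n\times\fk S_n$ (so the image indeed lands in $\bf P_n^{\fk S_n}$), this identifies $\phi_n$ with the asserted restriction.

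The routine parts are the two explicit computations (that $p_*\cal E$ is the tautological bundle, and the $n=1$ calculation, which is \cref{ex:n=1}); the step that needs care is the middle one — making precise, at the level of the stacky quotient presentations, the commutative square relating $\varepsilon$, the direct sum map $\bigoplus\colon\T_1^n\to\T_n$ and the block-diagonal "direct sum of representations" map on quiver moduli, so as to justify that $\bigoplus^*$ intertwines $\phi_n$ with desymmetrisation and splits $\varepsilon^*\cal V_0,\varepsilon^*\cal V_1$ accordingly. This rests on the (formal) fact that $RHom(\Ocal(-1)\oplus\Ocal,-)$ commutes with finite direct sums, but one must verify compatibility with the presentations $\T_n\simeq[Q_n/G_n]$ and $\ona{Rep}_{n\delta}\Gamma=[E_{n\delta}/G_{n\delta}]$ on both sides and with the normalisation of $\cal E$.
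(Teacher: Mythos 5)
Your argument is correct and follows essentially the same approach as the paper: both reduce to $n=1$ by pulling back along $\bigoplus\colon\T_1^n\to\T_n$, using the commutative square with $\varepsilon$ and the fact that this pullback realizes the desymmetrisation $\Poll_n^{\fk S_n\times\fk S_n}\subset\Poll_n$. The only difference is in the $n=1$ step: the paper applies Grothendieck--Riemann--Roch to compute $\ona{ch}(\Gamma(\cal E(k)))=\exp(x+kc)$ uniformly in $k$, whereas you identify $p_*\cal E$ with the tautological bundle via the universal quotient and $p_*(\cal E(1))$ directly as the twisted line bundle $x\Ocal_{\bbP^1}(1)$ using that $p$ restricts to an isomorphism on $\Delta$; both computations are short, and yours avoids GRR at the cost of treating $\cal E$ and $\cal E(1)$ separately.
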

\begin{proof}
By universal coefficients, it suffices to prove the statement for $\bbk = \bbZ$.
We have the following commutative diagram:
\[
\begin{tikzcd}
	\T_1^n\ar[r,"\bigoplus"]\ar[d,hook]& \T_n\ar[d,hook]\\
	(\ona{Rep}_{\delta}\Gamma)^n\ar[r,"\bigoplus"] & \ona{Rep}_{n\delta}\Gamma
\end{tikzcd}
\]
Since pullback is functorial, and pullback along the lower horizontal map realizes the inclusion $\Poll_n^{\fk{S}_n\times \fk{S}_n}\subset \Poll_n$, it suffices to prove our claim for $n=1$.
We will use the notations from \cref{ex:n=1}.
	
The variables $u$, $v$ are first Chern classes of tautological line bundles on $\ona{Rep}_{(1,1)}\Gamma$, which associate to a representation $U\rightrightarrows V$ vector spaces $U$ and $V$ respectively.
We denote by $\pi$ the projection map $\T_1\times \bbP^1\to \T_1$.
By definition of the embedding $\varepsilon$, restriction of these line bundles to $\T_1$ is $\Gamma(\cal E)$ and $\Gamma(\cal E(1))$ respectively, where $\Gamma(-)=R^0\pi_*(-)$ denotes the sheaf of global sections along $\bbP^1$.
Let us compute Chern character of $\Gamma(\cal E(k))$, $k\in\bbZ$, by applying Grothendieck-Riemann-Roch theorem.
\begin{align*}
	\ona{ch}(\Gamma(\cal E(k))) &= \ona{ch}(R\pi_*\cal E(k)) = \pi_*\left( \ona{ch}(\cal E)\ona{ch}(\Ocal(k))\ona{td}(\bbP^1) \right)\\
	& = \pi_*\left( \exp(x) (1-\exp(-c-p))\exp(kp) (1+p) \right)\\
	& = \pi_*\left( \exp(x)(c+p-cp)(1+kp)(1+p) \right) = \pi_*\left( \exp(x)(c+p+kcp)\right)\\
	& = \exp(x)(1+kc) = \exp(x+kc).
\end{align*}
In particular $\ona{ch}(\Gamma(\cal E)) = \exp(x)$ and $\ona{ch}(\Gamma(\cal E(1))) = \exp(x+c)$, so that $\phi_1(u) = x$ and $\phi_1(v) = x+c$.
\end{proof}

Let $\lambda\in\Comp(n)$.
\begin{lm}\label{lm:flagrestr}
We have a natural isomorphism $\FT_\lambda \simeq \T_n\times_{\ona{Rep}_{n\delta}} [\widetilde{\bfF}_{\ui_\lambda}/G_{n\delta}]$.
\end{lm}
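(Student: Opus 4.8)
The plan is to build the isomorphism from Beilinson's equivalence $\eqref{DerCatEq}$, reducing the whole statement to one elementary observation about regular representations of the Kronecker quiver.

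First I would identify the right-hand side concretely. Since $\varepsilon$ is an open immersion of stacks whose image is the regular locus in $\ona{Rep}_{n\delta}\Gamma$ (and the analogous statement holds for every length, in particular for the lengths $\tilde{\lambda}_i$), the fibre product $\T_n\times_{\ona{Rep}_{n\delta}\Gamma}[\widetilde{\bfF}_{\ui_\lambda}/G_{n\delta}]$ is the open substack of $[\widetilde{\bfF}_{\ui_\lambda}/G_{n\delta}]$ lying over this regular locus. Moreover, a dimension count shows that in a flag of type $\ui_\lambda$ the steps whose successive quotient is supported at the vertex $0$ are uniquely determined by the remaining ones, so this open substack is nothing but the stack of pairs $(M,\,0=M_0\subset M_1\subset\cdots\subset M_k=M)$ consisting of a regular representation $M$ of dimension $n\delta$ together with a filtration by subrepresentations satisfying $\dim M_i/M_{i-1}=\lambda_i\delta$.

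The one input I would isolate is: if $M$ is a regular representation and $N$ is a subrepresentation of $M$, or a quotient of $M$, whose dimension vector is a multiple of $\delta$, then $N$ is again regular. Indeed, choosing an invertible linear combination $A$ of the two arrows of $M$, the induced map $N_1\to N_0$ is injective when $N$ is a subrepresentation and, by a direct check, stays injective when $N$ is a quotient; since $\dim N_1=\dim N_0$ in both cases, the map is an isomorphism. Separately, $\eqref{DerCatEq}$ restricts to a fully faithful functor from the abelian category of torsion sheaves on $\bbP^1$ of all lengths into $\ona{Rep}\Gamma$; it is exact because a torsion sheaf has no higher $\Ext$-groups out of $\Ocal(-1)\oplus\Ocal$, and by the description of the image of $\varepsilon$ its essential image is precisely the full subcategory of regular representations, so it is an exact equivalence onto the latter.

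Granting these, I would exhibit the isomorphism in both directions. Applying this equivalence to a flag of torsion sheaves $0=\cal E_0\subset\cdots\subset\cal E_k=\cal E$ of type $\lambda$ yields, by exactness, a filtration of the regular representation $\varepsilon(\cal E)$ by subrepresentations with $\dim M_i/M_{i-1}=\lambda_i\delta$, compatibly with $p_\lambda\colon\FT_\lambda\to\T_n$; this defines a morphism $\FT_\lambda\to\T_n\times_{\ona{Rep}_{n\delta}\Gamma}[\widetilde{\bfF}_{\ui_\lambda}/G_{n\delta}]$. Conversely, given a point $(M,M_\bullet)$ of the target with $M$ regular, the observation above shows each $M_i$ is regular, hence equals $\varepsilon(\cal E_i)$ for a unique torsion sheaf $\cal E_i$ of length $\tilde{\lambda}_i$; applying the inverse equivalence to the inclusions $M_{i-1}\hookrightarrow M_i$ produces subsheaf inclusions $\cal E_{i-1}\subseteq\cal E_i$ whose successive quotients $\cal E_i/\cal E_{i-1}$ are torsion sheaves of length $\lambda_i$, that is a point of $\FT_\lambda$. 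These two constructions are manifestly mutually inverse and natural in the test scheme, which yields the asserted isomorphism of stacks. The step I expect to require the most care is verifying that $\eqref{DerCatEq}$, together with the identification of the image of $\varepsilon$, is compatible with arbitrary base change; but this is formal, as the functor is a relative derived pushforward of a perfect complex and regularity is an open condition.
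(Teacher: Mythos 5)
Your proof is correct and follows essentially the same route as the paper's: identify the fibre product as the open substack of regular flags, observe that the intermediate steps of type $\alpha_0$ are forced by the surrounding $\delta$-multiple steps (the paper says this via "vertex $0$ has only incoming arrows"; your dimension count gives uniqueness and the arrow structure gives existence of those steps as subrepresentations), and then use that subrepresentations of regular representations with dimension vector a multiple of $\delta$ are regular, together with the restriction of $\eqref{DerCatEq}$ to torsion sheaves. Your extra observation about quotients is harmless but unnecessary, since once each $\cal E_i$ is known to be a torsion sheaf, the successive quotients are automatically torsion of the right length.
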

\begin{proof}
Let $M\in \ona{Rep}_{n\delta}\Gamma$, and $M'\subset M$ a subrepresentation with dimension vector $n'\delta$, $n'< n$.
Since the vertex $0$ in $\Gamma$ has only incoming arrows, $M'$ uniquely determines a compatible flag $M'_0\subset M'\subset M$, with $\dim M'_0 = n'\alpha_0$.
Therefore the derived equivalence~\eqref{DerCatEq} provides us with an injective map
\begin{gather*}
\FT_\lambda \to [\widetilde{\bfF}_{\ui_\lambda}^{\rm reg}/G_{n\delta}] = \T_n \times_{\ona{Rep}_{n\delta}} [\widetilde{\bfF}_{\ui_\lambda}/G_{n\delta}],\\
(\cal E_1\subset \ldots \subset \cal E_k) \mapsto \left(0 \rightrightarrows \Gamma(\cal E_1(1))\right)\subset \varepsilon(\cal E_1)\subset \ldots \subset \left( \Gamma(\cal E_{k-1}) \rightrightarrows \Gamma(\cal E_k(1)) \right)\subset \varepsilon(\cal E_k).
\end{gather*}

Further, let $M$ be regular, and $M'$ as above.
Since restriction of an isomorphism is an isomorphism, then $M'$ is also regular.
Therefore every flag in $\bfF_{\ui_\lambda}^{\rm reg}$ comes from a flag in $\FT_\lambda$, and the map above is an isomorphism.
\end{proof}

As before, let us define $\phi_\lambda: \Poll_n^{\fk{S}_\lambda\times \fk{S}_\lambda}\to \bf{P}_n^{\fk{S}_\lambda}$ as pullback along the open embedding $\FT_\lambda\subset [\widetilde{\bfF}_{\ui_\lambda}/G_{n\delta}]$.
The following corollary is proved completely analogously to \cref{lm:phiFormula}.
\begin{corr}\label{philamFormula}
The map $\phi_\lambda$ is obtained as a restriction of~\eqref{eq:polyRestr} to the invariants.
\end{corr}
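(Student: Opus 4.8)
The plan is to imitate the proof of \cref{lm:phiFormula}, replacing its single direct-sum reduction by a ``partial direct sum'' reduction adapted to the composition $\lambda$. First, by universal coefficients it is enough to treat $\bbk=\bbZ$. Write $\ui_\lambda=(0^{(\lambda_1)},1^{(\lambda_1)},\ldots,0^{(\lambda_k)},1^{(\lambda_k)})$ as in \cref{sec:semicusp}, so that $\phi_\lambda$ is by definition the pullback along the open embedding $\FT_\lambda\hookrightarrow[\widetilde{\bfF}_{\ui_\lambda}/G_{n\delta}]$ of \cref{lm:flagrestr}. I would then introduce the partial-direct-sum map
\[
\sigma_\lambda\colon \T_1^n\to\FT_\lambda,\qquad (\cal F_1,\ldots,\cal F_n)\mapsto\Bigl(\textstyle\bigoplus_{j\le\tilde\lambda_1}\cal F_j\subset\bigoplus_{j\le\tilde\lambda_2}\cal F_j\subset\cdots\subset\bigoplus_{j\le n}\cal F_j\Bigr),
\]
together with its quiver-side analogue $\sigma_\lambda'\colon(\ona{Rep}_\delta\Gamma)^n\to[\widetilde{\bfF}_{\ui_\lambda}/G_{n\delta}]$, built from partial direct sums of $\delta$-dimensional representations equipped with their canonical flags --- here one uses that a direct sum of $d$ representations of dimension $\delta$ carries a canonical flag of type $\ui_{(d)}=(0^{(d)},1^{(d)})$, because its vertex-$0$ part is automatically a subrepresentation.

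The key step is to check that the square
\[
\begin{tikzcd}
\T_1^n\ar[r,"\sigma_\lambda"]\ar[d,hook,"\varepsilon^{\times n}"'] & \FT_\lambda\ar[d,hook]\\
(\ona{Rep}_\delta\Gamma)^n\ar[r,"\sigma_\lambda'"'] & {[\widetilde{\bfF}_{\ui_\lambda}/G_{n\delta}]}
\end{tikzcd}
\]
commutes, where $\varepsilon^{\times n}$ is the $n$-fold power of the $n=1$ embedding $\varepsilon\colon\T_1\hookrightarrow\ona{Rep}_\delta\Gamma$ and the right vertical arrow is the embedding of \cref{lm:flagrestr}. Granting this, one takes $H^*(-,\bbZ)$ and reads off the four maps. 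The right vertical arrow induces $\phi_\lambda$. The left vertical arrow induces $\phi_1^{\otimes n}$, which by the $n=1$ instance of \cref{lm:phiFormula} (note that $\FT_{(1)}=\T_1$ and $[\widetilde{\bfF}_{\ui_{(1)}}/G_\delta]=\ona{Rep}_\delta\Gamma$) is exactly the map \eqref{eq:polyRestr}, i.e.\ $u_i\mapsto x_i$, $v_i\mapsto x_i+c_i$. Finally, under the identifications $H^*(\FT_\lambda,\bbZ)\simeq\bf{P}_n^{\fk{S}_\lambda}$ of \eqref{FlagCoh} and $H^*_{G_{n\delta}}(\widetilde{\bfF}_{\ui_\lambda},\bbZ)\simeq\Poll_n^{\fk{S}_\lambda\times\fk{S}_\lambda}$ of \cref{subs:geom-KLR}, the maps induced by $\sigma_\lambda$ and $\sigma_\lambda'$ are the tautological inclusions of invariant subrings $\bf{P}_n^{\fk{S}_\lambda}\hookrightarrow\bf{P}_n$ and $\Poll_n^{\fk{S}_\lambda\times\fk{S}_\lambda}\hookrightarrow\Poll_n$: in both cases the polynomial generators in question are Chern classes of the tautological subquotient (sub)sheaves, which restrict along the direct sum to the corresponding Chern classes of the summands --- this is the same equivariant bookkeeping already used in \cref{isovialoc} (and its quiver counterpart in \cref{subs:geom-KLR}). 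Since the left inclusion is injective, commutativity of the square then forces $\phi_\lambda$ to be the restriction of \eqref{eq:polyRestr} to $\fk{S}_\lambda\times\fk{S}_\lambda$-invariants (which, one checks, indeed lands in $\bf{P}_n^{\fk{S}_\lambda}$), as claimed.

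The main obstacle I expect is precisely the commutativity of the square of stacks: one must verify that the open embedding of \cref{lm:flagrestr} intertwines $\sigma_\lambda$ with $\sigma_\lambda'$. This is not a computation but a matter of unwinding how Beilinson's equivalence \eqref{DerCatEq} behaves on a chain of subquotients --- essentially already done inside the proof of \cref{lm:flagrestr} --- together with its compatibility with direct sums; once that is in place, the identification of the remaining three pullback maps is routine.
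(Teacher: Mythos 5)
Your proof follows the same strategy the paper has in mind: the paper's own argument is a one-liner ("proved completely analogously to \cref{lm:phiFormula}"), and the natural way to instantiate that analogy is precisely your commutative square with the partial-direct-sum maps $\sigma_\lambda$, $\sigma'_\lambda$ replacing the two $\bigoplus$ arrows in the proof of \cref{lm:phiFormula}, followed by reduction to the $n=1$ computation already done there. The one step you flag as the "main obstacle" — commutativity of the square of stacks — is indeed the content, but it unwinds directly from the explicit formula for the embedding $\FT_\lambda\hookrightarrow[\widetilde{\bfF}_{\ui_\lambda}/G_{n\delta}]$ given in the proof of \cref{lm:flagrestr} together with the fact that $\Gamma(-)$ commutes with finite direct sums, so your argument is complete and correct.
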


\begin{lm}\label{lm:phiinj}
For $\bfk$ a field of characteristic zero, the maps $\phi_n$, $\phi_\lambda$ are surjective.
\end{lm}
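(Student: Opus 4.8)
The plan is to first reduce the surjectivity of each $\phi_\lambda$ to that of the maps $\phi_m$, $m\geq 1$, and then to prove surjectivity of $\phi_n$ using Heinloth's theorem that $H^*(\T_n,\bfk)$ is generated by tautological classes.

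\textbf{Reduction of $\phi_\lambda$ to $\phi_n$.} By \cref{philamFormula}, $\phi_\lambda$ is the restriction to invariants of the ring homomorphism \eqref{eq:polyRestr}. Grouping the variables $u_\bullet,v_\bullet$ according to the blocks of $\lambda=(\lambda_1,\dots,\lambda_k)$, that homomorphism is visibly the tensor product over the blocks of the analogous homomorphisms for $\lambda_1,\dots,\lambda_k$, and the same grouping gives $\Poll_n^{\fk S_\lambda\times\fk S_\lambda}=\bigotimes_i \Poll_{\lambda_i}^{\fk S_{\lambda_i}\times\fk S_{\lambda_i}}$ and $\bf P_n^{\fk S_\lambda}=\bigotimes_i\bf P_{\lambda_i}^{\fk S_{\lambda_i}}$. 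Hence $\phi_\lambda=\bigotimes_{i=1}^k\phi_{\lambda_i}$, where $\phi_{\lambda_i}$ denotes the map called $\phi_m$ for $m=\lambda_i$. Since a tensor product over a field of surjective linear maps is surjective, it suffices to prove that $\phi_n$ is surjective for every $n$.

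\textbf{Surjectivity of $\phi_n$.} The image $\Im\phi_n$ is a subring of $\bf P_n^{\fk S_n}=H^*(\T_n,\bfk)$, which, over a field of characteristic zero, is generated by the Künneth--Chern classes $c_{i,0},c_{i,1}$ of the universal sheaf $\cal E$ (\cite{Hei_CMSC2012}; cf. the discussion preceding \cref{ex:d2nonsurj}). It is therefore enough to show $c_{i,0},c_{i,1}\in\Im\phi_n$. I would check this after restricting along the direct-sum map $\bigoplus\colon\T_1^n\to\T_n$: by \cref{isovialoc} the induced map $H^*(\T_n)\hookrightarrow H^*(\T_1^n)=\bf P_n$ is injective, and by \cref{lm:phiFormula} it is compatible with the de-symmetrised map \eqref{eq:polyRestr}, so it suffices to see that the restriction of $\ona{ch}(\cal E)$ to $\T_1^n\times\bbP^1$ lies in the image of \eqref{eq:polyRestr} tensored with $H^*(\bbP^1,\bfk)$. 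Now $\bigoplus^*\cal E\simeq\bigoplus_k\cal E_k$, and \eqref{eq:totChern1pt} gives $\ona{ch}(\cal E_k)=e^{x_k}(1-e^{-c_k-p})$; using $c_k^2=0=p^2$, $e^{c_k}=1+c_k$, $e^{x_k}=e^{u_k}$ and $e^{x_k+c_k}=e^{v_k}$ one gets
\[
\ona{ch}(\cal E)\big|_{\T_1^n\times\bbP^1}=\sum_k\bigl(e^{x_k+c_k}-e^{x_k}\bigr)\;+\;p\sum_k\bigl(2e^{x_k}-e^{x_k+c_k}\bigr).
\]
Each of the two Künneth components is a $\bbZ$-linear combination of $\sum_k e^{x_k}$ and $\sum_k e^{x_k+c_k}$, whose degree-$2m$ parts are $\tfrac1{m!}\sum_k x_k^m$ and $\tfrac1{m!}\sum_k(x_k+c_k)^m$; by Newton's identities these are $\bbQ$-polynomials in the elementary symmetric functions of $\{x_k\}$, resp. $\{x_k+c_k\}$, and the latter are $\phi_n(e_j(u_\bullet))$, resp. $\phi_n(e_j(v_\bullet))$. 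Hence $\ona{ch}(\cal E)|_{\T_1^n\times\bbP^1}\in\Im\phi_n\otimes H^*(\bbP^1,\bfk)$; since $\Im\phi_n$ is a $\bbQ$-subring and Chern classes are rational polynomials in the Chern character, the same holds for the $c_i(\cal E)$, so that their Künneth components $c_{i,0},c_{i,1}$ lie in $\Im\phi_n$. Combined with the generation statement this yields $\Im\phi_n=\bf P_n^{\fk S_n}$. (The use of characteristic zero is exactly the division by $m!$ and by small integers, in agreement with \cref{ex:d2nonsurj}, where $\phi_n$ fails to be surjective over $\bbZ$.)

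\textbf{Main obstacle.} The only non-formal point is identifying $\ona{ch}(\cal E)$ with a class pulled back via $\varepsilon$ from $\ona{Rep}_{n\delta}\Gamma\times\bbP^1$: conceptually this expresses $\cal E$ as the restriction of the relative Beilinson universal complex, but routing it through the rank-one computation \eqref{eq:totChern1pt} and the direct-sum map keeps the argument elementary. Everything else is bookkeeping with symmetric functions, together with the cited results \cref{isovialoc}, \cref{lm:phiFormula}, \cref{philamFormula} and Heinloth's theorem.
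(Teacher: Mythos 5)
Your proof is correct, but it takes a genuinely different route from the paper's.

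The paper's argument is shorter and more elementary: it invokes Weyl's theorem on polarizations, which says that $\bf P_n^{\fk S_n}\simeq\ona{Sym}^n(\bfk[x,c]/c^2)$ is generated by the polarized power sums $p_{k,0}=\sum_i x_i^k$ and $p_{k,1}=\sum_i c_i x_i^k$, and then produces explicit preimages under $\phi_n$: $p_{k,0}=\phi_n(\sum_i u_i^k)$ and $p_{k,1}=\tfrac1{k}\phi_n(\sum_i v_i^k-\sum_i u_i^k)$ for $k\geq 1$ (using $(x_i+c_i)^k-x_i^k=kc_ix_i^{k-1}$; in char.\ $0$ one can divide by $k$). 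You instead go through the geometry: Heinloth's theorem that $H^*(\T_n,\bbQ)$ is tautologically generated, plus a GRR/Chern-character computation on the fixed-point cover $\T_1^n$ to show the Künneth--Chern classes of $\cal E$ land in $\Im\phi_n$. This is essentially a characteristic-zero version of the argument the paper gives later in \cref{prop:phi=TH} (via the Beilinson resolution of $\cal E$); both express the tautological classes of $\T_n$ as polynomials in the Chern classes of $\Gamma(\cal E)$ and $\Gamma(\cal E(1))$.

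The trade-off is clear. Your approach is conceptually cleaner (it explains \emph{why} surjectivity should hold: tautological classes on $\T_n$ and on $\ona{Rep}_{n\delta}\Gamma$ match up under the derived equivalence), and it naturally anticipates \cref{prop:phi=TH} and the failure of surjectivity over $\bbZ$ in \cref{ex:d2nonsurj}. But it imports the heavier input (Heinloth's generation theorem, GRR) compared to Weyl's classical theorem, and it requires several small sanity checks (injectivity of restriction to $\T_1^n$ via \cref{isovialoc}, Künneth compatibility, the fact that $\Im\phi_n\oplus p\,\Im\phi_n$ is a $\bbQ$-subring so Chern classes follow from Chern characters) that you correctly identify but which lengthen the write-up. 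Your reduction of $\phi_\lambda$ to the $\phi_{\lambda_i}$ via the tensor decomposition is the same observation the paper compresses into ``clearly enough to prove for $\phi_n$''; writing it out is fine.

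One minor point: the paper's displayed identity should read $p_{k-1,1}=\frac1k\phi_n(\sum_i v_i^k-\sum_i u_i^k)$ (an off-by-one in the exponent); this does not affect the conclusion. Your computation $\ona{ch}(\cal E_k)=(e^{x_k+c_k}-e^{x_k})+p\,(2e^{x_k}-e^{x_k+c_k})$ is correct.
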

\begin{proof}
It is clearly enough to prove the statement for $\phi_n$.
By a theorem of Weyl~\cite[II.3]{Wey_CGTI1939}, the ring ${\bf P}_n^{\frakS_n}$ is generated by elements
\[
p_{k,0} = \sum_{i} x_i^k,\quad p_{k,1} = \sum_{i} c_ix_i^k.
\]
However, we have
\[
p_{k,0} = \phi_n\left(\sum_{i} u_i^k\right),\quad p_{k,1} = \frac{1}{k}\sum_i \left( (x_i+c_i)^k - x_i^k \right) = \frac{1}{k}\phi_n\left( \sum_{i} v_i^k - \sum_{i} u_i^k \right),
\]
and so we may conclude.
\end{proof}

Another immediate corollary from \cref{lm:flagrestr} is that we have $\FT_\mu\times_{\T_n} \FT_\lambda\simeq [\bfZ^{\rm reg}_{\ui_\mu,\ui_\lambda}/G_{n\delta}]$. 
The resulting restriction map $eR(n\delta)e\to \Sc_n$ is a homomorphism of algebras by smooth base change and functoriality of pullbacks.
Furthermore, it descends to a homomorphism $\Phi_n:eC(n\delta)e\to \Sc_n$ by \cref{rem:map-cusp-Schur-geom}.

\begin{rmq}
Note that both $eC(n\delta)e$ and $\Sc_n$ are defined over any commutative ring $\bbk$, in particular $\bbk=\bb F_p$ finite field.
Since $\Phi_n$ is essentially a pullback along an open embedding, it is also defined for any $\bbk$.
This will become important at the end of this section.
\end{rmq}

\begin{prop}
\label{prop:map-Phi}
The algebra homomorphism $\Phi_n$ sends each thick diagram in $e C(n\delta)e$ to the same diagram in $\Sc_n$, replacing each polynomial $P\in \Poll_a^{(\frakS_a)^2}$ on a strand of thickness $a$ by $\phi_a(P)$.
\end{prop}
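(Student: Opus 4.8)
The plan is to reduce to the elementary pieces of the thick calculus and then match geometric correspondences under the open embedding $\varepsilon$. By \cref{rem:span-gCg-thick}, every element of $e C(n\delta)e$ is a $\bfk$-linear combination of thick diagrams obtained by vertical concatenation of three kinds of elementary pieces: a symmetric polynomial $P\in\Poll_a^{(\frakS_a)^2}$ sitting on a strand of thickness $a$, an elementary split $\rmS_\lambda^\mu$, and an elementary merge $\rmM_\mu^\lambda$; the crossings $\rmR_\lambda^\mu(w)$ are by definition products of such splits and merges, exactly as in \cref{subs:diag-Schur}. Since $\Phi_n$ is a homomorphism of algebras, it suffices to compute its value on each of these three building blocks, so the first step is to identify $\Phi_n(P)$, $\Phi_n(\rmS_\lambda^\mu)$ and $\Phi_n(\rmM_\mu^\lambda)$ individually.

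For a polynomial on the $k$-th strand of $e_\lambda$ (so $a=\lambda_k$), I would use the identification $H^*_{G_{n\delta}}(\widetilde{\bfF}_{\ui_\lambda})\simeq\Poll_n^{(\frakS_\lambda)^2}$ from \cref{subs:geom-KLR}, under which $P$ is the class acting by multiplication by $P$. The restriction of $\Phi_n$ to this summand is, by construction, the pullback along the open embedding $\FT_\lambda\hookrightarrow[\widetilde{\bfF}_{\ui_\lambda}/G_{n\delta}]$ of \cref{lm:flagrestr}, i.e.\ the ring map $\phi_\lambda\colon\Poll_n^{\frakS_\lambda\times\frakS_\lambda}\to\bf{P}_n^{\frakS_\lambda}$. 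By \cref{philamFormula}, $\phi_\lambda$ is the restriction to invariants of $u_i\mapsto x_i$, $v_i\mapsto x_i+c_i$, which on the block of positions belonging to the $k$-th component of $\lambda$ is precisely $\phi_a$. Combined with the fact that multiplication operators in $\Sc_n$ act through the identification $H^*_{G_n}(Y_\lambda)\simeq\bf{P}_n^{\frakS_\lambda}$ (see \cref{SchurPolRepFaith}), this gives $\Phi_n(P)=\phi_a(P)$, i.e.\ the same thick diagram with $P$ replaced by $\phi_a(P)$.

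For the elementary split and merge I would invoke \cref{lm:geomSMcusp}, which says that, under \cref{prop:geom-KLR-div}, one has $\rmS_\lambda^\mu=[\bfZ^{\rm S}_{\ui_\mu,\ui_\lambda}]$ and $\rmM_\mu^\lambda=[\bfZ^{\rm M}_{\ui_\lambda,\ui_\mu}]$ in $eR(n\delta)e$, while in $\Sc_n$ the split $\rmS_\lambda^\mu$ and merge $\rmM_\mu^\lambda$ are by definition the pushforwards of $[Y_\mu]$ along the closed embeddings $Y_\mu\hookrightarrow Z_{\mu,\lambda}$ and $Y_\mu\hookrightarrow Z_{\lambda,\mu}$. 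From \cref{lm:flagrestr} and its corollary $\FT_\mu\times_{\T_n}\FT_\lambda\simeq[\bfZ^{\rm reg}_{\ui_\mu,\ui_\lambda}/G_{n\delta}]$, the open embedding identifies the regular locus of $\bfZ^{\rm S}_{\ui_\mu,\ui_\lambda}$ with the Schur-side split correspondence $\FT_\mu\times_{\FT_\lambda}\FT_\lambda\subset\FT_\mu\times_{\T_n}\FT_\lambda$, and likewise for merges. Since the pullback of a fundamental class along an open immersion is the fundamental class of the preimage, and $\Phi_n$ is this pullback (which intertwines the two convolution products by smooth base change, as already used to see $\Phi_n$ is an algebra map), I conclude $\Phi_n(\rmS_\lambda^\mu)=\rmS_\lambda^\mu$ and $\Phi_n(\rmM_\mu^\lambda)=\rmM_\mu^\lambda$. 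Assembling the three computations and using multiplicativity of $\Phi_n$ then yields the statement for an arbitrary thick diagram.

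The main obstacle I anticipate is bookkeeping rather than a conceptual difficulty: one must carefully align the two diagrammatic conventions — the thick calculus for $e R(n\delta)e$ from \cref{subs:thickcals-KLR}, with its divided-power boxes, and the curve-Schur calculus of \cref{subs:diag-Schur} — and in particular verify that the open restriction of each elementary correspondence is literally the one occurring on the Schur side, with no spurious Euler-class factor or degree shift, and that the maps in question are genuinely open immersions of stacks (not merely equivalences of associated stacks in groupoids). Once these identifications are pinned down, the argument is purely formal.
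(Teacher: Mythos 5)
Your proof is correct and follows essentially the same route as the paper's (whose proof is a one-line citation of \cref{lm:geomSMcusp,lm:phiFormula}): reduce to elementary pieces, use \cref{lm:geomSMcusp} to express splits and merges as fundamental classes of the divided-power Steinberg correspondences, use \cref{lm:flagrestr} to identify their regular loci with the Schur-side correspondences (so pullback along the open immersion sends fundamental class to fundamental class), and use \cref{philamFormula} (the $\phi_\lambda$-generalization of \cref{lm:phiFormula}) to handle the polynomial decorations. The paper simply leaves these verifications implicit, so your write-up just spells out the same argument in detail.
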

\begin{proof}
Follows from \cref{lm:geomSMcusp,lm:phiFormula}.
\end{proof}

Restricting to polynomial operators, we have the following commutative square:
\begin{equation}\label{Jn-square}
\begin{tikzcd}
	\Poll_n^{\frakS_n^2}\ar[r,"\phi_n"]\ar[d,"\iota_n"] & \bf{P}_n^{\frakS_n}\ar[d,hook]\\
	eC(n\delta)e\ar[r,"\Phi_n"]& \Sc_n 
\end{tikzcd}
\end{equation}
Let $J_n$ be the kernel of $\iota_n$.
It is clear that $J_n\subset \Ker\phi_n$; we will show in \cref{lem:Jn=Ker} that this is actually an equality.
In order to prove this, we will need some preparations. 

\subsection{Shuffle products}
Consider the product 
$*\colon \Poll_n^{(\frakS_n)^2}\times \Poll_m^{(\frakS_m)^2}\to \Poll_{n+m}^{(\frakS_{n+m})^2}$ given by 

\begin{equation}\label{eq:fusion-pic}
\begin{aligned}
\tikz[thick,xscale=.25,yscale=.25]{
	\ntxt{0}{-0.5}{$n+m$}
	\draw (0,1) -- (0,5);
	\opbox{-2}{5}{2}{7}{$P*Q$}
	\draw (0,7) -- (0,11);
	\ntxt{0}{12}{$n+m$}
	
	\ntxt{4.5}{6}{$=$}
	
	\draw (11,1) -- (11,2);
	\ntxt{11}{-0.5}{$n+m$}
	\dsplit{8}{2}{14}{5}
	\ntxt{8}{3}{$n$}
	\ntxt{14}{3}{$m$}
	\opbox{7}{5}{9}{7}{$P$}
	\opbox{13}{5}{15}{7}{$Q$}
	\dmerge{8}{7}{14}{10}
	\draw (11,10) -- (11,11);
	\ntxt{11}{12}{$n+m$}
	\ntxt{16}{6}{.}
}
\end{aligned}   
\end{equation}

Thanks to the proof of \cref{lm:geomSMcusp}, we have the following expression for shuffle product:
\[
P*Q = \partial^u_{w_{0,a,b}}\partial^v_{w_{0,a,b}}\left( (P\otimes Q)\prod_{i=1}^a\prod_{j=a+1}^b (v_i-u_j)^2 \right),
\]

We also consider the shuffle product 
$$
*\colon \bf{P}_n^{\frakS_n}\times \bf{P}_m^{\frakS_m}\to \bf{P}_{n+m}^{\frakS_{n+m}}
$$
given by the same picture \eqref{eq:fusion-pic}, but using diagrammatic calculus in $\Sc_n$ instead of diagrammatic calculus in $e R(n\delta) e$.

\begin{lm}
The map
\[
\bigoplus_n \phi_n: \bigoplus_n \Poll_n^{(\frakS_n)^2}\to \bigoplus_n \bf{P}_n^{\frakS_n}
\]
is a homomorphism of algebras (with respect to the operations $*$).
\end{lm}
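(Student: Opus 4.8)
The plan is to deduce the statement from the fact, already available in the excerpt, that $\Phi_n$ is an algebra homomorphism which acts on thick diagrams by the recipe of \cref{prop:map-Phi}. Write $\Psi_n\colon e R(n\delta)e\twoheadrightarrow e C(n\delta)e\xrightarrow{\Phi_n}\Sc_n$ for the composite of the quotient map with $\Phi_n$; it is a homomorphism of (non-unital) algebras, and since the quotient map fixes diagrams, \cref{prop:map-Phi} shows that $\Psi_n$ sends every thick diagram in $e R(n\delta)e$ to the same diagram in $\Sc_n$, replacing each polynomial $P\in\Poll_a^{(\frakS_a)^2}$ on a strand of thickness $a$ by $\phi_a(P)$. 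In particular, restricted to the diagonal block $e_{(k)}R(k\delta)e_{(k)}\simeq\Poll_k^{(\frakS_k)^2}$ (a single thick strand of thickness $k$ carrying a polynomial; see \cref{rem:polyn-in-KLR:a}), the map $\Psi_k$ is $\phi_k$ followed by the inclusion $\bf{P}_k^{\frakS_k}=\Sc_{(k),(k)}\hookrightarrow\Sc_k$ of \cref{DiagEmb} — this is precisely the commutativity of the square \eqref{Jn-square}.

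Next I would unwind the definition \eqref{eq:fusion-pic}. For $P\in\Poll_n^{(\frakS_n)^2}$ and $Q\in\Poll_m^{(\frakS_m)^2}$, the picture says that, inside $e_{(n+m)}R((n+m)\delta)e_{(n+m)}$,
\[
(P*Q)\,e_{(n+m)}=\rmM^{(n+m)}_{(n,m)}\cdot X\cdot\rmS^{(n,m)}_{(n+m)},
\]
where $X$ is the diagram consisting of two parallel thick strands of thicknesses $n$ and $m$ carrying $P$ and $Q$ respectively. Applying the algebra homomorphism $\Psi_{n+m}$ and using the description of $\Psi_{n+m}$ on thick diagrams and on the diagonal block recalled above, one gets
\[
\phi_{n+m}(P*Q)=\Psi_{n+m}\big((P*Q)e_{(n+m)}\big)=\rmM^{(n+m)}_{(n,m)}\cdot X'\cdot\rmS^{(n,m)}_{(n+m)}\in\Sc_{n+m},
\]
where all diagrams are now interpreted in the diagrammatic calculus of $\Sc_{n+m}$ and $X'$ is the diagram of two parallel thick strands carrying $\phi_n(P)$ and $\phi_m(Q)$. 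By definition the right-hand side is exactly $\phi_n(P)*\phi_m(Q)$, the shuffle product on $\bigoplus_k\bf{P}_k^{\frakS_k}$ being given by the very same picture \eqref{eq:fusion-pic} in $\Sc_{n+m}$. Combined with the $\bfk$-linearity of each $\phi_n$ and the fact that $\phi_0=\id_\bfk$ preserves the unit, this proves that $\bigoplus_n\phi_n$ is a homomorphism of algebras with respect to the operations $*$.

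The only real content is bookkeeping: one must check that the two shuffle products are genuinely two instances of one diagrammatic recipe \eqref{eq:fusion-pic}, and that the diagonal blocks $e_{(k)}R(k\delta)e_{(k)}$ and $\Sc_{(k),(k)}$ are identified with the polynomial rings $\Poll_k^{(\frakS_k)^2}$ and $\bf{P}_k^{\frakS_k}$ in the way that makes $\Psi_k$ restrict to $\phi_k$ — but this is exactly what \eqref{Jn-square} together with \cref{prop:map-Phi,rem:polyn-in-KLR:a} record, so no new argument is needed. I note that a purely computational verification is also possible, using the explicit formula $P*Q=\partial^u_{w_{0,n,m}}\partial^v_{w_{0,n,m}}\big((P\otimes Q)\prod_{i\le n<j}(v_i-u_j)^2\big)$ and the corresponding Demazure formula for the merge operator in $\Sc_{n+m}$ from \cref{SchurPolRepFaith}; however this route is more painful, since the substitution $v_i\mapsto x_i+c_i$ defining $\phi$ does not commute with $\partial^v$ in an obvious way, so I would favour the functorial argument above.
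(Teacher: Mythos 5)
Your argument is correct and is exactly what the paper's one-line proof (``Follows from the definitions and \cref{prop:map-Phi}'') is compressing: both shuffle products are given by the single diagrammatic recipe \eqref{eq:fusion-pic}, $\Phi_n$ is an algebra homomorphism that sends thick diagrams to thick diagrams replacing each polynomial $P$ by $\phi_a(P)$, and on the diagonal block $e_{(k)}R(k\delta)e_{(k)}\simeq\Poll_k^{(\frakS_k)^2}$ it restricts to $\phi_k$ by \eqref{Jn-square}. So you have simply spelled out the same argument in full; no new ideas were needed.
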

\begin{proof}
Follows from the definitions and \cref{prop:map-Phi}.
\end{proof}

For a polynomial $f\in \Poll_n$, denote by $\ev(f)$ the polynomial in $\bfk[v_1,\ldots,v_n]$ obtained from $f$ after evaluation $u_1=u_2=\ldots=u_n=0$.
Set $D_n^u=\partial^u_1\partial^u_2\ldots\partial^u_{n-1}$ and $D_n^v=\partial^v_1\partial^v_2\ldots\partial^v_{n-1}$, where $\partial_i^u$ denote Demazure operators in variables $u_1,\ldots,u_n$, and $\partial_i^v$ are defined analogously.

\begin{rmq}
\label{rem:c(Du)-coeff}
Note that $D^u_n(u_n^{n-1})=(-1)^{n-1}$. 
This identity allows to simplify expressions of the form $\ev(D^u_n(P))$, where $P\in\bfk[v_1,\dots,v_n]^{\frakS_n}[u_n]$.
Write $P=\sum_ru_n^rP_r$, where $P_r\in\bfk[v_1,\ldots,v_n]^{\frakS_n}$. Then we have
$$
\ev(D^u_n(P))=(-1)^{n-1}(P_{n-1}).
$$
\end{rmq}

Denote by $\sigma_k^{(n)}$ the $k$-th elementary symmetric polynomial on the variables $v_1,\ldots,v_n$; we use the convention $\sigma^{(n)}_0=1$.
We also denote the unit in $\Poll_n$ by $1_n$.

\begin{lm}
\label{lem:1*f}
Assume $1\leqslant k\leqslant n$. We have $\ev(1_{n-1}*(v-u)u^{k-1})=(-1)^{k-1}\sigma_k^{(n)}$.
\end{lm}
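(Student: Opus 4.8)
We must compute $\ev\bigl(1_{n-1}*(v-u)u^{k-1}\bigr)$, where the left factor $1_{n-1}\in\Poll_{n-1}^{(\frakS_{n-1})^2}$ and the right factor $(v-u)u^{k-1}\in\Poll_1$. By the explicit formula for the shuffle product recalled just before the statement (with $a=n-1$, $b=n$, so the ``new'' variables are $u_n,v_n$), we have
\[
1_{n-1}*(v_n-u_n)u_n^{k-1}=\partial^u_{w_{0,n-1,1}}\partial^v_{w_{0,n-1,1}}\Bigl((v_n-u_n)u_n^{k-1}\prod_{i=1}^{n-1}(v_i-u_n)^2\Bigr).
\]
The plan is to evaluate this step by step. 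First I would set $u_1=\dots=u_n=0$ \emph{after} the Demazure operators have acted; since $\partial^v_{w_{0,n-1,1}}$ commutes with multiplication by the $u$-variables, and $\ev$ commutes with $\partial^v$-operators, I can first apply $\ev$ to the part not involving $\partial^u$, being careful that $\partial^u_{w_{0,n-1,1}}$ acts on a polynomial of $u$-degree that must be tracked. Concretely, $\partial^u_{w_{0,n-1,1}}$ is a Demazure operator pushing $u_n$ past $u_1,\dots,u_{n-1}$; by \cref{lem:Dem-ab-formula} it equals $\sum_{w}w\bigl(\tfrac{(-)}{\prod_{i=1}^{n-1}(u_i-u_n)}\bigr)$ over cosets $\frakS_n/(\frakS_{n-1}\times\frakS_1)$.

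The key computational simplification is \cref{rem:c(Du)-coeff}: since $\partial^u_{w_{0,n-1,1}}=\pm D^u_n$ up to reindexing (it is the operator that symmetrizes $u_n$ into $u_1,\dots,u_{n-1}$), and the argument, once we discard terms with positive $u_i$-degree for $i<n$ under $\ev$, is a polynomial in $u_n$ with coefficients symmetric in $v_1,\dots,v_n$, only the coefficient of $u_n^{n-1}$ survives after $\ev\circ D^u_n$, contributing a sign $(-1)^{n-1}$. So the next step is: expand $(v_n-u_n)u_n^{k-1}\prod_{i=1}^{n-1}(v_i-u_n)^2$, apply $\partial^v_{w_{0,n-1,1}}$ (which symmetrizes $v_n$ into $v_1,\dots,v_{n-1}$, yielding a polynomial symmetric in all $v_i$'s — this is where \cref{lem:Dem-ab-sym} guarantees the output lands in $\bfk[v_1,\dots,v_n]^{\frakS_n}[u_n]$, so \cref{rem:c(Du)-coeff} applies), and then extract the coefficient of $u_n^{n-1}$. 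I expect the extraction to isolate, via the Vieta-type identity $\prod_{i=1}^{n-1}(v_i-u_n)=\sum_j(-u_n)^{n-1-j}\sigma^{(n-1)}_j$ combined with the factor $(v_n-u_n)u_n^{k-1}$, a term whose $v$-symmetrization is precisely $\sigma^{(n)}_k$, with the sign bookkeeping giving $(-1)^{k-1}$.

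\textbf{Main obstacle.} The delicate point is matching up the two Demazure operators and the powers of $u_n$ correctly: after the $u$-symmetrization collapses everything to the $u_n^{n-1}$-coefficient, one must verify that the remaining $v$-symmetrization of $(v_n-u_n)\prod_{i=1}^{n-1}(v_i-u_n)^2$ — a degree-$(2n-1)$ polynomial in the $v$'s and $u_n$ — really produces $\sigma^{(n)}_k$ in the relevant graded piece, and that the factor $u_n^{k-1}$ shifts which coefficient of $u_n$ we read off so that exactly $\sigma^{(n)}_k$ (and not some other symmetric function or a sum of them) appears. I would handle this by observing that $\partial^v_{w_{0,n-1,1}}\bigl(\prod_{i=1}^{n-1}(v_i-u_n)^2(v_n-u_n)\bigr)$ can be evaluated by a generating-function argument: $\sum_k(-1)^k\sigma^{(n)}_k t^{n-k}=\prod_{i=1}^n(t-v_i)$, so after setting $t=u_n$ the whole computation reduces to an identity between $\prod_{i=1}^n(u_n-v_i)$ (up to the $v$-symmetrization of the squared factor, which by Cauchy-type manipulations reduces to a simple product) and a sum over $k$ of $(-1)^{k-1}\sigma^{(n)}_k u_n^{\text{something}}$. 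Once this generating-function identity is pinned down, reading off the coefficient of $u_n^{k-1}\cdot u_n^{n-1}=u_n^{n+k-2}$ (equivalently, tracking degrees so that $\ev\circ D^u_n$ lands on $\sigma^{(n)}_k$) gives the claimed formula. The sign $(-1)^{k-1}$ then comes from combining the $(-1)^{n-1}$ of \cref{rem:c(Du)-coeff} with the $(-1)^{n-k}$ in Vieta's formula for $\sigma^{(n)}_k$.
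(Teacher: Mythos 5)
Your plan is correct, but it takes a genuinely different route from the paper's proof, which proceeds by induction on $n$: the paper handles $k=1$ and $n=k$ as base cases via \cref{rem:c(Du)-coeff}, and for $n>k>1$ applies $\partial^v_{n-1}$ to collapse one square $(v_{n-1}-u_n)^2$ to a linear factor, shifts from $u_n$ to $u_{n-1}$ (again via \cref{rem:c(Du)-coeff}), splits $(v_n-u_{n-1})$ into $v_n$ and $-u_{n-1}$, and recognizes the Pascal relation $\sigma^{(n)}_k = v_n\sigma^{(n-1)}_{k-1}+\sigma^{(n-1)}_k$. Your approach instead performs the two symmetrizations directly, without induction: the crux is the identity
\[
D_n^v\Bigl[(v_n-u_n)\prod_{i=1}^{n-1}(v_i-u_n)^2\Bigr]=\prod_{i=1}^{n}(v_i-u_n),
\]
which you allude to as a ``Cauchy-type manipulation'' but do not actually carry out. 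It does hold: by \cref{lem:Dem-ab-formula} the left side equals $\sum_{j=1}^{n}\frac{(v_j-u_n)\prod_{i\ne j}(v_i-u_n)^2}{\prod_{i\ne j}(v_i-v_j)}=\prod_{i=1}^n(v_i-u_n)^2\cdot\sum_j\frac{1}{(v_j-u_n)\prod_{i\ne j}(v_i-v_j)}$, and the partial-fraction (Lagrange interpolation) identity $\sum_j\frac{1}{(v_j-t)\prod_{i\ne j}(v_i-v_j)}=\frac{1}{\prod_i(v_i-t)}$ collapses this to $\prod_i(v_i-u_n)$. With that in hand, $\ev\circ D^u_n$ extracts the coefficient of $u_n^{n-1}$ in $u_n^{k-1}\prod_i(v_i-u_n)$, i.e.\ the coefficient of $u_n^{n-k}$ in $\prod_i(v_i-u_n)$, which is $(-1)^{n-k}\sigma_k^{(n)}$; together with the $(-1)^{n-1}$ from \cref{rem:c(Du)-coeff} this gives $(-1)^{k-1}\sigma_k^{(n)}$, as you conclude. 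Your approach buys a closed-form, inductive-free argument and exposes a cleaner structure (the inner $D^v_n$ is simply the full Vieta product), at the cost of having to prove the Lagrange identity; the paper avoids that identity by inducting, at the cost of a more delicate case analysis. One slip worth fixing: you write that one reads off the coefficient of $u_n^{k-1}\cdot u_n^{n-1}=u_n^{n+k-2}$, but $\ev\circ D^u_n$ picks the coefficient of $u_n^{n-1}$ in the total polynomial $u_n^{k-1}f(u_n)$, hence the coefficient of $u_n^{n-k}$ in $f$; your final sign count $(-1)^{n-1}\cdot(-1)^{n-k}=(-1)^{k-1}$ shows you implicitly meant this.
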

\begin{proof}
We have 
$$
1_{n-1}*(v-u)u^{k-1}=D_n^uD_n^v\left[(v_1-u_n)^2(v_2-u_n)^2\ldots(v_{n-1}-u_n)^2 (v_n-u_n)u_n^{k-1}\right].
$$

First, let $k=1$. We have
\begin{align*}
\ev(1_{n-1}*(v-u)) & = \ev\left(D_n^uD_n^v{\left[(v_1-u_n)^2(v_2-u_n)^2\ldots(v_{n-1}-u_n)^2 (v_n-u_n)\right]}\right)\\
 & = \ev(D_n^u[(v_1-u_n)(v_2-u_n)\ldots (v_n-u_n)])\\
 & =v_1+v_2+\ldots+v_n=\sigma_1^{(n)}.
\end{align*}

Now, assume $k>1$.
We fix $k$ and proceed by induction on $n$.
If $n=k$, we have
\begin{align*}
\ev(1_{n-1}*(v-u)u^{k-1}) & = \ev\left(D_n^uD_n^v{\left[(v_1-u_n)^2(v_2-u_n)^2\ldots(v_{n-1}-u_n)^2 (v_n-u_n)u_n^{n-1}\right]}\right)\\
 & = (-1)^{n-1}(D_n^v(v_1^2v_2^2\ldots v_{n-1}^2 v_n))\\
 & = (-1)^{n-1}(v_1v_2\ldots v_{n-1} v_n)=(-1)^{n-1}\sigma_n^{(n)}.
\end{align*}
where the second equality follows from \cref{rem:c(Du)-coeff}.

\medskip
Now assume $n>k>1$.
Let us write $Q(u,v) = (v_1-u_n)^2(v_2-u_n)^2\ldots(v_{n-2}-u_n)^2$ for brevity.
We have
\begin{align*}
\partial^v_{n-1}\left[Q(u,v)(v_{n-1}-u_n)^2 (v_n-u_n)u_n^{k-1}\right] & = Q(u,v)(v_{n-1}-u_n)(v_n-u_n)u_n^{k-1} \partial_{n-1}^v (v_{n-1}-u_n)\\
 & = Q(u,v)(v_{n-1}-u_n)(v_n-u_n)u_n^{k-1}.
\end{align*}
This implies
\begin{align*}
\ev(1_{n-1}*(v-u)u^{k-1}) & = \ev\left(D_n^uD_n^v{\left[Q(u,v)(v_{n-1}-u_n)^2 (v_n-u_n)u_n^{k-1}\right]}\right)\\
& = \ev\left(D_n^u D_{n-1}^v{\left[Q(u,v)(v_{n-1}-u_n) (v_n-u_n)u_n^{k-1}\right]}\right)\\
& = -\ev\left(D_{n-1}^u D_{n-1}^v{\left[Q(u,v)(v_{n-1}-u_{n-1})(v_n-u_{n-1})u_{n-1}^{k-2}\right]}\right)\\
& = -v_n\ev(1_{n-2}*(u-v)u^{k-2})+\ev(1_{n-2}*(v-u)u^{k-1})\\
& = -v_n(-1)^{k-2}\sigma^{(n-1)}_{k-1}+(-1)^{k-1}\sigma^{(n-1)}_{k}=(-1)^{k-1}\sigma^{(n)}_k,
\end{align*}
where the third equality follows from \cref{rem:c(Du)-coeff}. 
\end{proof}

For each positive integer $k$ we set $\widetilde f_k=(v-u)u^{k-1}\in \Poll_1$, and $\widetilde t_{n,k}=1_{n-1}*{\widetilde f}_k\in \Poll_n^{(\frakS_n)^2}$.
The following proposition follows from \cref{lem:1*f}.
\begin{prop}
\label{prop:gen-1*f}
The commutative ring $\Poll_n^{(\frakS_n)^2}$ is generated by $\bfk[u_1,\ldots,u_n]^{\frakS_n}$ together with elements $\widetilde t_{n,k}$ for $k\in [1;n]$.
\end{prop}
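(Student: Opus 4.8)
The statement to prove is \cref{prop:gen-1*f}: the ring $\Poll_n^{(\frakS_n)^2} = \bfk[u_1,\dots,u_n,v_1,\dots,v_n]^{\frakS_n\times\frakS_n}$ is generated by $\bfk[u_1,\dots,u_n]^{\frakS_n}$ together with the elements $\widetilde t_{n,k} = 1_{n-1}*\widetilde f_k$ for $k\in[1,n]$. The guiding principle is that $\phi_n$ identifies $\Poll_n^{(\frakS_n)^2}$ (modulo its kernel) with $\bf{P}_n^{\frakS_n}$, and by \cref{lm:phiinj} the image is generated by the power sums $p_{k,0}=\sum x_i^k$ and $p_{k,1}=\sum c_i x_i^k$; but I prefer to argue directly on $\Poll_n^{(\frakS_n)^2}$ and avoid the kernel issue entirely. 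A bialgebra-type grading will do the bookkeeping.

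First I would introduce the bigrading on $\bigoplus_n \Poll_n^{(\frakS_n)^2}$ by $(n,d)$ where $d$ counts the total $v$-degree, and observe that the shuffle product $*$ is compatible with it: $\widetilde t_{n,k}$ has bidegree $(n,1)$ (its $v$-degree is exactly $1$, coming from the single factor $(v-u)$ which is shuffled in, the rest being powers of $u$). Let $A_n\subseteq \Poll_n^{(\frakS_n)^2}$ denote the subalgebra generated by $\bfk[u_1,\dots,u_n]^{\frakS_n}$ and the $\widetilde t_{n,k}$, $k\in[1,n]$. Since $\Poll_n^{(\frakS_n)^2}$ is a free module over $\bfk[u_1,\dots,u_n]^{\frakS_n}\otimes \bfk[v_1,\dots,v_n]^{\frakS_n}$ — or, more simply, since it is a polynomial algebra over $\bfk[u]^{\frakS_n}$ in the variables $\sigma_1^{(n)},\dots,\sigma_n^{(n)}$ (elementary symmetric functions of the $v_i$), possibly with mixed coefficients — the cleanest route is: show that the $v$-degree-$\le 1$ part of $\Poll_n^{(\frakS_n)^2}$ over $\bfk[u]^{\frakS_n}$ is already generated, and then bootstrap by multiplying.

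The key computational input is \cref{lem:1*f}: $\ev(\widetilde t_{n,k}) = \ev(1_{n-1}*(v-u)u^{k-1}) = (-1)^{k-1}\sigma_k^{(n)}$. This tells us that, after setting all $u_i=0$, the elements $\widetilde t_{n,1},\dots,\widetilde t_{n,n}$ specialize (up to signs) to the elementary symmetric polynomials $\sigma_1^{(n)},\dots,\sigma_n^{(n)}$, which generate $\bfk[v_1,\dots,v_n]^{\frakS_n}$. Now I would argue by induction on the $v$-degree $d$. Write any $f\in\Poll_n^{(\frakS_n)^2}$ as $f = f_0 + f_{\ge 1}$ where $f_0 = f|_{u=0}\in\bfk[v]^{\frakS_n}$ and $f_{\ge 1}$ vanishes at $u=0$. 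By the specialization above, $f_0$ is a polynomial in $\ev(\widetilde t_{n,1}),\dots,\ev(\widetilde t_{n,n})$; choose the same polynomial $g$ in $\widetilde t_{n,1},\dots,\widetilde t_{n,n}\in A_n$ (an honest product in $\Poll_n^{(\frakS_n)^2}$, not a shuffle), so that $g|_{u=0} = f_0$ and hence $h := f - g$ vanishes identically when $u_1=\dots=u_n=0$, i.e.\ $h$ lies in the ideal $(u_1,\dots,u_n)\cap\Poll_n^{(\frakS_n)^2}$. Here the main obstacle arises: I need that $h$, which is $\frakS_n\times\frakS_n$-invariant and divisible by the ideal generated by the $u_i$, can be written as a $\bfk[u]^{\frakS_n}$-combination of $v$-symmetric invariants with strictly controlled data — precisely, that $(u_1,\dots,u_n)\cap\Poll_n^{(\frakS_n)^2}$ is generated as an ideal in $\Poll_n^{(\frakS_n)^2}$ by the positive-degree elements of $\bfk[u]^{\frakS_n}$ (equivalently, that $\bfk[u]^{\frakS_n}\to\Poll_n^{(\frakS_n)^2}/(v\text{-part})$ is faithfully flat/split appropriately). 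This is a standard fact about invariants of $\frakS_n\times\frakS_n$ acting on two sets of variables: $\Poll_n^{(\frakS_n)^2}$ is free over $\bfk[u]^{\frakS_n}$, so $(u_1,\dots,u_n)\Poll_n^{(\frakS_n)^2}\cap\Poll_n^{(\frakS_n)^2}$ equals $\bigl(\bfk[u]^{\frakS_n}_{+}\bigr)\cdot\Poll_n^{(\frakS_n)^2}$ by a degree/leading-term argument or by noting $\Poll_n^{(\frakS_n)^2} = \bfk[u]^{\frakS_n}\otimes_{\bfk}\bigl(\text{coinvariants}\bigr)\otimes \bfk[v]^{\frakS_n}$-style decomposition. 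Granting this, $h = \sum_j q_j\, r_j$ with $q_j\in\bfk[u]^{\frakS_n}_+\subseteq A_n$ and $r_j\in\Poll_n^{(\frakS_n)^2}$ of $v$-degree $\le d$ and strictly smaller $u$-degree than $h$; then a double induction (on $v$-degree, and within that on $u$-degree, using Noetherianity) reduces $r_j$ to $A_n$. Chasing the bigrading, every such $r_j$ has $v$-degree $\le d$, and the inductive hypothesis applies verbatim, so $h\in A_n$ and therefore $f = g+h\in A_n$.

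In summary, the steps in order are: (i) set up the bigrading and record that $*$ and the ordinary product both respect it; (ii) use \cref{lem:1*f} to identify $\ev(\widetilde t_{n,k}) = (-1)^{k-1}\sigma_k^{(n)}$ and conclude $\bfk[v]^{\frakS_n}\subseteq A_n|_{u=0}$; (iii) prove (or cite) that $(u_1,\dots,u_n)\cap\Poll_n^{(\frakS_n)^2} = \bfk[u]^{\frakS_n}_+\cdot\Poll_n^{(\frakS_n)^2}$ using freeness of $\Poll_n^{(\frakS_n)^2}$ over $\bfk[u]^{\frakS_n}$; (iv) run the induction on $v$-degree and $u$-degree to peel off the $u=0$ part via step (ii) and the divisible remainder via step (iii). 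The genuine obstacle is step (iii)/(iv) — controlling the invariant ideal and making the induction terminate — everything else is formal once \cref{lem:1*f} is in hand.
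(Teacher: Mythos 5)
Your core argument is correct and is evidently what the paper has in mind (the paper gives no proof, merely saying the proposition "follows from \cref{lem:1*f}"). The essential content is exactly what you identify: $\ev(\widetilde t_{n,k})=(-1)^{k-1}\sigma_k^{(n)}$, the elementary symmetric polynomials generate $\bfk[v_1,\dots,v_n]^{\frakS_n}$, the tensor decomposition $\Poll_n^{(\frakS_n)^2}=\bfk[u]^{\frakS_n}\otimes_\bfk \bfk[v]^{\frakS_n}$ gives $(u_1,\dots,u_n)\cap\Poll_n^{(\frakS_n)^2}=\bfk[u]^{\frakS_n}_+\cdot\Poll_n^{(\frakS_n)^2}$, and then the induction on (total) degree closes the argument. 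All correct.

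However, the opening observation is wrong: you claim that $\widetilde t_{n,k}=1_{n-1}*\widetilde f_k$ has $v$-degree exactly $1$, and that $*$ is compatible with the bigrading by $v$-degree. Neither holds. The shuffle product preserves total degree but not the $(u,v)$-bidegree, because its definition introduces the factor $\prod(v_i-u_j)^2$ before applying Demazure operators, and that factor mixes $u$- and $v$-degrees. In fact your own key input \cref{lem:1*f} contradicts the claim: $\ev(\widetilde t_{n,k})=(-1)^{k-1}\sigma_k^{(n)}$ has $v$-degree $k$, so $\widetilde t_{n,k}$ certainly has $v$-degree-$k$ components for $k\geq 2$. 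Fortunately this remark plays no role in the rest of your proof --- the induction uses only the ordinary ring product and the $u=0$ evaluation, not the shuffle product or the (nonexistent) $v$-degree bound on $\widetilde t_{n,k}$ --- so the proof still stands. Two cosmetic simplifications: the $v$-degree/$u$-degree double induction you sketch can be replaced by a single induction on total degree (since each $q_j\in\bfk[u]^{\frakS_n}_+$ has positive degree, the $r_j$ have strictly smaller total degree); and the "coinvariants"-style decomposition you gesture at is unnecessary, as $\Poll_n^{(\frakS_n)^2}=\bfk[u]^{\frakS_n}\otimes\bfk[v]^{\frakS_n}$ already gives the needed ideal description directly.
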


\subsection{Spanning set of $\Poll_n^{(\frakS_n)^2}/J_n$}

\begin{lm}
\label{lem:prod-of-t}
For each positive integer $r$, we have the following equality in $\Poll_n^{(\frakS_n)^2}/J_n$:
$$
\widetilde t_{n,k_1}\cdot \widetilde t_{n,k_2}\cdot \ldots \cdot \widetilde t_{n,k_r}=
\begin{cases}
1_{n-r}* \widetilde f_{k_1}* \widetilde f_{k_2}*\ldots * \widetilde f_{k_r}, &\mbox{ if } r\leqslant n,\\
0, &\mbox{ if }r>n.
\end{cases} 
$$
\end{lm}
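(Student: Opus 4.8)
The statement concerns the ring $\Poll_n^{(\frakS_n)^2}/J_n$, where $J_n = \Ker\iota_n$ and $\iota_n\colon \Poll_n^{\frakS_n^2}\to eC(n\delta)e$ is the natural inclusion of polynomials (see~\eqref{Jn-square}). The plan is to reduce the multiplicative statement about the elements $\widetilde t_{n,k_i}$ to a diagrammatic computation inside $e C(n\delta)e$, exploiting the factorization $\widetilde t_{n,k} = 1_{n-1}*\widetilde f_k$ and the description of the shuffle product via split-merge diagrams in~\eqref{eq:fusion-pic}. Concretely, under $\iota_n$ each $\widetilde t_{n,k}$ becomes the thick diagram $\rmM^{(n)}_{(n-1,1)}\bigl(1_{n-1}\otimes \widetilde f_k\bigr)\rmS^{(n-1,1)}_{(n)}$, so the product $\widetilde t_{n,k_1}\cdots\widetilde t_{n,k_r}$ is a vertical stack of $r$ such merge-split pairs. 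The key point is that by the associativity of splits and merges (\cref{SMAssoc}, transported to $e R(n\delta)e$ as in \cref{subs:thickcals-KLR}) one can slide the splits past the merges and reorganize this stack into a single diagram: split the thick strand of thickness $n$ into $(n-r,1,1,\ldots,1)$ (with $r$ strands of thickness $1$), apply the polynomials $\widetilde f_{k_1},\ldots,\widetilde f_{k_r}$ on the thin strands, and merge back. This is exactly the diagram computing $1_{n-r}*\widetilde f_{k_1}*\cdots*\widetilde f_{k_r}$ when $r\leq n$, which gives the first case.

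\textbf{Handling the $r>n$ case.} When $r>n$, the intermediate split would require more than $n$ thin strands splitting off a strand of thickness $n$, which is impossible — the composition $\rmS$ factors through an idempotent $e_\mu$ with $\mu$ having more than $n$ parts summing to $n$, hence $e_\mu = 0$ in $e C(n\delta)e$ (or, more precisely, there is no such composition $\mu\in\Comp(n)$). Thus the diagram is zero. The cleanest way to organize this is probably induction on $r$: by the case $r\leq n$ applied with $r=n$, the product $\widetilde t_{n,k_1}\cdots \widetilde t_{n,k_n}$ already equals $\widetilde f_{k_1}*\cdots *\widetilde f_{k_n}$, a product of $n$ thin strands merged together with no room to spare; multiplying by one more $\widetilde t_{n,k_{n+1}}$ forces a split into $n+1$ pieces, giving $0$. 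I would phrase this by showing that $\widetilde t_{n,k}\cdot(1_0 * \widetilde f_{j_1}*\cdots*\widetilde f_{j_n}) = 0$ directly from the diagram.

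\textbf{Main obstacle.} The subtle point is to justify rigorously that the stacked merge-split diagram can be rewritten as the single split-all-then-merge-all diagram \emph{modulo} $J_n$ — i.e. that the manipulations one wants to do (commuting splits past merges, using bialgebra-type relations) actually hold in $e C(n\delta)e$ and not merely in $e R(n\delta)e$. The naive split-merge crossings do not satisfy braid relations (as flagged in the remark after~\eqref{eq:assoc-SM} and in \cref{subs:thickcals-KLR}), so one must be careful: what is needed is only associativity of splits and associativity of merges, which do hold by \cref{SMAssoc}, together with the observation that a merge immediately followed by a split on disjoint thin strands can be simplified. Here one should use that $\widetilde f_k$ lives on a single thin strand, so the relevant local relation is the $n=2$-type relation (the first displayed relation in the \texttt{exe} after \cref{SchurPolRepFaith}, or rather its $e C(n\delta)e$ analogue), which expresses a split-polynomial-merge on a pair of strands in terms of polynomials. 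Iterating this $2$-strand relation is what converts the stack into the shuffle diagram. I expect the bookkeeping of which Demazure operators act on which variables (as in the proof of \cref{lm:geomSMcusp}) to be the most delicate part, but it is routine given \cref{lem:Dem-ab-formula}; the conceptual content is entirely the diagrammatic reorganization plus the vanishing of over-long splits.
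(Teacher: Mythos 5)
Your overall plan — reduce to a diagrammatic identity in $e\,C(n\delta)e$ by writing $\widetilde t_{n,k}$ as a merge--polynomial--split thick diagram and then reorganize the stack — has the right shape, and you correctly locate the "main obstacle" (the rewriting must happen modulo $J_n$, not in $e\,R(n\delta)e$). But the method you propose for closing that gap does not match what is actually needed, and the gap is genuine. First, the piece you have to simplify between consecutive factors is $\rmS_{(n)}^{(n-1,1)}\rmM_{(n-1,1)}^{(n)}$, an endomorphism of $e_{(n-1,1)}$, i.e.\ a \emph{merge-then-split} through the thick $(n)$-strand. This is not a split--polynomial--merge on a pair of thin strands, so the ``$n=2$-type relation'' you cite (the $\rmM Q\rmS$ identity from the example after \cref{SchurPolRepFaith}) is not the local move you need; what is needed is a Mackey-type decomposition of $\rmS\rmM$, together with the vanishing of its off-diagonal terms in the quotient. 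You never state the key fact that makes this vanishing happen: by \cref{rem:polyn-in-KLR:b}, $(u_k-v_k)^2$ on a thin strand kills the map $\Poll_n^{(\frakS_\lambda)^2}\to e_\lambda C(n\delta)e_{(n)}$, because $\psi_{2k-1}^2 e_\lambda = (u_k-v_k)^2 1_{\uj_\lambda}$ factors through a non-cuspidal idempotent. This single relation is what makes $J_n$ nontrivial and is the engine of the whole lemma; a proof that does not invoke it (explicitly or implicitly) cannot be complete.

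Second, even granted the vanishing, the remaining identification is not ``routine bookkeeping''. The paper's actual proof bypasses diagram rewriting entirely: it uses \cref{rem:polyn-in-KLR:a} to identify both sides of the inductive step (and of the $r>n$ vanishing) with explicit polynomials in $\Poll_n^{(\frakS_\lambda)^2}$ for $\lambda=(n-r,1,\dots,1)$, namely Demazure-operator expressions $D^u_n D^v_n[\cdots]$ and $D^u_{n-r}D^v_{n-r}[\cdots]$, and then proves these are congruent modulo $(v_{n-r+1}-u_{n-r+1},\dots,v_n-u_n)$ by a careful Leibniz-rule analysis of which factors each $\partial^u_t$ must hit. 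This congruence is the technical heart of the argument, not a formality. Your $r>n$ treatment (``forces a split into $n+1$ pieces, giving $0$'') is intuitively on target but does not correspond to an actual step: the product lives in $\Poll_n^{(\frakS_n)^2}/J_n$, and what must be shown is that the polynomial representing $(\widetilde f_{k_1}*\cdots*\widetilde f_{k_n})\cdot\widetilde t_{n,k}$ in $e_{(1^n)}R(n\delta)e_{(n)}$ is $\equiv 0$ modulo the above ideal — again a Demazure computation. In short: the strategy is close in spirit to the paper's, but the two steps you treat as routine — the $\rmS\rmM$ reorganization modulo $J_n$ and the Demazure congruences — carry essentially all of the content, and your outline does not supply them.
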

\begin{proof}
First, we prove the case $r\leqslant n$.
We prove the statement by induction on $r$.
It is enough to prove the following equality for $r<n$:
$$
(1_{n-r}* \widetilde f_{k_1}*\ldots * \widetilde f_{k_r})\cdot (1_{n-1}*\widetilde f_{k})=1_{n-r-1}* \widetilde f_k * \widetilde f_{k_1}* \widetilde f_{k_2}*\ldots * \widetilde f_{k_r}.
$$ 
We can rewrite this as the following diagrammatic identity in $e C(n\delta)e$:
\begin{equation}\label{eq:deer}
\begin{aligned}
\tikz[thick,xscale=.25,yscale=.25]{
	\draw[very thin,dashed] (-3,11) -- (30,11);
	
	\ntxt{5}{-2}{$n$}
	\dsplit{3}{-1}{7}{1}
	\ntxt{1.5}{0}{$n-1$}
	\ntxt{7.5}{0}{$1$}
	\draw (3,1) -- (3,4);
	\draw (7,1) -- (7,2);
	\strex{7}{1.4}
	\opbox{6}{2}{8}{4}{$u^k$}
	\dmerge{3}{4}{7}{6}
	\draw (5,6) -- (5,7);
	\dsplit{0}{7}{10}{10}
	\draw (5,7) .. controls (5,8.5) and (3,8.5) .. (3,10);
	\ntxt{-1}{8.5}{$n-r$}
	\ntxt{4.8}{8.5}{$1$}
	\ntxt{10}{8.5}{$1$}
	\draw (0,10) -- (0,13.5);
	\draw (3,10) -- (3,11.5);
	\strex{3}{10.4}
	\draw (10,10) -- (10,11.5);
	\strex{10}{10.4}
	\opbox{2}{11.5}{4}{13.5}{$u^{k_1}$}
	\ntxt{6.5}{10}{$\ldots$}
	\opbox{9}{11.5}{11}{13.5}{$u^{k_r}$}
	\dmerge{0}{13.5}{10}{15.5}
	\draw (3,13.5) .. controls (3,14.5) and (5,14.5) .. (5,15.5);
	
	\ntxt{13}{7}{$=$}
	
	\ntxt{22}{-2}{$n$}
	\draw (22,-1) -- (22,0);
	\draw (22,0) .. controls (22,2.5) and (15.5,2.5) .. (15.5,5);
	\draw (22,0) .. controls (22,2.5) and (18.5,2.5) .. (18.5,5);
	\draw (22,0) .. controls (22,5) and (20,5) .. (20,10);
	\draw (22,0) .. controls (22,5) and (27,5) .. (27,10);
	\draw (15.5,5) -- (15.5,8);
	\draw (18.5,5) -- (18.5,6);
	\strex{18.5}{5.4}
	\opbox{17.5}{6}{19.5}{8}{$u^k$}
	\ntxt{18}{1}{$n-r-1$}
	\ntxt{19.75}{3.5}{$1$}
	\ntxt{22}{3.5}{$1$}
	\ntxt{24}{3.5}{$1$}
	\dmerge{15.5}{8}{18.5}{10}
	\draw (17,10) -- (17,13.5);
	\draw (20,10) -- (20,11.5);
	\strex{20}{10.4}
	\draw (27,10) -- (27,11.5);
	\strex{27}{10.4}
	\opbox{19}{11.5}{21}{13.5}{$u^{k_1}$}
	\ntxt{23.5}{10}{$\ldots$}
	\opbox{26}{11.5}{28}{13.5}{$u^{k_r}$}
	\dmerge{17}{13.5}{27}{15.5}
	\draw (20,13.5) .. controls (20,14.5) and (22,14.5) .. (22,15.5);
	
	\ntxt{30}{7}{$,$}
}
\end{aligned}   
\end{equation}
where a cross on a strand means the polynomial $v-u$.
It suffices to prove an equality of parts below the dashed line.

First, note that by \cref{rem:polyn-in-KLR:a} both pictures are equal in $e R(n\delta)e$ to some polynomials in $\Poll_n^{(\frakS_{\lambda})^2}$, where $\lambda=(n-r,1,1,\ldots,1)$.
The left picture gives the polynomial 
\[
(v_{n-r+1}-u_{n-r+1})\ldots (v_n-u_n) D^u_nD^v_n[(v_1-u_n)^2\ldots (v_{n-1}-u_n)^2(v_n-u_n)u_n^k],
\]
while the right picture gives 
\[
(v_{n-r+1}-u_{n-r+1})\ldots (v_n-u_n)D^u_{n-r}D^v_{n-r}[(u_{n-r}-v_1)^2\ldots (u_{n-r}-v_{n-r-1})^2(u_{n-r}-v_{n-r})u_{n-r}^k].
\]
We want to show that the images of these polynomials in $e_\lambda C(n\delta)e_{(n)}$ coincide.
By \cref{rem:polyn-in-KLR:b}, it is enough to show that these polynomials are equal modulo the ideal generated by $(v_{n-r+1}-u_{n-r+1})^2,\ldots, (v_{n}-u_{n})^2$.
Note that both polynomials already contain the factor $(v_{n-r+1}-u_{n-r+1})\ldots (v_n-u_n)$.
Therefore, it is enough to prove the congruence
\[
D^u_nD^v_n[(v_1-u_n)^2\ldots (v_{n-1}-u_n)^2(v_n-u_n)u_n^k]
\equiv
D^u_{n-r}D^v_{n-r}[(u_{n-r}-v_1)^2\ldots (u_{n-r}-v_{n-r-1})^2(u_{n-r}-v_{n-r})u_{n-r}^k]
\]
modulo the ideal $(v_{n-r+1}-u_{n-r+1},\ldots, v_{n}-u_{n})$.
Indeed, we have
\begin{align*}
D^u_nD^v_n&[(v_1-u_n)^2\ldots (v_{n-1}-u_n)^2(v_n-u_n)u_n^k]\\
 & = D^u_nD^v_{n-r}[(v_1-u_n)^2\ldots (v_{n-r-1}-u_n)^2(v_{n-r}-u_n)\cdots(v_n-u_n)u_n^k]\\
 & \equiv D^u_{n-r}D^v_{n-r}[(v_1-u_{n-r})^2\ldots (v_{n-r-1}-u_{n-r})^2(v_{n-r}-u_{n-r})u_{n-r}^k].
\end{align*}
Let us justify the congruence between the second and the third line above.
When we apply the sequence of Demazure operators $\partial^u_{n-r}\ldots \partial^u_{n-1}$ to  
\[
u_n^k(v_1-u_n)^2\ldots (v_{n-r-1}-u_n)^2(v_{n-r}-u_n)\cdots(v_n-u_n)
\]
(the order is important!) and use Leibniz rule $\partial^u_t(fg)=\partial^u_t(f)g+s^u_r(f)\partial^u_t(g)$, the only situation when the result is not in the ideal appears when 
\begin{itemize}
\item Demazure operator $\partial^u_{n-1}$ hits $(v_n-u_n)$ (and applies $s^u_{n-1}$ to other factors),
\item Demazure operator $\partial^u_{n-2}$ hits $(v_{n-1}-u_{n-1})$ (and applies $s^u_{n-2}$ to other factors),

and so on, until
\item Demazure operator $\partial^u_{n-r}$ hits $(v_{n-r+1}-u_{n-r+1})$ (and applies $s^u_{n-r}$ to other factors). 
\end{itemize}


Now, let us prove the case $r>n$.
For this we need to show that 
$$
(\widetilde f_{k_1}*\ldots * \widetilde f_{k_n})\cdot (1_{n-1}*\widetilde f_{k})=0.
$$ 
For this, it is enough to show that the left hand side of \eqref{eq:deer} is zero in $e C(n\delta)e$ for $r=n$. This left hand side is given by the polynomial
$$
(v_{1}-u_{1})\ldots (v_n-u_n) D^u_nD^v_n[(v_1-u_n)^2\ldots (v_{n-1}-u_n)^2(v_n-u_n)u_n^k].
$$
We have 
$$
D^u_nD^v_n[(v_1-u_n)^2\ldots (v_{n-1}-u_n)^2(v_n-u_n)u_n^k]=D^u[(v_1-u_n)\ldots (v_{n-1}-u_n)(v_n-u_n)u_n^k]\equiv 0.
$$
The congruence is justified in the same way as in the previous case.
\end{proof}

\begin{corr}
\label{cor:f-comm-Pol/J}
We have $\widetilde f_{k_1}*\widetilde f_{k_2}=\widetilde f_{k_2}*\widetilde f_{k_1}$ in $\Poll_2^{(\frakS_2)^2}/J_2$.
\end{corr}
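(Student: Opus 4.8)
The plan is to deduce this immediately from \cref{lem:prod-of-t} with $n=r=2$, using that the target ring is commutative. First I would note that since $\Poll_2^{(\frakS_2)^2}$ is a commutative ring and $J_2$ is an ideal, the quotient $\Poll_2^{(\frakS_2)^2}/J_2$ is again a commutative ring, so in particular $\widetilde t_{2,k_1}\cdot \widetilde t_{2,k_2}=\widetilde t_{2,k_2}\cdot \widetilde t_{2,k_1}$ there.

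Next I would apply \cref{lem:prod-of-t} in the case $r=2\leqslant n=2$ to both sides: it gives
\[
\widetilde t_{2,k_1}\cdot \widetilde t_{2,k_2}=1_0*\widetilde f_{k_1}*\widetilde f_{k_2},\qquad \widetilde t_{2,k_2}\cdot \widetilde t_{2,k_1}=1_0*\widetilde f_{k_2}*\widetilde f_{k_1}
\]
in $\Poll_2^{(\frakS_2)^2}/J_2$, where $1_0$ denotes the unit of $\Poll_0=\bfk$. Since $1_0$ is the identity for the shuffle product $*$ (shuffling with the empty sequence), the right-hand sides simplify to $\widetilde f_{k_1}*\widetilde f_{k_2}$ and $\widetilde f_{k_2}*\widetilde f_{k_1}$ respectively. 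Combining with the commutativity observed in the first step yields $\widetilde f_{k_1}*\widetilde f_{k_2}=\widetilde f_{k_2}*\widetilde f_{k_1}$ in $\Poll_2^{(\frakS_2)^2}/J_2$, as claimed.

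There is no real obstacle here: the statement is a formal consequence of \cref{lem:prod-of-t}, whose proof (the genuinely computational part, involving the Demazure-operator manipulation and the use of \cref{rem:polyn-in-KLR:b}) has already been carried out. The only point requiring a moment's care is to observe that $1_0$ acts as a unit for $*$, which is clear from the diagrammatic definition \eqref{eq:fusion-pic}.
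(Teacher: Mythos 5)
Your proof is correct and is exactly the argument the paper intends (the corollary is stated without a written proof, precisely because it follows formally from \cref{lem:prod-of-t} with $n=r=2$ together with commutativity of the quotient ring). The observation that $1_0$ is the unit for the shuffle product is the right small point to flag.
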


For each positive integer $n$, let us fix a basis $\widetilde B_n$ of $\bfk[u_1,\ldots,u_n]^{\frakS_n}$.
\begin{lm}
\label{lem:span-Pol/J}
The algebra $\Poll_n^{(\frakS_n)^2}/J_n$ is spanned by the following set
$$
\{P*\widetilde f_{k_1}* \widetilde f_{k_2}*\ldots *\widetilde f_{k_r} : r\in[0;n], P\in \widetilde B_{n-r},~ 0<k_1\leqslant k_2\leqslant\ldots\leqslant k_r\}.
$$
\end{lm}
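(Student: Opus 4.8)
The plan is to combine \cref{prop:gen-1*f}, \cref{lem:prod-of-t} and \cref{cor:f-comm-Pol/J} and then carry out one ``absorption'' computation with the shuffle formula from the proof of \cref{lm:geomSMcusp}. First, by \cref{prop:gen-1*f} every element of $\Poll_n^{(\frakS_n)^2}$ is a $\bfk[u_1,\ldots,u_n]^{\frakS_n}$-linear combination of ordinary products $\widetilde t_{n,k_1}\cdots\widetilde t_{n,k_r}$ with $k_i\in[1,n]$; these factors commute, being elements of a commutative ring. Reducing modulo $J_n$ and applying \cref{lem:prod-of-t}, the products with $r>n$ vanish, while for $r\le n$ one gets $\widetilde t_{n,k_1}\cdots\widetilde t_{n,k_r}\equiv 1_{n-r}*\widetilde f_{k_1}*\cdots*\widetilde f_{k_r}\pmod{J_n}$; since the left-hand side is symmetric in the $k_i$, so is the right-hand side modulo $J_n$, and we may assume $0<k_1\le\cdots\le k_r$. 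Thus $\Poll_n^{(\frakS_n)^2}/J_n$ is spanned by the elements $Q\cdot\bigl(1_{n-r}*\widetilde f_{k_1}*\cdots*\widetilde f_{k_r}\bigr)$ with $Q\in\bfk[u_1,\ldots,u_n]^{\frakS_n}$, $0\le r\le n$ and $0<k_1\le\cdots\le k_r$.

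Write $S_n\subseteq\Poll_n^{(\frakS_n)^2}/J_n$ for the $\bfk$-span of the set in the statement. It contains $1=1_n$ and each $1_{n-r}*\widetilde f_{k_1}*\cdots*\widetilde f_{k_r}$ with sorted $k_i$ (take $P$ to be $1$, expanded in $\widetilde B_{n-r}$). By the previous paragraph it therefore suffices to show that $S_n$ is stable under multiplication by $\bfk[u_1,\ldots,u_n]^{\frakS_n}$. By the fundamental theorem of symmetric polynomials (valid over $\bbZ$) that ring is generated as a $\bfk$-algebra by the elementary symmetric polynomials $\sigma_k(u_1,\ldots,u_n)$, $1\le k\le n$, so it is enough to verify that $\sigma_k(u_1,\ldots,u_n)\cdot\bigl(P*\widetilde f_{k_1}*\cdots*\widetilde f_{k_r}\bigr)\in S_n$ for every $P\in\widetilde B_{n-r}$.

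For this I would use the shuffle formula from the proof of \cref{lm:geomSMcusp}. Abbreviating $w_0=w_{0,n-r,r}$ and $W=\widetilde f_{k_1}*\cdots*\widetilde f_{k_r}\in\Poll_r^{(\frakS_r)^2}$, associativity of $*$ gives
\[
P*\widetilde f_{k_1}*\cdots*\widetilde f_{k_r}=\partial^u_{w_0}\partial^v_{w_0}\Bigl[P(u_1,\ldots,u_{n-r})\cdot W(u_{n-r+1},\ldots,u_n;v_{n-r+1},\ldots,v_n)\prod_{i=1}^{n-r}\prod_{j=n-r+1}^{n}(v_i-u_j)^2\Bigr].
\]
Since $\sigma_k(u_1,\ldots,u_n)$ is symmetric in all the $u$-variables, it commutes with $\partial^u_{w_0}$ and may be pulled inside the bracket. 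Using the classical splitting $\sigma_k(u_1,\ldots,u_n)=\sum_{a+b=k}\sigma_a(u_1,\ldots,u_{n-r})\,\sigma_b(u_{n-r+1},\ldots,u_n)$, the factor $\sigma_a(u_1,\ldots,u_{n-r})P$ stays in $\bfk[u_1,\ldots,u_{n-r}]^{\frakS_{n-r}}$, while a short induction on $r$ — using the two-fold shuffle formula, the identity $u\cdot\widetilde f_m(u,v)=\widetilde f_{m+1}(u,v)$, and the fact that $\sigma_b(u_1,\ldots,u_r)$ commutes with the Demazure operators occurring there — shows that $\sigma_b(u_1,\ldots,u_r)\cdot W=\sum\widetilde f_{k_1+\epsilon_1}*\cdots*\widetilde f_{k_r+\epsilon_r}$, the sum over $(\epsilon_1,\ldots,\epsilon_r)\in\{0,1\}^r$ with $\epsilon_1+\cdots+\epsilon_r=b$. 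Feeding this back through $\partial^u_{w_0}\partial^v_{w_0}$ yields
\[
\sigma_k(u_1,\ldots,u_n)\cdot\bigl(P*\widetilde f_{k_1}*\cdots*\widetilde f_{k_r}\bigr)=\sum_{a+b=k}\ \sum_{\substack{\epsilon\in\{0,1\}^r\\ \epsilon_1+\cdots+\epsilon_r=b}}\bigl(\sigma_a(u_1,\ldots,u_{n-r})P\bigr)*\widetilde f_{k_1+\epsilon_1}*\cdots*\widetilde f_{k_r+\epsilon_r},
\]
which belongs to $S_n$ once $\sigma_a(u_1,\ldots,u_{n-r})P$ is expanded in $\widetilde B_{n-r}$ and each $\widetilde f$-word is re-sorted into non-decreasing order.

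The one delicate point, and the main obstacle, is this last re-sorting: \cref{cor:f-comm-Pol/J} only gives $\widetilde f_m*\widetilde f_{m'}\equiv\widetilde f_{m'}*\widetilde f_m\pmod{J_2}$, so to conclude that $P'*\widetilde f_{m_1}*\cdots*\widetilde f_{m_r}\equiv P'*\widetilde f_{m_{\pi(1)}}*\cdots*\widetilde f_{m_{\pi(r)}}\pmod{J_n}$ for $P'\in\bfk[u_1,\ldots,u_{n-r}]^{\frakS_{n-r}}$ and $\pi\in\frakS_r$, one needs that $J=\bigoplus_m J_m$ behaves as an ideal for the shuffle product, i.e. that $A*(-)*B$ sends $J_m$ into $J_{m+\deg A+\deg B}$ for shuffle-words $A$, $B$. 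I would record this as a preliminary observation: the horizontal concatenation homomorphism $R(a\delta)\otimes R(b\delta)\to R((a+b)\delta)$ carries a non-cuspidal idempotent in either tensor factor to a non-cuspidal idempotent (appending letters preserves an unbalanced prefix, and a word of weight $a\delta$ followed by a non-cuspidal word is non-cuspidal), hence descends to $C(a\delta)\otimes C(b\delta)\to C((a+b)\delta)$; since under $\iota$ the shuffle product $P*Q$ is realised by the split--merge diagram \eqref{eq:fusion-pic} applied to the concatenation of $P$ and $Q$ (compare \cref{prop:map-Phi}), the class $\iota(A*X*B)$ depends only on $\iota(X)$. Granting this, the reordering is legitimate and the proof is complete; everything else is the bookkeeping indicated above.
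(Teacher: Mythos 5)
Your proof is correct and follows essentially the same route as the paper: reduce via \cref{prop:gen-1*f} and \cref{lem:prod-of-t} to $\bfk[u_1,\ldots,u_n]^{\frakS_n}$-multiples of $1_{n-r}*\widetilde f_{k_1}*\cdots*\widetilde f_{k_r}$, then show that multiplying by $\sigma_k(u_1,\ldots,u_n)$ lands back in the asserted span, finishing with a reordering via \cref{cor:f-comm-Pol/J}. Your explicit identity $\sigma_k\cdot(P*W)=\sum_{a+b=k}\sum_{|\epsilon|=b}(\sigma_aP)*\widetilde f_{k_1+\epsilon_1}*\cdots*\widetilde f_{k_r+\epsilon_r}$ is correct and matches the paper's terse assertion that one obtains elements $P*\widetilde f_{k'_1}*\cdots*\widetilde f_{k'_r}$ with $k'_i\ge k_i$.

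The one place where you add something beyond the paper's proof is worth highlighting: the paper simply invokes \cref{cor:f-comm-Pol/J} to ``reorder the factors,'' but that corollary is a statement in $\Poll_2^{(\frakS_2)^2}/J_2$, and one really does need that the shuffle product descends to the quotients $\Poll_m^{(\frakS_m)^2}/J_m$, i.e. that $A*J_m*B\subset J_{a+m+b}$. Your justification is right: the horizontal concatenation $R(a\delta)\otimes R(b\delta)\to R((a+b)\delta)$ sends $1_\ui\otimes 1_\uj$ to the non-cuspidal idempotent $1_{\ui\uj}$ as soon as $\ui$ or $\uj$ is non-cuspidal (a word of weight $a\delta$ is balanced, so any unbalanced prefix in either factor propagates to the concatenation), hence it carries $R(a\delta)1_{\nc}R(a\delta)\otimes R(b\delta)+R(a\delta)\otimes R(b\delta)1_{\nc}R(b\delta)$ into $R((a+b)\delta)1_{\nc}R((a+b)\delta)$; combined with the realization of $\iota_{a+b}(P*Q)$ via the split--merge diagram \eqref{eq:fusion-pic}, this gives $J_a*\Poll_b^{(\frakS_b)^2}+\Poll_a^{(\frakS_a)^2}*J_b\subset J_{a+b}$, which is exactly what the reordering step needs. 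So your ``delicate point'' is a genuine (if routine) gap in the paper's write-up that your argument correctly fills.
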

\begin{proof}
By \cref{prop:gen-1*f}, $\Poll_n^{(\frakS_n)^2}$ in generated (as an algebra) by $\bfk[u_1,\ldots,u_n]^{\frakS_n}$ and by the elements $\widetilde t_{n,k}$ for $k\in \bbZ_{>0}$.
Then \cref{lem:prod-of-t} implies that $\Im\phi_n$ is generated as a $\bfk[u_1,\ldots,u_n]^{\frakS_n}$-module by elements of the form $1_{n-r}*\widetilde f_{k_1}*\widetilde f_{k_2}*\ldots *\widetilde f_{k_r}$.
When we multiply $1_{n-r}*\widetilde f_{k_1}*\widetilde f_{k_2}*\ldots *\widetilde f_{k_r}$ by an element of $\bfk[u_1,\ldots,u_n]^{\frakS_n}$, we get an linear combination of elements of the form $P* \widetilde f_{k'_1}*\widetilde f_{k'_2}*\ldots *\widetilde f_{k'_r}$, where $P\in \bfk[u_1,\ldots,u_{n-r}]^{\frakS_{n-r}}$ and $k'_i\geqslant k_i$.
Note that we can also reorder the factors using \cref{cor:f-comm-Pol/J}.
This implies that the desired set spans $\Poll_n^{(\frakS_n)^2}/J_n$.
\end{proof}

From here until \cref{sec:no-inj-surj}, let us assume that $\bbk$ is either a field of characteristic $0$ or $\bbZ$.
Set $f_k=cx^{k-1}\in \bf{P}_1$ and $t_{n,k}=1_{n-1}*f_k\in \bf{P}_n^{\frakS_n}$. 
Under $\phi_n$, the basis $\widetilde B_n$ of $\bfk[u_1,\ldots,u_n]^{\frakS_n}$ defines an analogous basis of $\bfk[x_1,\ldots,x_n]^{\frakS_n}$, which we denote by the same symbol.

\begin{lm}
\label{lem:basis-Imphi}
The following set is a $\bbk$-basis of $\Im\phi_n$:
\[
\cal B:=\{P*f_{k_1}*f_{k_2}*\ldots *f_{k_r} : r\in[0;n], P\in \widetilde{B}_{n-r}, 0<k_1\leqslant k_2\leqslant\ldots\leqslant k_r\}.
\]
\end{lm}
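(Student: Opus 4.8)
The plan is to show that $\cal B$ is a spanning family using \cref{lem:span-Pol/J}, and then to establish linear independence by a Poincaré series count over $\bbQ$, descending to $\bbZ$ at the end.

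First I would record that $\cal B$ spans $\Im\phi_n$. Since $\bigoplus_n\phi_n$ is a homomorphism of algebras for the shuffle product, and since $\phi_1(\widetilde f_k)=f_k$ while $\phi_m$ carries the basis $\widetilde B_m$ of $\bbk[u_1,\ldots,u_m]^{\frakS_m}$ to the basis $\widetilde B_m$ of $\bbk[x_1,\ldots,x_m]^{\frakS_m}$, we get $\phi_n(P*\widetilde f_{k_1}*\cdots*\widetilde f_{k_r})=P*f_{k_1}*\cdots*f_{k_r}$ for every index tuple occurring in \cref{lem:span-Pol/J}. As $J_n\subseteq\ker\phi_n$, the map $\phi_n$ factors through $\Poll_n^{(\frakS_n)^2}/J_n$, and \cref{lem:span-Pol/J} then shows that $\cal B$ spans $\Im\phi_n$ (for $\bbk=\bbZ$ one uses the integral version of \cref{lem:span-Pol/J}, whose proof carries over verbatim).

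Next, take $\bbk$ a field of characteristic $0$. Then $\phi_n$ is surjective by \cref{lm:phiinj}, so $\Im\phi_n=\bf{P}_n^{\frakS_n}$. Both the shuffle product and $\phi_n$ are homogeneous of degree $0$ for the grading $\deg x_i=\deg c_i=2$ (immediate from the explicit merge operator in \cref{SchurPolRepFaith}), so $\cal B$ is a graded family with $\deg(P*f_{k_1}*\cdots*f_{k_r})=\deg P+2(k_1+\cdots+k_r)$. Grouping by the number $r$ of factors $f_{k_i}$, the Poincaré series of the index set of $\cal B$ is
\[
\sum_{r=0}^{n}\left(\prod_{j=1}^{n-r}\frac{1}{1-t^{2j}}\right)\cdot\frac{t^{2r}}{\prod_{i=1}^{r}(1-t^{2i})},
\]
the first factor counting $P\in\widetilde B_{n-r}$ and the second counting partitions with exactly $r$ parts, i.e. tuples $0<k_1\leq\cdots\leq k_r$ weighted by $t^{2(k_1+\cdots+k_r)}$. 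On the other hand, the $c$-degree decomposition $\bf{P}_n=\bigoplus_r\bf{P}_n^{(r)}$ gives $\bf{P}_n^{(r)}\cong\mathrm{Ind}_{\frakS_r\times\frakS_{n-r}}^{\frakS_n}\bigl(\bbk[x_1,\ldots,x_n]\,c_1\cdots c_r\bigr)$, hence $\bf{P}_n^{(r),\frakS_n}\cong\bbk[x_1,\ldots,x_r]^{\frakS_r}\otimes\bbk[x_{r+1},\ldots,x_n]^{\frakS_{n-r}}\cdot c_1\cdots c_r$, whose Poincaré series is exactly the $r$-th summand above. Thus the index set of $\cal B$ and $\bf{P}_n^{\frakS_n}$ have the same Poincaré series; since $\cal B$ spans $\bf{P}_n^{\frakS_n}$, a spanning family of cardinality equal to the dimension in each degree must be a basis, and in particular the indexing of $\cal B$ is injective.

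Finally, for $\bbk=\bbZ$: the family $\cal B\subseteq\Im\phi_n$ spans $\Im\phi_n$ over $\bbZ$ by the first step, and by the previous step applied to $\bbk=\bbQ$ — the whole construction being compatible with $-\otimes_\bbZ\bbQ$ — it is $\bbQ$-linearly independent in $\Im\phi_n\otimes_\bbZ\bbQ$. A $\bbZ$-spanning family that is $\bbQ$-linearly independent is a $\bbZ$-basis (so $\Im\phi_n$ is free). The only genuine computation is the matching of the two Poincaré series in the third step, together with the identification of the $\frakS_n$-module structure of $\bf{P}_n^{(r)}$ and the verification that the shuffle product and $\phi_n$ are graded of degree $0$; I expect all of this to be routine, so there is no serious obstacle beyond careful bookkeeping.
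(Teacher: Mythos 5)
Your proof is correct, and it takes a genuinely different route from the paper. The spanning step (via \cref{lem:span-Pol/J} and the square~\eqref{Jn-square}) is the same, but for linear independence the paper works with monomial symmetric functions $m_\lambda$ and the explicit basis $\mathcal B'$ of $\bf{P}_n^{\frakS_n}$ from the cohomological Hall algebra paper \cite{FR_CHAK2019}, and shows that the transition matrix from $\mathcal B$ to $\mathcal B'$ is upper triangular for a lexicographic order. You instead do a graded dimension count: using surjectivity of $\phi_n$ over $\bbQ$ (\cref{lm:phiinj}), you identify $\Im\phi_n$ with $\bf{P}_n^{\frakS_n}$ and check that the Poincar\'e series of the index set of $\mathcal B$ agrees with that of $\bf{P}_n^{\frakS_n}$, decomposed by $c$-degree. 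I verified the two series coincide: the $r$-th summand $t^{2r}\prod_{i=1}^r(1-t^{2i})^{-1}\prod_{j=1}^{n-r}(1-t^{2j})^{-1}$ is exactly the Poincar\'e series of $(\bf{P}_n^{(r)})^{\frakS_n}\cong\bbk[x_1,\dots,x_n]^{\frakS_r\times\frakS_{n-r}}\cdot c_1\cdots c_r$, and the shuffle product and $\phi_n$ are indeed graded of degree $0$. The descent to $\bbZ$ (a $\bbZ$-spanning family that is $\bbQ$-independent is a $\bbZ$-basis) is fine since $-\otimes_\bbZ\bbQ$ is exact and commutes with taking images. Your argument buys independence from the reordering relations of \cite{FR_CHAK2019}; the paper's argument buys independence from \cref{lm:phiinj} and gives a concrete triangular change of basis. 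Both are legitimate; the dimension count is arguably the cleaner proof.
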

\begin{proof}

The fact that the set $\cal B$ spans $\Im\phi_n$ follows from \cref{lem:span-Pol/J} together with commutative square~\eqref{Jn-square}.
Next, we prove linear independence.
It is enough to do this for $\bfk=\bbQ$.

Let us choose a specific basis of $\bbQ[x_1,\ldots,x_n]^{\frakS_n}$. 
Namely, let
\[
\mathcal P_n=\{\lambda=(\lambda_1,\ldots,\lambda_n)\in \bbZ^n ;~0\leqslant\lambda_1\leqslant\lambda_2\leqslant\ldots\leqslant\lambda_n\},
\] 
and write $B_n=\{m_\lambda : \lambda\in \mathcal P_n\}$, where $m_\lambda=\sum_{w\in\frakS_n}x_{w(1)}^{\lambda_1}\ldots x_{w(n)}^{\lambda_n}$ are the monomial symmetric functions.

For any $t\geqslant 0$, consider the element $e_t=x^t\in \bf{P}_1$.
Then the set 
\[
\mathcal B'=\{e_{t_1}*e_{t_2}*\ldots *e_{t_{n-r}}*f_{k_1}*f_{k_2}*\ldots * f_{k_r};~r\in[0;n],~ 0\leqslant t_1\leqslant t_2\leqslant\ldots\leqslant t_{n-r},~0 < k_1\leqslant k_2\leqslant\ldots\leqslant k_{r} \}
\] 
is a basis in $\bf{P}_n^{\frakS_n}$, see~\cite{FR_CHAK2019} for details.
Consider a lexicographic order on $\mathcal B$, where we assume 
\[
e_0> e_1> e_2>\ldots >f_1> f_2> f_3>\ldots 
\]
Let $\lambda\in \mathcal P_n$.
An easy induction argument shows that $1_n=\frac{1}{n!}(1*1*\ldots * 1)$.
We can therefore write
\[
m_\lambda=\frac{1}{n!} (1*1*\ldots * 1)m_\lambda=\frac{1}{n!}\sum_{w\in \frakS_n}e_{\lambda_{w(1)}}*\ldots *e_{\lambda_{w(n)}},
\]
where we have used that $\rmS_{n}^{1^n}\in \Sc_n$ commutes with any polynomial in $\bf{P}_n^{\frakS_n}$.
Next, we apply the reordering relations in~\cite[Theorem~1]{FR_CHAK2019} to write $m_\lambda$ in terms of $\mathcal B'$.
We get 
\[
m_\lambda=e_{\lambda_{1}}*e_{\lambda_2}*\ldots *e_{\lambda_n} + \mbox{ lower terms}.
\]
Similarly, for $\lambda\in\mathcal P_{n-r}$, we can write each element $m_\lambda*f_{k_1}*\ldots *f_{k_r}\in \mathcal B$ as 
\[
m_\lambda*f_{k_1}*\ldots *f_{k_r}=e_{\lambda_{1}}*e_{\lambda_2}*\ldots *e_{\lambda_{n-r}}*f_{k_1}*\ldots *f_{k_r} + \mbox{ lower terms}.
\]
Therefore the transition matrix from $\mathcal B$ to $\mathcal B'$ is upper triangular.
This implies linear independence of $\mathcal B$.
\end{proof}

\subsection{Realization of $eC(n\delta)e$ inside $\Sc_n$}

\begin{lm}
\label{lem:Jn=Ker}
We have $J_n=\Ker\phi_n$.
\end{lm}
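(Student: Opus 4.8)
We already know from~\eqref{Jn-square} that $J_n\subseteq\Ker\phi_n$, so the content is the reverse inclusion. The plan is to compare $\Poll_n^{(\frakS_n)^2}/J_n$ with $\Im\phi_n$ via the map induced by $\phi_n$, and show this induced map is injective. By \cref{lem:span-Pol/J} the source $\Poll_n^{(\frakS_n)^2}/J_n$ is spanned by the set
\[
\{P*\widetilde f_{k_1}*\widetilde f_{k_2}*\ldots *\widetilde f_{k_r}:r\in[0;n],\ P\in\widetilde B_{n-r},\ 0<k_1\leqslant k_2\leqslant\ldots\leqslant k_r\},
\]
and $\phi_n$ sends this spanning set onto the set $\cal B$ of \cref{lem:basis-Imphi} (using that $\phi_n$ is a homomorphism for the shuffle product $*$, that $\phi_{n-r}(P)\in\widetilde B_{n-r}$ is the corresponding basis element of $\bfk[x_1,\ldots,x_n]^{\frakS_n}$, and that $\phi_1(\widetilde f_k)=\phi_1((v-u)u^{k-1})=cx^{k-1}=f_k$ by \cref{lm:phiFormula}).

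First I would record the commutative square: $\phi_n$ factors as $\Poll_n^{(\frakS_n)^2}\twoheadrightarrow \Poll_n^{(\frakS_n)^2}/J_n\xrightarrow{\bar\phi_n}\Im\phi_n$, where the first map is the quotient by $J_n\subseteq\Ker\phi_n$. It suffices to show $\bar\phi_n$ is an isomorphism, and since it is visibly surjective, injectivity is what remains. Here is the key counting step: by \cref{lem:basis-Imphi} the set $\cal B$ is a $\bbk$-basis of $\Im\phi_n$, so $\Im\phi_n$ is free of rank $\#\cal B$ (with $\#\cal B$ graded-finite in each degree). On the other hand, $\bar\phi_n$ maps the spanning set of \cref{lem:span-Pol/J} \emph{onto} $\cal B$, and this spanning set is indexed by exactly the same data $(r,P,k_1\leqslant\cdots\leqslant k_r)$; hence $\bar\phi_n$ carries a spanning set of $\Poll_n^{(\frakS_n)^2}/J_n$ bijectively onto a basis of $\Im\phi_n$. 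A surjection of modules that sends some spanning set bijectively to a basis of the target is automatically an isomorphism: any element of the kernel, expanded in the spanning set, would produce a nontrivial linear relation among the basis elements of $\Im\phi_n$, a contradiction. This forces $\Ker\bar\phi_n=0$, i.e. $\Ker\phi_n\subseteq J_n$, completing the proof.

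The main subtlety — and the step I would write out most carefully — is the bookkeeping that the quotient map $\bar\phi_n$ really does send the \cref{lem:span-Pol/J} spanning element $P*\widetilde f_{k_1}*\cdots*\widetilde f_{k_r}$ to the \cref{lem:basis-Imphi} basis element $\phi_{n-r}(P)*f_{k_1}*\cdots*f_{k_r}$, with the \emph{same} index set on both sides and no collapsing. This needs: (i) compatibility of $\phi_\bullet$ with $*$ (the lemma preceding \cref{rem:c(Du)-coeff}); (ii) the identification $\phi_1(\widetilde f_k)=f_k$; (iii) that $\phi_{n-r}$ restricted to $\bfk[u_1,\ldots,u_{n-r}]^{\frakS_{n-r}}$ is the ring isomorphism onto $\bfk[x_1,\ldots,x_{n-r}]^{\frakS_{n-r}}$ of \cref{lm:phiFormula} sending $\widetilde B_{n-r}$ to $\widetilde B_{n-r}$; and (iv) the observation that no two distinct spanning-indices can map to the same $\cal B$-index, which is immediate since the index sets are literally the same tuples. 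Once these are in place, the abstract linear-algebra argument above closes everything. I do not expect any real obstacle here beyond careful cross-referencing; all the hard analytic input (the explicit shuffle computations of \cref{lem:1*f,lem:prod-of-t} and the triangularity in \cref{lem:basis-Imphi}) has already been done.
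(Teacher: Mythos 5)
Your proposal is correct and follows exactly the paper's own argument: both prove the reverse inclusion by observing that the map $\Poll_n^{(\frakS_n)^2}/J_n\to\Im\phi_n$ induced by $\phi_n$ carries the spanning set of \cref{lem:span-Pol/J} bijectively onto the basis of \cref{lem:basis-Imphi}, hence is injective. Your write-up simply makes explicit the bookkeeping steps (compatibility of $\phi_\bullet$ with $*$, the identification $\phi_1(\widetilde f_k)=f_k$, and the matching of index sets) that the paper leaves implicit.
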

\begin{proof}
The inclusion $J_n\subset \Ker\phi_n$ follows from diagram~\eqref{Jn-square}.
Next, the map $\Poll_n^{(\frakS_n)^2}/J_n\to \bf{P}_n^{\frakS_n}$ induced by $\phi_n$ takes the generating set of $\Poll_n^{(\frakS_n)}/J_n$ from \cref{lem:span-Pol/J} to the basis of $\Im\phi_n$ from \cref{lem:basis-Imphi}.
This implies that this map is injective, so that $J_n=\Ker\phi_n$.
\end{proof}

Fix $\lambda\in\Comp(n)$.
Denote by $J_\lambda$ the kernel of the map
$$
\iota_\lambda: \Poll_n^{(\frakS_\lambda)^2}\to e C(n\delta)e,\qquad P\mapsto e_\lambda Pe_\lambda. 
$$
 
\begin{corr}
\label{cor:J=Ker-lambda}
We have $J_\lambda=\Ker\phi_\lambda$.
\end{corr}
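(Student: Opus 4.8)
The plan is to deduce \cref{cor:J=Ker-lambda} from \cref{lem:Jn=Ker} by reducing the statement for an arbitrary composition $\lambda$ to the statement for the trivial composition $(n)$, exploiting the factorization of splits and merges through elementary ones. First I would record the obvious inclusion $J_\lambda\subset\Ker\phi_\lambda$, which follows from the commutative square analogous to~\eqref{Jn-square}: the pullback $\phi_\lambda$ factors the map $\iota_\lambda$ composed with $\Phi_n$, by the same smooth base change and functoriality argument that produced $\Phi_n$ and \cref{lm:geomSMcusp}. So the content is the reverse inclusion.

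For the reverse inclusion, the key observation is that $\Poll_n^{(\frakS_\lambda)^2}$ sits inside $\Poll_n^{(\frakS_{1^n})^2}=\Poll_n$, and that the polynomial subalgebra structure is compatible with splits: for $P\in\Poll_n^{(\frakS_\lambda)^2}$ one has $e_\lambda P e_\lambda \cdot \rmS_{(n)}^\lambda = P\cdot \rmS_{(n)}^\lambda$ in $eR(n\delta)e$ by \cref{rem:polyn-in-KLR:a}, and the analogous identity holds in $\Sc_n$ after applying $\Phi_n$ and \cref{prop:map-Phi}, where $\rmS_{(n)}^\lambda$ is a split in the curve Schur algebra. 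Concretely, suppose $P\in\Poll_n^{(\frakS_\lambda)^2}$ lies in $\Ker\phi_\lambda$, i.e. $\phi_\lambda(P)=0$ in $\bf P_n^{\frakS_\lambda}$. Since $\phi_\lambda$ is the restriction of $\phi_n$ (viewed on the larger ring $\Poll_n$, by \cref{philamFormula} and \cref{lm:phiFormula}) to $\frakS_\lambda$-invariants, and since the inclusion $\bf P_n^{\frakS_\lambda}\hookrightarrow\bf P_n$ is injective, we get that $P\in\Ker(\phi_n:\Poll_n\to\bf P_n)$ after viewing $P\in\Poll_n$. Here one should be slightly careful: $\phi_n$ was defined on $\Poll_n^{\frakS_n\times\frakS_n}$, but the formula~\eqref{eq:polyRestr} $u_i\mapsto x_i$, $v_i\mapsto x_i+c_i$ makes sense on all of $\Poll_n$ and is $\frakS_n$-equivariant, so we may freely regard it as a map $\Poll_n\to\bf P_n$; call this $\widetilde\phi$. \cref{lem:Jn=Ker} says $\Ker(\widetilde\phi|_{\Poll_n^{\frakS_n^2}}) = J_n$, but one in fact gets the stronger statement that $\Ker\widetilde\phi$ is the ideal $\widetilde J$ of $\Poll_n$ generated by the polynomials $(u_i-v_i)^2$-type relations coming from non-cuspidal idempotents — more precisely, $\widetilde\phi(P)=0$ iff $e_{1^n}Pe_{1^n}$ vanishes in $eC(n\delta)e$, by the same argument run with $1^n$ in place of $(n)$: \cref{rem:polyn-in-KLR:b} identifies $(u_k-v_k)^2$ as being in the kernel of $\Poll_n^{(\frakS_{1^n})^2}\to e_{1^n}C(n\delta)e_{(n)}$, and the computations of \cref{lem:prod-of-t}–\cref{lem:basis-Imphi} go through verbatim for $1^n$ since they only ever used the polynomial representation and the shuffle structure. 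Thus $\widetilde\phi(P)=0 \Rightarrow e_{1^n}Pe_{1^n}=0$ in $eC(n\delta)e$.

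Now I would conclude as follows. Given $P\in\Poll_n^{(\frakS_\lambda)^2}$ with $\phi_\lambda(P)=0$, we have $e_{1^n}Pe_{1^n}=0$ in $eC(n\delta)e$ by the previous paragraph. But $P$ is $\frakS_\lambda^2$-invariant, so $e_\lambda P e_\lambda$ and $e_{1^n}Pe_{1^n}$ are related by the idempotents: explicitly, using \cref{rem:polyn-in-KLR:a} and the factorization $\rmS_{(n)}^{1^n}=\rmS_\lambda^{1^n}\rmS_{(n)}^\lambda$, $\rmM^{(n)}_{1^n}=\rmM^{(n)}_\lambda\rmM^\lambda_{1^n}$ from \cref{SMAssoc} (which holds in $eR(n\delta)e$ by the thick-calculus associativity noted in \cref{subs:thickcals-KLR}), one has
\[
\rmM^{\lambda}_{1^n}\,(e_{1^n}Pe_{1^n})\,\rmS_{1^n}^{\lambda} = c_\lambda\cdot e_\lambda P e_\lambda
\]
for a nonzero scalar $c_\lambda$ (a product of orders of parabolic subgroups, invertible over $\bbk$ a field of characteristic $0$; over $\bbZ$ one instead argues via the injective polynomial representation, see below), and hence $e_\lambda Pe_\lambda=0$, i.e. $P\in J_\lambda$. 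Alternatively — and this is the cleanest route, avoiding scalar bookkeeping — both $eC(n\delta)e$ and the relevant polynomial rings act faithfully on $\bigoplus_\mu\Poll_n^{(\frakS_\mu)^2}$ (via \cref{lem:polrep-KLR} and \cref{lem:divided-some-eq:a}), and $e_\lambda Pe_\lambda$ acts by multiplication by $P$ on the summand $\Poll_n^{(\frakS_\lambda)^2}$; so $e_\lambda Pe_\lambda=0$ in $eC(n\delta)e$ is equivalent to $P$ acting by $0$ on that summand, which combined with the description of $\Ker\phi_\lambda$ via \cref{lem:span-Pol/J} and \cref{lem:basis-Imphi} (whose proofs, as remarked, adapt to general $\lambda$) gives $J_\lambda=\Ker\phi_\lambda$ directly.

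The main obstacle I anticipate is making rigorous the claim that the spanning-set/basis arguments of \cref{lem:span-Pol/J,lem:basis-Imphi,lem:Jn=Ker} transfer from the composition $(n)$ to a general $\lambda$: the shuffle product was set up as a map $\Poll_n^{(\frakS_n)^2}\times\Poll_m^{(\frakS_m)^2}\to\Poll_{n+m}^{(\frakS_{n+m})^2}$ landing in the fully-invariant piece, so one needs the right analogue of \cref{prop:gen-1*f} generating $\Poll_n^{(\frakS_\lambda)^2}$ over $\bfk[u]^{\frakS_\lambda}$ by partially-symmetrized shuffle elements, and one needs the analogue of \cref{lem:prod-of-t} in $\Poll_n^{(\frakS_\lambda)^2}/J_\lambda$. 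The honest fix is to observe that $\Poll_n^{(\frakS_\lambda)^2}=\bigotimes_i\Poll_{\lambda_i}^{(\frakS_{\lambda_i})^2}$ and $J_\lambda$ is generated by the $J_{\lambda_i}$ in each tensor factor together with the vanishing relations, so \cref{cor:J=Ker-lambda} follows by applying \cref{lem:Jn=Ker} in each block $\lambda_i$ and using that $\phi_\lambda$ is the corresponding tensor product of the maps $\phi_{\lambda_i}$ — a statement that itself needs a short verification from \cref{philamFormula} and the block decomposition $\FT_\lambda\simeq\prod_i\FT_{(\lambda_i)}$ hidden in \cref{lm:flagrestr} via $q_\lambda$. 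I would spend most of the write-up on this tensor-decomposition reduction, after which the result is immediate.
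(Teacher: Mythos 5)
Your ``honest fix'' at the end is exactly the paper's proof: one observes that $\iota_\lambda$ factors through $\bigotimes_i eC(\lambda_i\delta)e$, that $\phi_\lambda=\phi_{\lambda_1}\otimes\cdots\otimes\phi_{\lambda_k}$ by \cref{philamFormula}, and that therefore $\Ker\phi_\lambda$ is the sum of terms $\Poll_{\lambda_1}^{(\frakS_{\lambda_1})^2}\otimes\cdots\otimes\Ker\phi_{\lambda_i}\otimes\cdots\otimes\Poll_{\lambda_k}^{(\frakS_{\lambda_k})^2}$, after which \cref{lem:Jn=Ker} applied in each block gives $\Ker\phi_\lambda\subset J_\lambda$. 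One small ingredient you leave implicit and should make explicit is that this description of $\Ker\phi_\lambda$ as a sum of such tensor pieces uses that each $\Im\phi_{\lambda_i}$ is a \emph{free} $\bbk$-module (\cref{lem:basis-Imphi}); over a general ring the kernel of a tensor product of surjections need not decompose this way.

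Your earlier attempted routes are best discarded. The split--merge conjugation identity $\rmM^{\lambda}_{1^n}(e_{1^n}Pe_{1^n})\rmS_{1^n}^{\lambda}=c_\lambda\, e_\lambda Pe_\lambda$ does hold with $c_\lambda=|\frakS_\lambda|$ up to normalization, but then $c_\lambda$ is not invertible over $\bbZ$ or in positive characteristic, so this only gives the result after tensoring with $\bbQ$, which is not what the corollary claims. The $\widetilde\phi$ route also needs a genuine argument: the claim that the proofs of \cref{lem:prod-of-t}--\cref{lem:basis-Imphi} ``go through verbatim'' for $1^n$ is not obviously true, since the shuffle product and spanning sets were set up specifically to land in the fully $\frakS_n^2$-invariant piece, and one would have to redo the combinatorics for the much larger ring $\Poll_n$. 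The tensor reduction avoids both of these issues, which is why it is the right argument.
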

\begin{proof}
Thanks to~\cref{prop:map-Phi}, we have the following commutative diagram:
\[
\begin{tikzcd}
	\Poll_n^{\frakS_\lambda^2}\ar[r,"\phi_\lambda"]\ar[d,"\iota_\lambda"] & \bf{P}_n^{\frakS_{\lambda}}\ar[d,hook]\\
	eC(n\delta)e\ar[r,"\Phi_n"]& \Sc_n 
\end{tikzcd}
\]	
Since the rightmost vertical map is injective, it follows that $J_\lambda\subset \Ker\phi_\lambda$.

Let us prove the opposite inclusion.
Write $\lambda=(\lambda_1,\ldots,\lambda_k)$.
By definition of $C(n\delta)$, the map $\iota_\lambda$ can be written as a composition
\[
\Poll_n^{\frakS_\lambda^2}\xra{J_{\lambda_1}\otimes \ldots \otimes J_{\lambda_k}} \bigotimes_i eC(\lambda_i\delta)e \xra{\otimes} eC(n\delta)e.
\]
On the other hand, we have $\phi_{\lambda} = \phi_{\lambda_1}\otimes \phi_{\lambda_2}\otimes \ldots\otimes \phi_{\lambda_k}$ by \cref{philamFormula}, so that
\[
\Ker\phi_\lambda=\Ker_{\lambda_1}\otimes \Poll_{\lambda_2}^{(\frakS_{\lambda_2})^2}\otimes\ldots \otimes \Poll_{\lambda_k}^{(\frakS_{\lambda_k})^2}+\ldots+\Poll_{\lambda_1}^{(\frakS_{\lambda_1})^2}\otimes \Poll_{\lambda_2}^{(\frakS_{\lambda_2})^2}\otimes\ldots \otimes \Ker\phi_{\lambda_k}.
\]
Now, the inclusion $\Ker\phi_\lambda\subset J_\lambda$ follows from \cref{lem:Jn=Ker}.
\end{proof}

\begin{rmq}
\label{rem:span-gCg}
In view of \cref{cor:J=Ker-lambda}, the statement of \cref{corr:span-gCg} remains true if we replace $B_{\ui'_\lambda}$ by a basis of $\Poll_n^{(\frakS_{\lambda'})^2}/\Ker\phi_{\lambda'}$.
Note also that $\Poll_n^{(\frakS_{\lambda'})^2}/\Ker\phi_{\lambda'}\simeq \Im\phi_{\lambda'}$ is free over $\bfk$ by \cref{lem:basis-Imphi}.
\end{rmq}

\begin{prop}\label{prop:Phininj}
Let $\bfk$ be a field of characteristic zero or $\bfk=\bbZ$.
Then $\Phi_n$ is injective and its image is spanned by split/merge diagrams, whose strands of thickness $k$ are decorated by elements of $\Im\phi_k\subset P_k^{\frakS_k}$.
In particular, if $\bfk$ is a field of characteristic zero, then $\Phi_n$ is bijective.
\end{prop}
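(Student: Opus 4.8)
The plan is to reduce everything to the explicit bases already at our disposal: the thick-diagram spanning set of $eC(n\delta)e$ from \cref{corr:span-gCg} (sharpened by \cref{rem:span-gCg}), the zigzag basis of $\Sc_{\mu,\lambda}$ from \cref{lm:basis-Schur-zigzag}, and the compatibility of $\Phi_n$ with thick calculus from \cref{prop:map-Phi}.

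First I would establish the description of $\Im\Phi_n$. By \cref{rem:span-gCg-thick} every element of $eC(n\delta)e$ is a linear combination of thick diagrams assembled from elementary splits $\rmS_\lambda^{\mu}$, merges $\rmM_\mu^\lambda$ and polynomials $P\in\Poll_k^{(\frakS_k)^2}$ sitting on strands of thickness $k$. By \cref{prop:map-Phi} the homomorphism $\Phi_n$ carries such a diagram to the diagram of the same shape in $\Sc_n$, with every polynomial $P$ on a thickness-$k$ strand replaced by $\phi_k(P)$. Since these diagrams span the source and $\Phi_n$ is an algebra map, $\Im\Phi_n$ is spanned by split/merge diagrams in $\Sc_n$ whose thickness-$k$ strands are decorated by elements of $\Im\phi_k\subseteq\bf{P}_k^{\frakS_k}$, as claimed.

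Next I would prove injectivity componentwise. Fix $\lambda,\mu\in\Comp(n)$ and, using \cref{cor:J=Ker-lambda} and \cref{lem:basis-Imphi}, choose for each relevant $\lambda'$ a set $\cal B_{\lambda'}$ of representatives of a $\bfk$-basis of $\Poll_n^{(\frakS_{\lambda'})^2}/\Ker\phi_{\lambda'}\cong\Im\phi_{\lambda'}$. By \cref{rem:span-gCg} the corresponding set of elements $\psi_{w_{0,\ui_\mu,\ui_\mu'}}\psi_{(x)}Pe_\lambda$, with $x\in\frakS_\mu\backslash\frakS_n/\frakS_\lambda$ and $P\in\cal B_{\lambda'}$, spans $e_\mu C(n\delta)e_\lambda$. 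Each of these elements equals a thick split--merge diagram $\rmM_{\mu'}^\mu\rmR_{\lambda'}^{\mu'}(w)P\rmS_\lambda^{\lambda'}$ by \cref{rem:span-gCg-thick}, so by \cref{prop:map-Phi} its image under $\Phi_n$ is $\rmM_{\mu'}^\mu\rmR_{\lambda'}^{\mu'}(w)\phi_{\lambda'}(P)\rmS_\lambda^{\lambda'}=\Psi_x^{\phi_{\lambda'}(P)}$, in the notation of \cref{lm:basis-Schur-zigzag}. Since $\phi_{\lambda'}(\cal B_{\lambda'})$ is a basis of $\Im\phi_{\lambda'}$, it can be completed to a basis $B_{\lambda'}$ of $\bf{P}_n^{\frakS_{\lambda'}}$, and then \cref{lm:basis-Schur-zigzag} shows that $\Phi_n$ sends our spanning set of $e_\mu C(n\delta)e_\lambda$ injectively into the basis $\{\Psi_g^Q : g\in\doubleS{\mu}{\lambda},\ Q\in B_{\lambda'}\}$ of $\Sc_{\mu,\lambda}$. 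A spanning set mapped injectively onto a linearly independent set is itself a basis, so $\Phi_n$ is injective on each component $e_\mu C(n\delta)e_\lambda$, hence injective. When $\bfk$ is a field of characteristic zero, \cref{lm:phiinj} gives $\Im\phi_{\lambda'}=\bf{P}_n^{\frakS_{\lambda'}}$, so the image is the whole basis $\{\Psi_g^Q\}$ of $\Sc_{\mu,\lambda}$; thus $\Phi_n$ is also surjective, hence bijective.

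The one step requiring genuine care --- though no new idea --- is the bookkeeping identifying the data $(\mu,\lambda,x,P)$ indexing the thick spanning set of $eC(n\delta)e$ with the data $(\mu,\lambda,g,Q)$ indexing the zigzag basis of $\Sc_{\mu,\lambda}$: one must check that \cref{rem:span-gCg-thick} produces exactly the composition of splits, merges and crossings appearing in the definition of $\Psi_g^P$ in \cref{lm:basis-Schur-zigzag}, with matching parabolics $\lambda',\mu'$ and permutation $w$, and that \cref{prop:map-Phi} then sends this composition onto $\Psi_g^{\phi_{\lambda'}(P)}$ on the nose. Reconciling the (minor) inverse and ordering conventions between the two thick calculi is where most of the verification effort goes.
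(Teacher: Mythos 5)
Your proof follows the same route as the paper's: image description from \cref{rem:span-gCg-thick} and \cref{prop:map-Phi}; injectivity by matching the spanning set of $eC(n\delta)e$ from \cref{corr:span-gCg}/\cref{rem:span-gCg} against the zigzag basis of \cref{lm:basis-Schur-zigzag}; surjectivity in characteristic zero from \cref{lm:phiinj}. The overall structure is correct.

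There is one slip in the injectivity step that matters precisely for the case $\bfk=\bbZ$. You assert that $\phi_{\lambda'}(\cal B_{\lambda'})$, a $\bbZ$-basis of $\Im\phi_{\lambda'}$, ``can be completed to a basis $B_{\lambda'}$ of $\bf P_n^{\frakS_{\lambda'}}$.'' This is false in general: a sublattice of full rank need not be a direct summand, and indeed $\Im\phi_{\lambda'}$ is not one here. For $n=2$ one has $2c_1c_2\in\Im\phi_2$ while $c_1c_2\notin\Im\phi_2$ (see \cref{ex:d2nonsurj} and \cref{prop:phi=TH}), so $\bf P_2^{\frakS_2}/\Im\phi_2$ has $2$-torsion and no basis of $\Im\phi_2$ extends to one of $\bf P_2^{\frakS_2}$. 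The fix is easy and is what the paper's argument implicitly uses: you do not need to complete to a basis. The assignment $P\mapsto\Psi_g^P$ is $\bfk$-linear in $P$, and \cref{lm:basis-Schur-zigzag} says the resulting map $\bigoplus_{g\in\doubleS{\mu}{\lambda}}\bf P_n^{\frakS_{\lambda'(g)}}\to\Sc_{\mu,\lambda}$ is an isomorphism (it carries a basis to a basis). Any isomorphism carries a linearly independent subset to a linearly independent subset, so $\{\Psi_g^Q : g\in\doubleS{\mu}{\lambda},\,Q\in\phi_{\lambda'}(\cal B_{\lambda'})\}$ is linearly independent in $\Sc_{\mu,\lambda}$ directly, without any completion step. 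With this replacement your proof is correct and coincides with the paper's.
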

\begin{proof}
The statement about the image of $\Phi_n$ follows from \cref{rem:span-gCg-thick} and \cref{prop:map-Phi}. 
For injectivity, note that $\Phi_n$ takes the spanning set of $e C(n\delta)e$ from \cref{corr:span-gCg} and \cref{rem:span-gCg} to a linear independent set in $\Sc_n$, see \cref{lm:basis-Schur-zigzag}. Therefore the spanning set of $e C(n\delta)e$ is automatically a basis, and $\Phi_n$ is injective.
Finally, surjectivity in characteristic zero follows from \cref{lm:phiinj}.
\end{proof}

\begin{corr}
The algebra $eC(n\delta)e$ is generated by elementary splits and merges, polynomial $(v-u)$ on thin strands, and symmetric polynomials $\bbk[u_1,\ldots, u_k]^{\frakS_k}$ on strands of thickness $k$.
\end{corr}

\subsection{Counterexamples to injectivity/surjectivity of $\Phi_n$}\label{sec:no-inj-surj}
Let us begin by providing a certain geometric meaning for the image of $\phi_n$.
\begin{prop}\label{prop:phi=TH}
Let $\bbk = \bbZ$.
The $H_{G_n}$-submodule of $H^*(\T_n)$ generated by the tautological ring $TH^*(\T_n)$ coincides with the image of $\phi_n$.
\end{prop}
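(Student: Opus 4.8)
The plan is to establish the two inclusions separately, working throughout with $\bbk=\bbZ$. Write $M:=H_{G_n}\cdot TH^*(\T_n)$. Since $\phi_n$ is a ring homomorphism (recall diagram~\eqref{Jn-square}) and restricts to an isomorphism $\bbZ[u_1,\ldots,u_n]^{\fk{S}_n}\xra{\sim} H_{G_n}$, its image $\Im\phi_n$ is an $H_{G_n}$-submodule of $H^*(\T_n)$; hence for $M\subseteq\Im\phi_n$ it suffices to show the tautological generators $c_{i,0},c_{i,1}$ lie in $\Im\phi_n$. I would verify this after pulling back along $\bigoplus\colon\T_1^n\to\T_n$, whose pullback realizes the inclusion $H^*(\T_n)=\bf{P}_n^{\fk{S}_n}\hookrightarrow\bf{P}_n$ of \cref{prop:CohP1Z} and is in particular injective. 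By the universal property $\bigoplus^*\cal E\simeq\bigoplus_k\cal E_k$, so $c(\cal E)=\prod_kc(\cal E_k)$; using \cref{ex:n=1} and $c_k^2=p^2=0$ one computes, writing $c(\cal E)=C_0+p\,C_1$,
\[
C_0=\prod_k\frac{1+x_k+c_k}{1+x_k},\qquad C_1=C_0\cdot\sum_j\frac{1}{1+x_j-c_j}.
\]
By \cref{lm:phiFormula} the composite of $\phi_n$ with the inclusion $\bf{P}_n^{\fk{S}_n}\hookrightarrow\bf{P}_n$ is the restriction of the substitution $u_k\mapsto x_k$, $v_k\mapsto x_k+c_k$ to $\Poll_n^{(\fk{S}_n)^2}$, so $1+x_k+c_k$ is the image of $1+v_k$ and $1+x_k$ that of $1+u_k$; expanding $C_0$ one reads off $c_{i,0}=\phi_n\bigl(\sum_{a+b=i}(-1)^b\sigma_a(v_1,\ldots,v_n)\,h_b(u_1,\ldots,u_n)\bigr)$, where $\sigma_\bullet,h_\bullet$ denote the elementary and complete homogeneous symmetric polynomials; the class $c_{i,1}$ is handled by a similar, longer computation (or by induction on $n$ via the splitting $\FT_{(1,n-1)}\to\T_n$ together with the multiplicativity of total Chern classes along the tautological short exact sequence). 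In every case the preimage is an integral polynomial, symmetric in the $u$'s and in the $v$'s separately, so $c_{i,0},c_{i,1}\in\Im\phi_n$ and therefore $M\subseteq\Im\phi_n$.

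For the reverse inclusion I would use the spanning set of $\Im\phi_n$ furnished by \cref{prop:gen-1*f} and \cref{lem:prod-of-t}: applying the ring homomorphism $\phi_n$ shows that $\Im\phi_n$ is generated over $H_{G_n}$ by the elements $t_{n,k}=\phi_n(\widetilde t_{n,k})$, and by \cref{lem:prod-of-t} (and \cref{lem:Jn=Ker}) every monomial in the $t_{n,k}$ equals a shuffle product $1_{n-s}*f_{k_1}*\cdots*f_{k_s}$ with $0\le s\le n$. Thus it is enough to place each such shuffle product in $M$. Here I would use that $f_k=(-1)^{k-1}c_{k,0}$ in $\bf{P}_1=H^*(\T_1)$ (a tautological class on $\T_1$, by \cref{ex:n=1}) together with the realization of the shuffle product, up to a fixed twist by an explicit class, as a push-pull along $\T_{n-s}\times\T_1^s\xla{q}\FT_{(n-s,1^s)}\xra{p}\T_n$: the pullback $q^*(1\boxtimes f_{k_1}\boxtimes\cdots\boxtimes f_{k_s})$ is, up to sign, $\prod_jc_{k_j,0}(\cal L_j)$, where $\cal L_1,\ldots,\cal L_s$ are the length-one subquotients of the tautological flag on $\FT_{(n-s,1^s)}$ and $\cal G$ its bottom step. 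The relation $c(p^*\cal E)=c(\cal G)\prod_jc(\cal L_j)$ lets one rewrite this class as a $\bbZ$-linear combination of products $(p^*\tau)\cdot\rho$ with $\tau$ a monomial in the $c_{\bullet,\bullet}(\cal E)$ and $\rho$ pulled back from $\T_{n-s}$; the projection formula then turns its pushforward into a $TH^*(\T_n)$-linear combination of pushforwards of classes from $\T_{n-s}$, i.e. of shuffle products $\rho*1_s$. An induction on $s$ (base case $s=0$: $1_n=1\in TH^*$), using $TH^*(\T_n)\cdot TH^*(\T_n)\subseteq TH^*(\T_n)$, then yields $\Im\phi_n\subseteq M$.

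The main obstacle is keeping the pushforwards under control in this last argument: one must verify that the intermediate classes $\rho$ are again tautological on $\T_{n-s}$ (so that the induction applies) and that the terms $(p^*\tau)\cdot\rho$ are genuinely only a $\bbZ$-combination of the desired shape. Concretely this reduces to bookkeeping of which subring of $H^*(\FT_{(n-s,1^s)})$ the classes $c_{\bullet,\bullet}(\cal L_j)$ and $c_{\bullet,\bullet}(\cal G)$ generate --- using the explicit formulae of \cref{ex:n=1} and the telescoping identities $c_{k,\epsilon}(\cal L_j)=c_{k,\epsilon}(\cal E'_j)-c_{k,\epsilon}(\cal E'_{j-1})$ for the partial flag $\cal G=\cal E'_0\subset\cdots\subset\cal E'_s=p^*\cal E$ --- and I expect it to collapse to a finite symmetric-function computation that can be cross-checked against \cref{ex:d2nonsurj} for $n=2$.
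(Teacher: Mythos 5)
For the inclusion $M\subseteq\Im\phi_n$ your plan is sound but departs from the paper: you verify that the tautological classes $c_{i,0},c_{i,1}$ land in $\Im\phi_n$ by pulling back along $\bigoplus$ and factoring $c(\cal E)$, whereas the paper applies the Beilinson resolution $0\to\Gamma(\cal E(-1))\otimes H^0(\Ocal(1))\otimes\Ocal\to\Gamma(\cal E(-1))\otimes\Ocal(1)\oplus\Gamma(\cal E)\otimes\Ocal\to\cal E\to0$ to get $c(\cal E)$ as a ratio in $c(\Gamma(\cal E))$, $c(\Gamma(\cal E(-1)))$ and $c(\Ocal(1))$ in one stroke. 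Your $C_0$ computation is correct and complete; your $C_1=C_0\cdot\sum_j(1+x_j-c_j)^{-1}$ is also correct, but you stop short of checking that the coefficients of $\sum_j(1+x_j-c_j)^{-1}$ lie in $\Im\phi_n$. They do, since this series equals $\sum_j(1+x_j)^{-1}+\sum_j c_j(1+x_j)^{-2}$, the first part has coefficients in $H_{G_n}$, and the second has coefficients $(k+1)(-1)^k\sum_j c_jx_j^k=(-1)^k\phi_n\bigl(\sum_j(v_j^{k+1}-u_j^{k+1})\bigr)$; but this needs to be said, and the resolution-based argument is cleaner.

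The other inclusion, $\Im\phi_n\subseteq M$, is where the genuine gap lies. You reduce to placing the shuffle products $1_{n-s}*f_{k_1}*\cdots*f_{k_s}$ in $M$ and propose a push-pull along $\FT_{(n-s,1^s)}$, but you explicitly flag that you have not verified the bookkeeping (which classes $\rho$ stay tautological, whether the projection-formula terms really have the shape you want). As it stands this half is a plan, not a proof, and the inversion of $c(\cal G)$ and the reassembly of $\prod_j c_{k_j,0}(\cal L_j)$ from total Chern classes would require real work. The paper avoids all of this with a single application of Grothendieck--Riemann--Roch: $\ona{ch}(\Gamma(\cal E(1)))-\ona{ch}(\Gamma(\cal E))=\ona{ch}^0(\cal E)$, so $c(\Gamma(\cal E(1)))=c(\Gamma(\cal E))\,c^0(\cal E)$, and since $\Im\phi_n$ is generated by the Chern classes of $\Gamma(\cal E)$ (which lie in $H_{G_n}$) and of $\Gamma(\cal E(1))$ (which by the identity above lie in $H_{G_n}\cdot TH^*(\T_n)$), the inclusion $\Im\phi_n\subseteq M$ follows at once. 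I would replace your entire second half by this two-line GRR argument.
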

\begin{proof}
By definition of the moduli stack $\ona{Rep}_{n\delta}\Gamma$, equivariant parameters $u_i$, $v_i$ are the Chern roots of tautological vector bundles $U$, $V$ respectively.
In particular, restricting to $\T_n$ we see that the image of $\phi_n$ is generated over $\bbZ$ by the Chern classes of $\Gamma(\cal E)$ and $\Gamma(\cal E(1))$.
	
Let us write $c^0(\cal E) = \sum_i c_{i,0}(\cal E)$ and $c^1(\cal E) = \sum_i c_{i,1}(\cal E)$; we have $c(\cal E) = c^0(\cal E) + c^1(\cal E)p$.
Since $p^2 = 0$, it follows from the definition of Chern character that $\ona{ch}(\cal E) = \ona{ch}^0(\cal E) + pQ$, where $Q$ is some class in $H^*(\T_d)$, and $\ona{ch}^0(\cal E)=\ona{ch}(c^0(\cal E))$. 
	
By Grothendieck-Riemann-Roch, we have
\begin{equation*}
	\ona{ch}(\Gamma(\cal E(1)))	- \ona{ch}(\Gamma(\cal E)) = \pi_* \left( \ona{ch}(\cal E)\ona{td}(\bbP^1)p \right) =  \pi_* \left( p\ona{ch}(\cal E) \right) = \ona{ch}^0(\cal E).
\end{equation*}
In particular, we see that $c(\Gamma(\cal E(1))) = c(\Gamma(\cal E))c^0(\cal E)$.
Since the coefficients of $c(\Gamma(\cal E))$ are precisely the generators of $H_{G_n}$, we conclude that $\Im\phi_n\subset H^*_{G_n}\cdot TH^*(\T_n)$.
	
On the other hand, applying $\varepsilon$ to the standard resolution of the path algebra of $\Gamma$ (see~\cite[(1.2)]{BK_MRA1999a}) produces the following resolution of $\cal E$:
\[
0\to \Gamma(\cal E(-1))\otimes H^0(\Ocal(1))\otimes \Ocal\to \Gamma(\cal E(-1))\otimes \Ocal(1)\oplus \Gamma(\cal E)\otimes \Ocal\to \cal E\to 0.
\]
In particular, we have 
\begin{equation}\label{eq:ChernKC}
	c(\cal E) = \frac{c(\Gamma(\cal E))c(\Gamma(\cal E(-1))\otimes \Ocal(1))}{c(\Gamma(\cal E(-1)))^2} = \frac{c(\Gamma(\cal E))}{c(\Gamma(\cal E(-1))\otimes \Ocal(-1))}.
\end{equation}
The denominator is a polynomial in the Chern classes of $\Gamma(\cal E(-1))$ and $p\in H^2(\bbP^1)$.
Moreover, we have
\[
c(\Gamma(\cal E(-1))) = c(\Gamma(\cal E))^2/c(\Gamma(\cal E(1))).
\]
Therefore the formula \eqref{eq:ChernKC} shows that the Künneth-Chern classes $c_{i,j}(\cal E)$ are expressed as polynomials of the Chern classes of $\Gamma(\cal E)$ and $\Gamma(\cal E(1))$.
Thus $H_{G_n}\cdot TH^*(\T_n)\subset \Im\phi_n$, and we may conclude.
\end{proof}

Now let $\bfk=\bbZ$, and consider the map $\phi_2\colon \bbZ[u_1,u_2,v_1,v_2]^{\frakS_2^2}\to (\bbZ[x_1,x_2,c_1,c_2]/(c_1^2,c_2^2))^{\frakS_2}$.
The element $c_1c_2$ does not lie in $\Im\phi_2$ by \cref{ex:d2nonsurj} and \cref{prop:phi=TH}.
However, note that $2c_1c_2=(c_1+c_2)^2=\phi_2((v_1+v_2-u_1-u_2)^2)\in\Im\phi_2$.

Non-surjectivity of $\phi_2$ automatically implies non-surjectivity of $\Phi_2$.
In effect, $\Im\phi_2\subset \bf{P}_2^{\frakS_2}$ can be identified with $\Im\Phi_2\cap e_{(2)}\Sc_2e_{(2)}\simeq \bf{P}_2^{\frakS_2}$. 
Moreover, applying universal coefficients we obtain that $\Phi_2$ is not surjective if $\bbk = \bb F_2$.

\begin{lm}
For $\bfk=\mathbb F_2$ we have $J_2\subsetneq \Ker\phi_2$.
\end{lm}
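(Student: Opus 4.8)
The plan is to produce an explicit element lying in $\Ker\phi_2$ but not in $J_2$ when $\bfk=\mathbb F_2$. The natural candidate is $g=(v_1+v_2-u_1-u_2)^2\in\Poll_2^{(\frakS_2)^2}$, the mod-$2$ reduction of the integral element appearing in the discussion just above the statement. First I would verify $g\in\Ker\phi_2$: by \cref{lm:phiFormula} the map $\phi_2$ sends $v_i\mapsto x_i+c_i$ and $u_i\mapsto x_i$, so $\phi_2(g)=(c_1+c_2)^2=c_1^2+2c_1c_2+c_2^2$, which vanishes in $\bf{P}_2^{\frakS_2}=\bigl(\mathbb F_2[x_1,x_2,c_1,c_2]/(c_1^2,c_2^2)\bigr)^{\frakS_2}$ because $2=0$ and $c_i^2=0$.

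The substance of the argument is to show $g\notin J_2$, i.e. $\iota_2(g)=e_{(2)}ge_{(2)}\neq 0$ in $eC(2\delta)e$ over $\mathbb F_2$. The key structural input is that all the objects involved are compatible with the base change $\bbZ\to\mathbb F_2$: since $R(n\delta)$ is free over $\bbZ$ and $C(n\delta)$ is a quotient of it, right-exactness of $-\otimes_\bbZ\mathbb F_2$ gives $C(n\delta)_{\mathbb F_2}\simeq C(n\delta)_\bbZ\otimes_\bbZ\mathbb F_2$, hence $eC(n\delta)e_{\mathbb F_2}\simeq eC(n\delta)e_\bbZ\,/\,2\cdot eC(n\delta)e_\bbZ$; likewise $\iota_2$, $\phi_2$ and $\Phi_2$ are defined integrally and intertwine these reductions, and $\Poll_2^{(\frakS_2)^2}$, $\bf{P}_2^{\frakS_2}$ base-change correctly (monomial symmetric functions give compatible $\bbZ$- and $\mathbb F_2$-bases). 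So it suffices to prove that the integral element $\iota_2(g)$ does not lie in $2\cdot eC(2\delta)e_\bbZ$.

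For this I would work inside $\Sc_2$. By \cref{prop:Phininj}, $\Phi_2$ is injective over $\bbZ$, so $eC(2\delta)e_\bbZ\hookrightarrow\Sc_2$; and $\Sc_2$ is a free $\bbZ$-module by \cref{lm:basis-Schur-zigzag} (valid over any $\bbk$ for $C=\bbP^1$, see \cref{subs:P1anyring}), in particular torsion-free. Diagram~\eqref{Jn-square} over $\bbZ$ gives $\Phi_2(\iota_2(g))=\phi_2(g)=(c_1+c_2)^2=2c_1c_2\in\bf{P}_2^{\frakS_2}\subset\Sc_2$. Now suppose $\iota_2(g)=2y$ for some $y\in eC(2\delta)e_\bbZ$; then $2c_1c_2=2\Phi_2(y)$ in the torsion-free $\bbZ$-module $\Sc_2$, so $c_1c_2=\Phi_2(y)\in\Im\Phi_2$. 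Since $c_1c_2\in e_{(2)}\Sc_2e_{(2)}$ and $\Phi_2$ is injective, $y=e_{(2)}ye_{(2)}\in e_{(2)}C(2\delta)e_{(2)}$, whence $c_1c_2\in\Phi_2(e_{(2)}C(2\delta)e_{(2)})=\Im\phi_2$ — contradicting the fact (recorded above, from \cref{ex:d2nonsurj} and \cref{prop:phi=TH}) that $c_1c_2\notin\Im\phi_2$. Hence $\iota_2(g)\notin 2\cdot eC(2\delta)e_\bbZ$, so its reduction is nonzero in $eC(2\delta)e_{\mathbb F_2}$, i.e. $g\notin J_2$. Together with $g\in\Ker\phi_2$ and the always-valid inclusion $J_2\subseteq\Ker\phi_2$ from~\eqref{Jn-square}, this gives $J_2\subsetneq\Ker\phi_2$.

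The main obstacle I anticipate is the bookkeeping around base change: one must check carefully that forming the semi-cuspidal quotient and then truncating by the idempotents $e_\lambda$ commutes with $-\otimes_\bbZ\mathbb F_2$, and that $\iota_2$, $\phi_2$, $\Phi_2$ and the polynomial invariant rings all reduce mod $2$ in the expected way, so that the integral computation $\Phi_2(\iota_2(g))=2c_1c_2$ genuinely controls the behaviour over $\mathbb F_2$. Once that compatibility is in place, the remainder is a short diagram chase using \cref{prop:Phininj}, freeness of $\Sc_2$, and the non-surjectivity results \cref{ex:d2nonsurj} and \cref{prop:phi=TH} already established.
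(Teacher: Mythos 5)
Your proof is correct, and it takes a somewhat different route from the paper's. Both proofs start from the same element $g=(v_1+v_2-u_1-u_2)^2$, the same base-change observation that $J_2^{\mathbb F_2}$ is the reduction mod $2$ of $J_2^{\bbZ}$, and ultimately the same underlying fact about the $\bbZ$-structure of $\phi_2$. But the mechanisms differ. The paper never confronts the specific element $g$ directly: instead it argues by graded pieces, observing that $J_2^\bbZ=\Ker\phi_2^\bbZ$ (by \cref{lem:Jn=Ker}) contains no homogeneous element of polynomial degree $\leqslant 2$, hence neither does its reduction $J_2^{\mathbb F_2}$, while $\Ker\phi_2^{\mathbb F_2}$ does contain the degree-$2$ element $g$. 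You instead lift the question to $\bbZ$ and show directly that $\iota_2^{\bbZ}(g)\notin 2\cdot eC(2\delta)e_\bbZ$, by embedding into the torsion-free $\Sc_2^{\bbZ}$ via $\Phi_2$, computing $\Phi_2(\iota_2(g))=2c_1c_2$, and invoking $c_1c_2\notin\Im\phi_2$ (from \cref{ex:d2nonsurj} and \cref{prop:phi=TH}). Your route is slightly heavier — it requires \cref{prop:Phininj}, the freeness of $\Sc_2^{\bbZ}$, and the explicit non-surjectivity computation — whereas the paper's degree count is lighter on machinery once $J_2^\bbZ=\Ker\phi_2^\bbZ$ is in hand. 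On the other hand, your argument is more explicit: it pins down a concrete witness and shows exactly where $2$-divisibility fails, which makes the obstruction more visible. Both treat the base change of $C(n\delta)$ and the idempotent truncation with the requisite care, and the diagram chase through $\Phi_2$ (including the reduction to the $e_{(2)}$-component by injectivity) is carried out correctly.
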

\begin{proof}
The inclusion $J_2\subset \Ker\phi_2$ holds by the same argument as in \cref{lem:Jn=Ker}.
Let us show that this inclusion is strict.
First, note that both $J_2$ and $\Ker\phi_2$ are homogeneous ideals in $\Poll_2^{\frakS_2^2}$.
Let us add the base ring ($\bbZ$ or $\mathbb F_p$) to our notation as a superscript.
Since $\phi_2((v_1+v_2-u_1-u_2)^2)=2c_1c_2=0$ over $\mathbb F_2$, the ideal $\Ker\phi^{\mathbb F_2}_2$ contains a generator of degree $2$.
Thus in order to prove that the inclusion $J^{\mathbb F_2}_2\subset \Ker\phi^{\mathbb F_2}_2$ is strict, it suffices to show that $J^{\mathbb F_2}_2$ is generated by elements of degrees strictly greater than $2$.

Since $\Ker(e R_{\mathbb F_2}(n\delta)e\to e C_{\mathbb F_2}(n\delta)e)$ is the clearly reduction modulo $2$ of $\Ker(e R_{\bbZ}(n\delta)e\to e C_{\bbZ}(n\delta)e)$, we deduce that $J^{\mathbb F_2}_2$ is reduction modulo $2$ of $J_2^\bbZ$.
So, in order to show that $J^{\mathbb F_2}_2$ is generated by elements of degrees greater than $2$, it is enough to show the same for $J^{\bbZ}_2$.
At the same time we know that $J^{\bbZ}_2=\Ker\phi^{\bbZ}_2$, and it is easy to check that $\Ker\phi^\bbZ_2$ has no elements of degree $1$ or $2$.
This completes the proof.
\end{proof}
The lemma above implies that $\Phi_2$ is not injective for $\bfk=\mathbb F_2$.
Indeed, take $P\in (\Ker\phi_2\backslash J_2)\subset \Poll_2^{(\frakS_2)^2}$.
Then $Pe_{(2)}$ is a non-zero element in $e C(2\delta)e$ that lies in the kernel of $\Phi_2$.

\subsection{Positive characteristic}
We have seen that the map $\Phi^\bfk_n\colon e C_\bfk(n\delta)e\to \Sc^\bfk_n$ is an isomorphism when $\bfk$ is a field of characteristic zero.
However, in general this map is neither injective nor surjective over a field of positive characteristic, as we have seen in \cref{sec:no-inj-surj}.
This behavior is explained by non-surjectivity of $\Phi^{\bbZ}_n$, because $\Phi_n^\bbk$ is obtained from $\Phi_n^\bbZ$ by base change $\bbk\otimes_\bbZ -$.

Let $\widetilde{\Sc}^{\bbZ}_n:=\Im\Phi_n^{\bbZ}\subset \Sc^\bbZ_n$.
\cref{prop:Phininj} implies that $\widetilde{\Sc}^{\bbZ}_n$ is a sublattice of full rank.
Now, for any field $\bfk$ define $\widetilde{\Sc}^{\bfk}_n:=\bfk\otimes_\bbZ \widetilde{\Sc}^{\bbZ}_n$.
The following theorem follows immediately from \cref{prop:Phininj}.
\begin{thm}\label{thm:cusp-computed}
Let $\bbk$ be a field.
We have an isomorphism of algebras $e C_\bbk(n\delta)e\simeq \widetilde{\Sc}^\bfk_n$.
\end{thm}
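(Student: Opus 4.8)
The plan is to deduce the theorem from the integral statement \cref{prop:Phininj} by base change along $\bbZ\to\bbk$, so I will first record the base-change compatibilities of all the objects involved. The KLR algebra $R(n\delta)$ is free over the ground ring with a basis independent of it by \cref{lem:basis-KLR}, so $R_\bbk(n\delta)=\bbk\otimes_\bbZ R_\bbZ(n\delta)$; moreover $C_\bbZ(n\delta)$ is the cokernel of the map $R_\bbZ(n\delta)^{\otimes 2}\to R_\bbZ(n\delta)$, $(a,b)\mapsto a\,1_{\nc}\,b$, which base-changes to the corresponding map over $\bbk$, and since $\bbk\otimes_\bbZ-$ is right exact and hence preserves cokernels, we get $C_\bbk(n\delta)=\bbk\otimes_\bbZ C_\bbZ(n\delta)$; as $e$ is an idempotent defined over $\bbZ$, this yields
\[
e C_\bbk(n\delta)e=\bbk\otimes_\bbZ\bigl(e C_\bbZ(n\delta)e\bigr).
\]
Likewise $\Sc_n$ is free over the ground ring with the basis of \cref{lm:basis-Schur-zigzag}, valid over any $\bbk$ by the discussion in \cref{subs:P1anyring}, whence $\Sc_n^\bbk=\bbk\otimes_\bbZ\Sc_n^\bbZ$; and the homomorphism $\Phi_n^\bbk$ is obtained from $\Phi_n^\bbZ$ by applying $\bbk\otimes_\bbZ-$, as already noted in \cref{sec:no-inj-surj}.

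Next I would apply \cref{prop:Phininj} with $\bbk=\bbZ$: it asserts that $\Phi_n^\bbZ\colon e C_\bbZ(n\delta)e\to\Sc_n^\bbZ$ is injective. An injective homomorphism of algebras is an isomorphism onto its image, so $\Phi_n^\bbZ$ induces an isomorphism of $\bbZ$-algebras
\[
e C_\bbZ(n\delta)e\xra{\sim}\widetilde{\Sc}^{\bbZ}_n:=\Im\Phi_n^\bbZ,
\]
where $\widetilde{\Sc}^{\bbZ}_n$ is a subalgebra of $\Sc_n^\bbZ$ (and a full-rank sublattice, again by \cref{prop:Phininj}). Applying the functor $\bbk\otimes_\bbZ-$ to this isomorphism of $\bbZ$-algebras produces an isomorphism of $\bbk$-algebras
\[
e C_\bbk(n\delta)e=\bbk\otimes_\bbZ\bigl(e C_\bbZ(n\delta)e\bigr)\xra{\sim}\bbk\otimes_\bbZ\widetilde{\Sc}^{\bbZ}_n=\widetilde{\Sc}^{\bfk}_n,
\]
which is exactly the assertion of the theorem. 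This map is the one induced by $\Phi_n^\bbk$; note that $\Phi_n^\bbk$ itself need not be injective, but this is harmless because $\widetilde{\Sc}^{\bfk}_n$ is defined as the abstract base change $\bbk\otimes_\bbZ\widetilde{\Sc}^{\bbZ}_n$ rather than as a $\bbk$-submodule of $\Sc_n^\bbk$, so the failure of $\bbk\otimes_\bbZ\widetilde{\Sc}^{\bbZ}_n\to\bbk\otimes_\bbZ\Sc_n^\bbZ$ to be injective, observed in \cref{sec:no-inj-surj}, is irrelevant to the statement.

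The only point needing genuine verification is the base-change identity $e C_\bbk(n\delta)e=\bbk\otimes_\bbZ(e C_\bbZ(n\delta)e)$; this is the mild technical heart of the proof, and I expect it to be the only real obstacle, since \cref{prop:Phininj} already carries all the substantive content and the remainder of the argument is formal. Besides the right-exactness argument sketched above, one can see this identity concretely: the spanning set of $e C(n\delta)e$ furnished by \cref{corr:span-gCg} together with \cref{rem:span-gCg} is a genuine $\bbk$-basis for every $\bbk$ — this is precisely what the injectivity half of \cref{prop:Phininj} establishes, by comparing its image under $\Phi_n$ with the basis of \cref{lm:basis-Schur-zigzag} — and each ingredient of that spanning set (KLR diagrams, and bases of $\Im\phi_{\lambda'}$, which is free over $\bbk$ by \cref{lem:basis-Imphi}) is visibly compatible with base change. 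Everything else then follows by unwinding definitions.
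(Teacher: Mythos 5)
Your proof is correct and follows essentially the same route the paper takes: the authors define $\widetilde{\Sc}^{\bfk}_n$ precisely as $\bfk\otimes_\bbZ\Im\Phi_n^{\bbZ}$ and state that the theorem ``follows immediately from \cref{prop:Phininj}'', which is exactly your argument (injectivity of $\Phi_n^{\bbZ}$ gives an isomorphism onto its image, then base change). The only content you add beyond the paper is the explicit verification that $e C_\bbk(n\delta)e=\bbk\otimes_\bbZ(e C_\bbZ(n\delta)e)$ via right exactness, a compatibility the paper takes for granted when it says $\Phi_n^\bbk$ is obtained from $\Phi_n^\bbZ$ by base change.
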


When $\ona{char}\bfk=0$, we have $\widetilde{\Sc}^{\bfk}_n=\Sc^{\bfk}_n$; however, the two algebras are quite different in general.
The algebra $\widetilde{\Sc}^\bfk_n$ is rather explicit: it has nice diagrammatic description and an explicit basis.
Moreover, the action of $\widetilde{\Sc}^\bbZ_n$ on the polynomial representation $P_n^\bbZ$ preserves the $\bbZ$-submodule $\widetilde{P}^\bbZ_n:= \bigoplus_\lambda \Im\phi_\lambda$ by \cref{prop:map-Phi}.
In particular, we obtain a polynomial representation $\widetilde{\Sc}^\bbk_n\curvearrowright \widetilde{P}^\bbk_n:=\bbk\otimes_\bbZ \widetilde{P}^\bbZ_n$.
We conjecture the following:

\begin{conj}\label{conj:pol-Fp-faith}
The polynomial representation $\widetilde{P}^\bbk_n$ of $\widetilde{\Sc}^\bbk_n$ is faithful.
\end{conj}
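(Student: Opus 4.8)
The natural route to \cref{conj:pol-Fp-faith} is to reduce it to an integrality (saturation) statement over $\bbZ$ and then to establish the latter by an explicit unitriangular testing pattern. By \cref{thm:cusp-computed} the assertion is that $e C_\bfk(n\delta)e$ acts faithfully on $\widetilde{P}_n^\bfk$; since $\widetilde{\Sc}_n^\bfk\simeq e C_\bfk(n\delta)e=\bfk\otimes_\bbZ e C_\bbZ(n\delta)e$ and $\widetilde{P}_n^\bfk=\bfk\otimes_\bbZ\widetilde{P}_n^\bbZ$ are obtained by base change from the finite free $\bbZ$-modules $\widetilde{\Sc}_n^\bbZ$ and $\widetilde{P}_n^\bbZ$ (freeness by \cref{prop:Phininj} and \cref{lem:basis-Imphi}), faithfulness for \emph{all} fields $\bfk$ is equivalent to the statement that the $\bbZ$-linear action map
\[
\rho_n\colon \widetilde{\Sc}_n^\bbZ\longrightarrow \End_\bbZ\!\big(\widetilde{P}_n^\bbZ\big)
\]
is a \emph{split} injection, i.e.\ that $\Coker\rho_n$ is torsion-free, i.e.\ that all elementary divisors of $\rho_n$ are units. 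That $\rho_n$ is at least injective is automatic: over $\bbQ$ one has $\widetilde{\Sc}_n^\bbQ=\Sc_n^\bbQ$ and $\widetilde{P}_n^\bbQ=P_n^\bbQ$ by \cref{prop:Phininj} and \cref{lm:phiinj}, so $\rho_n\otimes\bbQ$ is the faithful polynomial representation of \cref{SchurPolRepFaith}, and $\widetilde{\Sc}_n^\bbZ$ is torsion-free. So the entire difficulty is $p$-integral.

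To prove splitness one would refine the localization computation used in the proof of \cref{lm:basis-Schur-zigzag}. Fix the $\bbZ$-basis of $\widetilde{\Sc}_n^\bbZ\simeq e C_\bbZ(n\delta)e$ given by the split--merge diagrams $\Psi_g^{\phi_{\lambda'}(P)}$, for $\lambda,\mu\in\Comp(n)$, $g\in\doubleS{\mu}{\lambda}$, and $P$ ranging over a fixed $\bbZ$-basis $\widetilde{B}_{\lambda'}$ of $\Im\phi_{\lambda'}^\bbZ$, where $\lambda'$ is the split composition attached to $(\mu,\lambda,g)$ as in~\eqref{fla:basisSchur}; this is a basis by \cref{corr:span-gCg}, \cref{rem:span-gCg} and \cref{prop:Phininj}, and for $\widetilde{B}_{\lambda'}$ one uses the shuffle monomials of \cref{lem:basis-Imphi} (applied componentwise), which are unitriangular against a monomial basis of $P_n^\bbZ$. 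The polynomial representation of $\Sc_n$ factors through equivariant localization also over $\bbZ$ for $C=\bbP^1$ (\cref{subs:P1anyring}, the classes $\gamma_\lambda$ being non-zero divisors over any ring), and the image of $\Psi_g^{\phi_{\lambda'}(P)}$ under $\Xi_n$ has leading term $\xi^{\mu\lambda}_{(g,\,\phi_{\lambda'}(P)^g\beta_g)}$ modulo strictly lower terms in the Bruhat order on $g$, exactly as in \cref{lm:basis-Schur-zigzag}. Feeding this together with formula~\eqref{SchurLocRep} into a test vector $v_{\lambda,P'}\in\Im\phi_\lambda^\bbZ$ localizing to $\gamma_\lambda$ times a monomial matched to $P'$, one reads off that $\rho_n\big(\Psi_g^{\phi_{\lambda'}(P)}\big)\,v_{\lambda,P'}$ equals, up to sign and modulo terms strictly lower in the product of the Bruhat order on $g$ and the monomial order on $\widetilde{B}_{\lambda'}$, the $(g,P)$-basis vector of $\Im\phi_\mu^\bbZ$ when $P'=P$, and a strictly lower vector otherwise. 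Pairing with the dual coordinate functionals of the chosen basis of $\widetilde{P}_n^\bbZ$ would then present $\rho_n$ as a unitriangular matrix, hence a split injection, and \cref{conj:pol-Fp-faith} would follow by base change.

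Two points carry the real difficulty. First, the rational coefficients $1/|\Gamma_g^{\mu\lambda}|$ occurring in~\eqref{SchurLocComp} and~\eqref{SchurLocRep} must be shown to cancel along the diagonal of the testing matrix; this is the cancellation already carried out in the proof of \cref{lm:basis-Schur-zigzag} (where $|\Gamma^{\lambda^{i+1}\lambda^i}_{s_{i+1}}|=|\frakS_{\lambda^{i+1}}|$), but it has to be tracked through the merge and split at the two ends of $\Psi_g$ without introducing a prime in the denominator. Second, and more seriously, one must choose the integral basis $\widetilde{B}_{\lambda'}$ of $\Im\phi_{\lambda'}^\bbZ$ and the test vectors $v_{\lambda,P'}$ so that every diagonal pairing is a \emph{unit} of $\bbZ$, not merely a nonzero rational; over $\bbQ$ this is guaranteed by faithfulness, but integrally it amounts to a precise compatibility between the shuffle-product description of $\Im\phi_{\lambda'}$ (\cref{lem:basis-Imphi} and the reordering relations of~\cite{FR_CHAK2019}) and the Demazure-operator formula for the merge operator in \cref{SchurPolRepFaith}. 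Proving this compatibility uniformly in $n$ and $\lambda'$ is the crux of the conjecture; should it fail for some naive choice, one would fall back on a structural argument, e.g.\ identifying $\widetilde{P}_n^\bbZ$ with the $H_{G_n}$-submodule of $H^*(\T_n,\bbZ)$ generated by tautological classes via \cref{prop:phi=TH} and exploiting a self-duality of this module, or using the known integral structure of the quiver Schur algebra of which $e R(n\delta)e$ is an idempotent truncation (\cref{rem:span-gCg-thick}).
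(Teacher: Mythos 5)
This statement is a \emph{Conjecture} in the paper; the authors give no proof, and they explicitly note, immediately after stating it, that the argument used for faithfulness in characteristic zero (\cref{SchurPolRepFaith}) is unavailable because $\widetilde{\Sc}^\bbk_n$ and $\widetilde{P}^\bbk_n$ have no geometric description in positive characteristic. There is therefore no proof in the paper against which to compare your write-up, and what you have produced is an attempt at an open problem.

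The reduction in your first paragraph is sound: both $\widetilde{\Sc}_n^\bbZ$ and $\widetilde{P}_n^\bbZ$ are free lattices with finite-rank graded pieces, the action is defined over $\bbZ$ by \cref{prop:map-Phi}, and the $\bbQ$-form is faithful by \cref{prop:Phininj} and \cref{SchurPolRepFaith}; so the conjecture is equivalent to the saturation statement that for each prime $p$, an element $a\in\widetilde{\Sc}_n^\bbZ$ acting by zero modulo $p$ must itself be divisible by $p$. Beyond that, however, you do not close the argument. You flag the two steps that would be needed — integral cancellation of the denominators from~\eqref{SchurLocComp} across an entire merge-permute-split diagram, and a choice of integral basis of $\Im\phi^\bbZ_{\lambda'}$ and test vectors making every diagonal pairing a \emph{unit} — and then describe these as ``the crux of the conjecture'' rather than proving them. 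These are genuine gaps. The $n=2$, $p=2$ example at the end of \S7 already shows that integral basis vectors of $\widetilde{P}^\bbZ_n$ can collapse under splits and merges mod $p$ (e.g.\ $\rmS(2c_1c_2)\equiv 0$ in $\widetilde{P}^{\mathbb F_2}_2$), so the unitriangular pattern is not automatic and would require bookkeeping that the proposal does not supply. The fallback ideas you mention (a self-duality of the tautological $H_{G_n}$-module, or the integral structure of the quiver Schur algebra) are offered as possibilities, not as arguments. The conjecture remains open.
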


In positive characteristic neither $\widetilde{\Sc}^\bbk_n$ nor $\widetilde{P}^\bbk_n$ has a concise geometric description, so the argument from the proof of \cref{SchurPolRepFaith} does not apply here.
It would be interesting to realize $\widetilde{\Sc}^\bfk_n$ as homology of some variety. 

\begin{exe}
Let us illustrate how properties of $\widetilde{\Sc}^{\bb F_p}_n$ can change for different $p$.
Let $n=2$, and consider $2c_1c_2\in \Im\phi^\bbZ_2 \subset \widetilde{P}^\bbZ_2$.
It defines a non-zero element in the reduction $\widetilde{P}^{\bb F_2}_2$ by previous considerations.
Recall that we have the split operator $\rmS = \rmS_2^{(1,1)}\in \widetilde{\Sc}^{\bbZ}_2$; we denote its reduction modulo $2$ by the same letter. 
Note that $\rmS(2c_1c_2) = 0\in \widetilde{P}^{\bb F_2}_2$, because tautologically $c_1c_2\in \Im\phi_{(1,1)}^\bbZ = \bf{P}_2^\bbZ$.
This implies that $\widetilde{P}^{\bb F_2}_2$ admits a non-trivial submodule supported completely on the thick string.

On the other hand, it is an easy exercise to verify that $\Im\phi_2^\bbZ$ together with $c_1c_2$ generate the whole $\bf P_2^{\frakS_2}$ as a ring.
Therefore $\widetilde{\Sc}^{\bb F_p}_2=\Sc^{\bb F_p}_2$, $\widetilde{P}^{\bb F_p}_2 = P^{\bb F_p}_2$ for $p>2$.
However, the split operator $\rmS$ now acts on $\bf P_2^{\frakS_2}\subset P^{\bb F_p}_2$ by embedding it into $\bf P_2$, so that no submodule of $P^{\bb F_p}_2$ can be supported solely on the thick string.
\end{exe}
\appendix
\section{Several parity questions}\label{app:parity}

\subsection{Parity of quiver flag varieties, type $A_1^{(1)}$}
As before, let $\Gamma = 1\rightrightarrows 0$ be the Kronecker quiver.
Pick a representation $M = (U\rightrightarrows V)\in \ona{Rep}\Gamma$ with dimension vector $\bf{v}$.
For any increasing sequence of dimension vectors $\underline{\bf{v}}=(\bf{v}_1<\ldots<\bf{v}_k=\bf{v})$, consider the \textit{quiver flag variety}
\[
\bfF_{\underline{\bf{v}}}(M) = \left\{ M_1\subset\ldots M_k=M : \dim M_i=\bf{v}_i \right\}.
\]
The goal of this section is to prove the following theorem:
\begin{thm}
\label{thm:parity-fibres}
The flag variety $\bfF_{\underline{\bf{v}}}(M)$ has no odd cohomology groups.
\end{thm}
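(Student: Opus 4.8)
\emph{Proof proposal.}
The plan is to prove the stronger statement that $\bfF_{\underline{\bf{v}}}(M)$ admits an affine paving (equivalently here, that its integral cohomology is free and concentrated in even degrees), by induction on $\dim M=|{\bf{v}}|$, using torus actions and Bia\l ynicki--Birula decompositions to reduce to a short list of elementary cases. The base case $M=0$ is trivial.

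First I would reduce to $M$ indecomposable. Write $M=\bigoplus_j M_j^{\oplus a_j}$ with the $M_j$ pairwise non-isomorphic; then $\Aut(M)$ contains the torus $T=\prod_j(\bb G_m)^{a_j}$ scaling the multiplicity spaces, which acts on $\bfF_{\underline{\bf{v}}}(M)$. For a generic cocharacter of $T$ every fixed flag has each of its steps of the form $\bigoplus_{j,l}N_{j,l}$ with $N_{j,l}$ a subrepresentation of the $l$-th copy of $M_j$, so the $T$-fixed locus is a disjoint union, over the combinatorics of how $\underline{\bf{v}}$ is distributed, of products of quiver flag varieties $\bfF_{\underline{\bf{w}}}(M_j)$ of the individual summands. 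When $M$ is decomposable each $M_j$ satisfies $|\dim M_j|<|{\bf{v}}|$, so these factors have no odd cohomology by the induction hypothesis, and hence neither does the fixed locus; a Bia\l ynicki--Birula argument (see the last paragraph) then transfers this to $\bfF_{\underline{\bf{v}}}(M)$. So it remains to treat indecomposable $M$, which by the classification of indecomposables of the Kronecker quiver is regular, preprojective, or preinjective.

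If $M$ is regular indecomposable --- under the derived equivalence \eqref{DerCatEq}, a torsion sheaf $\Ocal/\mathfrak m_\lambda^{k}$ supported at one point --- then $M$ is uniserial, its subrepresentations form a chain, and $\bfF_{\underline{\bf{v}}}(M)$ is a single point or empty; there is nothing to prove. If $M$ is preprojective or preinjective, then by \eqref{DerCatEq} it is, up to shift, a line bundle on $\bbP^1$, hence equivariant for a $\bb G_m$-action on $\bbP^1$ with two fixed points; concretely, putting the two structure maps of $M$ in staircase normal form, one produces a cocharacter $\chi$ of $G_{\bf{v}}$ which, combined with the $\bb G_m$-action on the representation variety rescaling one of the two arrows, fixes the point representing $M$. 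The induced $\bb G_m$-action on $\bfF_{\underline{\bf{v}}}(M)$ is $F_\bullet\mapsto\chi(t)F_\bullet$, so its fixed locus is the variety of flags of $\chi$-graded subrepresentations of $M$. Since with respect to $\chi$ one arrow is weight-preserving and the other shifts weight by one, the associated graded of $M$ is a representation of an alternating-orientation quiver of type $A$ (a ``zig-zag'' $A_{2k+1}$), and the fixed locus is a disjoint union of quiver flag varieties of representations of a Dynkin quiver of type $A$. These are string modules, whose quiver flag varieties admit affine pavings --- classical in type $A$: one may iterate the same torus reduction, which now terminates, or invoke known cellular decompositions of quiver Grassmannians of string modules. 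Hence the fixed locus has no odd cohomology, and one concludes once more by Bia\l ynicki--Birula.

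The genuinely technical point, which I expect to be the main obstacle, is to justify the Bia\l ynicki--Birula step when $\bfF_{\underline{\bf{v}}}(M)$ is singular, which it is in general already for preprojective $M$: for a $\bb G_m$-action on a singular projective variety the attracting cells need not be affine bundles over the fixed components, so evenness of the fixed locus does not automatically propagate. I would address this by analysing each attracting set directly: working in a $\bb G_m$-equivariant chart coming from a Schubert cell of an ambient product of partial flag varieties of the vector space underlying $M$, one parametrises the flags of $\bfF_{\underline{\bf{v}}}(M)$ flowing to a given fixed flag by an explicit system of equations and checks that, after a change of variables adapted to the $\bb G_m$-weights, it defines an affine space --- so that the decomposition is a genuine (filterable) cell decomposition. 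This is exactly the kind of explicit verification carried out for quiver Grassmannians of Kronecker type in the literature, and the flag version presents no new difficulty. Granting it, the open--closed long exact sequences of the cell decomposition split, and $\bfF_{\underline{\bf{v}}}(M)$ has free integral cohomology concentrated in even degrees, completing the induction. Note that the same technical input is what is needed in the decomposable case above, so this single point governs the whole argument.
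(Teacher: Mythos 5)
Your strategy is genuinely different from the paper's, and the difference is exactly where a gap appears. The paper does not try to reduce to indecomposable $M$ by scaling multiplicity spaces; instead it decomposes $M=P\oplus R\oplus I$ and then further within each family, organizing the induction so that at every step one invokes \cref{lem:RuslanNoExt} for a pair $(M_1,M_2)$ with $\Ext^1(M_1,M_2)=0$, together with \cref{lem:regular} for rigid modules. The whole point of the $\Ext^1$-vanishing hypothesis in those two lemmas (imported from~\cite{Mak_FVQG2019}) is precisely to guarantee that the Bia\l ynicki--Birula attracting sets are affine bundles over the fixed components even though $\bfF_{\underline{\bf{v}}}(M)$ is singular: the vanishing of $\Ext^1$ is what makes the relevant deformation problem unobstructed. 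For the one place where self-extensions are unavoidable (several regular summands $R_{a_r}^{(\lambda,\mu)}$ supported over the same point of $\bbP^1$), the paper uses \cref{rem:half-regular}: replace the support point to get $M'$ with $\Ext^1(M,M')=0$ and an abstract isomorphism of flag varieties, so the machinery still applies. In short, the paper never has to verify affineness of BB cells directly.

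Your reduction replaces this by a generic cocharacter of $T=\prod_j(\bb G_m)^{a_j}$ scaling multiplicity spaces. But this cocharacter does \emph{not} come with the needed $\Ext^1$-vanishing: if two summands lie in the same regular tube (in particular if $a_j\geq 2$ for a regular $M_j$, since $\Ext^1(R_n^{(\lambda,\mu)},R_m^{(\lambda,\mu)})\neq 0$ in general), then the extension groups linking the weight spaces are nonzero, and there is no a priori reason the attracting sets should be affine bundles, let alone affine cells. You correctly flag this as ``the genuinely technical point'' and propose to settle it by an explicit chart computation, but that computation is exactly the missing content of the proof; it is not a routine verification, and it is the step the paper was designed to avoid. (Your approach is in the spirit of the cell-decomposition results of Cerulli Irelli, Esposito, Franzen and Reineke for Kronecker-type quiver Grassmannians, so it is not hopeless --- but as written it is an outline with its hardest step delegated to ``the literature''.) By contrast, the parts of your argument that you do carry out --- uniseriality for regular indecomposables, the $\bb G_m$-degeneration of $P_n$, $I_n$ to type $A$ string modules --- are correct and would be rendered unnecessary in the paper's scheme, since there $\Ext^1(P_n,P_n)=\Ext^1(I_n,I_n)=0$ and \cref{lem:regular} applies directly.
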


Before giving the proof, we will need some preparations.
Let us say that \cref{thm:parity-fibres} holds for a representation $M$ if we have $H^{\rm odd}(\bfF_{\underline{\bf{v}}}(M),\bbZ)=0$ for any $\underline{\bf{v}}$.

Recall (e.g., see~\cite{Sch_LHA2012}) that isomorphism classes of indecomposable representations of $\Gamma$ can be classified into three distinct families:

\begin{center}
\begin{tabular}{c|c|c}
	preprojective $P_n$, $n\geqslant 0$ & preinjective $I_n$, $n\geqslant 0$ & regular $R_n^{(\lambda,\mu)}$, $n>0$, $\lambda,\mu\in\bbC$\\\hline
	$\begin{tikzcd}[column sep = large]
		\bbC^n\ar[r,shift left,"\begin{pmatrix}\id_{\bbC^n}\\0\end{pmatrix}"]\ar[r,shift right,"\begin{pmatrix}0\\ \id_{\bbC^n}\end{pmatrix}"'] & \bbC^{n+1}
	\end{tikzcd}$ &
	$\begin{tikzcd}[column sep = large]
		\bbC^{n+1}\ar[r,shift left,"(\id_{\bbC^n}\text{ }0)"]\ar[r,shift right,"(0\text{ }\id_{\bbC^n})"'] & \bbC^n
	\end{tikzcd}$ & 
	\begin{tabular}{c}
	$\begin{tikzcd}[column sep = huge]
		\bbC^{n}\ar[r,shift left,"\lambda\id_{\bbC^n}+L_n"]\ar[r,shift right,"\mu\id_{\bbC^n}+L_n"'] & \bbC^n,\quad\lambda\neq \mu
	\end{tikzcd}$\\
	$\begin{tikzcd}[column sep = huge]
		\bbC^{n}\ar[r,shift left,"\lambda\id_{\bbC^n}"]\ar[r,shift right,"\lambda\id_{\bbC^n}+L_n"'] & \bbC^n
	\end{tikzcd}$
	\end{tabular}
\end{tabular}
\end{center}
Here, $L_n$ denotes a nilpotent Jordan block of rank $n$.

Furthermore, for any $n, m\geqslant 0$ we have vanishing of $\Ext$-groups:
\[
\Ext^1(P_n,P_{n+m})=\Ext^1(P_n,R_n^{(\lambda,\mu)})=\Ext^1(P_n,I_m)=\Ext^1(R_n^{(\lambda,\mu)},I_{n})=\Ext^1(I_{n+m},I_n)=0.
\]
Two representation of the form $R_n^{(\lambda,\mu)}$ are isomorphic if and only if they have the same $n$ and the same ratio $(\lambda:\mu)\in \bbP^1$.
We also have $\Ext^1(R_n^{(\lambda,\mu)},R_m^{(\lambda',\mu')})=0$ for $(\lambda:\mu)\ne (\lambda':\mu')$.

The following lemma is an immediate consequence of \cite[Proposition 2.17]{Mak_FVQG2019}.
\begin{lm}\label{lem:RuslanNoExt}
Let $M_1$ and $M_2$ be two representations of $\Gamma$ such that $\Ext^1(M_1,M_2)=0$, and such that \cref{thm:parity-fibres} holds for $M_1$ and $M_2$.
Then \cref{thm:parity-fibres} also holds for $M_1\oplus M_2$.
\end{lm}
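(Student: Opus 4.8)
The statement is advertised as an immediate consequence of \cite[Proposition 2.17]{Mak_FVQG2019}, so the plan is essentially to quote that proposition and then run the standard parity-propagation argument. The one substantive input I would extract from \cite{Mak_FVQG2019} is a stratification of $\bfF_{\underline{\bf v}}(M_1\oplus M_2)$ indexed by combinatorial data, whose strata are affine-space bundles over products of smaller flag varieties of $M_1$ and of $M_2$. Concretely, to a flag $0=N_0\subset N_1\subset\cdots\subset N_k=M_1\oplus M_2$ one would attach, step by step, the dimension vectors of $N_i\cap M_1$ and of the image of $N_i$ under the projection $M_1\oplus M_2\twoheadrightarrow M_2$; fixing this ``relative type'' cuts out a locally closed subvariety, and the map sending a flag to the pair consisting of the induced flag $(N_i\cap M_1)_i$ in $M_1$ and the induced flag of images in $M_2$ realizes that subvariety as an iterated affine bundle over $\bfF_{\underline{\bf u}}(M_1)\times\bfF_{\underline{\bf w}}(M_2)$ for suitable refinements $\underline{\bf u},\underline{\bf w}$ of $\underline{\bf v}$. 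The hypothesis $\Ext^1(M_1,M_2)=0$ is precisely what guarantees that the fibres of this map are affine spaces (and that the strata are smooth of the expected dimension); this is the heart of Proposition 2.17 and the only place the Ext-vanishing is used.

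Granting this, the remaining steps are routine. First I would record, as the form of the statement actually needed in the induction, the strengthening that $H^*(\bfF_{\underline{\bf v}}(M),\bbZ)$ is a \emph{free} $\bbZ$-module concentrated in even degrees; this is what \cref{thm:parity-fibres} provides for $M_1$ and $M_2$ in the context in which the lemma is invoked, and it is what makes the Künneth formula harmless. Then: by Künneth (with vanishing Tor term, all groups being free) the cohomology of each product $\bfF_{\underline{\bf u}}(M_1)\times\bfF_{\underline{\bf w}}(M_2)$ is again free and even, and an affine-space bundle induces an isomorphism on integral cohomology, so each stratum has free even cohomology. Finally I would filter $\bfF_{\underline{\bf v}}(M_1\oplus M_2)$ by closed unions of strata with successive complements as above and observe that the long exact sequences of the pairs split, since their connecting maps go between groups of opposite parity and hence vanish; therefore $H^*(\bfF_{\underline{\bf v}}(M_1\oplus M_2),\bbZ)$ is a graded direct sum of the cohomologies of the strata, in particular free and concentrated in even degrees, for every $\underline{\bf v}$.

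The only genuine obstacle is locating and citing the precise form of \cite[Proposition 2.17]{Mak_FVQG2019} and checking that its hypotheses match (a direct sum $M_1\oplus M_2$ of representations of an arbitrary quiver with $\Ext^1(M_1,M_2)=0$, and arbitrary flag types $\underline{\bf v}$); everything downstream of it — Künneth, invariance under affine bundles, and the splitting of the stratification long exact sequences — is formal and I would not spell out the bookkeeping of the refinements $\underline{\bf u},\underline{\bf w}$ in detail.
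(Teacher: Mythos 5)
Your proposal is correct and follows essentially the same route as the paper, whose proof consists precisely of invoking \cite[Proposition 2.17]{Mak_FVQG2019}: the stratification of $\bfF_{\underline{\bf{v}}}(M_1\oplus M_2)$ by relative types into affine-space bundles over products $\bfF_{\underline{\bf{u}}}(M_1)\times\bfF_{\underline{\bf{w}}}(M_2)$ (with $\Ext^1(M_1,M_2)=0$ ensuring the fibres are affine spaces), followed by the formal Künneth/affine-bundle/long-exact-sequence bookkeeping. Your explicit strengthening to free, evenly concentrated cohomology, needed to kill the Tor term and split the sequences, is exactly the form of the statement the cited proposition delivers, so there is nothing to add.
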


\begin{lm}[{\cite[Theorem 3.4]{Mak_FVQG2019}}]
\label{lem:regular}
Let $M$ be a representation of $\Gamma$ satisfying $\Ext^1(M,M)=0$.
Then \cref{thm:parity-fibres} holds for $M$.
\end{lm}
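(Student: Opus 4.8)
The plan is to reduce to a single rigid indecomposable, then to a preprojective module by duality, and finally to produce a torus action with isolated fixed points on the relevant flag variety and invoke Białynicki--Birula.

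\textbf{Reduction to indecomposables.} Write $M=\bigoplus_{i=1}^r X_i^{\oplus n_i}$ with the $X_i$ pairwise non-isomorphic indecomposables. Since $\Ext^1(M,M)=\bigoplus_{i,j}\Ext^1(X_i,X_j)^{\oplus n_in_j}$, rigidity of $M$ forces $\Ext^1(X_i,X_j)=0$ for \emph{all} $i,j$; in particular each $X_i$ is rigid, hence (as is classical for the Kronecker quiver, where the rigid indecomposables are exactly the preprojective and preinjective modules) $X_i\in\{P_m,I_m\}$. Now I would peel off summands one at a time: $M=X_1\oplus M'$ with $M'=X_1^{\oplus n_1-1}\oplus\bigoplus_{i\geqslant 2}X_i^{\oplus n_i}$ and $\Ext^1(X_1,M')=0$. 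Induction on $\dim M$ together with \cref{lem:RuslanNoExt} then reduces the statement to $M=P_n$ or $M=I_n$. Moreover, arrow reversal identifies $\Gamma^{\mathrm{op}}$ with $\Gamma$, and linear duality $D=\Hom_{\bbC}(-,\bbC)$ is an exact contravariant autoequivalence of $\ona{Rep}\Gamma$ sending $P_n$ to $I_n$ and a filtration $0\subset M_1\subset\cdots\subset M_k=M$ to the filtration $0\subset D(M/M_{k-1})\subset\cdots\subset D(M/M_0)=DM$; hence $\bfF_{\underline{\bf{v}}}(I_n)$ and $\bfF_{\underline{\bf{v}}}(P_n)$ run over isomorphic varieties as $\underline{\bf{v}}$ varies, and it suffices to treat $M=P_n$.

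\textbf{A torus action on $\bfF_{\underline{\bf{v}}}(P_n)$.} Model $P_n$ as $U\xrightarrow{\ \alpha,\beta\ }V$ with $U=\bbC[t]_{\leqslant n-1}$, $V=\bbC[t]_{\leqslant n}$, $\alpha$ the inclusion and $\beta$ multiplication by $t$. Let $\lambda\colon\bbC^\times\to G_\alpha$ act on monomial bases by $t^i\mapsto s^i t^i$ on both $U$ and $V$; a one-line computation gives $\lambda(s)\cdot(\alpha,\beta)=(\alpha,s\beta)$. Because rescaling an arrow does not change which subspaces are subrepresentations, $\bfF_{\underline{\bf{v}}}(\alpha,s\beta)=\bfF_{\underline{\bf{v}}}(\alpha,\beta)$ as subvarieties of the ambient product of two partial flag varieties of $U$ and $V$, so $\lambda$ descends to a $\bbC^\times$-action on $\bfF_{\underline{\bf{v}}}(P_n)$, namely the restriction of the standard torus action on that ambient product. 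As $\lambda$ has pairwise distinct weights $0,1,\dots,n-1$ on $U$ and $0,1,\dots,n$ on $V$, the ambient fixed points are coordinate flags, whence $\bfF_{\underline{\bf{v}}}(P_n)^{\bbC^\times}$ is finite.

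\textbf{Conclusion and the hard point.} The variety $\bfF_{\underline{\bf{v}}}(P_n)$ is projective, and it is smooth because $P_n$ is rigid; granting this, a smooth projective variety with a $\bbC^\times$-action having finitely many fixed points is paved by affine spaces, so $H^{\mathrm{odd}}(\bfF_{\underline{\bf{v}}}(P_n),\bbZ)=0$. Since $\underline{\bf{v}}$ is arbitrary and $I_n$ was reduced to $P_n$, the reductions above finish the proof. The main obstacle is exactly this smoothness: without it the Białynicki--Birula strata are merely affine varieties, not affine spaces, which would not be enough. I would establish it either by citing smoothness of quiver Grassmannians of rigid modules and propagating along the forgetful maps $\bfF_{\underline{\bf{v}}}(P_n)\to\bfF_{(\mathbf v_1,\mathbf v)}(P_n)$, or — cleanest here — by checking directly that $\bfF_{\underline{\bf{v}}}(P_n)$ is an iterated tower of Grassmann bundles and projective spaces governed by the $t$-shift combinatorics, which yields smoothness and the affine paving simultaneously; the only remaining combinatorial input (the nonzero entries of $\beta$ form a path graph, so the cocharacter above exists with distinct weights) is elementary.
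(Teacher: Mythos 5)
Your route is necessarily different from the paper's: the paper does not prove \cref{lem:regular} at all, but imports it from \cite[Theorem 3.4]{Mak_FVQG2019}, where rigidity is used to build a vector bundle on $\bfF_{\underline{\bf v}}(M)\times\bfF_{\underline{\bf v}}(M)$ with a section vanishing exactly on the diagonal (this is what \cref{rem:half-regular} alludes to) — an argument that never invokes smoothness, which is precisely why it stretches to the regular modules needed there. Within your approach, the reductions are sound: rigidity kills all cross-extensions, so peeling off summands with \cref{lem:RuslanNoExt} and induction reduces to $P_m$ and $I_m$; duality combined with arrow reversal exchanges these while permuting the flag types; and the cocharacter acting by $t^i\mapsto s^it^i$ indeed sends $(\alpha,\beta)$ to $(\alpha,s\beta)$, preserves $\bfF_{\underline{\bf v}}(P_n)$ inside the ambient product of flag varieties, and has distinct weights, hence finitely many fixed points. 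Given smoothness, Bia{\l}ynicki-Birula then gives an affine paving and the claim.

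The genuine gap is the smoothness of $\bfF_{\underline{\bf v}}(P_n)$, and neither of your two suggested fixes works as stated. Propagation along forgetful maps fails: the fibre of $\bfF_{\underline{\bf v}}(P_n)\to\ona{Gr}_{{\bf v}_1}(P_n)$ over a submodule $M_1$ is a flag variety of $P_n/M_1$, and subquotients of $P_n$ need not be rigid (already a quotient of $P_1$ of dimension $\delta$ is a regular module $R_1^{(\lambda,\mu)}$ with self-extensions); moreover these maps are not fibrations, so smoothness cannot be pushed up or down through them. The ``iterated tower of Grassmann bundles'' claim is unsubstantiated: the flag conditions cut out degeneracy loci inside products of Grassmannians rather than sub-bundles — for instance $\ona{Gr}_{(2,3)}(P_3)$ sits inside $\ona{Gr}(2,U)\simeq\mathbb P^2$ as the conic $a_0a_2=a_1^2$ (it happens to be $\mathbb P^1$, but not via any naive bundle combinatorics), and for larger flag types no such tower structure is evident. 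The standard repair is the universal-family argument: the universal flag variety over the representation space $E_{\bf v}$ is a sub-vector bundle of a trivial bundle over a product of partial flag varieties, hence smooth, and is proper over $E_{\bf v}$; rigidity of $M$ gives it a dense open $G_{\bf v}$-orbit, so generic smoothness in characteristic zero plus homogeneity along the orbit yields smoothness of the fibre $\bfF_{\underline{\bf v}}(M)$. With that supplied (or with a citation for smoothness of quiver flag varieties of rigid modules), your proof closes; note only that, unlike the diagonal-bundle argument of the cited reference, this smoothness-based route cannot be extended to the situation of \cref{rem:half-regular}.
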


\begin{rmq}
\label{rem:half-regular}
The assumption $\Ext^1(M,M)=0$ in~\cite[Theorem 3.4]{Mak_FVQG2019} is only required in order to construct a certain vector bundle on $\bfF_{\underline{\bf{v}}}(M)\times \bfF_{\underline{\bf{v}}}(M)$ with a distinguished section, which vanishes exactly on the diagonal.
This assumption can be slightly relaxed.
Namely, it may happen that $M$ has $\Ext^1(M,M)\ne 0$, but there exists another representation $M'$ such that 
\begin{itemize}
\item there exists an isomorphism of $I$-graded vector spaces $f : M\to M'$,
\item for each $\underline{\bf{v}}$, the isomorphism $\phi$ induces as isomorphism of varieties $\bfF_{\underline{\bf{v}}}(M)\simeq\bfF_{\underline{\bf{v}}}(M')$,
\item $\Ext^1(M,M') = 0$.
\end{itemize}
In this case~\cite[Theorem 3.4]{Mak_FVQG2019} is still applicable, and \cref{thm:parity-fibres} holds for $M$.
An example of such situation is $M=R_n^{(\lambda,\mu)}$ and $M'=R_n^{(\lambda',\mu')}$ with $(\lambda:\mu)\ne (\lambda':\mu')$.
More generally, we can take $M=\bigoplus_{r=1}^k R_{a_r}^{(\lambda,\mu)}$ and $M'=\bigoplus_{r=1}^k R_{a_r}^{(\lambda',\mu')}$  with $(\lambda:\mu)\ne (\lambda':\mu')$ for some positive integers $a_1,\ldots,a_k$.
\end{rmq}

\begin{proof}[Proof of \cref{thm:parity-fibres}]
Any representation $M$ can be decomposed as $M=P\oplus R\oplus I$ such that $P$ is preprojective, $R$ is regular and $I$ is preinjective.
Since $\Ext^1(P,R)=\Ext^1(R,I)=\Ext^1(P,I)=0$, it is enough to prove the statement separately for $P$, $R$ and $I$ by \cref{lem:RuslanNoExt}. 

First, let us show that the theorem holds for a preprojective representation $P$.
We know that $P$ can be decomposed as $P=\bigoplus_{r=1}^k P_{a_r}$ for some $a_1,a_2\ldots,a_k\in \bbZ_{\geqslant 0}$.
We have $\Ext^1(P_a,P_b)=0$ for $a\leqslant b$.
Applying \cref{lem:regular} and \cref{lem:RuslanNoExt}, we see that \cref{thm:parity-fibres} holds for each $P_{a_r}$ and then for $P$ as well.
The same argument also proves the statement for preinjective representations.

Now, let $R$ be a regular representation.
We can decompose $R$ as $R=\bigoplus_{r=1}^k R^{(\lambda_r:\mu_r)}$, where $(\lambda_r:\mu_r)\in \bbP^1$ are different for different $r$'s, and $R^{(\lambda_r:\mu_r)}$ is isomorphic to a direct sum of representations of the form $R_n^{(\lambda_r,\mu_r)}$.
Since $\Ext^1(R^{(\lambda_i:\mu_i)},R^{(\lambda_j:\mu_j)}) = 0$ for $i\ne j$, it suffices to prove the statement for each $R^{(\lambda_r:\mu_r)}$.
We conclude by applying \cref{rem:half-regular}.
\end{proof}

\subsection{Parity sheaves}
The theory of parity sheaves has been developed in~\cite{JMW_PS2014}.
It takes as an input a complex algebraic variety $Y$ with an action of a complex algebraic group $G$, such that $Y$ has a $G$-invariant stratification $Y=\coprod_{\lambda}Y_\lambda$ satisfying some parity conditions~\cite[(2.1),(2.2)]{JMW_PS2014}.
For each stratum $Y_\lambda$ and a local system $\mathfrak L$ on $Y_\lambda$, it produces a certain indecomposable complex $\mathcal E(\lambda,\mathfrak L)$ supported on $\overline{Y_\lambda}$, which satisfies a list of properties.
(This complex is not well-defined in general, but it is unique if it exists.)

The case $Y=E_\alpha$, $G=G_\alpha$ was studied in~\cite{Mak_CBKA2015} for a Dynkin quiver.
In this situation, we have a finite stratification of $E_\alpha$ by $G_\alpha$-orbits, and each stratum admits only trivial $G_\alpha$-equivariant local systems.
The existence of parity sheaf for each stratum is then proved.
However, \cite{Mak_CBKA2015} did not prove that in positive characteristics the Lusztig sheaf $\mathcal L_\alpha = (\pi_\alpha)_*\underline{\bbk}$ is a direct sum of shifts of parity sheaves.
This was done later in~\cite{McN_RTGE2017,Mak_FVQG2019}.
The key point was to prove that the fibers of maps $\widetilde{\bfF}_{\ui}\to E_\alpha$ have no odd cohomology groups.
Note that \cref{thm:parity-fibres} proves an analogous statement about the fibers for the Kronecker quiver.

Nevertheless, the proposition below shows that there is no satisfactory theory of parity sheaves for the Kronecker quiver, already for dimension vector $\alpha=2\delta$.
So, despite \cref{thm:parity-fibres}, we cannot deduce in this case that the sheaf $\mathcal L_{\alpha}$ is a direct sum of shifts of parity sheaves.

\begin{prop}
Let $\Gamma$ be the Kronecker quiver. 
There is no algebraic stratification (in the sense of \cite[Definition 3.2.23]{CG_RTCG2010}) of $E_{2\delta}$ into smooth connected locally closed subsets such that 
\begin{itemize}
\item each stratum is $G_{2\delta}$-invariant,
\item each stratum satisfies \cite[(2.1),(2.2)]{JMW_PS2014},
\item the subset $E^{\rm reg}_{2\delta}\subset E_{2\delta}$ is a union of strata.
\end{itemize}
\end{prop}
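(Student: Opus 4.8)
The plan is to argue by contradiction: suppose such an algebraic stratification $\{Y_\lambda\}$ of $E_{2\delta}$ exists. The first step is to recall the structure obtained from parity stratifications. As explained in the passage before the proposition, a stratification satisfying the parity conditions of \cite[(2.1),(2.2)]{JMW_PS2014} together with the fact that the fibers of $\pi_\alpha$ have no odd cohomology (our \cref{thm:parity-fibres}) would imply, following~\cite{Mak_CBKA2015,McN_RTGE2017,Mak_FVQG2019}, that the Lusztig sheaf $\mathcal L_{2\delta}=(\pi_{2\delta})_*\underline\bbk$ is a direct sum of shifts of parity complexes $\mathcal E(\lambda,\mathfrak L)$ for this stratification, over $\bbk=\bb F_p$ for all $p$ (and over $\bbZ$). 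Since the strata are $G_{2\delta}$-invariant, connected, and smooth, and since $E_{2\delta}^{\rm reg}$ is a union of strata which is moreover \emph{open} (being the preimage of an open condition) and \emph{dense}, there is a unique open stratum, namely a dense open $G_{2\delta}$-orbit-closure type stratum inside $E^{\rm reg}_{2\delta}$; call it $Y_0$. I would then observe that the restriction of $\mathcal L_{2\delta}$ to $Y_0$ identifies with $(\pi_{2\delta})_*\underline\bbk$ restricted to the regular locus, whose pushforward to the stack level is exactly the object controlling $\Sc_2^{\bbP^1}$, via the identification $\FT_\mu\times_{\T_2}\FT_\lambda\simeq[\bfZ^{\rm reg}_{\ui_\mu,\ui_\lambda}/G_{4}]$ and $\cref{prop:geom-KLR}$.

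The second and central step is to turn the existence of a parity decomposition into surjectivity of $\Phi_2$ over $\bb F_p$. The endomorphism algebra of a direct sum of shifts of parity sheaves has a standard-module / cellular-type control; in the quiver setting of~\cite{Mak_CBKA2015}, taking the full endomorphism algebra $\Ext^*_{G_\alpha}(\mathcal L_\alpha,\mathcal L_\alpha)$ recovers the KLR algebra $R(\alpha)$, and the decomposition into parity sheaves exhibits $R(\alpha)$ (suitably truncated) with a graded cellular structure whose cell modules are the stalks/costalks of the $\mathcal E(\lambda,\mathfrak L)$. Restricting attention to the open regular stratum: the summand of $\mathcal L_{2\delta}$ supported on $\overline{Y_0}=E^{\rm reg}_{2\delta}$ is forced to be $\underline\bbk_{E^{\rm reg}_{2\delta}}$ up to shift (the stratum admits only the trivial equivariant local system, as in the Dynkin case — here one uses that the stabilizer in $G_{2\delta}$ of a regular representation is connected, e.g. it is $\Aut$ of a degree-$2$ torsion sheaf on $\bbP^1$, which is connected), and hence $\mathrm{Hom}$ between this summand and the relevant flag pushforwards computes precisely the tautological classes. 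Concretely, I would show that the parity hypothesis forces the natural map $eR_{\bb F_p}(2\delta)e\to H^*_{G_{2\delta}}(\bfZ^{\rm reg}_{2\delta,e})=\Sc_2^{\bb F_p}$ (of \cref{rem:map-cusp-Schur-geom}) to be \emph{surjective after passing to $eC_{\bb F_p}(2\delta)e$}, i.e. $\Phi_2$ is surjective over $\bb F_p$. The mechanism: a parity decomposition makes $H^*_{G_{2\delta}}(\bfZ^{\rm reg}_{2\delta,e})$ free over $H_{G_{2\delta}}$ with basis read off from the strata, and the cellular pairing identifies the image of $\Phi_2$ (which by \cref{prop:Phininj} is the span of split/merge diagrams decorated by $\Im\phi_k$) with the whole thing, since the open stratum contributes exactly $\bf P_2^{\frakS_2}$ and the parity condition forbids the ``defect'' between $\Im\phi_2$ and $\bf P_2^{\frakS_2}$.

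The third step is the contradiction. By \cref{ex:d2nonsurj} combined with \cref{prop:phi=TH}, over $\bbZ$ the class $c_1c_2\in\bf P_2^{\frakS_2}$ does not lie in $\Im\phi_2$; applying universal coefficients exactly as in \cref{sec:no-inj-surj}, $\Phi_2$ fails to be surjective over $\bb F_2$ (the obstruction is $2$-torsion: $2c_1c_2=(c_1+c_2)^2\in\Im\phi_2$, so the cokernel of $\phi_2$ in the relevant degree is $\bb Z/2$). This directly contradicts the surjectivity of $\Phi_2$ over $\bb F_2$ deduced in the previous step. Hence no such stratification exists.

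The step I expect to be the main obstacle is the second one: rigorously extracting surjectivity of $\Phi_2$ from the formal existence of a parity-sheaf decomposition, without re-deriving the entire machinery of~\cite{Mak_CBKA2015,McN_RTGE2017}. The cleanest route is probably to invoke that a parity decomposition of $\mathcal L_{2\delta}$ adapted to a stratification refining $E^{\rm reg}_{2\delta}$ makes $H^*_{G_{2\delta}}$ of each flag--Steinberg piece a free module whose rank is computed stratum-by-stratum from the (even) stalk cohomology of the $\mathcal E(\lambda,\mathfrak L)$, and that the localization-theoretic basis of $\Sc_2$ from \cref{lm:basis-Schur-zigzag} matches this count only if the decorations on the thick strand exhaust $\bf P_2^{\frakS_2}$ rather than just $\Im\phi_2$ — forcing $\Im\phi_2=\bf P_2^{\frakS_2}$ over $\bb F_2$, which is false. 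One must be careful that the parity conditions are imposed on $E_{2\delta}$ itself and restrict well to the open union of strata $E^{\rm reg}_{2\delta}$ (restriction of parity complexes to an open union of strata is again parity), so that the argument is genuinely local on the regular locus; this is what lets us compare with the Schur algebra $\Sc_2^{\bbP^1}$ rather than with the full (unknown) integral cohomology of $E_{2\delta}$.
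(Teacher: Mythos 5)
Your contradiction at the end is the same one the paper uses (non-surjectivity of $\phi_2$, hence of $\Phi_2$, over $\bbF_2$, detected by \cref{ex:d2nonsurj} and \cref{prop:phi=TH}), but your route to the surjectivity claim you want to contradict is much longer than the paper's, and its middle step is a genuine gap rather than a technicality.

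The paper's proof never touches the Lusztig sheaf, the flag--Steinberg varieties, or the algebra $\Sc_2$. It uses only that $E_{2\delta}$ is smooth and connected, so the \emph{constant} sheaf $\underline\bfk$ on $E_{2\delta}$ is itself a parity complex for any admissible stratification. Since $E^{\rm reg}_{2\delta}$ is by hypothesis a union of strata and is open, the restriction map on equivariant $\Ext$-algebras $\Ext^*_{G_{2\delta}}(\underline\bfk,\underline\bfk)\to\Ext^*_{G_{2\delta}}(\iota^*\underline\bfk,\iota^*\underline\bfk)$ is surjective by the standard lemma about restriction of parity complexes to open unions of strata (\cite[Corollary 4.2]{McN_RTGE2017}). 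These two $\Ext$-algebras are precisely $H^*_{G_{2\delta}}(E_{2\delta})\simeq\bfk[u_1,u_2,v_1,v_2]^{\frakS_2^2}$ and $H^*_{G_{2\delta}}(E^{\rm reg}_{2\delta})\simeq\bf{P}_2^{\frakS_2}$, and the restriction map is exactly $\phi_2$. Non-surjectivity of $\phi_2$ over $\bbF_2$ then gives the contradiction, and one is done in three lines.

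Your proposed second step --- converting a parity decomposition of $\mathcal L_{2\delta}$ into surjectivity of $\Phi_2$ over $\bbF_p$ --- is where the hole is. You correctly flag it as the main obstacle, but it is more than an obstacle: it requires precisely the kind of machinery (matching a cellular filtration of $eR(n\delta)e$ to stalk/costalk ranks of parity summands, then pushing this across the quotient to $eC(n\delta)e$ and comparing with the basis of \cref{lm:basis-Schur-zigzag}) that the paper deliberately avoids by working directly with the constant sheaf. In particular, the sentence ``the parity condition forbids the `defect' between $\Im\phi_2$ and $\bf{P}_2^{\frakS_2}$'' is asserted without a mechanism; this is exactly the nontrivial content, and without making it precise the argument does not close. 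Also note that establishing a parity decomposition of $\mathcal L_{2\delta}$ at all requires $\cref{thm:parity-fibres}$ plus the machinery of~\cite{Mak_FVQG2019,McN_RTGE2017}, so you are invoking strictly more than the hypotheses grant directly; the paper's proof only ever uses $\underline\bfk$ on $E_{2\delta}$, which is parity for free on a smooth connected variety. A further small inaccuracy: $E^{\rm reg}_{2\delta}$ is open and dense but is not a single $G_{2\delta}$-orbit (regular representations with different isomorphism types coexist there), so there is not ``a unique open stratum'' in the way you assert; the paper makes no such claim and does not need one.
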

\begin{proof}
Suppose that such a stratification exists.
The first two assumptions are simply the assumptions in~\cite{JMW_PS2014} that allow to apply the theory of parity sheaves.
In particular, the constant sheaf $\underline\bfk$ on $E_{2\delta}$ is a parity sheaf (up to a shift).

Consider the inclusion map $\iota\colon E^{\rm reg}_{2\delta}\to E_{2\delta}$. 
The third assumption together with argument in~\cite[Corollary~4.2]{McN_RTGE2017} show that the map $\Ext_{G_{2\delta}}^*(\underline\bfk,\underline\bfk)\to \mathcal \Ext_{G_{2\delta}}^*(\iota^*\underline\bfk,\iota^*\underline\bfk)$ must be surjective. 
Recall that we the following commutative diagram, where the horizontal maps are isomorphisms:
\[
\begin{tikzcd}
	\bfk[u_1,u_2,v_1,v_2]^{\frakS_2^2}\ar[d,"\phi_2"]\ar[r] & H^{G_{2\delta}}_*(E_{2\delta})\ar[d]\ar[r] & \Ext^*_{G_{2\delta}}(\underline\bfk,\underline\bfk)\ar[d] \\
	(\bfk[x_1,x_2,c_1,c_2]/(c_1^2,c_2^2))^{\frakS_2}\ar[r] & H^{G_{2\delta}}_*(E^{\rm reg}_{2\delta})\ar[r] & \Ext^*_{G_{2\delta}}(\iota^*\underline\bfk,\iota^*\underline\bfk)
\end{tikzcd}
\]
However, we have seen in \cref{sec:no-inj-surj} that the map $\phi_2$ is not surjective for $\bfk=\mathbb F_2$.
\end{proof}

\bibliography{zot-bib}{}
\bibliographystyle{alpha}

\end{document}